\documentclass[11pt,reqno]{amsart}
\usepackage[T1]{fontenc}
\usepackage{lmodern}

\usepackage{amssymb}
\usepackage{enumitem}
\usepackage[dvipsnames]{xcolor}

\usepackage[a4paper,top=2.5 cm,bottom=2.5 cm,left=2.5 cm,right=2.5 cm]{geometry}

\newtheorem{theorem}{Theorem}[section]
\newtheorem{lemma}{Lemma}[section]
\newtheorem{remark}{Remark}[section]
\newtheorem{proposition}{Proposition}[section]

\numberwithin{equation}{section}

\def\to{\rightarrow}

\DeclareMathOperator{\dive}{div}

\newcommand{\ba}{\begin{aligned}}
	\newcommand{\ea}{\end{aligned}}
\newcommand{\be}{\begin{equation}}
	\newcommand{\ee}{\end{equation}}

\allowdisplaybreaks

\usepackage[colorlinks,
linkcolor=red,
citecolor=blue
]{hyperref}
\usepackage{color}

\begin{document}
\title[Fluid-particle models in critical spaces]{Global existence and vanishing viscosity limit of the compressible Navier--Stokes--Vlasov--Fokker--Planck system in critical spaces}

\author{Hai-Liang Li and Ling-Yun Shou}

\begin{abstract}
We study multidimensional compressible fluid--particle systems at critical regularity, in which a carrier fluid and a particle phase with Fokker--Planck diffusion are coupled through a drag force. We prove the existence and uniqueness of strong solutions for the Cauchy problems of the Navier--Stokes--Vlasov--Fokker--Planck and Euler--Vlasov--Fokker--Planck systems near equilibrium in their respective critical Besov spaces. Moreover, we establish regularity estimates for the Navier--Stokes--Vlasov--Fokker--Planck system uniform with respect to the common viscosity parameter $\mu=\lambda=\varepsilon$ and justify the global-in-time vanishing-viscosity limit with the convergence rate $\mathcal O(\varepsilon)$. Finally, under an additional lower-order Besov assumption on the initial data, we obtain optimal time-decay estimates for both systems and derive enhanced decay rates for the relative velocity and the microscopic part of the distribution function.

\end{abstract}

\keywords{Fluid-particle model; hypocoercivity; critical regularity; global existence;
optimal time-decay rates; vanishing viscosity limit}
\maketitle
\section{Introduction}

Fluid--particle systems describe the motion of dispersed particles in a carrier fluid and arise in applications such as spray combustion, aerosol transport and sedimentation \cite{boudin2,jabin1,williams1}. In this paper, we consider the following fluid--particle system
\begin{equation}\label{m1}
\left\{
\begin{aligned}
&\partial_t\rho+\dive_x(\rho u)=0,\\
&\partial_t(\rho u)
 +\dive_x(\rho u\otimes u)
 +\nabla_xP(\rho)
 =\mu\Delta_xu
 +(\mu+\lambda)\nabla_x\dive_xu
 -\int_{\mathbb R^d_v}(u-v)F\,dv,\\
&\partial_tF+v\cdot\nabla_xF
 +\dive_v\big((u-v)F-\nabla_vF\big)=0\quad\text{for}\quad
 (t,x,v)\in \mathbb{R}_+\times \mathbb{R}^d\times\mathbb{R}^d,
\end{aligned}
\right.
\end{equation}
where the unknowns $\rho=\rho(x,t)\geq 0$, $u=u(x,t)\in\mathbb{R}^d$ and
$F=F(x,v,t)\geq0$ denote the fluid density, the fluid velocity and the
particle distribution function, respectively. The fluid and particle
phases are coupled through the drag force $(u-v)F$. The pressure function $P$ satisfies
\begin{align*}
    P\in C^{\infty}((0,\infty))\quad\text{and}\quad P'(\rho)>0\quad\text{for}\quad \rho>0,
\end{align*}
and the shear viscosity $\mu$ and the bulk viscosity $\lambda$ are constants satisfying 
 \begin{align*}
 \mu>0\quad\text{and}\quad 2\mu+\lambda>0.
 \end{align*}
We refer to \eqref{m1} as the NS--VFP system. When
$\mu,\lambda\rightarrow 0$, we formally derive the compressible
Euler--Vlasov--Fokker--Planck system,
abbreviated as the Euler--VFP system.

Significant progress has been made in the mathematical theory of
fluid--particle models related to \eqref{m1}. For the incompressible
NS--VFP system, Goudon et al.\ \cite{goudon1} established global classical solutions near
equilibrium and studied their large-time behavior. Chae, Kang
and Lee \cite{chae1} proved global weak solutions in two and three
dimensions and global smooth solutions in two dimensions, while He \cite{he1} investigated the smoothing effect for arbitrary initial data in two
dimensions. In the
compressible case, Mellet and Vasseur \cite{mellet2} constructed
global weak solutions in a three-dimensional bounded domain. Global
classical solutions near equilibrium and their decay properties were
later obtained on the torus in \cite{chae2} and in the whole space in
\cite{lf1}. Li et al.\ \cite{lhl1} carried out the
spectral analysis, derived pointwise estimates and established the
optimal time-decay rates for the compressible system. Hydrodynamic
limits for this system and related kinetic--fluid models have also
been investigated under various
asymptotic regimes; refer to \cite{carrillo2,choi4,goudon2,goudon3,mellet1} and the references therein.

Although the fluid equation in the Euler--VFP system has no direct dissipation, the Fokker--Planck operator and the drag force can stabilize the inviscid system. For the
incompressible Euler--VFP system,
Carrillo, Duan and Moussa \cite{carrillo4} proved the global
existence of classical solutions near equilibrium and derived their
algebraic and exponential decay rates in the whole space and on the
torus, respectively. For the compressible Euler equations, Duan and
Liu \cite{duan3} established global well-posedness near equilibrium.

For both the compressible NS--VFP and Euler--VFP systems, Li et al.\ \cite{lhl1} carried out the
spectral analysis, derived pointwise estimates and established the
optimal time-decay rates. Li, Wang and Wang \cite{LWW} established the nonlinear stability of planar rarefaction
waves for both systems.
They also proved the zero-viscosity limit on every finite time interval
in Sobolev spaces. More precisely, when $\mu=\lambda=\varepsilon$,
their estimate gives a convergence rate
$\mathcal O(\sqrt{\varepsilon})$ on a finite time interval.
We also refer to \cite{LiNiShouWang2025,LiNiWang2026}
for recent results on the inviscid limit for related systems.

If the velocity diffusion is removed, the kinetic equation reduces to
a Vlasov equation. The drag force then drives the particle velocities
toward the fluid velocity, and the particle distribution is
expected to concentrate on a monokinetic state of the form
$n(t,x)\delta_{u(t,x)}(v)$. Hamdache
\cite{hamdache1} first established global weak solutions for the
Vlasov--Stokes system, and global weak solutions to the incompressible
Navier--Stokes--Vlasov system were later constructed in
\cite{boudinNSV1,wangd1,yuNSV1}. Baranger and Desvillettes
\cite{baranger2} also treated local classical solutions to a related
Euler--Vlasov spray model. For the one-dimensional Euler--Vlasov
system, global bounded weak entropy solutions for arbitrarily large
initial data were constructed in \cite{CaoJiang2021}. Bae et al.\ \cite{bae1}
studied conditional large-time behavior with an
additional nonlocal alignment force, and Choi \cite{choiNSV1} later
revisited the large-time behavior. Finite-time blow-up of classical
solutions to the Vlasov/compressible Navier--Stokes system and related
models was also studied by Choi \cite{choiblowup}. For the incompressible
Navier--Stokes--Vlasov system on the three-dimensional torus,
Han-Kwan, Moussa and Moyano \cite{han1} rigorously proved that the
particle distribution converges unconditionally to a Dirac mass in
velocity when the initial modulated energy is sufficiently small.
Recent results also include the exponential stability of the inhomogeneous
Navier--Stokes--Vlasov system with vacuum \cite{LiShouZhang2025} and the large-time
asymptotics of finite-energy weak solutions on the two-dimensional torus
\cite{DanchinShou2026}. For the compressible models in one dimension,
Li and Shou \cite{lishouNSV1,lishouNSV2} established global
well-posedness for general initial data and proved an exponential time convergence rate to equilibrium.

The previous results on global strong solutions for the fluid--particle systems discussed above were established in high-order Sobolev spaces. It is therefore natural to ask whether global well-posedness can be established at the critical regularity. Function spaces whose norms are invariant under the natural scaling of the equations are called critical spaces. Refer to \cite{crin1,danchin1,danchin4,xu1} for critical regularity theories of the compressible Navier--Stokes equations, the compressible Euler equations with damping and more general partially dissipative hyperbolic systems. The fluid--particle coupling in \eqref{m1}, however, leads to a substantially different difficulty. More specifically, the drag force produces quadratic terms without spatial derivatives, which cannot be controlled directly by the critical regularity for the compressible Navier--Stokes equations. Thus, the main issue is not merely to lower the regularity of the existing Sobolev theory, but to overcome the derivative loss generated by the drag coupling.

In this paper, we study the Cauchy problems for the NS--VFP and
Euler--VFP systems in their respective critical Besov spaces, with emphasis on the quantitative effects of the drag force
and Fokker--Planck dissipation on the regularity and
large-time behavior of solutions. For any dimension $d\ge2$, we prove global well-posedness of
strong solutions to the NS--VFP system for small initial
perturbations satisfying
$\varrho_0\in\dot B^{d/2-1}_{2,1}\cap\dot B^{d/2}_{2,1}$,
$u_0\in\dot B^{d/2-1}_{2,1}$ and
$f_0\in\dot B^{d/2-1}_{2,1}L^2_v$.
When $\mu=\lambda=\varepsilon\in(0,1]$ and the initial data
additionally have spatial regularity $\dot B^{d/2+1}_{2,1}$,
we obtain a priori estimates uniform in $\varepsilon$.
These estimates establish global well-posedness for the Euler--VFP
system and justify the vanishing-viscosity limit with the
convergence rate $\mathcal O(\varepsilon)$, uniformly for all time.
Under an additional uniform bound on the initial data in
lower-order $\dot{B}^{\sigma_1}_{2,\infty}$ Besov spaces with $-d/2\le\sigma_1<d/2-1$,
we establish optimal time-decay estimates for
$\sigma_1<\sigma\le d/2+1$, uniformly in
$\varepsilon\in[0,1]$.
Moreover, the relative velocity and the microscopic component exhibit
enhanced decay for $\sigma_1<\sigma\le d/2$, owing to the drag force and 
Fokker--Planck dissipation.

The paper is organized as follows. In Section \ref{s1}, we state the results in this paper. Section \ref{s2} concerns the global a priori estimates for the NS--VFP system and the proof of Theorem \ref{t1}. Section
\ref{s6} establishes uniform higher-order estimates and proves Theorems \ref{t2} and \ref{t3}. Sections
\ref{s9} and \ref{s10} are devoted to the proofs of Theorems \ref{t4} and \ref{t5} regarding the global
vanishing-viscosity and uniform decay estimates, respectively. The
appendices collect some analytic tools and the local well-posedness theory for large data in critical spaces.

\section{Main results}\label{s1}

We investigate the global dynamics of solutions to the Cauchy problem
\eqref{m1} for the NS--VFP system near the equilibrium $(1,0,M)$, where
\[
M(v):=(2\pi)^{-d/2}e^{-|v|^2/2}
\]
denotes the global Maxwellian. We define the perturbations
\[
\varrho:=\rho-1\quad\text{and}\quad \varrho_0:=\rho_0-1,
\quad
f:=\frac{F-M}{M^{1/2}}\quad\text{and}\quad  f_0:=\frac{F_0-M}{M^{1/2}}.
\]
Let $a$ and $b$ be the particle density perturbation
and momentum, respectively, defined by
\begin{equation*}
a:=\int_{\mathbb R^d_v}M^{1/2}f\,dv 
\quad\text{and}\quad 
b:=\int_{\mathbb R^d_v}vM^{1/2}f\,dv.
\end{equation*}
The Fokker--Planck operator associated with $M$ is
\begin{equation*}
\mathcal Lf
:=M^{-1/2}\nabla_v\cdot
\Big(M\nabla_v(M^{-1/2}f)\Big)
=\Delta_vf+\frac14(2d-|v|^2)f,
\end{equation*}
which is self-adjoint and non-positive in $L^2(\mathbb R^d_v)$, with $\ker\mathcal L=\operatorname{span}\{M^{1/2}\}$.  We follow \cite{duan3,guo2,liu1} and introduce the
$L^2(\mathbb R^d_v)$-orthogonal projection $\mathbf P:=\mathbf P_0+\mathbf P_1$
onto
\[
\operatorname{span}
\{M^{1/2},v_iM^{1/2}:1\le i\le d\},
\]
where $\mathbf P_0$ and $\mathbf P_1$ are, respectively, denoted by
\begin{equation*}
\mathbf P_0f:=aM^{1/2}
\quad\text{and}\quad 
\mathbf P_1f:=b\cdot vM^{1/2}.
\end{equation*}
Then $f$ admits the macro--micro decomposition
\begin{align}\label{decom}
f=\mathbf Pf+\{\mathbf I-\mathbf P\}f.
\end{align}
For a function $z=z(v)$, we define the velocity dissipation norm by
\begin{equation}\label{L12}
\|z\|_{L^2_{v,\nu}}
:=
\Big(\|\nabla_vz\|_{L^2_v}^2
+\|(1+|v|)z\|_{L^2_v}^2\Big)^{1/2}.
\end{equation}
Note that the Fokker--Planck operator satisfies the following
coercivity estimate (cf. e.g.,
\cite{carrillo4}):
\begin{align}
-\langle\mathcal Lf,f\rangle_{L^2_v}
&=
|b|^2-
\bigl\langle
\mathcal L\{\mathbf I-\mathbf P\}f,
\{\mathbf I-\mathbf P\}f
\bigr\rangle_{L^2_v}\ge
|b|^2+\lambda_0
\bigl\|
\{\mathbf I-\mathbf P\}f\bigr\|_{L^2_{v,\nu}}^2,
\label{e3}
\end{align}
where $\lambda_0>0$ depends only on $d$.
As $P'(1)>0$, we normalize $P'(1)=1$, and the general case can be addressed similarly. Noting that
\[
\frac12u\cdot v\,\mathbf Pf-u\cdot\nabla_v\mathbf Pf
=a\,u\cdot v\,M^{1/2}
+(v\otimes v-\mathrm{Id}):(u\otimes b)M^{1/2},
\]
we rewrite the Cauchy problem \eqref{m1}
for the NS--VFP system as
\begin{equation}\label{m1n}
\left\{
\begin{aligned}
&\partial_t\varrho
+u\cdot\nabla_x\varrho
+\dive_xu
=-\varrho\dive_xu,\\
&\partial_tu+u\cdot\nabla_xu
+\nabla_x\varrho-\mu\Delta_xu
-(\mu+\lambda)\nabla_x\dive_xu
+u-b
=-au+g(\varrho,u,f),\\
&\partial_tf+v\cdot\nabla_xf
-\mathcal Lf-u\cdot vM^{1/2}\\
&\quad\quad \quad\quad  =a\,u\cdot v\,M^{1/2}
+(v\otimes v-\mathrm{Id}):(u\otimes b)M^{1/2}+h(\varrho,u,f),\\
&\big(\varrho(0,x),u(0,x),f(0,x,v)\big)
=\big(\varrho_0(x),u_0(x),f_0(x,v)\big),
\end{aligned}
\right.
\end{equation}
with
\begin{equation}\label{e4}
\left\{
\begin{aligned}
g(\varrho,u,f):={}&-\Big(
\frac{P'(1+\varrho)}{1+\varrho}-1
\Big)\nabla_x\varrho
-\frac{\varrho}{1+\varrho}
\Big(\mu\Delta_xu
+(\mu+\lambda)\nabla_x\dive_xu\Big)\\
&+\frac{\varrho}{1+\varrho}
\bigl(u-b+au\bigr),\\
h(\varrho,u,f):={}&\frac12u\cdot v
\{\mathbf I-\mathbf P\}f
-u\cdot\nabla_v
\{\mathbf I-\mathbf P\}f.
\end{aligned}
\right.
\end{equation}

Formally, as $\mu=\lambda=\varepsilon\to0$, the solution
$(\varrho^\varepsilon,u^\varepsilon,f^\varepsilon)$ to the Cauchy
problem \eqref{m1n} for the NS--VFP system is expected to converge to a limit
$(\varrho,u,f)$ solving the following Cauchy problem for the
Euler--VFP system:
\begin{equation}\label{m1n0}
\left\{
\begin{aligned}
&\partial_t\varrho
 +u\cdot\nabla_x\varrho+\dive_xu
 =-\varrho\dive_xu,\\
&\partial_tu+u\cdot\nabla_xu
 +\nabla_x\varrho+u-b
 =-au+g^0(\varrho,u,f),\\
&\partial_tf+v\cdot\nabla_xf
 -\mathcal Lf-u\cdot vM^{1/2}
 =\frac12u\cdot v\mathbf Pf
 -u\cdot\nabla_v\mathbf Pf+h(\varrho,u,f),\\
&\big(\varrho(0,x),u(0,x),f(0,x,v)\big)
 =\big(\varrho_0(x),u_0(x),f_0(x,v)\big),
\end{aligned}
\right.
\end{equation}
with
\[
g^0(\varrho,u,f):=-\Big(
\frac{P'(1+\varrho)}{1+\varrho}-1
\Big)\nabla_x\varrho
+\frac{\varrho}{1+\varrho}
\bigl(u-b+au\bigr).
\]

First, we prove global well-posedness near equilibrium for the Cauchy problem \eqref{m1n} in the critical space. We take the fluid data $\varrho_0\in\dot B^{d/2-1}_{2,1}\cap\dot B^{d/2}_{2,1}$ and $u_0\in\dot B^{d/2-1}_{2,1}$ according to the scaling of the compressible Navier--Stokes equations \cite{danchin1}. The drag coupling leads us to measure $f_0$ at the same spatial regularity as $u_0$, while we impose only $L^2(\mathbb R^d_v)$ regularity in the particle velocity variable, i.e., $f_0\in\dot{\mathcal B}^{d/2-1}_{2,1}=\dot B^{d/2-1}_{2,1}L^2_v$. For the definitions and properties of the Besov spaces and the low ($^\ell$)--high ($^h$) frequency decomposition, please refer to Appendix~A.

We define the initial energy norm
\begin{equation}\label{a2}
\begin{aligned}
\|(\varrho_0,u_0,f_0)\|_{\mathcal E_0}:={}&
 \|\varrho_0\|_{\dot B^{d/2-1}_{2,1}
 \cap\dot B^{d/2}_{2,1}}
 +\|u_0\|_{\dot B^{d/2-1}_{2,1}}
 +\|f_0\|_{\dot{\mathcal B}^{d/2-1}_{2,1}},
\end{aligned}
\end{equation}
and the solution norm
\begin{align}
\|(\varrho,u,f)\|_{\mathcal E_t}:={}&
 \|\varrho\|_{L^\infty_t(\dot B^{d/2-1}_{2,1}\cap\dot B^{d/2}_{2,1})}
 +\|u\|_{L^\infty_t(\dot B^{d/2-1}_{2,1})}
 +\|f\|_{L^\infty_t(\dot{\mathcal B}^{d/2-1}_{2,1})}\nonumber\\
&+\|\varrho^\ell\|_{L^1_t(\dot B^{d/2+1}_{2,1})}
 +\|\varrho^h\|_{L^1_t(\dot B^{d/2}_{2,1})}
 +\|u\|_{L^1_t(\dot B^{d/2+1}_{2,1})}\nonumber\\
 &+\|f^\ell\|_{L^1_t(\dot{\mathcal B}^{d/2+1}_{2,1})}+\|f^h\|_{L^1_t(\dot{\mathcal B}^{d/2-1}_{2,1})}
 \nonumber\\
&
 +\|u-b\|_{L^2_t(\dot B^{d/2-1}_{2,1})}
 +\bigl\|\{\mathbf I-\mathbf P\}f\bigr\|_{L^2_t(\dot{\mathcal B}^{d/2-1}_{2,1,\nu})}.
\label{Xt}
\end{align}

\begin{theorem}\label{t1}
For any $d\ge2$, assume that the initial datum
$(\varrho_0,u_0,f_0)$ satisfies $\rho_0=1+\varrho_0>0$,
$F_0=M+M^{1/2}f_0\ge0$, $ \varrho_0\in\dot B^{d/2-1}_{2,1}
  \cap\dot B^{d/2}_{2,1},\quad  u_0\in\dot B^{d/2-1}_{2,1}$ and $f_0\in\dot{\mathcal B}^{d/2-1}_{2,1}$. There exists a constant $\delta_0>0$, depending only on $d$ and the
fixed coefficients of the system, such that if
\begin{equation*}
 \|(\varrho_0,u_0,f_0)\|_{\mathcal E_0}\le\delta_0,
\end{equation*}
then the Cauchy problem \eqref{m1n} for the NS--VFP system admits a
unique global strong solution $(\varrho,u,f)$ satisfying
\begin{equation}\label{e6}
 \sup_{t\geq0}\|(\varrho,u,f)\|_{\mathcal E_t}
 \le C_0\|(\varrho_0,u_0,f_0)\|_{\mathcal E_0},
\end{equation}
where $C_0>0$ is a constant. Moreover, 
$\rho=1+\varrho>0$ and $F=M+M^{1/2}f\ge0$ hold, and $\|(\varrho,u,f)\|_{\mathcal E_t}$ is continuous with respect to $t\in\mathbb R_+$.
\end{theorem}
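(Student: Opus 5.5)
The proof combines local well-posedness for large data in critical spaces (Appendix) with a global a priori estimate of the form
\[
\|(\varrho,u,f)\|_{\mathcal E_t}\le C\|(\varrho_0,u_0,f_0)\|_{\mathcal E_0}+C\|(\varrho,u,f)\|_{\mathcal E_t}^2\bigl(1+\|(\varrho,u,f)\|_{\mathcal E_t}\bigr)^{k}
\]
for some fixed $k$, valid as long as $\|\varrho\|_{L^\infty_t(\dot B^{d/2}_{2,1})}\le \tfrac12$ so that the composition estimates for $\frac{P'(1+\varrho)}{1+\varrho}-1$ and $\frac{\varrho}{1+\varrho}$ apply. Once this holds, a bootstrap on the maximal existence time closes for $\delta_0$ small and gives \eqref{e6}. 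Continuity of $\|\cdot\|_{\mathcal E_t}$ comes from the local theory, and positivity of $\rho$ follows from the $L^\infty$ smallness given by $\dot B^{d/2}_{2,1}\hookrightarrow L^\infty$. Nonnegativity of $F$ follows from the maximum principle for the linear Fokker--Planck equation with smooth drift $u$. Uniqueness is obtained by a difference estimate in the weaker space at regularity $d/2-1$ (with $\varrho$ at $d/2-1$), using the $L^1_t$ Lipschitz bound on $u$.

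The linear estimates are done blockwise after applying $\dot\Delta_j$. In \emph{low frequencies} ($j\le J_0$), the energy functional is $\|(\varrho_j,u_j)\|^2+\|f_j\|_{L^2_{x,v}}^2$ plus small hypocoercive correction terms. These are: $\eta\langle u_j,\nabla\varrho_j\rangle$, giving $\varrho$ the dissipation $2^{2j}\|\varrho_j\|^2$; macroscopic corrections for $(a,b)$ in the style of Duan / Carrillo--Duan--Moussa, built from the fluid-type system satisfied by $(a,b)$ with flux $\{\mathbf I-\mathbf P\}f$; and $\langle \{\mathbf I-\mathbf P\}f,\cdot\rangle$ moments. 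The basic energy identity yields the damping terms $\|u_j-b_j\|^2+\lambda_0\|\{\mathbf I-\mathbf P\}f_j\|_{L^2_{v,\nu}}^2$ from \eqref{e3} and the drag cancellation, together with viscous dissipation $2^{2j}\|u_j\|^2$. The corrections transfer $2^{2j}$-weighted dissipation to $a,b,\varrho$. This yields $L^1_t(\dot B^{d/2+1})$ control of $\varrho^\ell,u^\ell,f^\ell$ and the $L^2_t$ damping norms in \eqref{Xt}.

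In \emph{high frequencies}, the compressible Navier--Stokes part is handled by the effective velocity / Haspot-type functional $\|\nabla\varrho_j\|^2+\|u_j\|^2+\eta\langle u_j,\nabla\varrho_j\rangle$, which gives parabolic smoothing for $u$ and damping for $\varrho$ at $d/2$ regularity. The equation for $f$ is handled by the coercivity of $-\mathcal L$ together with a mixed term $\langle\nabla_x f_j,\nabla_v f_j\rangle$ (Hérau/Villani hypocoercivity) at the level $2^{-2j}$. This yields $f^h\in L^1_t(\dot{\mathcal B}^{d/2-1})$ damping. The linear source $u\cdot vM^{1/2}$ is absorbed through the same drag cancellation, and the $u\in L^1_t(\dot B^{d/2+1})$ bound at high frequencies comes from maximal regularity.

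The main obstacle is the nonlinear terms without derivatives coming from the drag: $au$, $(v\otimes v-\mathrm{Id}):(u\otimes b)M^{1/2}$, $h$, and $\frac{\varrho}{1+\varrho}(u-b)$. At critical regularity, the product law $\dot B^{d/2-1}\times\dot B^{d/2}\to\dot B^{d/2-1}$ requires one factor at the $\dot B^{d/2}$ level in $L^2_t$ or $L^1_t$, which is not directly available for $a$ or $b$.

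I would first decompose $u=(u-b)+b$ and $a u$ and use the $L^2_t(\dot B^{d/2-1})$ damping of $u-b$ and $\{\mathbf I-\mathbf P\}f$. I would pair these with $L^2_t(\dot B^{d/2})$ bounds on $u,a,b$, obtained by interpolating $L^\infty_t\dot B^{d/2-1}$ with $L^1_t\dot B^{d/2+1}$ at low frequencies, and with $f^h\in L^1_t\cap L^\infty_t$ at high frequencies (where $2^{j}$ factors are harmless for a fixed threshold). The term $h$ requires a $v$-weight and a $\nabla_v$ of $\{\mathbf I-\mathbf P\}f$, which are exactly controlled by the $L^2_{v,\nu}$ dissipation norm. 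In the energy method, the $\nabla_v$ is moved onto the test function by integration by parts so that no velocity regularity is lost.

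The remaining difficulty is the purely macroscopic product $a\,b$-type terms, where neither factor is damped. If the plain product estimate fails, I would exploit the structure $u\cdot vM^{1/2}$ paired with $b$, using the cancellation $u-b$ again, and estimate the low-frequency part in $L^1_t(\dot B^{d/2-1})$ via $\|a\|_{L^2_t\dot B^{d/2}}\|u\|_{L^2_t\dot B^{d/2-1}}$-type bounds combined with $\dot B^{d/2}$–$\dot B^{d/2-1}$ paraproduct rules valid for $d\ge2$.
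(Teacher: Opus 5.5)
Your overall architecture (local theory, an a priori bound in $\mathcal E_t$, continuation, uniqueness at regularity $d/2-1$, positivity) matches the paper. However, the low-frequency a priori estimate does not close as you propose.

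\textbf{The low-frequency obstruction.} The problem is the one you flag last. After writing $au=a(u-b)+ab$ and $u\otimes b=(u-b)\otimes b+b\otimes b$, two terms carry neither a spatial derivative nor a damped factor: $ab$ (in the $u$- and $b$-equations) and $(v\otimes v-\mathrm{Id}):(b\otimes b)M^{1/2}$ (in \eqref{e23}). At low frequencies $a$, $b$ and $u$ are only heat-like. The best available bound is therefore $L^2_t(\dot B^{d/2}_{2,1})\times L^2_t(\dot B^{d/2}_{2,1})\to L^1_t(\dot B^{d/2}_{2,1})$. This is one derivative short of the required $L^1_t(\dot B^{d/2-1}_{2,1})$ at low frequencies, where $\|z\|^\ell_{\dot B^{d/2}_{2,1}}\lesssim\|z\|^\ell_{\dot B^{d/2-1}_{2,1}}$ and not conversely.

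Your fallback $\|a\|_{L^2_t(\dot B^{d/2}_{2,1})}\|u\|_{L^2_t(\dot B^{d/2-1}_{2,1})}$ uses a norm that is neither in $\mathcal E_t$ nor obtainable. Indeed, $u^\ell$ is coupled to the conserved total momentum $(1+\varrho)u+b$ and decays only like the heat flow. So it lies in $\widetilde L^2_t(\dot B^{d/2}_{2,1})$ but not in $L^2_t(\dot B^{d/2-1}_{2,1})$.

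The cancellation you allude to does not rescue this. In the basic energy identity the two $au$ contributions do combine into a pairing with $\dot\Delta_j(u-b)$, but this has two limitations.
\begin{itemize}
\item It only helps after Young's inequality, which leaves $\|\dot\Delta_j(au)\|_{L^2_x}^2$ on the right. That is incompatible with the $L^1_t$ bounds obtained by dividing by $\sqrt{\mathcal L_{\ell,j}}$.
\item It is absent from the cross terms $(\dot\Delta_j b,\nabla_x\dot\Delta_j a)$ and $(\dot\Delta_j u,\nabla_x\dot\Delta_j\varrho)$. These are exactly the terms that produce the $2^{2j}$-dissipation, hence the $L^1_t(\dot B^{d/2+1}_{2,1})$ components of $\mathcal E_t$.
\end{itemize}

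\textbf{The missing idea.} You need a nonlinear change of unknowns that removes these terms.
\begin{itemize}
\item Set $V=b/(1+a)$. Then $u-b+au=(1+a)(u-V)$, so $au$ disappears from the $V$-equation. The fluid equation then only sees $a(u-V)$, which carries the damped factor $u-V$.
\item Set $G=\{\mathbf I-\mathbf P\}f-\frac12(v\otimes v-\mathrm{Id}):(b\otimes b)M^{1/2}$. This is a normal-form correction: $\mathcal L$ acts as multiplication by $-2$ on $(v\otimes v-\mathrm{Id})M^{1/2}$, and one substitutes the $b$-equation into $\partial_t(b\otimes b)$. The microscopic source $(v\otimes v-\mathrm{Id}):(u\otimes b)M^{1/2}$ then becomes $(v\otimes v-\mathrm{Id}):((u-b)\otimes b)M^{1/2}$ plus differentiated or cubic terms.
\item Since $a\in\dot B^{d/2-1}_{2,1}$ gives no $L^\infty$ bound, this must be done on the truncated variables $\widetilde V=b^\ell/(1+a^\ell)$ and $\widetilde G$. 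The truncation defects $\mathcal C_\ell(a,u)$ and $\mathcal D_\ell(u,b)$ are controlled by high-frequency norms. Afterwards you recover $b^\ell$, $(u-b)^\ell$ and $\{\mathbf I-\mathbf P\}f^\ell$.
\item The high-frequency analysis stays in the original variables.
\end{itemize}
A linear damped-mode argument would be an alternative: show that sources in the $u-b$ and microscopic directions only need to lie in $L^1_t(\dot B^{d/2}_{2,1})$ at low frequencies. Some such device is indispensable.

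\textbf{A secondary point.} Your high-frequency H\'erau term $2^{-2j}\langle\nabla_x\dot\Delta_jf,\nabla_v\dot\Delta_jf\rangle$ is not controlled by $\|\dot\Delta_jf\|_{L^2_{x,v}}^2$. It would require $\nabla_vf_0\in L^2$, which is not assumed. The macroscopic corrector $2^{-2j}(\nabla_x\dot\Delta_ja,\dot\Delta_jb)$ suffices instead. This is because $b$ is already controlled by $\|\dot\Delta_ju\|_{L^2_x}^2+\|\dot\Delta_j(u-b)\|_{L^2_x}^2$, and $\{\mathbf I-\mathbf P\}f$ by \eqref{e3}.
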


\begin{remark}\label{r1}\normalfont
The estimate \eqref{e6} in Theorem
\ref{t1} reflects the frequency-dependent dissipation of the
system. Indeed, at low frequencies, $\varrho$, $u$ and $f$ satisfy heat-type
estimates. At high frequencies, $\varrho$ and $f$ exhibit
damping due to a spectral gap, while $u$ retains heat-type dissipation
because of the viscosity. Moreover, we obtain the additional
lower-order regularities
\[
u-b\in L^2(\mathbb R_+;\dot B^{d/2-1}_{2,1})
\quad\text{and}\quad
\{\mathbf I-\mathbf P\}f
\in L^2(\mathbb R_+;\dot{\mathcal B}^{d/2-1}_{2,1,\nu}),
\]
which quantify the dissipative effects of the drag force and the
Fokker--Planck operator, respectively. These regularities play a crucial
role in analyzing the nonlinear terms.
\end{remark}

Next, we study the Cauchy problem \eqref{m1n} for the
NS--VFP system with
$\mu=\lambda=\varepsilon$, focusing on higher-order estimates
uniform in $0<\varepsilon\le1$ and the vanishing-viscosity limit
as $\varepsilon\to0$. The upper regularity index $d/2+1$
is critical for the compressible Euler equations in the sense that
$\dot B^{d/2+1}_{2,1}$ is continuously embedded into the
Lipschitz space \cite{crin1,xu1}. To this end, we define 
\begin{equation}\label{Z0}
\begin{aligned}
\|(\varrho_0,u_0,f_0)\|_{\mathcal Z_0}:={}&
\|(\varrho_0,u_0)\|_{\dot B^{d/2-1}_{2,1}
\cap\dot B^{d/2+1}_{2,1}}
+\|f_0\|_{\dot{\mathcal B}^{d/2-1}_{2,1}
\cap\dot{\mathcal B}^{d/2+1}_{2,1}},
\end{aligned}
\end{equation}
and
\begin{equation}\label{Zt}
\begin{aligned}
\|(\varrho,u,f)\|_{\mathcal Z_t}:={}&
\|(\varrho,u)\|_{L^\infty_t(
\dot B^{d/2-1}_{2,1}
\cap\dot B^{d/2+1}_{2,1})}
+\|f\|_{L^\infty_t(
\dot{\mathcal B}^{d/2-1}_{2,1}
\cap\dot{\mathcal B}^{d/2+1}_{2,1})}\\
&+\|(\varrho,u)\|_{L^1_t(\dot B^{d/2+1}_{2,1})}
+\|f\|_{L^1_t(\dot{\mathcal B}^{d/2+1}_{2,1})}\\
&+\|u-b\|_{L^2_t(
\dot B^{d/2-1}_{2,1}\cap\dot B^{d/2+1}_{2,1})}
+\bigl\|\{\mathbf I-\mathbf P\}f\bigr\|_
{L^2_t(\dot{\mathcal B}^{d/2-1}_{2,1,\nu}
\cap\dot{\mathcal B}^{d/2+1}_{2,1,\nu})}.
\end{aligned}
\end{equation}

\begin{theorem}\label{t2}
Let $d\ge2$ and $0<\varepsilon\le1$. Suppose that
$(\varrho_0^\varepsilon,u_0^\varepsilon,f_0^\varepsilon)$ satisfies
$\rho_0^\varepsilon=1+\varrho_0^\varepsilon>0$,
$F_0^\varepsilon=M+M^{1/2}f_0^\varepsilon\ge0$,
\[
 \varrho_0^\varepsilon,u_0^\varepsilon
 \in\dot B^{d/2-1}_{2,1}\cap
       \dot B^{d/2+1}_{2,1}\quad\text{and}\quad 
 f_0^\varepsilon
 \in\dot{\mathcal B}^{d/2-1}_{2,1}\cap
       \dot{\mathcal B}^{d/2+1}_{2,1}.
\]
There exists a constant $\delta_1>0$, independent of $\varepsilon$,
such that if 
\begin{equation}\label{e7}
 \|(\varrho_0^\varepsilon,u_0^\varepsilon,f_0^\varepsilon)
 \|_{\mathcal Z_0}\le\delta_1,
\end{equation}
then the Cauchy problem \eqref{m1n} for the NS--VFP system
admits a unique global strong
solution $(\varrho^\varepsilon,u^\varepsilon,f^\varepsilon)$ satisfying
\begin{equation}\label{e8}
\sup_{t\geq 0}\big( \|(\varrho^\varepsilon,u^\varepsilon,f^\varepsilon)
 \|_{\mathcal Z_t}+\varepsilon\|u^\varepsilon\|_{L^1_t(
\dot B^{d/2+1}_{2,1}\cap\dot B^{d/2+2}_{2,1})} \big)
 \le C_1
 \|(\varrho_0^\varepsilon,u_0^\varepsilon,f_0^\varepsilon)
 \|_{\mathcal Z_0},
\end{equation}
where the constant $C_1>0$ is independent of $\varepsilon$. Moreover,
$\rho^\varepsilon=1+\varrho^\varepsilon>0$ and
$F^\varepsilon=M+M^{1/2}f^\varepsilon\ge0$, while
$\|(\varrho^\varepsilon,u^\varepsilon,f^\varepsilon)\|_{\mathcal Z_t}$
is continuous with respect to $t\in\mathbb R_+$.
\end{theorem}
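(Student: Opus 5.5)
The proof combines three ingredients: local existence, a global a priori estimate of the $\mathcal Z_t$-norm that is uniform in $\varepsilon$, and a continuity (bootstrap) argument. Since $\mathcal Z_0$ controls $\mathcal E_0$, and $\mu=\lambda=\varepsilon>0$ is fixed for each $\varepsilon$, Theorem \ref{t1} (or the local theory of the appendix) already provides a unique global strong solution at $\mathcal E$-level. The task is therefore propagation of the $\dot B^{d/2+1}_{2,1}$ regularity with bounds independent of $\varepsilon$. Uniqueness and the sign conditions $\rho^\varepsilon>0$, $F^\varepsilon\ge0$ are inherited from Theorem \ref{t1}. Continuity of $\|(\varrho^\varepsilon,u^\varepsilon,f^\varepsilon)\|_{\mathcal Z_t}$ in $t$ follows from the local theory in the higher-regularity class. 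We assume $\sup_{t\le T}\|(\varrho,u,f)\|_{\mathcal Z_t}\le 2C_1\delta_1$ and show that $C_1\delta_1$ can be recovered.

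\textbf{Linear estimates, uniform in $\varepsilon$.} Since the Euler part has no viscous smoothing, we localise \eqref{m1n} with $\dot\Delta_j$ and build a hypocoercive Lyapunov functional at each frequency $j$. The functional is
\[
\mathcal L_j=\|\dot\Delta_j(\varrho,u,f)\|_{L^2}^2+\eta_1\langle \dot\Delta_j u,\nabla\dot\Delta_j\varrho\rangle+\eta_2\,(\text{macro--micro cross terms pairing }\nabla_x a,\ \nabla_x b\text{ with moments of }\{\mathbf I-\mathbf P\}\dot\Delta_j f),
\]
with weights $\min(1,2^{-2j})$ in the cross terms, as in the Kawashima/Guo-type approach. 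The basic energy identity gives dissipation of $\|\dot\Delta_j(u-b)\|^2$, via the drag term $u-b$ together with $|b|^2$ in \eqref{e3}, and of $\lambda_0\|\{\mathbf I-\mathbf P\}\dot\Delta_jf\|_{L^2_{v,\nu}}^2$. The cross terms recover dissipation of $\nabla\varrho$, $\nabla u$, $\nabla a$, $\nabla b$: heat-like at low frequencies and damping-like at high frequencies. The viscous term is only used with its sign, plus the extra gain $\varepsilon 2^{2j}\|\dot\Delta_j u\|^2$. Because none of this uses viscosity for damping, every constant is $\varepsilon$-independent, and $\mathcal L_j$ is equivalent to $\|\dot\Delta_j(\varrho,u,f)\|^2_{L^2}$. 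Summing $2^{js}$-weighted square roots over $j$ for $s=d/2-1$ and $s=d/2+1$ gives the structure of \eqref{Zt}, including $\varepsilon\|u\|_{L^1_t(\dot B^{d/2+1}\cap\dot B^{d/2+2})}$. At level $d/2+1$ the high frequencies of $(\varrho,u,f)$ all decay at rate $2^{j(d/2+1)}$ in $L^1_t$. This is exactly what is available in \eqref{Zt}.

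\textbf{Nonlinear terms, the main obstacle.} The transport terms $u\cdot\nabla\varrho$, $u\cdot\nabla u$ and $v\cdot\nabla_x f$ are treated with commutator estimates $[\dot\Delta_j,u\cdot\nabla]$, bounded by $\|\nabla u\|_{L^\infty}\lesssim\|u\|_{\dot B^{d/2+1}_{2,1}}$, which lies in $L^1_t$. At top order the loss of derivative in $\varrho\,\dive u$ is handled by symmetrisation with the pressure term: the cross term $\langle \dot\Delta_j\nabla\varrho,\dot\Delta_j u\rangle$ and the coefficient $P'(1+\varrho)/(1+\varrho)$ in $g$ are rewritten as a symmetric hyperbolic system with weight. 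The term $\frac{\varrho}{1+\varrho}\varepsilon\Delta u$ is bounded by $\|\varrho\|_{L^\infty_t(\dot B^{d/2})}\,\varepsilon\|u\|_{L^1_t(\dot B^{d/2+2})}$ and absorbed using smallness. The genuinely delicate terms are the drag quadratics $au$, $\varrho(u-b)$, $(v\otimes v-\mathrm{Id}):(u\otimes b)$ and $h$, which carry no derivative. We write $u=(u-b)+b$, and at low regularity $d/2-1$ we estimate products such as $\|au\|_{L^1_t(\dot B^{d/2-1})}$ by $\|a\|_{L^2_t(\dot B^{d/2})}\|u\|_{L^2_t(\dot B^{d/2-1})}$-type interpolations. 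Here the $L^2_t$ dissipation of $u-b$ and of $\{\mathbf I-\mathbf P\}f$ supplies the time integrability that a heat-type bound alone lacks. The velocity growth in $h$, namely $u\cdot v\{\mathbf I-\mathbf P\}f$ and $u\cdot\nabla_v\{\mathbf I-\mathbf P\}f$, is paired against $\{\mathbf I-\mathbf P\}f$. This produces $\|u\|_{L^\infty}\|\{\mathbf I-\mathbf P\}f\|_{L^2_t(\dot{\mathcal B}^{s}_{2,1,\nu})}^2$, which is absorbed by the $\nu$-norm dissipation. I expect closing the low-frequency $d/2-1$ estimate of these zero-derivative products without using viscosity to be the hardest step. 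My first attempt would be paraproduct decompositions, placing the low-frequency factor in $L^\infty_t(\dot B^{d/2-1})$ and the other factor in $L^1_t(\dot B^{d/2+1})$ or in the $L^2_t$ dissipation norms.

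\textbf{Closing the argument.} We obtain $\|(\varrho,u,f)\|_{\mathcal Z_t}\le C(\|\cdot\|_{\mathcal Z_0}+\|\cdot\|_{\mathcal Z_t}^2)$ with $C$ independent of $\varepsilon$. Choosing $\delta_1$ small, the bootstrap closes, which yields \eqref{e8} for all $t$.
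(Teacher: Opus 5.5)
\textbf{There is a genuine gap, exactly where you expect it: the low-frequency estimate of the zero-order drag quadratics.} Writing $u=(u-b)+b$ handles $a(u-b)$ and $(u-b)\otimes b$ through the $L^2_t$ relaxation norm. It leaves untouched $ab$, coming from $au$ (which appears in $\eqref{m1n}_2$ and in the $b$-equation $\eqref{e22}_2$), and $b\otimes b$, coming from $(v\otimes v-\mathrm{Id}):(u\otimes b)M^{1/2}$ in \eqref{e23}.

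At low frequencies $a$, $b$, $u$ are only heat-like. $\mathcal Z_t$ gives $L^\infty_t(\dot B^{d/2-1}_{2,1})\cap L^1_t(\dot B^{d/2+1}_{2,1})$, hence $L^2_t(\dot B^{d/2}_{2,1})$, but not $L^2_t(\dot B^{d/2-1}_{2,1})$. So the bound $\|a\|_{L^2_t(\dot B^{d/2}_{2,1})}\|u\|_{L^2_t(\dot B^{d/2-1}_{2,1})}$ you propose is not available.

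Your paraproduct fallback fails for the same reason. To land in $\dot B^{d/2-1}_{2,1}$ with one factor in $L^\infty_t(\dot B^{d/2-1}_{2,1})$, the other factor must lie in $L^1_t(\dot B^{d/2}_{2,1})$. At low frequencies that is strictly stronger than the available $L^1_t(\dot B^{d/2+1}_{2,1})$ bound.

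This is a scaling obstruction, not a technicality. Take heat-like divergence-free $b$ with data concentrated at frequency $2^k$. Then $\|b\otimes b\|_{L^1_t(\dot B^{d/2-1}_{2,1})}\sim 2^{-k}\|b_0\|^2_{\dot B^{d/2-1}_{2,1}}$, which is unbounded as $k\to-\infty$. Hence no bound of these terms as $L^1_t(\dot B^{d/2-1}_{2,1})$ sources by $\|\cdot\|_{\mathcal Z_t}^2$ can hold. Viscosity plays no role here; the same obstruction already arises in Theorem \ref{t1}.

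\textbf{The missing idea is to remove these terms by a change of unknowns, not to estimate them.}
\begin{itemize}
\item With $V=b/(1+a)$ one has $\int(u-v)F\,dv=(1+a)(u-V)$. In the fluid equation, the drag and $-au$ therefore combine into $-(u-V)-a(u-V)$. The $V$-equation \eqref{e27} contains no $au$ at all, only terms with a spatial derivative.
\item Use the normal-form correction $G=\{\mathbf I-\mathbf P\}f-\frac12(v\otimes v-\mathrm{Id}):(b\otimes b)M^{1/2}$. The identity $\mathcal L\bigl(\frac12(v\otimes v-\mathrm{Id}):(b\otimes b)M^{1/2}\bigr)=-(v\otimes v-\mathrm{Id}):(b\otimes b)M^{1/2}$, together with the $b$-equation, cancels the $u\otimes b$ source. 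What remains is the cubic term $a(v\otimes v-\mathrm{Id}):(u\otimes b)M^{1/2}$ and terms with a derivative; see \eqref{e29}.
\item The low-frequency Lyapunov functional is built on $(\varrho,u,a,V,G)$, and $b$, $\{\mathbf I-\mathbf P\}f$ are recovered afterwards. Only the high frequencies, where damping makes $au$ and $u\otimes b$ harmless, are treated in the original variables.
\end{itemize}
Your macro--micro cross term is the right linear mechanism, but it must be run on the corrected variables. The pairing $(\Theta(\dot\Delta_jG),\mathbb D_x\dot\Delta_jV)$, via \eqref{e76}, yields $\|\nabla_x\dot\Delta_jV\|_{L^2_x}^2$. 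Combined with the relaxation of $u-V$, this gives the $\varepsilon$-independent heat-type dissipation of $u$.

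\textbf{Two smaller points.}
\begin{itemize}
\item Theorem \ref{t1} cannot supply the global solution, because its $\delta_0$ depends on $\mu=\lambda=\varepsilon$. Continuation must come from the local theory plus your uniform bootstrap.
\item At the $\dot B^{d/2+1}_{2,1}$ level, $\frac{\varepsilon\varrho}{1+\varrho}\Delta_xu$ cannot be treated as a source, since that would need $\varepsilon u\in L^1_t(\dot B^{d/2+3}_{2,1})$. Instead keep $\frac{\varepsilon}{1+\varrho}(\Delta_xu+2\nabla_x\dive_xu)$ on the left, use the $(1+\varrho)$-weighted energy, and estimate only the commutator, as in Lemma \ref{l7}.
\end{itemize}
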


The uniform estimates in Theorem \ref{t2} establish global
well-posedness of the limiting Euler--VFP system in the critical space.

\begin{theorem}\label{t3}
Let $d\ge2$. Suppose that $(\varrho_0,u_0,f_0)$ satisfies
$\rho_0=1+\varrho_0>0$, $F_0=M+M^{1/2}f_0\ge0$, $\varrho_0,u_0
 \in\dot B^{d/2-1}_{2,1}\cap
       \dot B^{d/2+1}_{2,1}$ and $f_0
 \in\dot{\mathcal B}^{d/2-1}_{2,1}\cap
       \dot{\mathcal B}^{d/2+1}_{2,1}$. There exists a constant $\delta_2>0$ such that if
\begin{equation}\label{e9}
 \|(\varrho_0,u_0,f_0)\|_{\mathcal Z_0}\le\delta_2,
\end{equation}
then the Cauchy problem \eqref{m1n0} for the Euler--VFP system admits a
unique global strong solution $(\varrho,u,f)$ satisfying
\begin{equation*}
 \sup_{t\geq 0}\|(\varrho,u,f)\|_{\mathcal Z_t}
 \le C_2\|(\varrho_0,u_0,f_0)\|_{\mathcal Z_0},
\end{equation*}
where $C_2>0$ is a constant. The solution also satisfies $\rho=1+\varrho>0$ and $F=M+M^{1/2}f\ge0$, while $\|(\varrho,u,f)\|_{\mathcal Z_t}$ is continuous with respect to $t\in\mathbb R_+$.
\end{theorem}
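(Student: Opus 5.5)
We obtain the Euler--VFP solution as the vanishing-viscosity limit of the NS--VFP solutions of Theorem~\ref{t2}, using the $\varepsilon$-uniform bound \eqref{e8}. Set $\delta_2:=\delta_1$. For $0<\varepsilon\le1$, take the same data $(\varrho_0^\varepsilon,u_0^\varepsilon,f_0^\varepsilon)=(\varrho_0,u_0,f_0)$; these data are admissible in Theorem~\ref{t2}. Hence \eqref{m1n} with $\mu=\lambda=\varepsilon$ has a global solution $(\varrho^\varepsilon,u^\varepsilon,f^\varepsilon)$, and $\|(\varrho^\varepsilon,u^\varepsilon,f^\varepsilon)\|_{\mathcal Z_t}\le C_1\|(\varrho_0,u_0,f_0)\|_{\mathcal Z_0}$ for all $t\ge0$, uniformly in $\varepsilon$. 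If one prefers not to use an $\varepsilon$-dependent argument, mollified data $S_n$ could be used instead, but identical data suffice.

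\emph{Step 1: compactness.} We take $C_2:=C_1$. The uniform bounds give boundedness of $(\varrho^\varepsilon,u^\varepsilon)$ in $L^\infty(\mathbb R_+;\dot B^{d/2-1}_{2,1}\cap\dot B^{d/2+1}_{2,1})$ and of $f^\varepsilon$ in $L^\infty(\mathbb R_+;\dot{\mathcal B}^{d/2-1}_{2,1}\cap\dot{\mathcal B}^{d/2+1}_{2,1})$. Reading the equations \eqref{m1n} gives $\partial_t(\varrho^\varepsilon,u^\varepsilon)$ bounded in $L^\infty_{loc}(\mathbb R_+;\dot B^{d/2-1}_{2,1}\cap\dot B^{d/2}_{2,1})$; here the viscous term $\varepsilon\Delta u^\varepsilon$ is harmless because $\varepsilon\|u^\varepsilon\|_{L^1_t(\dot B^{d/2+2}_{2,1})}$ is bounded. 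Likewise $\partial_t f^\varepsilon$ is bounded in $L^2_{loc}$ of a weighted space that loses one $v$-derivative and moment, since $\mathcal L f$ and $v\cdot\nabla_x f$ are controlled by the $L^2_t(\dot{\mathcal B}_{2,1,\nu})$ norms of $\{\mathbf I-\mathbf P\}f$. By Aubin--Lions, applied on $[0,T]\times B_R$ with a diagonal extraction, and using the Fatou property of Besov spaces, a subsequence converges to $(\varrho,u,f)$ that obeys the same $\mathcal Z_t$ bound. The convergence is strong in $C([0,T];H^{s}_{loc})$ for $(\varrho,u)$ with $s<d/2+1$, and strong in $L^2_{loc}$ for $f$, weighted in $v$, possibly after interpolation with the dissipation norm in $v$.

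\emph{Step 2: passing to the limit.} The strong convergence lets us pass to the limit in the quadratic terms $u\cdot\nabla\varrho$, $\varrho\,\dive u$, $au$, $g$, $h$, $(v\otimes v-\mathrm{Id}):(u\otimes b)M^{1/2}$, and $\varepsilon\Delta u^\varepsilon\to0$ in distributions. The limit therefore solves \eqref{m1n0}. Positivity of $\rho$ follows from the uniform smallness of $\varrho$ in $L^\infty$, and $F\ge0$ is preserved under weak limits. The weak continuity of $(\varrho,u,f)$ in time, combined with the $L^1_t$ and $L^2_t$ dissipative controls, gives continuity of the $\mathcal Z_t$ norm. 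This follows the standard argument, namely time continuity through the equations in the lower norm plus the Fatou property, the same way Theorem~\ref{t1} obtained continuity.

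\emph{Step 3: uniqueness (main obstacle).} Let $(\delta\varrho,\delta u,\delta f)$ be the difference of two solutions in the $\mathcal Z$ class. Since \eqref{m1n0} is hyperbolic in $(\varrho,u)$, the difference system loses one derivative. We therefore estimate the difference in the lower space $\dot B^{d/2-1}_{2,1}$ for $(\delta\varrho,\delta u)$ and in $\dot{\mathcal B}^{d/2-1}_{2,1}$ for $\delta f$. Each localized energy estimate is performed with the symmetrizer $1/(1+\varrho)$ for the Euler part and with the coercivity \eqref{e3} for $\delta f$. The transport terms are handled by the commutator estimates of Appendix~A, which require $\nabla u\in L^1_t(\dot B^{d/2}_{2,1})$ (available from $u\in L^1_t(\dot B^{d/2+1}_{2,1})$). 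Terms such as $\delta u\cdot\nabla\varrho$ are controlled by the product law $\dot B^{d/2-1}_{2,1}\times\dot B^{d/2}_{2,1}\to\dot B^{d/2-1}_{2,1}$. The drag terms $\delta u-\delta b$ and $\delta a\,u$ contain no derivatives, and are absorbed using the damping $\|\delta u-\delta b\|^2$ together with the smallness of $\|u\|_{L^\infty_t(\dot B^{d/2}_{2,1})}$. In dimension $d=2$, the endpoint product at regularity $0$ may fail; if so, I would instead work in $\widetilde L^\infty_t(\dot B^{d/2-1}_{2,\infty})$ and close with a logarithmic Osgood inequality. Gronwall's inequality then gives $\delta\equiv0$ on every $[0,T]$. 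This endpoint issue is the delicate point.
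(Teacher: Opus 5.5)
Your proposal follows the paper's proof of Theorem~\ref{t3}. You take the NS--VFP solutions of Theorem~\ref{t2} with identical data, extract a limit from the uniform bound \eqref{e8} by Aubin--Lions--Simon, and pass to the limit with the viscous term vanishing. You then transfer the bounds by blockwise lower semicontinuity and get uniqueness from a difference estimate at regularity $d/2-1$.

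Two of your choices differ from the paper but are sound:
\begin{itemize}
\item Your strong $L^2_{\mathrm{loc}}$ convergence of $f$, using the $\nabla_v$ control in the $\nu$-norm of $\{\mathbf I-\mathbf P\}f$, is more than needed. The paper uses only weak convergence of $f$ together with strong convergence of $(\varrho,u)$ and $(a,b)$.
\item Local-in-time Gronwall is a legitimate substitute for the global absorption argument of Section~\ref{s9}.
\end{itemize}

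Your $d=2$ worry is unfounded. At index $d/2-1=0$, every product in the difference system pairs a $\dot B^{0}_{2,1}$ factor with a $\dot B^{1}_{2,1}$ factor. This is covered by \eqref{uv2} with $s_1=0$, $s_2=1$, $s_1+s_2>0$, and Lemmas~\ref{l12}--\ref{l13} hold at $s=0>-d/2$, so no Osgood argument is needed. Also, the Friedrichs symmetrizer is $\mathrm{diag}(q(\varrho),1+\varrho)$ with $q(r)=P'(1+r)/(1+r)$, as in Lemma~\ref{l7}, not a single weight $1/(1+\varrho)$.

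The step that does not work as written is the continuity of $t\mapsto\|(\varrho,u,f)\|_{\mathcal Z_t}$.
\begin{itemize}
\item Weak continuity plus the Fatou property gives only lower semicontinuity of $t\mapsto\|(\varrho,u)(t)\|_{\dot B^{d/2+1}_{2,1}}$. That can make $t\mapsto\sup_{[0,t]}$ jump.
\item Continuity in $\dot B^{d/2-1}_{2,1}$ interpolates only to indices strictly below $d/2+1$.
\item Neither the Euler part nor the $x$-dependence of $f$ is smoothed, so the $L^1_t$ and $L^2_t$ dissipative controls do not help at the top index.
\end{itemize}
You need the finite-frequency argument used for Theorem~\ref{t6}. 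Each block norm $t\mapsto\|\dot\Delta_j(\varrho,u,f)(t)\|_{L^2}$ is continuous by the localized energy identity, whose integrand is in $L^1_{\mathrm{loc}}$. The dyadic tails are small uniformly in $t$, because the limit inherits, block by block, the Chemin--Lerner bounds of Lemmas~\ref{l6} and~\ref{l8}: low frequencies in $\widetilde L^\infty_t(\dot B^{d/2-1}_{2,1})$ and high frequencies in $\widetilde L^\infty_t(\dot B^{d/2+1}_{2,1})$.

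A minor slip: $\varepsilon\Delta u^\varepsilon$ is bounded only in $L^1_t(\dot B^{d/2}_{2,1})$. So your $L^\infty_{\mathrm{loc}}$ bound on $\partial_t u^\varepsilon$ holds in $\dot B^{d/2-1}_{2,1}$ only, which is still enough for Aubin--Lions--Simon.
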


\begin{remark}\normalfont
In Theorem \ref{t3}, 
by capturing the effects of the Fokker--Planck operator and the drag force, we obtain $u\in L^1(\mathbb R_+;\dot B^{d/2+1}_{2,1})\hookrightarrow L^1(\mathbb R_+;{\rm Lip})$. At low frequencies, this regularity is weaker than the regularity $L^1(\mathbb R_+;\dot B^{d/2}_{2,1})$ obtained for
standard partially dissipative hyperbolic systems such as the Euler
equations with damping \cite{crin1}.
\end{remark}

Then, we establish uniform error estimates between the solutions
to the Cauchy problems \eqref{m1n} and \eqref{m1n0} and justify the global-in-time strong inviscid limit.

\begin{theorem}\label{t4}
Let $(\varrho^\varepsilon,u^\varepsilon,f^\varepsilon)$ be the solution supplied by Theorem \ref{t2}, and let $(\varrho,u,f)$ be the solution supplied by Theorem \ref{t3}. Assume further that
\begin{equation}\label{e11}
 \|(\varrho_0^\varepsilon-\varrho_0,
 u_0^\varepsilon-u_0)\|_{\dot B^{d/2-1}_{2,1}}
 +\|f_0^\varepsilon-f_0\|_{\dot{\mathcal B}^{d/2-1}_{2,1}}
 \le \varepsilon.
\end{equation}
Then the solutions satisfy
\begin{equation}\label{e12}
\sup_{t\geq0} \big( \|(\varrho^\varepsilon-\varrho,
 u^\varepsilon-u)(t)\|_{\dot B^{d/2-1}_{2,1}}
 +\|(f^\varepsilon-f)(t)\|_{\dot{\mathcal B}^{d/2-1}_{2,1}}\big)
 \le C\varepsilon.
\end{equation}
In addition, the following estimate holds:
\begin{align*}
\sup_{t\geq0} \big(\|(\varrho^\varepsilon-\varrho,
u^\varepsilon-u)(t)\|_{L^\infty_x}
+\|(f^\varepsilon-f)(t)\|_{L^\infty_x(L^2_v)}\big)&\leq C\sqrt{\varepsilon}.
\end{align*}
Here, the constant $C$ is independent of $\varepsilon\in(0,1]$.
\end{theorem}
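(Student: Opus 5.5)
We will study the difference $(\delta\varrho,\delta u,\delta f):=(\varrho^\varepsilon-\varrho,u^\varepsilon-u,f^\varepsilon-f)$. Subtracting \eqref{m1n0} from \eqref{m1n} with $\mu=\lambda=\varepsilon$ gives a linearized Euler--VFP system for the differences, with source terms of two kinds:
\begin{itemize}[leftmargin=*]
\item the viscous forcing $\varepsilon\Delta_x u^\varepsilon+2\varepsilon\nabla_x\dive_x u^\varepsilon$, together with its nonlinear counterpart $-\frac{\varrho^\varepsilon}{1+\varrho^\varepsilon}(\cdots)$ coming from $g$;
\item differences of the nonlinear terms, written bilinearly, e.g.\ $u^\varepsilon\cdot\nabla\varrho^\varepsilon-u\cdot\nabla\varrho=u^\varepsilon\cdot\nabla\delta\varrho+\delta u\cdot\nabla\varrho$.
\end{itemize}
The plan is to run the same hypocoercive low/high-frequency energy argument used for Theorem \ref{t3}, but at the \emph{lower} regularity level $\dot B^{d/2-1}_{2,1}$ instead of $\dot B^{d/2-1}_{2,1}\cap\dot B^{d/2+1}_{2,1}$. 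This is one derivative below the top index, so the loss of one derivative in terms like $u^\varepsilon\cdot\nabla\delta\varrho$ is absorbed by the Lipschitz bound $\|\nabla u^\varepsilon\|_{L^1_t(\dot B^{d/2}_{2,1})}$ and the $\mathcal Z_t$ bounds of both solutions.

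The key steps are as follows.

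\textbf{Step 1: localized energy functional.} We localize the difference system with $\dot\Delta_j$ and build, for each $j$, a Lyapunov functional $\mathcal L_j\sim\|\dot\Delta_j(\delta\varrho,\delta u,\delta f)\|^2_{L^2_xL^2_v}$. It combines three ingredients:
\begin{itemize}[leftmargin=*]
\item the basic energy identity, where the drag term and the coercivity \eqref{e3} yield the dissipation of $\delta u-\delta b$ and $\{\mathbf I-\mathbf P\}\delta f$;
\item macroscopic cross terms (e.g.\ $\langle \dot\Delta_j\delta u,\nabla\dot\Delta_j\delta\varrho\rangle$ and the $a,b$ moment equations) recovering dissipation of $\delta\varrho$, $\delta a$, $\delta b$;
\item at high frequencies, the mixed $x$--$v$ derivative terms.
\end{itemize}
Because the Euler part has no viscosity, we do \emph{not} use any viscous dissipation of $\delta u$. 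This gives
\[
\|\delta(t)\|_{\dot B^{d/2-1}_{2,1}}+\text{(dissipation)}\lesssim \|\delta_0\|+\int_0^t\big(\text{sources}\big),
\]
with the commutators $[\dot\Delta_j,u\cdot\nabla]$ handled by the standard commutator estimates of Appendix~A.

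\textbf{Step 2: the $O(\varepsilon)$ forcing.} The key observation is that the viscous term is tested against $\dot\Delta_j\delta u$ in $L^2$, so it is bounded by
\[
\varepsilon\|u^\varepsilon\|_{L^1_t(\dot B^{d/2+1}_{2,1})}\,\|\delta u\|_{L^\infty_t(\dot B^{d/2-1}_{2,1})}
\]
and, by \eqref{e8}, $\|u^\varepsilon\|_{L^1_t(\dot B^{d/2+1}_{2,1})}$ is uniformly bounded. This is exactly where the rate $\varepsilon$, rather than $\sqrt\varepsilon$, comes from: we measure the forcing in $L^1_t$ rather than through a viscous energy argument. Alternatively, we can use the bound on $\varepsilon\|u^\varepsilon\|_{L^1_t(\dot B^{d/2+1})}$ in \eqref{e8}, which is $O(1)$ after dividing. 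The nonlinear viscous piece is treated by the product law, again at size $\varepsilon$.

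\textbf{Step 3: bilinear differences.} Each difference of nonlinear terms is bounded by a product of a $\mathcal Z_t$-norm, of size $\lesssim\delta_1$, with the difference norm $\mathcal D_t:=\|\delta\|_{L^\infty_t(\dot B^{d/2-1}_{2,1})}+\text{dissipation}$. Closing then gives
\[
\mathcal D_t\lesssim\varepsilon+\delta_1\mathcal D_t+\varepsilon\mathcal D_t,
\]
hence $\mathcal D_t\lesssim\varepsilon$ uniformly in $t$, which is \eqref{e12}.

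This step is the main obstacle, and within it the zero-derivative drag-type terms such as $a^\varepsilon u^\varepsilon-au=a^\varepsilon\delta u+\delta a\,u$ and the terms $(v\otimes v-\mathrm{Id}):(u\otimes b)M^{1/2}$. At regularity $\dot B^{d/2-1}_{2,1}$, the product $\dot B^{d/2-1}\times\dot B^{d/2}\to\dot B^{d/2-1}$ must be paired with time integrability; for this we split the factors as follows:
\begin{itemize}[leftmargin=*]
\item $u=(u-b)+b$;
\item $f=\mathbf Pf+\{\mathbf I-\mathbf P\}f$.
\end{itemize}
We then use the $L^2_t$ dissipation of $u-b$ and of $\{\mathbf I-\mathbf P\}f$ from $\mathcal Z_t$, and the low-frequency $L^1_t(\dot B^{d/2+1})$ bounds. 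If a term resists, we first try to estimate it in $L^2_t(\dot B^{d/2-2})$ at low frequencies against the parabolic-type dissipation of $\delta$. In $d=2$ the index $d/2-1=0$ is borderline for products, and we would rely on the $\dot B^{0}_{2,1}$ endpoint product law in Appendix~A.

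\textbf{Step 4: $L^\infty$ bound.} We interpolate. By \eqref{e8} and Theorem \ref{t3}, the difference is $O(1)$ in $\dot B^{d/2+1}_{2,1}$. Then
\[
\|\delta\|_{\dot B^{d/2}_{2,1}}\lesssim \|\delta\|_{\dot B^{d/2-1}_{2,1}}^{1/2}\|\delta\|_{\dot B^{d/2+1}_{2,1}}^{1/2}\lesssim\sqrt\varepsilon,
\]
and the embedding $\dot B^{d/2}_{2,1}\hookrightarrow L^\infty$ (applied to $L^2_v$-valued functions for $\delta f$) gives the second estimate.
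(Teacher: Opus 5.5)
Your Steps 2 and 4 match the paper. Step 3, however, has a genuine gap at exactly the terms you flag, and the proposed split does not close it.

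Write $a^\varepsilon\delta u+\delta a\,u=a^\varepsilon(\delta u-\delta b)+\delta a\,(u-b)+a^\varepsilon\delta b+\delta a\,b$, and split $(v\otimes v-\mathrm{Id}):(\delta u\otimes b^\varepsilon+u\otimes\delta b)M^{1/2}$ the same way. The pieces containing $\delta u-\delta b$ or $u-b$ are fine. The residual products $a^\varepsilon\delta b$, $\delta a\,b$, $\delta b\otimes b^\varepsilon$ and $b\otimes\delta b$ are not: they carry neither a spatial derivative nor a damped factor.

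At low frequencies, each of $a^\varepsilon,b,\delta a,\delta b$ is only heat-type. It is controlled in $\widetilde L^\infty_t(\dot B^{d/2-1}_{2,1})\cap L^1_t(\dot B^{d/2+1}_{2,1})$, hence in $L^p_t(\dot B^{d/2-1+2/p}_{2,1})$, and never at a lower index. Any H\"older-in-time pairing of two such factors therefore puts the product in $L^1_t(\dot B^{d/2}_{2,1})$ at best. That is one derivative above the $L^1_t(\dot B^{d/2-1}_{2,1})$ needed in the low-frequency estimate, and $\|\cdot\|^\ell_{\dot B^{d/2}_{2,1}}$ does not control $\|\cdot\|^\ell_{\dot B^{d/2-1}_{2,1}}$.

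Your fallback, $L^2_t(\dot B^{d/2-2}_{2,1})$ against the parabolic dissipation, has the same one-derivative mismatch. The uniform $\mathcal Z_t$ bounds do not help either, since their low-frequency part is of the same heat type. This is precisely the obstruction the paper identifies for $au$ and $(v\otimes v-\mathrm{Id}):(u\otimes b)M^{1/2}$. It is also why the argument behind Theorems \ref{t2}--\ref{t3} is never run in the original variables at low frequencies.

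The missing idea is to carry the low-frequency reformulation over to the error. The paper estimates $\delta\varrho,\delta u,\delta a$ together with
$\delta V=V^\varepsilon-V$, where $V=b/(1+a)$, and $\delta G=G^\varepsilon-G$, where $G=\{\mathbf I-\mathbf P\}f-\frac12(v\otimes v-\mathrm{Id}):(b\otimes b)M^{1/2}$. It does this through \eqref{e124}--\eqref{e125} and the functional \eqref{e128}.

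In these variables three things happen:
\begin{itemize}
\item the relaxation term becomes $\delta u-\delta V$;
\item the zero-order drag terms become $\delta a\,(u^\varepsilon-V^\varepsilon)+a(\delta u-\delta V)$, each containing a factor in $L^2_t(\dot B^{d/2-1}_{2,1})$;
\item the quadratic microscopic source is traded, via $\mathcal L$ acting on the correction and the equation for $b\otimes b$, for terms with a spatial derivative plus cubic terms.
\end{itemize}
Afterwards $\delta b$ and $\{\mathbf I-\mathbf P\}\delta f$ are recovered from \eqref{e122}. Only the high frequencies are handled in the original variables, where zero-order products are harmless because $\|z^h\|_{\dot B^{d/2-1}_{2,1}}\lesssim\|z^h\|_{\dot B^{d/2}_{2,1}}$ and every component is damped.

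If you want to stay in the original variables, you would at least need a structural cancellation your proposal does not use. In the basic energy identity, the two copies of $a^\varepsilon\delta u+\delta a\,u$ coming from the fluid and kinetic equations combine, up to cubic terms, into a pairing with $\dot\Delta_j(\delta u-\delta b)$. The $(u\otimes b)$-terms pair with $\Theta(\dot\Delta_j\{\mathbf I-\mathbf P\}\delta f)$. Both could then be measured in $\widetilde L^2_t(\dot B^{d/2-1}_{2,1})$ by duality against the damped dissipation rather than in $L^1_t$.

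A minor point: the mixed $x$--$v$ derivative cross terms in your Step 1 would require $\nabla_v\delta f_0\in L^2$, which is not assumed. The paper's high-frequency functional \eqref{e132} uses only moment cross terms.
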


Finally, we establish time-decay estimates for the viscous system
\eqref{m1n} and the inviscid system \eqref{m1n0}, uniformly for
$\varepsilon\in[0,1]$.

\begin{theorem}\label{t5}
For each $\varepsilon\in(0,1]$, let
$(\varrho^\varepsilon,u^\varepsilon,f^\varepsilon)$ be the global
solution to the Cauchy problem for the NS--VFP system given by Theorem
\ref{t2}, and let $(\varrho,u,f)$ be the global solution to the Cauchy
problem for the Euler--VFP system given by Theorem \ref{t3}, subject to
the same data $(\varrho_0,u_0, f_0)$ satisfying \eqref{e9}.
Let $-d/2\leq\sigma_1<d/2-1$ and suppose further that $(\varrho_0,u_0)\in 
 \dot B^{\sigma_1}_{2,\infty}$ and $ f_0\in \dot{\mathcal B}^{\sigma_1}_{2,\infty}$.
Then, for all
$t\geq0$, $(\varrho^\varepsilon,u^\varepsilon,f^\varepsilon)$ and $(\varrho,u,f)$
satisfy the following decay estimates for $\sigma_1<\sigma\leq d/2+1$:
\begin{align}
& \|(\varrho^\varepsilon,u^\varepsilon)(t)\|_{\dot B^\sigma_{2,1}}
 +\|f^\varepsilon(t)\|_{\dot{\mathcal B}^\sigma_{2,1}}
 \le C(1+t)^{-\frac{\sigma-\sigma_1}{2}},\label{e13}\\
  &\|(\varrho,u)(t)\|_{\dot B^\sigma_{2,1}}
 +\|f(t)\|_{\dot{\mathcal B}^\sigma_{2,1}}
 \le C(1+t)^{-\frac{\sigma-\sigma_1}{2}},\label{e14}
\end{align}
and for $\sigma_1<\sigma\leq d/2$:
\begin{align}
&\|(u^\varepsilon-b^\varepsilon)(t)\|_{\dot B^\sigma_{2,1}}
 +\|\{\mathbf I-\mathbf P\}f^\varepsilon(t)\|_
 {\dot{\mathcal B}^\sigma_{2,1}}
 \le C(1+t)^{-\frac{\sigma-\sigma_1}{2}-\frac{1}{2}},\label{e16}\\
& \|(u-b)(t)\|_{\dot B^\sigma_{2,1}}
 +\|\{\mathbf I-\mathbf P\}f(t)\|_
 {\dot{\mathcal B}^\sigma_{2,1}}
 \le C(1+t)^{-\frac{\sigma-\sigma_1}{2}-\frac{1}{2}},\nonumber
\end{align}
where the constant $C>0$ is independent of $\varepsilon$ and $t$.
\end{theorem}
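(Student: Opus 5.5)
The plan follows the standard two-step scheme for decay in critical spaces: first propagate a uniform $\dot B^{\sigma_1}_{2,\infty}$ bound, then combine it with a Lyapunov-type inequality and interpolation to get the time rates. Everything is done uniformly in $\varepsilon\in[0,1]$, with $\varepsilon=0$ giving the Euler--VFP case. The input is the global bound of Theorems \ref{t2} and \ref{t3}, namely $\|(\varrho,u,f)\|_{\mathcal Z_t}\lesssim\delta$ uniformly in $\varepsilon$.

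\textbf{Step 1: uniform low-order bound.} I will show
\[
\sup_t\big(\|(\varrho,u)^\ell(t)\|_{\dot B^{\sigma_1}_{2,\infty}}+\|f^\ell(t)\|_{\dot{\mathcal B}^{\sigma_1}_{2,\infty}}\big)\le C.
\]
To do this, I apply $\dot\Delta_j$ to \eqref{m1n}, take the low-frequency hypocoercive Lyapunov functional already used for Theorem \ref{t2}, and take the $\ell^\infty$ norm in $j$ with weight $2^{j\sigma_1}$. This reduces the claim to bounding $\int_0^t\|N\|^\ell_{\dot B^{\sigma_1}_{2,\infty}}$ for the nonlinear terms $N$. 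I estimate these with the product law $\dot B^{d/2-1}_{2,1}\times\dot B^{d/2}_{2,1}\to\dot B^{\sigma_1}_{2,\infty}$, which is valid since $-d/2\le\sigma_1<d/2-1$; the endpoint $\sigma_1=-d/2$ uses $L^2\times L^2\subset L^1\hookrightarrow\dot B^{-d/2}_{2,\infty}$. The drag-type quadratic terms $au$, $u\otimes b$ and $h$ have no derivatives. I handle them by writing $u=(u-b)+b$ and using the $L^2_t$ dissipation of $u-b$ and $\{\mathbf I-\mathbf P\}f$ from the $\mathcal Z_t$ norm, paired with $L^2_t$ bounds on $a,b$. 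Those $L^2_t$ bounds follow by interpolating between $L^\infty_t\dot B^{d/2-1}$ and $L^1_t\dot B^{d/2+1}$. The result is a bound $C(\|\cdot\|_{\sigma_1,0}+\delta\sup\|\cdot\|_{\sigma_1})$, which closes by smallness.

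\textbf{Step 2: Lyapunov inequality and interpolation.} For $\sigma_1<\sigma\le d/2+1$, I build an $\varepsilon$-uniform functional $\mathcal L_\sigma(t)\simeq\|(\varrho,u)\|_{\dot B^\sigma\cap\dot B^{d/2+1}}+\|f\|_{\dot{\mathcal B}^\sigma\cap\dot{\mathcal B}^{d/2+1}}$ satisfying
\[
\frac{d}{dt}\mathcal L_\sigma+c\,\mathcal H_\sigma\le0,\qquad \mathcal H_\sigma\gtrsim\|(\cdot)^\ell\|_{\dot B^{\sigma+2}}+\|(\cdot)^h\|_{\dot B^{d/2+1}}.
\]
Nonlinear terms are absorbed using smallness of the $\mathcal Z_t$ norm. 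At low frequencies, real interpolation between $\dot B^{\sigma_1}_{2,\infty}$ and $\dot B^{\sigma+2}$ gives $\|\cdot\|^\ell_{\dot B^\sigma}\lesssim(\|\cdot\|^\ell_{\dot B^{\sigma+2}})^{\theta}$ with $\theta=(\sigma-\sigma_1)/(\sigma-\sigma_1+2)$. At high frequencies, $\|\cdot\|^h_{\dot B^{d/2+1}}\gtrsim\mathcal L^h$ directly. This yields $\frac{d}{dt}\mathcal L+c\mathcal L^{1+2/(\sigma-\sigma_1)}\le0$, which integrates to \eqref{e13} and \eqref{e14}; the constants are $\varepsilon$-free because the viscous terms enter with the favourable sign.

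\textbf{Step 3: enhanced decay.} The relative velocity $w=u-b$ and the micro part $\{\mathbf I-\mathbf P\}f$ satisfy damped equations. The $b$-equation is obtained from the $v M^{1/2}$-moment of the kinetic equation, and subtracting it from the $u$-equation gives
\[
\partial_t w+2w=-\nabla\varrho+\nabla a+\dive_x\langle v\otimes v M^{1/2},\{\mathbf I-\mathbf P\}f\rangle+\varepsilon\text{-terms}+N,
\]
while $\partial_t\{\mathbf I-\mathbf P\}f-\mathcal L\{\mathbf I-\mathbf P\}f$ is forced by $\{\mathbf I-\mathbf P\}(v\cdot\nabla_xf)$ plus nonlinear terms. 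The sources carry one extra spatial derivative, so Duhamel with exponential damping (via \eqref{e3}) gives $\|w\|_{\dot B^\sigma}\lesssim e^{-ct}+\int_0^te^{-c(t-s)}\|(\varrho,a,f)(s)\|_{\dot B^{\sigma+1}}ds\lesssim(1+t)^{-(\sigma-\sigma_1)/2-1/2}$, using Step 2 at level $\sigma+1\le d/2+1$. This is why $\sigma\le d/2$ is required.

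\textbf{Main obstacle.} The hardest step is Step 1 together with the nonlinear estimates in Step 2 at the endpoint $\sigma=d/2+1$, uniformly in $\varepsilon$. The non-derivative drag products must be handled using only the $L^2_t$ damping information, and the $\varepsilon\Delta u$ terms inside $g$ must be kept uniform. My first attempt is to split $u=(u-b)+b$ everywhere and to bound $\varepsilon\varrho\Delta u$ using $\varepsilon\|u\|_{L^1_t\dot B^{d/2+2}}$ from \eqref{e8}.
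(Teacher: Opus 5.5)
\paragraph{A gap in Steps 1 and 2.} Step~1 does not close at low frequencies, and the absorption you claim in Step~2 fails for the same reason. Splitting $u=(u-b)+b$ rewrites the drag nonlinearities as
\[
au=a(u-b)+ab
\]
and
\[
(v\otimes v-\mathrm{Id}):(u\otimes b)M^{1/2}=(v\otimes v-\mathrm{Id}):((u-b)\otimes b)M^{1/2}+(v\otimes v-\mathrm{Id}):(b\otimes b)M^{1/2}.
\]
The pieces containing $u-b$ are harmless. The residual terms $ab$ and $b\otimes b$ are not:

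\begin{itemize}
\item they are quadratic and carry no derivative;
\item they involve only macroscopic quantities, which are \emph{not} damped at low frequencies;
\item $a^\ell,b^\ell$ have only heat-type bounds, namely $L^\infty_t\dot B^{s}$, $L^2_t\dot B^{s+1}$ and $L^1_t\dot B^{s+2}$ with $s=\sigma_1$ or $s=d/2-1$.
\end{itemize}

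Any pairing that is linear in the propagated low-order norm, for example $\|a\|_{\widetilde L^2_t(\dot B^{\sigma_1+1}_{2,\infty})}\|b\|_{L^2_t(\dot B^{d/2}_{2,1})}$, puts $ab$ in $L^1_t(\dot B^{\sigma_1+1}_{2,\infty})$. Your reduction needs $L^1_t(\dot B^{\sigma_1}_{2,\infty})$, so this is one derivative short. At low frequencies the lower index is the stronger norm, so the derivative cannot be recovered. For the same reason, in Step~2 you cannot bound $\|ab\|^\ell_{\dot B^\sigma_{2,1}}$ by a small multiple of the dissipation $\|(a,b)\|^\ell_{\dot B^{\sigma+2}_{2,1}}$.

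This derivative loss is exactly what the paper is built around. The missing idea is a change of unknowns at low frequencies; the low-frequency functional behind Theorem~\ref{t2} is written in these variables, not in \eqref{m1n}.
\begin{itemize}
\item Set $V=b/(1+a)$. This replaces $au$ in the particle momentum equation by terms carrying a spatial derivative and leaves the relaxation $u-V$.
\item Set $G=\{\mathbf I-\mathbf P\}f-\frac12(v\otimes v-\mathrm{Id}):(b\otimes b)M^{1/2}$. This normal-form correction uses that $\mathcal L$ acts as $-2$ on this Hermite mode. After substituting the $b$-equation, it turns the $u\otimes b$ term into derivative terms and cubic terms; see \eqref{e27} and \eqref{e29}.
\end{itemize}
One could instead try to exploit that, in the basic energy identity, these terms pair only with the damped quantities $u-b$ and $\{\mathbf I-\mathbf P\}f$, and estimate them in $\widetilde L^2_t$. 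Your reduction to $L^1_t$ bounds discards that structure, though.

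\paragraph{How the paper closes the decay, and remarks on Step 3.} The low-frequency inequality lives in $(\varrho,u,a,V,G)$, while the high-frequency one uses the original variables. So the paper does not build a single functional with $\frac{d}{dt}\mathcal L_\sigma+c\,\mathcal L_\sigma^{1+2/(\sigma-\sigma_1)}\le0$ as in your Step~2. Instead it proceeds in three stages:
\begin{itemize}
\item It propagates $\mathcal N^\varepsilon_{\sigma_1}(T)$, which includes $\widetilde L^2_T(\dot B^{\sigma_1}_{2,\infty})$ bounds on $u-V$ and $G$ (Proposition~\ref{p3}).
\item It closes the time-weighted bound $\mathcal D^\varepsilon_\alpha(T)\lesssim K_0\langle T\rangle^{\alpha-\frac12(\frac d2+1-\sigma_1)}$, with $\alpha>\frac12(\frac d2+1-\sigma_1)$, for the frequency-localized inequalities at index $d/2+1$ (Lemma~\ref{l9}). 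Interpolation with $\mathcal N^\varepsilon_{\sigma_1}$ is used only on the low-frequency term produced by differentiating the weight.
\item Lower $\sigma$ follow by interpolation.
\end{itemize}

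Your Step~3 is essentially the paper's argument, with two caveats.
\begin{itemize}
\item $\{\mathbf I-\mathbf P\}(v\cdot\nabla_x\{\mathbf I-\mathbf P\}f)$ must stay on the left-hand side, where it is skew. Treated as a source, it costs a velocity weight that is not controlled pointwise in time.
\item For $d/2-1<\sigma\le d/2$, the high-frequency viscous source $\varepsilon\Delta u^{\varepsilon,h}$ is not controlled by the unweighted bound \eqref{e8}. The paper uses the weighted bound on $\varepsilon\|\langle t\rangle^\alpha u^{\varepsilon,h}\|_{L^1_T(\dot B^{d/2+2}_{2,1})}$ and splits the Duhamel integral at $t/2$.
\end{itemize}
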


\begin{remark}\normalfont
The decay estimates \eqref{e13} and \eqref{e14} hold up to
the highest-order critical index \(\sigma=d/2+1\). Moreover, the interactions of the drag relaxation
and the Fokker--Planck dissipation lead to an additional decay rate
\((1+t)^{-\frac{1}{2}}\) for the relative velocity
\(u^\varepsilon-b^\varepsilon\) and the microscopic component
\(\{\mathbf I-\mathbf P\}f^\varepsilon\), compared with the solution at the same spatial regularity for
\(\sigma_1<\sigma\leq d/2\).
\end{remark}

\begin{remark}\normalfont
Let $1\leq q<2$ and set
$\sigma_1:=-d(\frac1q-\frac12)$. Then the embedding $L^q(\mathbb R^d_x)\hookrightarrow\dot B^{\sigma_1}_{2,\infty}$ holds. Hence, if the initial data are uniformly bounded in the corresponding
 spatial $L^q$ spaces, using $\dot{B}^{0}_{2,1}\hookrightarrow L^2(\mathbb{R}^d)$ and $\dot{B}^{d/2}_{2,1}\hookrightarrow L^\infty(\mathbb{R}^d)$, we obtain
\begin{align*}
&\|(\varrho^\varepsilon,u^\varepsilon)(t)\|_{L^2_x}
 +\|f^\varepsilon(t)\|_{L^2_x(L^2_v)}
 \leq C(1+t)^{-\frac{d}{2}\left(\frac{1}{q}-\frac{1}{2}\right)},\\
&\|(u^\varepsilon-b^\varepsilon)(t)\|_{L^2_x}
 +\|\{\mathbf I-\mathbf P\}f^\varepsilon(t)\|_{L^2_x(L^2_v)}
 \leq C(1+t)^{-\frac{d}{2}\left(\frac{1}{q}-\frac{1}{2}\right)-\frac{1}{2}},\\
&\|(\varrho^\varepsilon,u^\varepsilon)(t)\|_{L^\infty_x}
 +\|f^\varepsilon(t)\|_{L^\infty_x(L^2_v)}
 \leq C(1+t)^{-\frac{d}{2q}},\\
&\|(u^\varepsilon-b^\varepsilon)(t)\|_{L^\infty_x}
 +\|\{\mathbf I-\mathbf P\}f^\varepsilon(t)\|_{L^\infty_x(L^2_v)}
 \leq C(1+t)^{-\frac{d}{2q}-\frac{1}{2}}.
\end{align*}
For the critical Lipschitz norm, we also have
\begin{align*}
&\|(\nabla_x\varrho^\varepsilon,
      \nabla_xu^\varepsilon)(t)\|_{L^\infty_x}
 +\|\nabla_xf^\varepsilon(t)\|_{L^\infty_x(L^2_v)}
 \leq C(1+t)^{-\frac{d}{2q}-\frac{1}{2}}.
\end{align*}
The above decay rates are optimal in the sense that they coincide with those of the heat equation at low frequencies. Note that the general $B^{\sigma_1}_{2,\infty}$ assumption is in fact equivalent to
the heat-semigroup decay 
\cite{brandolese1}.
\end{remark}


\vspace{2mm}
We explain the main difficulties and ideas in the proofs of
Theorems \ref{t1}--\ref{t5}.
By developing frequency-localized estimates and capturing the effects
of the drag force and the Fokker--Planck operator, we construct
low- and high-frequency Lyapunov inequalities for the linearized problem
(cf.\ Lemmas \ref{l1},
\ref{l3}). 

To analyze the nonlinear problem \eqref{m1n}, we need to overcome
difficulties caused by the drag force. Indeed, the classical critical regularity theory for
the compressible Navier--Stokes equations requires the nonlinear source
terms to be controlled in
$L^1(\mathbb R_+;\dot B^{d/2-1}_{2,1})$, cf.\ \cite{danchin1}.
However, several lower-order quadratic terms generated by
the drag force in \eqref{e4}
contain no spatial derivative and cannot be controlled directly by the
usual critical regularity norms.

The first class consists of 
$\frac{\varrho}{1+\varrho}(u-b)$ and
$u\cdot\nabla_v\{\mathbf I-\mathbf P\}f$. To handle these, we establish the additional $L^2(\mathbb R_+;\dot B^{d/2-1}_{2,1})$ estimate of the relative
velocity $u-b$ and the $L^2(\mathbb R_+;\dot{\mathcal{B}}^{d/2-1}_{2,1,\nu})$ estimate of the microscopic part
$\{\mathbf I-\mathbf P\}f$. Together with the critical regularity estimates for $(\varrho,u)$,
these bounds yield the required
$L^1(\mathbb R_+;\dot B^{d/2-1}_{2,1})$ estimates.

The second difficulty arises from the term $au$ in \eqref{m1n}.
In the usual macro--micro formulation, the drag dissipation is
represented by $u-b$ (cf. \cite{carrillo4,duan3}). However, $u$ is the velocity, whereas $b$ is the  momentum of the particles. Accordingly, $\int_{\mathbb{R}^d_v}(u-v)F\,dv=(u-b)+au$ produces the
additional term $au$. To overcome this mismatch, we make use of the macroscopic velocity variable
\begin{equation}\label{e18}
 V:=\frac{b}{1+a}.
\end{equation}
The equation for $V$, instead of that for $b$, removes the term $au$
from the particle momentum equation, cf.\ \eqref{e27}.

The third difficulty is the quadratic microscopic term $
(v\otimes v-\mathrm{Id}):(u\otimes b)M^{1/2}$ in $\eqref{m1n}_3$. To eliminate it, we introduce the
effective microscopic variable
\begin{equation}\label{e19}
 G:=\{\mathbf I-\mathbf P\}f
 -\frac12(v\otimes v-\mathrm{Id}):(b\otimes b)M^{1/2}.
\end{equation}
Indeed, the definition of \(G\) implies $\mathcal L\{\mathbf I-\mathbf P\}f
+(v\otimes v-\mathrm{Id}):(u\otimes b)M^{1/2}
=
\mathcal LG
+(v\otimes v-\mathrm{Id}):((u-b)\otimes b)M^{1/2}$. Thus, the original quadratic term is reduced to $\mathcal LG$ and a term containing the dissipative quantity $u-b$. By differentiating the quadratic correction in $G$ and substituting the
equation for $\partial_t b$, the extra quadratic terms contain spatial derivatives, while the terms without spatial derivatives are at least cubic, cf. \eqref{e29}. This correction is similar to a normal-form transformation.

However, a new obstruction arises: since $a$
is only controlled in $\dot B^{d/2-1}_{2,1}$, we do not have an
$L^\infty_x$ bound to define $V=b/(1+a)$ directly. We therefore use the low-frequency truncated variables defined in
\eqref{e20}.
By Bernstein's inequality, $a^\ell$ is controlled in $L^\infty_x$.
Thus, we use the reformulated variables at low frequencies and recover
the original variables afterwards, while the high-frequency estimates are
carried out directly in the original variables. Based on these two formulations, we construct suitable low- and
high-frequency Lyapunov functionals and obtain the global a priori
estimate \eqref{e26}. Therefore, we extend the local
solution to a global one and complete the proof of
Theorem~\ref{t1}.

For Theorems \ref{t2} and \ref{t3}, the key point is to
recover the dissipation of $u^\varepsilon$ independent of $\mu=\lambda=\varepsilon$. By the drag force relaxation and the higher-order
moment equations of the Fokker--Planck equation, we obtain 
\[
\min\{2^{2j},1\}
\|\dot\Delta_j u^\varepsilon\|_{L^2}
\lesssim
\begin{cases}
\|\dot\Delta_j(u^\varepsilon-V^\varepsilon)\|_{L^2}
 +2^{2j}\|\dot\Delta_jV^\varepsilon\|_{L^2},
 & j\leq0,\\[2mm]
\|\dot\Delta_j(u^\varepsilon-b^\varepsilon)\|_{L^2}
 +2^{2j}\|\dot\Delta_jb^\varepsilon\|_{L^2},
 & j\geq-1,
\end{cases}
\]
which is independent of $\varepsilon$. We then use the hyperbolic symmetry at the Euler-critical regularity to control the higher derivative. The resulting bounds are global in time and yield global well-posedness for the Cauchy problem for the Euler--VFP system.

For Theorem \ref{t4}, we subtract the viscous and inviscid systems and construct low- and high-frequency error functionals in $\dot B^{d/2-1}_{2,1}$, together with
$\dot{\mathcal B}^{d/2-1}_{2,1}$ for the kinetic error. We retain the
fluid--particle cancellations in these error functionals. The only
additional source term comes from viscosity, and the uniform higher-order
estimate gives
\[
 \left\|\frac{\varepsilon}{1+\varrho^\varepsilon}
 \big(\Delta_xu^\varepsilon
 +2\nabla_x\dive_xu^\varepsilon\big)\right\|_
 {L^1_t(\dot B^{d/2-1}_{2,1})}
 \lesssim\varepsilon
 \|u^\varepsilon\|_{L^1_t(\dot B^{d/2+1}_{2,1})}
 \lesssim\varepsilon.
\]
The nonlinear error terms are absorbed by the small uniform bound, and
this yields the global $\mathcal O(\varepsilon)$ convergence rate.

Finally, to derive the decay estimates in Theorem \ref{t5}, we use a time-weighted energy argument. Negative Besov norms of the form $\dot B^{\sigma_1}_{2,\infty}$ have been used in the decay analysis
of the Boltzmann equation and partially dissipative hyperbolic or
hyperbolic--parabolic systems
\cite{brandolese1,danchin3,LiuShouXu2025,so1,xu2}. For the compressible
Navier--Stokes equations, Guo and Wang \cite{guo1} and Xin and Xu
\cite{xin1} combine a differential Lyapunov inequality with a
propagated negative norm. Here the low-frequency analysis uses the
reformulated variables, while the high-frequency analysis is carried out
in the original variables, so the two estimates cannot be combined into
a single differential Lyapunov inequality. We therefore first propagate
the lower-order Besov norm and then apply time weights to the
frequency-localized hypocoercive inequalities. At low
frequencies, the negative norm and the heat-type dissipation give the
algebraic decay by interpolation, whereas the uniform high-frequency
damping gives the corresponding weighted bounds. Finally, the equations
for $u^\varepsilon-b^\varepsilon$ and
$\{\mathbf I-\mathbf P\}f^\varepsilon$, together with their zeroth-order
damping and the spatial derivative in the leading source terms, yield
the additional decay $(1+t)^{-\frac{1}{2}}$ in  \eqref{e16}.

\section{Global existence}\label{s2}

\subsection{A priori estimate}\label{s3}

In this section, we prove Theorem \ref{t1}. For local
well-posedness, refer to Appendix B.

\begin{proposition}\label{p1}
Let $T>0$ and let $(\varrho,u,f)$ be a smooth solution to the Cauchy
problem \eqref{m1n} for the NS--VFP system on $[0,T]$. There exist
constants $\delta_0^*>0$ and
$C_0>0$, independent of $T$, such that if
\begin{align}\label{e25}
\|(\varrho,u,f)\|_{\mathcal E_T}\le \delta_0^*,
\end{align}
then the following estimate holds:
\begin{equation}\label{e26}
\|(\varrho,u,f)\|_{\mathcal E_T}
\le C_0\Big(
\|(\varrho_0,u_0,f_0)\|_{\mathcal E_0}
+\|(\varrho_0,u_0,f_0)\|_{\mathcal E_0}^2\Big),
\end{equation}
where the $\mathcal E_0$- and $\mathcal E_T$-norms are defined by
\eqref{a2} and \eqref{Xt}, respectively.
\end{proposition}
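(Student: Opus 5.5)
The argument is a frequency-split hypocoercive energy method built on the linearized Lyapunov inequalities announced in the introduction (Lemmas \ref{l1}, \ref{l3}). Low and high frequencies are treated in different variables, and the quadratic remainders are closed using the smallness \eqref{e25}.

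\emph{Low frequencies.} I would not work with $(\varrho,u,f)$ directly. Instead I use the reformulated unknowns built from the truncated density $a^\ell$: the macroscopic velocity $V=b/(1+a^\ell)$ in place of \eqref{e18}, and the corrected microscopic part $G$ of \eqref{e19}, truncated accordingly. Bernstein's inequality gives $\|a^\ell\|_{L^\infty}\lesssim\|a\|_{\dot B^{d/2-1}_{2,1}}\ll1$, so $V$ is well defined.

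In these variables $\partial_t V$ no longer contains the bad term $au$. The $G$-equation replaces $(v\otimes v-\mathrm{Id}):(u\otimes b)M^{1/2}$ by two pieces: one involving $u-b$, and others that either carry a spatial derivative or are cubic. For each $j\le J_0$ I apply $\dot\Delta_j$ and build a Lyapunov functional of the form
\[
\mathcal L_j=\|\dot\Delta_j(\varrho,u,a,V,G)\|_{L^2}^2+\eta\,2^{-j}\times(\text{cross terms } \langle \dot\Delta_j u,\nabla\dot\Delta_j\varrho\rangle,\ \langle \dot\Delta_j V,\nabla \dot\Delta_j a\rangle,\ \text{moment terms from }G).
\]
Using the coercivity \eqref{e3}, this should give
\[
\tfrac{d}{dt}\mathcal L_j+c\,2^{2j}\mathcal L_j+c\bigl(\|\dot\Delta_j(u-V)\|_{L^2}^2+\|\dot\Delta_j G\|_{L^2_{v,\nu}}^2\bigr)\lesssim \|\dot\Delta_j \mathrm{NL}\|\sqrt{\mathcal L_j}.
\]
Taking square roots, integrating in time and summing over $j$ with weight $2^{j(d/2-1)}$ yields the low-frequency part of $\mathcal E_T$, including the $L^2_t$ bounds. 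I then recover the original variables: $b-V=aV$-type products, and $\{\mathbf I-\mathbf P\}f-G$ is quadratic in $b$. Both are estimated by product laws in $\dot B^{d/2-1}_{2,1}\times\dot B^{d/2}_{2,1}$ and are absorbed.

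\emph{High frequencies.} For $j\ge J_0-1$ I work directly with \eqref{m1n}. For the fluid I use the compressible Navier--Stokes effective-velocity/Lagrangian-type Lyapunov functional with the cross term $\langle\dot\Delta_j u,\nabla\dot\Delta_j\varrho\rangle$, which damps $\varrho$ in $\dot B^{d/2}_{2,1}$ and gives parabolic smoothing for $u$. The transport term $u\cdot\nabla\varrho$ is handled by a commutator estimate. For $f$, the Fokker--Planck coercivity \eqref{e3} plus a micro--macro cross term $\langle \dot\Delta_j\{\mathbf I-\mathbf P\}f, v\cdot\nabla\dot\Delta_j\mathbf Pf\rangle$ scaled by $2^{-2j}$ gives damping at rate $c$ in $\dot{\mathcal B}^{d/2-1}_{2,1}$. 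The linear coupling $u\cdot vM^{1/2}$ and $u-b$ is harmless because at high frequency $u$ is controlled parabolically.

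\emph{Nonlinear terms.} Each source term must be bounded in $L^1_T(\dot B^{d/2-1}_{2,1})$ (or in the appropriate dual $L^2$ pairing) by $\mathcal E_T^2$.
\begin{itemize}
\item Derivative terms such as $\varrho\,\dive u$, $u\cdot\nabla u$, $(\tfrac{P'}{1+\varrho}-1)\nabla\varrho$ and the viscous term in $g$ are standard product/composition estimates.
\item The dangerous drag terms $\frac{\varrho}{1+\varrho}(u-b)$ and $u\cdot\nabla_v\{\mathbf I-\mathbf P\}f$ are bounded by $\|\varrho\|_{L^2_T\dot B^{d/2}}\|u-b\|_{L^2_T\dot B^{d/2-1}}$ and $\|u\|_{L^2_T\dot B^{d/2}}\|\{\mathbf I-\mathbf P\}f\|_{L^2_T\dot{\mathcal B}^{d/2-1}_{\nu}}$. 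Here the $L^2_T\dot B^{d/2}$ norms come from interpolating $L^\infty\dot B^{d/2-1}$ with $L^1\dot B^{d/2+1}$ at low frequencies, and from the damped/parabolic norms at high frequencies.
\item The term $au$ only appears through high frequencies or after the $V$ reformulation, where $a^h u$ is controlled because $f^h\in L^1_T\dot{\mathcal B}^{d/2-1}$.
\end{itemize}
Summing the two regimes gives $\mathcal E_T\lesssim \mathcal E_0+\mathcal E_0^2+\mathcal E_T^2+\mathcal E_T^3$. Smallness \eqref{e25} absorbs the right-hand side, and the initial reformulation terms account for the $\mathcal E_0^2$ in \eqref{e26}.

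\emph{Main obstacle.} I expect the hardest step to be the low-frequency reformulation. One must verify that the time derivative of the quadratic correction in $G$, after substituting the $b$-equation, leaves only derivative-carrying quadratic terms and cubic ones. One must also check that the low/high cutoff interaction terms, such as $a^h b$ appearing in $V$ and commutators with $\dot S_{J_0}$, close in $L^1_T\dot B^{d/2-1}$ without losing a derivative. My first attempt there would be to put the high-frequency factor in $L^1_T\dot{\mathcal B}^{d/2-1}$ and the other factor in $L^\infty_T\dot B^{d/2}$, using that $J_0$ is fixed.
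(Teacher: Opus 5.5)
Your overall architecture is the paper's: truncated variables $V$, $G$ built on $a^\ell$ at low frequencies, original variables with an effective-velocity functional at high frequencies, and the $L^2_t$ bounds on $u-b$ and $\{\mathbf I-\mathbf P\}f$ to absorb the derivative-free drag terms. However, two of the Lyapunov functionals you actually write down do not close.

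First, at low frequencies the cross terms must carry $O(1)$ weights, not $\eta\,2^{-j}$. Differentiating $\eta\,2^{-j}(\dot\Delta_ju,\nabla_x\dot\Delta_j\varrho)_{L^2_x}$ gains $\eta\,2^{-j}\|\nabla_x\dot\Delta_j\varrho\|_{L^2_x}^2\approx\eta\,2^{j}\|\dot\Delta_j\varrho\|_{L^2_x}^2$. It also costs $\eta\,2^{-j}\|\dive_x\dot\Delta_ju\|_{L^2_x}^2\approx\eta\,2^{j}\|\dot\Delta_ju\|_{L^2_x}^2$ and $\eta\,2^{-j}|(\dot\Delta_j(u-V),\nabla_x\dot\Delta_j\varrho)_{L^2_x}|$. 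As $j\to-\infty$ these losses cannot be absorbed by the only available dissipations, $\nu_*\|\nabla_x\dot\Delta_ju\|_{L^2_x}^2\approx 2^{2j}\|\dot\Delta_ju\|_{L^2_x}^2$ and $\|\dot\Delta_j(u-V)\|_{L^2_x}^2$. The same problem occurs for the $(V,\nabla_xa)$ cross term. With constant weights $\eta_1,\eta_2$, as in Lemma~\ref{l1}, the gain is $\eta\|\nabla_x\dot\Delta_j\varrho\|_{L^2_x}^2\sim\eta\,2^{2j}\|\dot\Delta_j\varrho\|_{L^2_x}^2$. That is exactly the $c\,2^{2j}\mathcal L_j$ you claim, so the fix is simply to drop the $2^{-j}$.

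Second, your high-frequency kinetic cross term does not damp $a$. Since $\mathbf P=\mathbf P_0+\mathbf P_1$, the piece $v\cdot\nabla_xa\,M^{1/2}$ of $v\cdot\nabla_x\mathbf Pf$ lies in the range of $\mathbf P_1$ and drops out. Hence $\langle\{\mathbf I-\mathbf P\}f,v\cdot\nabla_x\mathbf Pf\rangle=\int\Theta(\{\mathbf I-\mathbf P\}f):\nabla_xb\,dx$. Its time derivative yields $\|\mathbb D_x\dot\Delta_jb\|_{L^2_x}^2$ and a pairing with $\nabla_x^2\dot\Delta_ja$, but never $\|\nabla_x\dot\Delta_ja\|_{L^2_x}^2$. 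The $b$-dissipation is redundant anyway, since $\|\dot\Delta_ju\|_{L^2_x}^2+\|\dot\Delta_j(u-b)\|_{L^2_x}^2\ge\frac12\|\dot\Delta_jb\|_{L^2_x}^2$ already damps $b$ for $j\ge0$. Because $a$ is dissipated only through $\partial_ta=-\dive_xb$, you need the paper's $\eta\,2^{-2j}(\nabla_x\dot\Delta_ja,\dot\Delta_jb)_{L^2_x}$, which is your term with $\mathbf P_0$ in place of $\mathbf P$. Without it you get no bound on $\|f^h\|_{L^1_t(\dot{\mathcal B}^{d/2-1}_{2,1})}$. You therefore also lose $\|a^h\|_{L^2_t(\dot B^{d/2-1}_{2,1})}$, which the defect and drag estimates require.

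Two smaller points remain. A functional damped uniformly at rate $c$ only gives $u^h\in L^1_t(\dot B^{d/2-1}_{2,1})\cap\widetilde L^2_t(\dot B^{d/2}_{2,1})$. The $L^1_t(\dot B^{d/2+1}_{2,1})$ bound on $u^h$ therefore needs the separate Lam\'e maximal-regularity step (Lemma~\ref{heat}). Also, $\mathcal E_T$ gives no $L^\infty_t(\dot B^{d/2}_{2,1})$ bound on $u$ or $b$, only on $\varrho$, so the pairing you propose for $a^hu$ and the truncation defects is unavailable. Use instead $\|a^h\|_{L^2_t(\dot B^{d/2-1}_{2,1})}\|u\|_{L^2_t(\dot B^{d/2}_{2,1})}$ and $\|u^h\|_{L^1_t(\dot B^{d/2+1}_{2,1})}\|b\|_{L^\infty_t(\dot B^{d/2-1}_{2,1})}$, as in the estimates following \eqref{e36}.
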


Proposition \ref{p1} follows from the low- and high-frequency
estimates in Lemmas \ref{l2} and \ref{l4}.

\subsection{Low-frequency analysis}\label{s4}

The Shizuta--Kawashima condition and Villani's hypocoercivity
theory provide general frameworks for analyzing partially
dissipative systems \cite{crin1,villani1,xu1}. However, the
linearized NS--VFP and Euler--VFP systems do not fit directly
into these frameworks because of the drag force coupling.
To address this issue, we adapt the frequency-localized estimates
developed in our previous work on the Navier--Stokes--Euler
system \cite{lishou1} to analyze the linear dissipation structure.

The nonlinear terms create an additional difficulty. In view of the macro--micro decomposition \eqref{decom}, it suffices to
estimate 
$a$, $b$ and $\{\mathbf I-\mathbf P\}f$ due to the fact
\begin{equation}\label{E35}
\|f\|_{L^2_v}\sim |a|+|b|
+\|\{\mathbf I-\mathbf P\}f\|_{L^2_v}.
\end{equation}
As explained in Section 1, it is difficult to estimate $(a,b, \{\mathbf I-\mathbf P\}f)$ directly because of the lower-order
terms $au$ and $u\otimes b$. We first take the zeroth and first velocity
moments of the kinetic equation and obtain
\begin{equation}\label{e22}
\left\{
\begin{aligned}
&\partial_ta+\dive_x b=0,\\
&\partial_tb+\nabla_xa+b-u
+\dive_x\Theta(\{\mathbf I-\mathbf P\}f)=au,
\end{aligned}
\right.
\end{equation}
where $\Theta(z)$ is defined by
\begin{align*}
    \Theta(z):=\int_{\mathbb R^d_v}(v\otimes v-\mathrm{Id})
M^{1/2}z\,dv.
\end{align*}
Moreover, the microscopic projection gives
\begin{equation}\label{e23}
\begin{aligned}
&\partial_t\{\mathbf I-\mathbf P\}f
+\{\mathbf I-\mathbf P\}
\big(v\cdot\nabla_x\{\mathbf I-\mathbf P\}f\big)
-\mathcal L\{\mathbf I-\mathbf P\}f
+(v\otimes v-\mathrm{Id}):\nabla_xb\,M^{1/2}\\
&\qquad=h(\varrho,u,f)
+(v\otimes v-\mathrm{Id}):(u\otimes b)M^{1/2}.
\end{aligned}
\end{equation}

To cancel $au$ and $(v\otimes v-\mathrm{Id}):(u\otimes b)M^{1/2}$ in \eqref{e22}--\eqref{e23}, we use the macroscopic velocity $V$ and the microscopic variable $G$ defined by \eqref{e18} and \eqref{e19}, respectively. Substituting $b=(1+a)V$ into \eqref{e22} and using $\partial_t b=(1+a)\partial_t V-V\dive_x\,b$, we obtain
\begin{equation}\label{e24}
\left\{
\begin{aligned}
&\partial_ta+\dive_x\big((1+a)V\big)=0,\\
&\partial_tV+\nabla_xa-(u-V)
+\dive_x\Theta(\{\mathbf I-\mathbf P\}f)\\
&\qquad=\frac{a}{1+a}
\Big(\nabla_xa+\dive_x\Theta(\{\mathbf I-\mathbf P\}f)\Big)
+\frac{V}{1+a}\dive_xb.
\end{aligned}
\right.
\end{equation}
Thus, the lower-order term $au$ is replaced by terms with spatial
derivatives, while the linear relaxation term takes the form $u-V$.

By $\Theta(G)=\Theta(\{\mathbf I-\mathbf P\}f)-b\otimes b$, $\eqref{e24}_2$ can be rewritten as
\begin{align}
&\partial_tV+\nabla_xa-(u-V)+\dive_x\Theta(G)
\nonumber\\
&\qquad=\frac{a}{1+a}
\Big(\nabla_xa+\dive_x\Theta(\{\mathbf I-\mathbf P\}f)\Big)
+\frac{V}{1+a}\dive_xb-\dive_x(b\otimes b).
\label{e27}
\end{align}
From $\eqref{e22}_2$, we have
\begin{align}
\partial_t(b\otimes b)+2b\otimes b
={}&(1+a)(u\otimes b+b\otimes u)-\Big(\nabla_xa
+\dive_x\Theta(\{\mathbf I-\mathbf P\}f)\Big)\otimes b
\nonumber\\
&-b\otimes\Big(\nabla_xa
+\dive_x\Theta(\{\mathbf I-\mathbf P\}f)\Big).
\label{e28}
\end{align}
The key observation is that by \eqref{e23}, \eqref{e28},
$\nabla_xb=\nabla_xV+\nabla_x(aV)$ and 
\[
 \mathcal L\left(
 \frac12(v\otimes v-\mathrm{Id}):(b\otimes b)M^{1/2}
 \right)
 =-(v\otimes v-\mathrm{Id}):(b\otimes b)M^{1/2},
\]
the equation of $G$ reads
\begin{align}
&\partial_tG
+\{\mathbf I-\mathbf P\}(v\cdot\nabla_xG)
-\mathcal L G
+(v\otimes v-\mathrm{Id}):\nabla_xV\,M^{1/2}
\nonumber\\
&\quad=h(\varrho,u,f)
-a(v\otimes v-\mathrm{Id}):(u\otimes b)M^{1/2}
\nonumber\\
&\qquad+(v\otimes v-\mathrm{Id}):
\left(
\Big(\nabla_xa
+\dive_x\Theta(\{\mathbf I-\mathbf P\}f)\Big)\otimes b
-\nabla_x(aV)
\right) M^{1/2}
\nonumber\\
&\qquad-\{\mathbf I-\mathbf P\}
\left(
v\cdot\nabla_x\left(
\frac12(v\otimes v-\mathrm{Id}):(b\otimes b)M^{1/2}
\right)\right).
\label{e29}
\end{align}
Therefore, all nonlinear terms on the right-hand sides of \eqref{e27} and \eqref{e29} can be analyzed by the critical norms of the solutions without any loss of derivatives. 

At critical regularity, the control of \(a\) in
\(\dot B^{d/2-1}_{2,1}\) does not yield an \(L^\infty(\mathbb R^d_x)\) bound,
so the lower bound for \(1+a\) required to define \(V=b/(1+a)\) is
not available. To resolve this issue, we define the cut-off variables
\begin{equation}\label{e20}
\widetilde V:=\frac{b^\ell}{1+a^\ell}\quad\text{and}\quad 
\widetilde G:=\{\mathbf I-\mathbf P\}f^\ell
-\frac12(v\otimes v-\mathrm{Id}):
(b^\ell\otimes b^\ell)^\ell M^{1/2}.
\end{equation}
By \eqref{e25}
and Bernstein's inequality, the fact that $\delta_0^*>0$ is sufficiently small implies $\|a^\ell\|_{L^\infty_T(L^\infty_x)}\lesssim \|a^\ell\|_{L^\infty_T
 (\dot B^{d/2}_{2,1})}\ll 1$ and
\begin{equation}\label{e30}
\frac{1}{2}\leq 1+a^\ell(t,x) \leq \frac{3}{2}\quad\text{for}\quad (t,x)\in [0,T]\times\mathbb{R}^d.
\end{equation}
The low-frequency energy estimates are carried out for the new variables $\varrho^\ell,u^\ell,a^\ell$, $\widetilde V$ and $\widetilde G$. The low-frequency truncation produces two product defects
\begin{equation}\label{e31}
 \mathcal C_\ell(a,u):=(au)^\ell-a^\ell u^\ell
 \quad\text{and}\quad
 \mathcal D_\ell(u,b):=(u\otimes b)^\ell
 -(u^\ell\otimes b^\ell)^\ell.
\end{equation}
Indeed, we take the low-frequency part of \eqref{e22} and have
\begin{equation}\label{e32}
\left\{
\begin{aligned}
&\partial_ta^\ell+\dive_xb^\ell=0,\\
&\partial_tb^\ell+\nabla_xa^\ell+b^\ell-u^\ell
+\dive_x\Theta(\{\mathbf I-\mathbf P\}f^\ell)
=a^\ell u^\ell+\mathcal C_\ell(a,u).
\end{aligned}
\right.
\end{equation}
Due to
$b^\ell=(1+a^\ell)\widetilde V$, we use \eqref{e32} to obtain
\begin{equation}\label{e33}
\left\{
\begin{aligned}
&\partial_ta^\ell+\dive_x\widetilde V
=-\dive_x(a^\ell\widetilde V),\\
&\partial_t\widetilde V+\nabla_xa^\ell-(u^\ell-\widetilde V)
+\dive_x\Theta(\widetilde G)\\
&\quad=\frac{a^\ell}{1+a^\ell}
\Big(\nabla_xa^\ell
+\dive_x\Theta(\{\mathbf I-\mathbf P\}f^\ell)\Big)
+\frac{\widetilde V}{1+a^\ell}\dive_xb^\ell
+\frac{\mathcal C_\ell(a,u)}{1+a^\ell}
-\dive_x(b^\ell\otimes b^\ell)^\ell.
\end{aligned}
\right.
\end{equation}
Then, we take the low-frequency part of the
fluid equations $\eqref{m1n}_1$--$\eqref{m1n}_2$ and use $b^\ell=(1+a^\ell)\widetilde V$ to obtain
\begin{equation}\label{e34}
\left\{
\begin{aligned}
&\partial_t\varrho^\ell+\dive_xu^\ell
 =-\dive_x(\varrho u)^\ell,\\
&\partial_tu^\ell+\nabla_x\varrho^\ell-\mu\Delta u^\ell
-(\mu+\lambda)\nabla\dive u^\ell+u^\ell-\widetilde V\\
&\quad=-(u\cdot\nabla_xu)^\ell+g(\varrho,u,f)^\ell
-a^\ell(u^\ell-\widetilde V)-\mathcal C_\ell(a,u).
\end{aligned}
\right.
\end{equation}
Here we used the identity
$-(au)^\ell+a^\ell\widetilde V
=-a^\ell(u^\ell-\widetilde V)-\mathcal C_\ell(a,u)$.

We next take the low-frequency part of \eqref{e23}. Differentiating
the quadratic correction $\widetilde G$ in \eqref{e20} and
using the second equation in \eqref{e32}, we have
\begin{equation}\label{e35}
\begin{aligned}
&\partial_t\widetilde G
+\{\mathbf I-\mathbf P\}(v\cdot\nabla_x\widetilde G)
-\mathcal L\widetilde G
+(v\otimes v-\mathrm{Id}):\nabla_x\widetilde V\,M^{1/2}\\
&\quad=h(\varrho,u,f)^\ell-(v\otimes v-\mathrm{Id}):
\nabla_x(a^\ell\widetilde V)M^{1/2}\\
&\qquad +(v\otimes v-\mathrm{Id}):
\Big\{\mathcal D_\ell(u,b)
-\big(a^\ell u^\ell\otimes b^\ell\big)^\ell\\
&\qquad
+\Big(\big(\nabla_xa^\ell
+\dive_x\Theta(\{\mathbf I-\mathbf P\}f^\ell)
-\mathcal C_\ell(a,u)\big)\otimes b^\ell\Big)^\ell
\Big\}M^{1/2}\\
&\qquad-\{\mathbf I-\mathbf P\}
\left(v\cdot\nabla_x\left(
\frac12(v\otimes v-\mathrm{Id}):
(b^\ell\otimes b^\ell)^\ell M^{1/2}\right)\right),
\end{aligned}
\end{equation}
with the initial datum
$\widetilde G(0,x,v)=\widetilde G_0(x,v):=\{\mathbf I-\mathbf P\}f_0^\ell
-\frac12(v\otimes v-\mathrm{Id}):(b_0^\ell\otimes b_0^\ell)^\ell M^{1/2}$.

By the low--high frequency decomposition, the two defects $\mathcal C_\ell(a,u)$ and $\mathcal D_\ell(u,b)$ defined by \eqref{e31} satisfy
\begin{equation}\label{e36}
\mathcal C_\ell(a,u)=-(a^\ell u^\ell)^h
+(a^hu+a^\ell u^h)^\ell\quad\text{and}\quad 
\mathcal D_\ell(u,b)=(u^h\otimes b+u^\ell\otimes b^h)^\ell.
\end{equation}
Hence, both defects are controlled by the corresponding high-frequency
regularity.

We first establish the following low-frequency Lyapunov inequalities.
\begin{lemma}\label{l1}
For every \(j\leq1\), there exist constants \(c,C>0\), independent of
\(j\), and a functional \(\mathcal L_{\ell,j}\) such that
\begin{equation}\label{ELsim}
\mathcal L_{\ell,j}\sim
\|\dot\Delta_j(\varrho^\ell,u^\ell,a^\ell,
\widetilde V)\|_{L^2_x}^2
+\|\dot\Delta_j\widetilde G\|_{L^2_{x,v}}^2,
\end{equation}
and
\begin{equation}\label{low11}
\frac{d}{dt}\mathcal L_{\ell,j}+c2^{2j}\mathcal L_{\ell,j}
+c\|\dot\Delta_j(u^\ell-\widetilde V)\|_{L^2_x}^2
+c\|\dot\Delta_j\widetilde G\|_{L^2_xL^2_{v,\nu}}^2
\lesssim\mathcal R_{\ell,j}\sqrt{\mathcal L_{\ell,j}},
\end{equation}
with
\begin{equation}\label{e37}
\begin{aligned}
\mathcal R_{\ell,j}:={}&
\left\|\dot\Delta_j\nabla_x\Big(
(\varrho u)^\ell,a^\ell\widetilde V,
(b^\ell\otimes b^\ell)^\ell\Big)\right\|_{L^2_x}\\
&+\left\|\dot\Delta_j\Big(
(u\cdot\nabla_xu)^\ell,g(\varrho,u,f)^\ell,
a^\ell(u^\ell-\widetilde V)
\Big)\right\|_{L^2_x}\\
&+\left\|\dot\Delta_j\left(
\frac{a^\ell}{1+a^\ell}
\Big(
\nabla_xa^\ell
+\dive_x\Theta(\{\mathbf I-\mathbf P\}f^\ell)
\Big),
\frac{\widetilde V}{1+a^\ell}\dive_xb^\ell
\right)\right\|_{L^2_x}\\
&+\|\dot\Delta_jh(\varrho,u,f)^\ell\|_{L^2_{x,v}}
+\left\|\dot\Delta_j\Big(
\mathcal C_\ell(a,u),
\frac{\mathcal C_\ell(a,u)}{1+a^\ell},
\mathcal D_\ell(u,b),
(\mathcal C_\ell(a,u)\otimes b^\ell)^\ell
\Big)\right\|_{L^2_x}\\
&+\left\|\dot\Delta_j\Big(
(a^\ell u^\ell\otimes b^\ell)^\ell,
\big(\big(\nabla_xa^\ell
+\dive_x\Theta(\{\mathbf I-\mathbf P\}f^\ell)
\big)\otimes b^\ell\big)^\ell
\Big)\right\|_{L^2_x}.
\end{aligned}
\end{equation}
\end{lemma}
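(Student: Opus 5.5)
Apply $\dot\Delta_j$ (for $j\le1$) to the reformulated low-frequency system \eqref{e33}--\eqref{e35}. The linear part couples the fluid pair $(\varrho^\ell,u^\ell)$ to the particle macro--micro triple $(a^\ell,\widetilde V,\widetilde G)$ through the relaxation $u^\ell-\widetilde V$. All the nonlinear and defect terms are moved to the right-hand side; their $L^2$ norms make up exactly $\mathcal R_{\ell,j}$ in \eqref{e37}. The plan is to build $\mathcal L_{\ell,j}$ as a basic energy plus small hypocoercive cross terms. Each time a right-hand side is tested against a localized unknown, Cauchy--Schwarz bounds the contribution by $\mathcal R_{\ell,j}\sqrt{\mathcal L_{\ell,j}}$.

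\emph{Step 1 (basic energy).} Take the $L^2_x$ inner product of the localized \eqref{e34} with $(\dot\Delta_j\varrho^\ell,\dot\Delta_ju^\ell)$, of \eqref{e33} with $(\dot\Delta_ja^\ell,\dot\Delta_j\widetilde V)$, and take the $L^2_{x,v}$ inner product of \eqref{e35} with $\dot\Delta_j\widetilde G$. The pressure/divergence couplings cancel, and so do $\nabla a^\ell$ against $\dive\widetilde V$. The drag terms $u^\ell-\widetilde V$ in the two momentum equations combine, with opposite signs, into $\|\dot\Delta_j(u^\ell-\widetilde V)\|_{L^2}^2$. Since $\widetilde G$ is microscopic, the streaming term $\{\mathbf I-\mathbf P\}(v\cdot\nabla_x\widetilde G)$ is skew, and the coupling $\dive_x\Theta(\widetilde G)$ against $\widetilde V$ cancels with $(v\otimes v-\mathrm{Id}):\nabla_x\widetilde V M^{1/2}$ against $\widetilde G$ after one integration by parts. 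Here I use that $\Theta$ is the $L^2_v$-adjoint of $z\mapsto(v\otimes v-\mathrm{Id})M^{1/2}z$ (up to a normalization that I will check and fix with a constant weight if needed). By \eqref{e3}, $-\mathcal L$ yields $\lambda_0\|\dot\Delta_j\widetilde G\|_{L^2_xL^2_{v,\nu}}^2$. The viscous terms give nonnegative contributions. This produces the dissipation of $u^\ell-\widetilde V$ and $\widetilde G$, but not yet $c2^{2j}\mathcal L_{\ell,j}$.

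\emph{Step 2 (hypocoercive cross terms).} To recover $2^{2j}$ dissipation for $\varrho^\ell$, $a^\ell$ and the common velocity, I add three corrections. First, $\eta\langle\dot\Delta_ju^\ell,\nabla\dot\Delta_j\varrho^\ell\rangle$, whose time derivative gives $-\eta\|\nabla\dot\Delta_j\varrho^\ell\|^2$. Second, $\eta\langle\dot\Delta_j\widetilde V,\nabla\dot\Delta_ja^\ell\rangle$, which gives $-\eta\|\nabla\dot\Delta_ja^\ell\|^2$. The errors in both produce $\|\dive\dot\Delta_j u^\ell\|^2$, $\|\dive \dot\Delta_j\widetilde V\|^2$, $\nabla\Theta(\widetilde G)$ and drag terms; for $j\le1$ these are bounded by $2^{2j}\|\cdot\|^2\lesssim$ the existing dissipation plus a small multiple of $2^{2j}\|\dot\Delta_j(u^\ell,\widetilde V)\|^2$. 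Third, to dissipate the velocities themselves I write $\widetilde V=\frac12(u^\ell+\widetilde V)-\frac12(u^\ell-\widetilde V)$. For the mean velocity $w=\frac12(u^\ell+\widetilde V)$, the combined momentum equation contains no drag and reads $\partial_tw+\frac12\nabla(\varrho^\ell+a^\ell)+\ldots$. I need control of $\nabla w$ at order $2^{2j}$. This comes from the mass equations and the second correction: testing the mass equations against $\dive w$ through a term $-\eta^2\langle \varrho^\ell+a^\ell,\dive w\rangle$ gives $\|\dive w\|^2$. The curl part of $w$ is not reached by this; for it I plan to exploit the moment structure of $\widetilde G$, adding the term $\eta^2\langle\Theta(\dot\Delta_j\widetilde G),\nabla\dot\Delta_j\widetilde V\rangle$ (symmetrized). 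Its derivative, using \eqref{e35}, produces $\|\nabla\dot\Delta_j\widetilde V\|^2$ through the projection of $(v\otimes v-\mathrm{Id}):\nabla\widetilde VM^{1/2}$ onto $(v\otimes v-\mathrm{Id})M^{1/2}$, which is the Fokker--Planck analog of Kawashima's compensating function. All these hierarchies use weights $1\gg\eta\gg\eta^2$ and the bound $2^{j}\le2$.

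\emph{Step 3 (closing).} Choosing $\eta$ small makes $\mathcal L_{\ell,j}$ equivalent to \eqref{ELsim} uniformly in $j\le1$. Collecting terms, the dissipation dominates $c2^{2j}\mathcal L_{\ell,j}+c\|\dot\Delta_j(u^\ell-\widetilde V)\|^2+c\|\dot\Delta_j\widetilde G\|_{L^2_xL^2_{v,\nu}}^2$. At low frequency the velocity part only needs to be of order $2^{2j}$, and it is bounded by $\|\dot\Delta_j(u^\ell-\widetilde V)\|^2+2^{2j}\|\dot\Delta_j w\|^2$. The cross terms carry an extra factor of $2^j$ on the sources; this is harmless, because $2^j\le 2$, and those sources are already listed with a gradient in $\mathcal R_{\ell,j}$. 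The main obstacle is Step 2: making the velocity dissipation (especially the solenoidal part of $\widetilde V$, which receives no help from $a^\ell$) come out with a $j$-independent constant. If the $\widetilde G$ compensating term fails to close, the fallback is the drag itself. The solenoidal part of $u^\ell$ has viscous dissipation $\mu2^{2j}$, and the drag transfers it to $\widetilde V$ via $\|\widetilde V\|\le\|u^\ell\|+\|u^\ell-\widetilde V\|$.
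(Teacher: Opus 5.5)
Your Step 1 and the first two cross terms of Step 2 are exactly the paper's construction (compare \eqref{e38}--\eqref{e40}). The velocity part of Step 2, however, is unnecessary, and with the weights you chose it cannot close on its own.

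In Step 1 the viscous terms are not merely nonnegative. They give $\nu_*\|\nabla_x\dot\Delta_ju^\ell\|_{L^2_x}^2$ with $\nu_*=\min\{\mu,2\mu+\lambda\}$, and this holds for all of $u^\ell$, compressive part included, since $2\mu+\lambda>0$. The paper uses nothing beyond this dissipation and the relaxation:
\begin{itemize}
\item for $j\le1$, \eqref{e42} gives $2^{2j}\|\dot\Delta_j\widetilde V\|_{L^2_x}^2\lesssim\|\nabla_x\dot\Delta_ju^\ell\|_{L^2_x}^2+\|\dot\Delta_j(u^\ell-\widetilde V)\|_{L^2_x}^2$;
\item \eqref{e43} handles $\widetilde G$;
\item the errors $\eta_1\|\dive_x\dot\Delta_ju^\ell\|_{L^2_x}^2$ and $\eta_2\|\dive_x\dot\Delta_j\widetilde V\|_{L^2_x}^2$ produced by the two cross terms are absorbed by these order-one dissipations once $\eta_1,\eta_2$ are small compared with $\nu_*$ and $1$.
\end{itemize}
So what you call the fallback is the whole proof. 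The mean velocity $w$, the term $\langle\varrho^\ell+a^\ell,\dive w\rangle$ and the $\Theta(\widetilde G)$ compensator can all be dropped.

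If you really try to avoid viscosity, your hierarchy is ordered the wrong way. The two $\eta$-weighted cross terms produce
\[
\eta\bigl(\|\dive_x\dot\Delta_ju^\ell\|^2+\|\dive_x\dot\Delta_j\widetilde V\|^2\bigr)\ge2\eta\|\dive_x\dot\Delta_jw\|^2 .
\]
Your compensators, however, only yield dissipation of $\nabla_xw$ at level $\eta^2\ll\eta$. Your Step 2 claim that these errors are bounded by ``the existing dissipation plus a small multiple of $2^{2j}\|\dot\Delta_j(u^\ell,\widetilde V)\|^2$'' is therefore circular unless the viscous term is used.

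There is also a sign slip: with $-\eta^2\langle\varrho^\ell+a^\ell,\dive w\rangle$ in $\mathcal L_{\ell,j}$, the mass equations contribute $+2\eta^2\|\dive w\|^2$ to $\frac{d}{dt}\mathcal L_{\ell,j}$. The sign must be $+$.

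The arrangement that works without viscosity is the one in Lemma~\ref{l5}:
\begin{itemize}
\item the stress compensator $(\Theta(\dot\Delta_jG),\mathbb D_x\dot\Delta_jV)$ carries the same weight $\eta_2$ as $(\dot\Delta_jV,\nabla_x\dot\Delta_ja)$;
\item with equal weights, the acoustic stress pairings cancel, and by \eqref{e85} the term $2\eta_2\|\mathbb D_x\dot\Delta_jV\|^2$ absorbs $\eta_2\|\dive_x\dot\Delta_jV\|^2$;
\item only then is the fluid cross term added, with $\eta_1\ll\eta_2$.
\end{itemize}
The paper needs this for $\varepsilon$-uniform constants. For the present lemma, with $\mu,\lambda$ fixed, it is not required.
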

\begin{proof}
 We apply
\(\dot\Delta_j\) to
\eqref{e33}--\eqref{e35} and take the
\(L^2(\mathbb R^d_x)\) and \(L^2(\mathbb R^d_x\times\mathbb R^d_v)\) inner products with
\(\dot\Delta_j(\varrho^\ell,u^\ell,a^\ell,
\widetilde V,\widetilde G)\), respectively. Since \(\mathbf P\widetilde G=0\), \eqref{e3} gives
\begin{equation}\label{e38}
\begin{aligned}
&\frac12\frac{d}{dt}
\|\dot\Delta_j(\varrho^\ell,u^\ell,a^\ell,
\widetilde V)\|_{L^2_x}^2
+\frac12\frac{d}{dt}
\|\dot\Delta_j\widetilde G\|_{L^2_{x,v}}^2\\
&\quad+\nu_*\|\nabla_x\dot\Delta_ju^\ell\|_{L^2_x}^2
+\|\dot\Delta_j(u^\ell-\widetilde V)\|_{L^2_x}^2+\lambda_0\|
\dot\Delta_j\widetilde G\|_{L^2_xL^2_{v,\nu}}^2\\
&\quad
\lesssim
\mathcal R_{\ell,j}\Big(
\|\dot\Delta_j(\varrho^\ell,u^\ell,a^\ell,
\widetilde V)\|_{L^2_x}
+\|\dot\Delta_j\widetilde G\|_{L^2_{x,v}}\Big),
\end{aligned}
\end{equation}
with \(\nu_*:=\min\{\mu,2\mu+\lambda\}>0\). 

To capture the dissipation, we take the \(L^2(\mathbb R^d_x)\) inner products of the
\(\dot\Delta_j\)-localized in
$\eqref{e33}_2$ and $\eqref{e34}_2$ with
\(\nabla_x\dot\Delta_j a^\ell\) and
\(\nabla_x\dot\Delta_j\varrho^\ell\), respectively, and use the
corresponding first equations to obtain
\begin{align}
&\frac d{dt}\bigl(\dot\Delta_ju^\ell,\, \nabla_x\dot\Delta_j\varrho^\ell\bigr)_{L^2_x}+\|\nabla_x\dot\Delta_j\varrho^\ell\|_{L^2_x}^2\nonumber\\
&\quad\leq \|\dive_x\dot\Delta_ju^\ell\|_{L^2_x}^2-\bigl(\dot\Delta_j(u^\ell-\widetilde V),\, \nabla_x\dot\Delta_j\varrho^\ell\bigr)_{L^2_x}\nonumber\\
&\qquad+\bigl(\mu\Delta_x\dot\Delta_ju^\ell+(\mu+\lambda)\nabla_x\dive_x\dot\Delta_ju^\ell,\, \nabla_x\dot\Delta_j\varrho^\ell\bigr)_{L^2_x}\nonumber\\
&\qquad+C\mathcal R_{\ell,j}\Big(\|\dot\Delta_j(\varrho^\ell,u^\ell,a^\ell,\widetilde V)\|_{L^2_x}+\|\dot\Delta_j\widetilde G\|_{L^2_{x,v}}\Big),
\label{e39}
\end{align}
and
\begin{align}
&\frac d{dt}\bigl(\dot\Delta_j\widetilde V,\, \nabla_x\dot\Delta_ja^\ell\bigr)_{L^2_x}+\|\nabla_x\dot\Delta_ja^\ell\|_{L^2_x}^2\nonumber\\
&\quad\leq \|\dive_x\dot\Delta_j\widetilde V\|_{L^2_x}^2+\bigl(\dot\Delta_j(u^\ell-\widetilde V),\, \nabla_x\dot\Delta_ja^\ell\bigr)_{L^2_x}\nonumber\\
&\qquad-\bigl(\dive_x\Theta(\dot\Delta_j\widetilde G),\, \nabla_x\dot\Delta_ja^\ell\bigr)_{L^2_x}+C\mathcal R_{\ell,j}\Big(\|\dot\Delta_j(\varrho^\ell,u^\ell,a^\ell,\widetilde V)\|_{L^2_x}+\|\dot\Delta_j\widetilde G\|_{L^2_{x,v}}\Big).
\label{e40}
\end{align}

We define
\begin{equation*}
\begin{aligned}
\mathcal L_{\ell,j}:={}&
\frac12\|\dot\Delta_j(\varrho^\ell,u^\ell,a^\ell,
\widetilde V)\|_{L^2_x}^2
+\frac12\|\dot\Delta_j\widetilde G\|_{L^2_{x,v}}^2\\
&+\eta_1
\bigl(\dot\Delta_ju^\ell,
\nabla_x\dot\Delta_j\varrho^\ell\bigr)_{L^2_x}
+\eta_2
\bigl(\dot\Delta_j\widetilde V,
\nabla_x\dot\Delta_ja^\ell\bigr)_{L^2_x}.
\end{aligned}
\end{equation*}
For \(j\leq1\), Bernstein's inequality gives
\begin{equation*}
\left|\bigl(\dot\Delta_ju^\ell,\nabla_x\dot\Delta_j\varrho^\ell\bigr)_{L^2_x}\right|
+\left|\bigl(\dot\Delta_j\widetilde V,\nabla_x\dot\Delta_ja^\ell\bigr)_{L^2_x}\right|
\lesssim\|\dot\Delta_j(u^\ell,\varrho^\ell)\|_{L^2_x}^2
+\|\dot\Delta_j(\widetilde V,a^\ell)\|_{L^2_x}^2.
\end{equation*}
We multiply these two inequalities by \(\eta_1\) and \(\eta_2\),
respectively, and add them to \eqref{e38}.  Note that
\begin{align}
&2^{2j}\|\dot\Delta_j\widetilde V\|_{L^2_x}^2
\lesssim
\|\nabla_x\dot\Delta_ju^\ell\|_{L^2_x}^2
+\|\dot\Delta_j(u^\ell-\widetilde V)\|_{L^2_x}^2,
\label{e42}\\
&2^{2j}\|\dot\Delta_j\widetilde G\|_{L^2_{x,v}}^2
\lesssim
\|
\dot\Delta_j\widetilde G\|_{L^2_xL^2_{v,\nu}}^2.
\label{e43}
\end{align}
We now apply Young's inequality to the linear pairings in
\eqref{e39}--\eqref{e40}, using
at most half of each weighted density-gradient term. Since $j\leq1$,
the remaining terms are bounded by $C(\eta_1+\eta_2)$ times the basic
viscous, relative-velocity and microscopic dissipations. Choosing
\(\eta_1,\eta_2>0\) sufficiently small ensures \eqref{ELsim}
and absorbs these terms into \eqref{e38}, yielding
\begin{align*}
\frac{d}{dt}\mathcal L_{\ell,j}
&+\frac{\nu_*}{2}\|\nabla_x\dot\Delta_ju^\ell\|_{L^2_x}^2
+\frac{\eta_1}{2}\|\nabla_x\dot\Delta_j\varrho^\ell\|_{L^2_x}^2
+\frac{\eta_2}{2}\|\nabla_x\dot\Delta_ja^\ell\|_{L^2_x}^2\\
&+\frac12\|\dot\Delta_j(u^\ell-\widetilde V)\|_{L^2_x}^2
+\frac{\lambda_0}{2}\|
\dot\Delta_j\widetilde G\|_{L^2_xL^2_{v,\nu}}^2
\lesssim\mathcal R_{\ell,j}\sqrt{\mathcal L_{\ell,j}}.
\end{align*}
Finally, \eqref{ELsim}, \eqref{e42}--\eqref{e43} and Bernstein's
inequality show that
\[
2^{2j}\mathcal L_{\ell,j}
\lesssim
\|\nabla_x\dot\Delta_ju^\ell\|_{L^2_x}^2
+2^{2j}\|\dot\Delta_j(\varrho^\ell,a^\ell)\|_{L^2_x}^2
+\|\dot\Delta_j(u^\ell-\widetilde V)\|_{L^2_x}^2
+\|
\dot\Delta_j\widetilde G\|_{L^2_xL^2_{v,\nu}}^2.
\]
This proves \eqref{low11}.
\end{proof}

Before proving the low-frequency a priori estimate, we record the
bounds for $\rho=1+\varrho$. Bernstein's
inequality and the embedding
$\dot B^{d/2}_{2,1}\hookrightarrow L^\infty(\mathbb R^d_x)$ give 
\begin{equation}\label{e44}
 \frac12\leq\rho(t,x)=1+\varrho(t,x)\leq\frac32
 \quad\text{for}\quad (t,x)\in[0,T]\times\mathbb R^d,
\end{equation}
provided that $\delta_0^*$ is chosen to be small.
\begin{lemma}\label{l2}
Let \(T>0\), and let \((\varrho,u,f)\) be a solution to the
Cauchy problem \eqref{m1n} for the NS--VFP system on \([0,T]\).
Suppose that \eqref{e25}
holds. Then, for every \(t\in[0,T]\), we have
\begin{equation}\label{e45}
\begin{aligned}
&\|(\varrho^\ell,u^\ell)\|_{\widetilde L^{\infty}_t
(\dot B^{d/2-1}_{2,1})\cap L^1_t
(\dot B^{d/2+1}_{2,1})}
+\|f^\ell\|_{\widetilde L^{\infty}_t
(\dot{\mathcal B}^{d/2-1}_{2,1})\cap L^1_t
(\dot{\mathcal B}^{d/2+1}_{2,1})}\\
&\quad+\|(u-b)^\ell\|_{\widetilde L^2_t
(\dot B^{d/2-1}_{2,1})}
+\|
\{\mathbf I-\mathbf P\}f^\ell\|_{\widetilde L^2_t
(\dot{\mathcal B}^{d/2-1}_{2,1,\nu})}\\
&\qquad\lesssim
\|(\varrho_0^\ell,u_0^\ell)\|_{
\dot B^{d/2-1}_{2,1}}
+\|f_0^\ell\|_{\dot{\mathcal B}^{d/2-1}_{2,1}}
+\|f_0^\ell\|_{\dot{\mathcal B}^{d/2-1}_{2,1}}^2
+\|(\varrho,u,f)\|_{\mathcal E_t}^2+\|(\varrho,u,f)\|_{\mathcal E_t}^3.
\end{aligned}
\end{equation}
\end{lemma}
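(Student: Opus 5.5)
The plan is to integrate the Lyapunov inequality \eqref{low11} of Lemma \ref{l1} in time for each $j\le1$, sum in $j$ with weight $2^{j(d/2-1)}$, then bound $\mathcal R_{\ell,j}$ by products of $\mathcal E_t$-norms, and finally return from $(\widetilde V,\widetilde G)$ to the original variables $(b,\{\mathbf I-\mathbf P\}f)$.

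\textbf{Time integration.} Write $\mathcal L_{\ell,j}=X_j^2$ and divide \eqref{low11} by $X_j$ (regularised, e.g. $\sqrt{X_j^2+\eta^2}$ with $\eta\to0$). This gives
\[
X_j(t)+c2^{2j}\int_0^tX_j\,ds\lesssim X_j(0)+\int_0^t\mathcal R_{\ell,j}\,ds.
\]
To recover the $L^2_t$ dissipations of $u^\ell-\widetilde V$ and $\widetilde G$, integrate \eqref{low11} directly, bound the right side by $\sup_s X_j\int\mathcal R_{\ell,j}$, and insert the previous bound. Taking square roots, multiplying by $2^{j(d/2-1)}$ and summing over $j\le1$ yields the $\widetilde L^\infty_t(\dot B^{d/2-1}_{2,1})\cap L^1_t(\dot B^{d/2+1}_{2,1})$ norms of $(\varrho^\ell,u^\ell,a^\ell,\widetilde V)$ and of $\widetilde G$. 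It also yields the $\widetilde L^2_t$ norms of $u^\ell-\widetilde V$ in $\dot B^{d/2-1}_{2,1}$ and of $\widetilde G$ in $\dot{\mathcal B}^{d/2-1}_{2,1,\nu}$. All of these are bounded by $\|(\varrho_0,u_0,a_0,\widetilde V_0,\widetilde G_0)^\ell\|$ plus $\sum_{j\le1}2^{j(d/2-1)}\|\mathcal R_{\ell,j}\|_{L^1_t}$.

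\textbf{Back to the original variables.} Use the identities
\[
b^\ell-\widetilde V=a^\ell\widetilde V
\quad\text{and}\quad
\{\mathbf I-\mathbf P\}f^\ell=\widetilde G+\tfrac12(v\otimes v-\mathrm{Id}):(b^\ell\otimes b^\ell)^\ell M^{1/2}.
\]
These differences are quadratic. Product laws in $\dot B^{d/2-1}_{2,1}\times\dot B^{d/2}_{2,1}$ bound them in every norm of $\mathcal E_t$, using interpolation and Bernstein at low frequency; for instance, $a^\ell\in L^2_t(\dot B^{d/2}_{2,1})$ by interpolation. The macro--micro equivalence \eqref{E35} then recovers $f^\ell$.

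For the initial data, note that $\widetilde V_0-b_0^\ell$ and $\widetilde G_0-\{\mathbf I-\mathbf P\}f_0^\ell$ are quadratic in $f_0^\ell$. This produces the term $\|f_0^\ell\|^2_{\dot{\mathcal B}^{d/2-1}_{2,1}}$. The cubic term $\|\cdot\|_{\mathcal E_t}^3$ arises from $1/(1+a^\ell)$ via composition estimates and \eqref{e30}, and from $a^\ell u^\ell\otimes b^\ell$.

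\textbf{Main obstacle.} The key step is to show that
\[
\sum_{j\le1}2^{j(d/2-1)}\|\mathcal R_{\ell,j}\|_{L^1_t}\lesssim\|(\varrho,u,f)\|_{\mathcal E_t}^2+\|(\varrho,u,f)\|_{\mathcal E_t}^3.
\]
The terms in \eqref{e37} fall into three groups.

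\emph{Divergence-form terms.} These are $\nabla(\varrho u)$, $\nabla(a^\ell\widetilde V)$, $\nabla(b^\ell\otimes b^\ell)$ and $u\cdot\nabla u$. One product estimate $\|FG\|_{\dot B^{d/2}_{2,1}}\lesssim\|F\|_{\dot B^{d/2}_{2,1}}\|G\|_{\dot B^{d/2}_{2,1}}$ in $L^1_t$ suffices, with one factor in $L^2_t(\dot B^{d/2}_{2,1})$ by interpolating the $L^\infty_t$ and $L^1_t$ bounds of $\mathcal E_t$. At high frequencies, $\varrho^h\in L^1_t(\dot B^{d/2}_{2,1})$ is used directly.

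\emph{Underivative quadratic terms.} These are $\frac{\varrho}{1+\varrho}(u-b)$, $a^\ell(u^\ell-\widetilde V)$, $h$, and the defect terms. Here the $L^2_t$ dissipation is essential. Pair $u-b\in L^2_t(\dot B^{d/2-1}_{2,1})$ with $\varrho\in L^2_t(\dot B^{d/2}_{2,1})$, using $\dot B^{d/2}\times\dot B^{d/2-1}\to\dot B^{d/2-1}$ (valid for $d\ge2$). Treat $u\cdot\nabla_v\{\mathbf I-\mathbf P\}f$ with the $\nu$-norm in $L^2_t$ against $u\in L^2_t(\dot B^{d/2}_{2,1})$. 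The term $\frac{\varrho}{1+\varrho}\,au$ is cubic.

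\emph{Defects \eqref{e36}.} These involve only high-frequency pieces. They are controlled by $\varrho^h,f^h\in L^1_t(\dot B^{d/2-1})$ together with the $L^\infty_t$ bounds, or by $u^h\in L^1_t(\dot B^{d/2+1})$. Since the output is restricted to low frequencies, all regularity indices are interchangeable up to constants.

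The most delicate case is the viscous part of $g$, namely $\frac{\varrho}{1+\varrho}\Delta u$. It is handled by $\varrho\in L^\infty_t(\dot B^{d/2}_{2,1})$ and $\Delta u\in L^1_t(\dot B^{d/2-1}_{2,1})$.
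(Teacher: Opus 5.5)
Your proposal is correct and follows the paper's proof essentially step for step. The shared steps are: regularized division of \eqref{low11} by $\sqrt{\mathcal L_{\ell,j}}$, plus direct integration for the $L^2_t$ dissipations (as in \eqref{e46}--\eqref{e47}); summation with weight $2^{j(d/2-1)}$; control of $\sum_{j\le1}2^{j(d/2-1)}\|\mathcal R_{\ell,j}\|_{L^1_t}$ by product estimates that exploit the $L^2_t$ bounds on $u-b$ and $\{\mathbf I-\mathbf P\}f$; and recovery of $b^\ell$, $(u-b)^\ell$ and $\{\mathbf I-\mathbf P\}f^\ell$ from $\widetilde V,\widetilde G$ through the quadratic identities (as in \eqref{e51} and \eqref{e61}).

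The only bookkeeping to tighten is in the defects \eqref{e36}. The piece $-(a^\ell u^\ell)^h$ has purely low-frequency inputs; bound it by $\|a^\ell\|_{L^2_t(\dot B^{d/2}_{2,1})}\|u^\ell\|_{L^2_t(\dot B^{d/2}_{2,1})}$. Also, at low frequencies a lower-index norm controls a higher-index one but not conversely, so your indices are not freely interchangeable. For $d=2$, pairing $a^h,b^h\in L^1_t(\dot B^{0}_{2,1})$ with $L^\infty_t(\dot B^{0}_{2,1})$ bounds therefore works only via the endpoint law \eqref{uv3} followed by $\|z\|^\ell_{\dot B^{0}_{2,1}}\lesssim\|z\|_{\dot B^{-1}_{2,\infty}}$. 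The paper avoids this by pairing $\|(a^h,b^h)\|_{L^2_t(\dot B^{d/2-1}_{2,1})}$ with $\|u\|_{L^2_t(\dot B^{d/2}_{2,1})}$.
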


\begin{proof}
Fix \(j\leq1\). We divide \eqref{low11} by
\(2\sqrt{\mathcal L_{\ell,j}+\kappa^2}\), integrate in time and let
\(\kappa\to0\). Then one has
\begin{equation}\label{e46}
\sup_{0\leq\tau\leq t}\sqrt{\mathcal L_{\ell,j}(\tau)}
+c2^{2j}\int_0^t\sqrt{\mathcal L_{\ell,j}(\tau)}\,d\tau
\lesssim \sqrt{\mathcal L_{\ell,j}(0)}
+\|\mathcal R_{\ell,j}\|_{L^1_t}.
\end{equation}
Integrating \eqref{low11} and using Young's inequality, we obtain
\begin{align}
\|\dot\Delta_j(u^\ell-\widetilde V)\|_{L^2_tL^2_x}
+\|
\dot\Delta_j\widetilde G\|_{L^2_tL^2_xL^2_{v,\nu}}&\lesssim
\left(
\mathcal L_{\ell,j}(0)
+\|\mathcal R_{\ell,j}\|_{L^1_t}
\sup_{0\leq\tau\leq t}
\sqrt{\mathcal L_{\ell,j}(\tau)}
\right)^{1/2}
\nonumber\\
&\quad\lesssim
\sqrt{\mathcal L_{\ell,j}(0)}
+\|\mathcal R_{\ell,j}\|_{L^1_t}+\sup_{0\leq\tau\leq t}\sqrt{\mathcal L_{\ell,j}(\tau)}.
\label{e47}
\end{align}
Since the external low-frequency seminorm is defined by summation over
\(j\leq1\), and
\(\|z^\ell\|_{\dot B^s_{2,1}}^\ell
=\|z^\ell\|_{\dot B^s_{2,1}}\), we multiply \eqref{e46} and
\eqref{e47} by \(2^{j(d/2-1)}\), sum over
\(j\leq1\) and use \eqref{ELsim} to get
\begin{equation}\label{e48}
\begin{aligned}
&\|(\varrho^\ell,u^\ell,a^\ell,\widetilde V)\|_
{\widetilde L^\infty_t
(\dot B^{d/2-1}_{2,1})}^{\ell}
+\|\widetilde G\|_{\widetilde L^\infty_t
(\dot{\mathcal B}^{d/2-1}_{2,1})}^{\ell}\\
&\quad+\|(\varrho^\ell,u^\ell,a^\ell,\widetilde V)\|_
{L^1_t(\dot B^{d/2+1}_{2,1})}^{\ell}
+\|\widetilde G\|_{L^1_t
(\dot{\mathcal B}^{d/2+1}_{2,1})}^{\ell}+\|u^\ell-\widetilde V\|_{\widetilde L^2_t
(\dot B^{d/2-1}_{2,1})}^{\ell}
+\|\widetilde G\|_{\widetilde L^2_t
(\dot{\mathcal B}^{d/2-1}_{2,1,\nu})}^{\ell}\\
&\qquad\lesssim
\|(\varrho_0^\ell,u_0^\ell,a_0^\ell,\widetilde V_0)\|_
{\dot B^{d/2-1}_{2,1}}^{\ell}
+\|\widetilde G_0\|_{\dot{\mathcal B}^{d/2-1}_{2,1}}^{\ell}
+\sum_{j\leq1}2^{j(d/2-1)}
\|\mathcal R_{\ell,j}\|_{L^1_t}.
\end{aligned}
\end{equation}
By \eqref{e20}, \eqref{e30} and \eqref{uv2}, it follows that
\begin{equation}\label{e49}
\|(a_0^\ell,\widetilde V_0)\|_{\dot B^{d/2-1}_{2,1}}^{\ell}
+\|\widetilde G_0\|_{\dot{\mathcal B}^{d/2-1}_{2,1}}^{\ell}
\lesssim \|f_0^\ell\|_{\dot{\mathcal B}^{d/2-1}_{2,1}}+\|f_0^\ell\|_{\dot{\mathcal B}^{d/2-1}_{2,1}}^2.
\end{equation}

We now estimate the remainder in \eqref{e48}.
From the definition of \(\mathcal E_t\) and interpolation, one has
\begin{equation}\label{e50}
\begin{aligned}
&\|(\varrho,u)\|_{L^2_t
(\dot B^{d/2}_{2,1})}
+\|(a^\ell,b^\ell)\|_{L^2_t
(\dot B^{d/2}_{2,1})}
+\|(a^h,b^h)\|_{L^2_t
(\dot B^{d/2-1}_{2,1})}\\
&\quad+\|f^\ell\|_{L^2_t
(\dot{\mathcal B}^{d/2}_{2,1})}+\|f^h\|_{L^2_t
(\dot{\mathcal B}^{d/2-1}_{2,1})}
+\|\{\mathbf I-\mathbf P\}f\|_
{L^2_t
(\dot{\mathcal B}^{d/2-1}_{2,1,\nu})}
\lesssim \|(\varrho,u,f)\|_{\mathcal E_t}.
\end{aligned}
\end{equation}
Since $\widetilde V=b^\ell-\frac{a^\ell}{1+a^\ell}b^\ell$ and $u^\ell-\widetilde V=(u-b)^\ell
+\frac{a^\ell}{1+a^\ell}b^\ell$, the product and composition estimates, together with
\eqref{Xt}, \eqref{e30} and
\eqref{e50}, yield
\begin{equation}\label{e51}
\begin{aligned}
\|\widetilde V\|_{L^2_t(\dot B^{d/2}_{2,1})}
&\leq
\|b^\ell\|_{L^2_t(\dot B^{d/2}_{2,1})}
+\left\|\frac{a^\ell}{1+a^\ell}b^\ell
\right\|_{L^2_t(\dot B^{d/2}_{2,1})}\\
&\lesssim
\Bigl(1+\|a^\ell\|_{L^\infty_t
(\dot B^{d/2}_{2,1})}\Bigr)
\|b^\ell\|_{L^2_t(\dot B^{d/2}_{2,1})}\lesssim \|(\varrho,u,f)\|_{\mathcal E_t},\\
\|u^\ell-\widetilde V\|_{L^2_t(\dot B^{d/2-1}_{2,1})}
&\leq
\|(u-b)^\ell\|_{L^2_t(\dot B^{d/2-1}_{2,1})}
+\left\|\frac{a^\ell}{1+a^\ell}b^\ell
\right\|_{L^2_t(\dot B^{d/2-1}_{2,1})}\\
&\lesssim
\|u-b\|_{L^2_t(\dot B^{d/2-1}_{2,1})}+
\|a^\ell\|_{L^2_t(\dot B^{d/2}_{2,1})}
\|b^\ell\|_{L^\infty_t
(\dot B^{d/2-1}_{2,1})}\lesssim \|(\varrho,u,f)\|_{\mathcal E_t}.
\end{aligned}
\end{equation}
 The bounds
\eqref{e30} and \eqref{e44} also give control of the coefficients \((1+a^\ell)^{-1}\) and
\((1+\varrho)^{-1}\).

Every term in \eqref{e37} can be analyzed as follows.

\smallskip
\noindent\emph{Terms with one spatial derivative.}
The product and composition estimates (Lemmas \ref{l11} and \ref{l12}) as well as \eqref{e50} yield
\begin{align}
&\|\nabla_x(\varrho u)\|_{L^1_t
(\dot B^{d/2-1}_{2,1})}
+\|u\cdot\nabla_xu\|_{L^1_t
(\dot B^{d/2-1}_{2,1})}
+\|\nabla_x(b^\ell\otimes b^\ell)\|_{L^1_t
(\dot B^{d/2-1}_{2,1})} \nonumber\\
&\quad+\left\|\Big(
\frac{P'(1+\varrho)}{1+\varrho}-1\Big)\nabla_x\varrho
\right\|_{L^1_t(\dot B^{d/2-1}_{2,1})}+\left\|\frac{\varrho}{1+\varrho}
\Big(\mu\Delta_xu+(\mu+\lambda)\nabla_x\dive_xu\Big)
\right\|_{L^1_t(\dot B^{d/2-1}_{2,1})} \nonumber\\
&\quad+\left\|\frac{a^\ell}{1+a^\ell}
\Big(\nabla_xa^\ell
+\dive_x\Theta(\{\mathbf I-\mathbf P\}f^\ell)\Big)
\right\|_{L^1_t(\dot B^{d/2-1}_{2,1})} \nonumber\\
&\quad+\left\|\left(
\nabla_xa^\ell
+\dive_x\Theta(\{\mathbf I-\mathbf P\}f^\ell)
\right)\otimes b^\ell
\right\|_{L^1_t(\dot B^{d/2-1}_{2,1})}\nonumber\\
&\lesssim \|(\varrho,u,f)\|_{\mathcal E_t}^2,\label{e52}
\end{align}
and
\begin{equation*}
\|\nabla_x(a^\ell\widetilde V)\|_{L^1_t
(\dot B^{d/2-1}_{2,1})}
+\left\|\frac{\widetilde V}{1+a^\ell}
\dive_xb^\ell\right\|_{L^1_t
(\dot B^{d/2-1}_{2,1})}
\lesssim
\|(a^\ell,b^\ell)\|_{L^2_t
(\dot B^{d/2}_{2,1})}
\|\widetilde V\|_{L^2_t
(\dot B^{d/2}_{2,1})}.
\end{equation*}

\smallskip
\noindent\emph{Lower-order terms.}
By \eqref{e50} and \eqref{e51}, we have
\begin{equation}\label{e54}
\begin{aligned}
&\|a^\ell(u^\ell-\widetilde V)\|_
{L^1_t(\dot B^{d/2-1}_{2,1})}
\lesssim
\|a^\ell\|_{L^2_t
(\dot B^{d/2}_{2,1})}
\|u^\ell-\widetilde V\|_{L^2_t
(\dot B^{d/2-1}_{2,1})},\\
&\|h(\varrho,u,f)\|_{L^1_t
(\dot{\mathcal B}^{d/2-1}_{2,1})}
\lesssim
\|u\|_{L^2_t
(\dot B^{d/2}_{2,1})}
\|\{\mathbf I-\mathbf P\}f\|_
{L^2_t
(\dot{\mathcal B}^{d/2-1}_{2,1,\nu})}
\lesssim\|(\varrho,u,f)\|_{\mathcal E_t}^2,\\
&\left\|\frac{\varrho}{1+\varrho}(u-b)\right\|_
{L^1_t(\dot B^{d/2-1}_{2,1})}
\lesssim
\|\varrho\|_{L^2_t
(\dot B^{d/2}_{2,1})}
\|u-b\|_{L^2_t
(\dot B^{d/2-1}_{2,1})}
\lesssim\|(\varrho,u,f)\|_{\mathcal E_t}^2.
\end{aligned}
\end{equation}

\smallskip
\noindent\emph{Trilinear terms.}
The product and composition estimates, together with \eqref{e50}, imply
\begin{equation}\label{e55}
\begin{aligned}
&\left\|\frac{\varrho}{1+\varrho}au\right\|_
{L^1_t(\dot B^{d/2-1}_{2,1})}
+\|a^\ell u^\ell\otimes b^\ell\|_{L^1_t
(\dot B^{d/2-1}_{2,1})}\\
&\quad\lesssim \|(\varrho, a)\|_{L^\infty_t(\dot{B}^{d/2-1}_{2,1})} \|(\varrho,b^\ell)\|_{L^2_t(\dot{B}^{d/2}_{2,1})}\|u\|_{L^2_t(\dot{B}^{d/2}_{2,1})}\lesssim \|(\varrho,u,f)\|_{\mathcal E_t}^3.
\end{aligned}
\end{equation}

\smallskip
\noindent\emph{Truncation defects.}
It follows from \eqref{e36}, \eqref{e50} and \eqref{uv2} that
\begin{equation*}
\begin{aligned}
&\|(a^\ell u^\ell)^h\|_{L^1_t
(\dot B^{d/2-1}_{2,1})}
\lesssim
\|(a^\ell u^\ell)^h\|_{L^1_t
(\dot B^{d/2}_{2,1})}
\lesssim
\|a^\ell\|_{L^2_t
(\dot B^{d/2}_{2,1})}
\|u^\ell\|_{L^2_t
(\dot B^{d/2}_{2,1})},\\
&\|(a^hu+a^\ell u^h)^\ell\|_{L^1_t
(\dot B^{d/2-1}_{2,1})}
\lesssim
\|a^h\|_{L^2_t
(\dot B^{d/2-1}_{2,1})}
\|u\|_{L^2_t
(\dot B^{d/2}_{2,1})}
+\|a^\ell\|_{L^2_t
(\dot B^{d/2}_{2,1})}
\|u^h\|_{L^2_t
(\dot B^{d/2-1}_{2,1})},\\
&\|\mathcal D_\ell(u,b)\|_{L^1_t
(\dot B^{d/2-1}_{2,1})}
\lesssim
\|u^h\|_{L^1_t
(\dot B^{d/2+1}_{2,1})}
\|b\|_{L^\infty_t
(\dot B^{d/2-1}_{2,1})}
+\|u^\ell\|_{L^2_t
(\dot B^{d/2}_{2,1})}
\|b^h\|_{L^2_t
(\dot B^{d/2-1}_{2,1})},
\end{aligned}
\end{equation*}
where we use $\|z^h\|_{\dot B^{d/2-1}_{2,1}}\lesssim
\|z^h\|_{\dot B^{d/2}_{2,1}}$. Consequently, one has
$\|\mathcal C_\ell(a,u)\|_{L^1_t
(\dot B^{d/2-1}_{2,1})}
+\|\mathcal D_\ell(u,b)\|_{L^1_t
(\dot B^{d/2-1}_{2,1})}
\lesssim\|(\varrho,u,f)\|_{\mathcal E_t}^2$ and
\begin{equation*}
\left\|\left(\frac{\mathcal C_\ell(a,u)}{1+a^\ell},
\mathcal C_\ell(a,u)\otimes b^\ell\right)\right\|_
{L^1_t(\dot B^{d/2-1}_{2,1})}
\lesssim\|(\varrho,u,f)\|_{\mathcal E_t}^2
+\|(\varrho,u,f)\|_{\mathcal E_t}^3.
\end{equation*}

By \eqref{e4},
\eqref{e52},
\eqref{e54} and
\eqref{e55}, one obtains
\begin{equation*}
\|g(\varrho,u,f)\|_{L^1_t
(\dot B^{d/2-1}_{2,1})}
\lesssim\|(\varrho,u,f)\|_{\mathcal E_t}^2
+\|(\varrho,u,f)\|_{\mathcal E_t}^3.
\end{equation*}

The preceding estimates imply
\begin{equation}\label{e59}
\begin{aligned}
\sum_{j\leq1}2^{j(d/2-1)}
\|\mathcal R_{\ell,j}\|_{L^1_t}
&\lesssim\|(\varrho,u,f)\|_{\mathcal E_t}^2+\|(\varrho,u,f)\|_{\mathcal E_t}^3\\
&\quad+\|(\varrho,u,f)\|_{\mathcal E_t}
\Big(\|\widetilde V\|_{L^2_t
(\dot B^{d/2}_{2,1})}
+\|u^\ell-\widetilde V\|_{L^2_t
(\dot B^{d/2-1}_{2,1})}\Big).
\end{aligned}
\end{equation}
The last line is bounded by
$\|(\varrho,u,f)\|_{\mathcal E_t}^2$ in view of
\eqref{e51}. Inserting
\eqref{e49} and
\eqref{e59} into
\eqref{e48}, one obtains
\begin{equation}\label{e60}
\begin{aligned}
&\|(\varrho^\ell,u^\ell,a^\ell,\widetilde V)\|_
{\widetilde L^\infty_t
(\dot B^{d/2-1}_{2,1})}^{\ell}
+\|\widetilde G\|_{\widetilde L^\infty_t
(\dot{\mathcal B}^{d/2-1}_{2,1})}^{\ell}\\
&\quad+\|(\varrho^\ell,u^\ell,a^\ell,\widetilde V)\|_
{L^1_t(\dot B^{d/2+1}_{2,1})}^{\ell}
+\|\widetilde G\|_{L^1_t
(\dot{\mathcal B}^{d/2+1}_{2,1})}^{\ell}+\|u^\ell-\widetilde V\|_{\widetilde L^2_t
(\dot B^{d/2-1}_{2,1})}^{\ell}
+\|\widetilde G\|_{\widetilde L^2_t
(\dot{\mathcal B}^{d/2-1}_{2,1,\nu})}^{\ell}\\
&\qquad\lesssim
\|(\varrho_0^\ell,u_0^\ell)\|_{\dot B^{d/2-1}_{2,1}}
+\|f_0^\ell\|_{\dot{\mathcal B}^{d/2-1}_{2,1}}
+\|f_0^\ell\|_{\dot{\mathcal B}^{d/2-1}_{2,1}}^2
+\|(\varrho,u,f)\|_{\mathcal E_t}^2
+\|(\varrho,u,f)\|_{\mathcal E_t}^3.
\end{aligned}
\end{equation}

It remains to recover the original variables. We use
$b^\ell=(1+a^\ell)\widetilde V$,
$\{\mathbf I-\mathbf P\}f^\ell=\widetilde G
+\frac12(v\otimes v-\mathrm{Id}):(b^\ell\otimes b^\ell)^\ell
M^{1/2}$ and
$(u-b)^\ell=(u^\ell-\widetilde V)-a^\ell\widetilde V$.
The low-frequency localization, \eqref{e50},
\eqref{e60} and the product estimates
\eqref{uv1}--\eqref{uv2} imply
\begin{equation}\label{e61}
\begin{aligned}
&\|a^\ell\widetilde V\|_{\widetilde L^\infty_t
(\dot B^{d/2-1}_{2,1})}^{\ell}
+\|a^\ell\widetilde V\|_{L^1_t
(\dot B^{d/2+1}_{2,1})}^{\ell}
+\|a^\ell\widetilde V\|_{\widetilde L^2_t
(\dot B^{d/2-1}_{2,1})}^{\ell}\\
&\quad+\|(b^\ell\otimes b^\ell)^\ell\|_{\widetilde L^\infty_t
(\dot B^{d/2-1}_{2,1})}
+\|(b^\ell\otimes b^\ell)^\ell\|_{L^1_t
(\dot B^{d/2+1}_{2,1})}+\|(b^\ell\otimes b^\ell)^\ell\|_{\widetilde L^2_t
(\dot B^{d/2-1}_{2,1})}
\\
&\quad\quad \lesssim\|(\varrho,u,f)\|_{\mathcal E_t}^2.
\end{aligned}
\end{equation}
The estimates in \eqref{e45} now follow from \eqref{e60}. For example,
\eqref{E35}, \eqref{e60} and \eqref{e61} imply
\begin{align*}
\|f^\ell\|_{\widetilde L^\infty_t
(\dot{\mathcal B}^{d/2-1}_{2,1})}
&\lesssim
\|(a^\ell,\widetilde V)\|_{\widetilde L^\infty_t
(\dot B^{d/2-1}_{2,1})}^{\ell}
+\|\widetilde G\|_{\widetilde L^\infty_t
(\dot{\mathcal B}^{d/2-1}_{2,1})}^{\ell}\\
&\quad
+\|a^\ell\widetilde V\|_{\widetilde L^\infty_t
(\dot B^{d/2-1}_{2,1})}^{\ell}
+\|(b^\ell\otimes b^\ell)^\ell\|_{\widetilde L^\infty_t
(\dot B^{d/2-1}_{2,1})}.
\end{align*}
The \(L^1_t\) estimate for \(f^\ell\) and the \(L^2_t\) estimates for
\((u-b)^\ell\) and
\(\{\mathbf I-\mathbf P\}f^\ell\) in the velocity dissipation norm follow in the same
way. Hence, by
\eqref{e60} and \eqref{e61}, we 
prove \eqref{e45}.
\end{proof}

\subsection{High-frequency analysis}\label{s5}

We establish a high-frequency Lyapunov estimate exhibiting
exponential damping for each high-frequency block.

\begin{lemma}\label{l3}
For every $j\geq0$, there exists a functional
$\mathcal L_{h,j}$ such that
\begin{equation}\label{EHsim}
\mathcal L_{h,j}\sim
\|\dot\Delta_j(\varrho,\nabla_x\varrho,u)\|_{L^2_x}^2
+\|\dot\Delta_jf\|_{L^2_{x,v}}^2,
\end{equation}
and there exist constants $c,C>0$, independent of $j$, such that
\begin{equation}\label{e62}
\begin{aligned}
&\frac{d}{dt}\mathcal L_{h,j}
+c\mathcal L_{h,j}
+c\|\dot\Delta_j(u-b)\|_{L^2_x}^2
+c\|
\dot\Delta_j\{\mathbf I-\mathbf P\}f\|_{L^2_xL^2_{v,\nu}}^2\\
&\lesssim\Big(
2^j\|\dot\Delta_j(\varrho u)\|_{L^2_x}
+\|\dot\Delta_j(au)\|_{L^2_x}
+\|\dot\Delta_j g(\varrho,u,f)\|_{L^2_x}\\
&\qquad
+\left\|\dot\Delta_j\Big(
\frac12u\cdot v\,\mathbf Pf
-u\cdot\nabla_v\mathbf Pf+h(\varrho,u,f)\Big)\right\|_{L^2_{x,v}}\\
&\qquad
+\|\nabla_xu\|_{L^\infty_x}
\|\dot\Delta_j(\nabla_x\varrho,u)\|_{L^2_x}
+2^j\|\dot\Delta_j(\varrho\dive_xu)\|_{L^2_x}
+\mathcal R_j\Big)\sqrt{\mathcal L_{h,j}},
\end{aligned}
\end{equation}
where
\begin{equation*}
\mathcal R_j:=
\|[u,\dot\Delta_j]\cdot\nabla_xu\|_{L^2_x}
+\left\|\nabla_x\bigl(
[u,\dot\Delta_j]\cdot\nabla_x\varrho\bigr)\right\|_{L^2_x}.
\end{equation*}
\end{lemma}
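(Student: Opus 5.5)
We work directly in the original variables $(\varrho,u,f)$ of \eqref{m1n}, writing the fluid equations in the non-conservative form with the convective terms $u\cdot\nabla_x\varrho$, $u\cdot\nabla_x u$ kept on the left. We apply $\dot\Delta_j$ to \eqref{m1n} and commute $\dot\Delta_j$ with the transport terms, which produces the commutators gathered in $\mathcal R_j$.

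\textbf{Step 1: basic energy identity.} Take the $L^2_x$ inner product of the localized $\varrho$-, $u$-equations with $\dot\Delta_j\varrho$, $\dot\Delta_j u$, and the $L^2_{x,v}$ product of the $f$-equation with $\dot\Delta_j f$. The coupling terms cancel: $u-b$ from the momentum equation and $-u\cdot vM^{1/2}$ from the kinetic equation combine with $-\langle\mathcal L f,f\rangle$ through \eqref{e3} into
\[
\|\dot\Delta_j(u-b)\|_{L^2_x}^2+\lambda_0\|\dot\Delta_j\{\mathbf I-\mathbf P\}f\|_{L^2_xL^2_{v,\nu}}^2 .
\]
Viscosity gives $\nu_*\|\nabla\dot\Delta_j u\|^2$, and the convection terms contribute only $\|\nabla u\|_{L^\infty}\|\dot\Delta_j(\varrho,u)\|^2$ after integration by parts, plus commutators.

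\textbf{Step 2: estimate for $\nabla\varrho$.} Because $\varrho$ has no dissipation of its own, we first apply $\nabla$ to the localized mass equation and pair it with $\nabla\dot\Delta_j\varrho$. The term $\nabla\dot\Delta_j\dive u$ is then handled in the usual Danchin/Haspot way, by the cross term $(\dot\Delta_j u,\nabla\dot\Delta_j\varrho)$. Combining these, high frequencies give the damping
\[
\|\nabla\dot\Delta_j\varrho\|^2\gtrsim\|\dot\Delta_j\varrho\|^2 .
\]
Alternatively, one can use the effective velocity $w=\nabla(-\Delta)^{-1}(\dive u-\varrho/(2\mu+\lambda))$. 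The transport part $u\cdot\nabla\nabla\varrho$ is treated by integration by parts, giving $\|\nabla u\|_{L^\infty}\|\dot\Delta_j\nabla\varrho\|^2$ plus the commutator $\nabla[u,\dot\Delta_j]\cdot\nabla\varrho$. The term $\nabla(\varrho\dive u)$ gives $2^j\|\dot\Delta_j(\varrho\dive u)\|$.

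\textbf{Step 3: kinetic hypocoercivity at high frequency.} We recover $a,b$ from the moment system \eqref{e22}. We add small multiples of $(\dot\Delta_j b,\nabla\dot\Delta_j a)$, and of a term like $(\Theta(\dot\Delta_j\{\mathbf I-\mathbf P\}f),\nabla\dot\Delta_j b)$ normalized by $2^{-2j}$ for $j\ge0$. These yield dissipation of $2^{2j}\|\dot\Delta_j a\|^2$ and of $\|\dot\Delta_j b\|^2$, the latter via $\|u-b\|$ plus the damping of $u$ from viscosity, since $j\ge0$. The nonlinear right side of \eqref{e22}, namely $au$, produces $\|\dot\Delta_j(au)\|$. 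For the $f$-equation, the source terms $\frac12u\cdot v\mathbf Pf-u\cdot\nabla_v\mathbf Pf+h$ are kept as a block, since the statement lists them as such.

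\textbf{Step 4: assembling $\mathcal L_{h,j}$.} Set $\mathcal L_{h,j}$ to be the sum of the Step 1 energy, $\|\nabla\dot\Delta_j\varrho\|^2$, and the small cross terms with weights $\eta_i$. For $j\ge0$, Bernstein makes the cross terms bounded by the energy, which gives \eqref{EHsim}. The collected dissipation controls $\|\dot\Delta_j(\varrho,\nabla\varrho,u)\|^2+\|\dot\Delta_j f\|^2$, so we obtain $c\mathcal L_{h,j}$. Each nonlinear pairing is bounded by Cauchy–Schwarz as (norm)$\cdot\sqrt{\mathcal L_{h,j}}$.

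The main obstacle is choosing the $\eta_i$ uniformly in $j\ge0$ so that every indefinite linear pairing is absorbed. These pairings include the viscous term against $\nabla\varrho$ (of size $2^{2j}$), the drag $u-b$ against $\nabla\varrho$, and $\Theta$ against $\nabla a$. Because the $\nabla\varrho$ equation carries an extra derivative, the $\eta_1$ cross term must be arranged at the $\nabla$ level so that the viscous term $(2\mu+\lambda)\nabla\dive$ produces the damping itself rather than an error; this is where we would spend effort.
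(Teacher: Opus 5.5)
Your architecture is the paper's. It consists of the dyadic $L^2$ energy with \eqref{e3}, an energy for $\nabla_x\dot\Delta_j\varrho$ coupled to the cross term $(\dot\Delta_ju,\nabla_x\dot\Delta_j\varrho)_{L^2_x}$, and the $2^{-2j}$-weighted cross term $(\nabla_x\dot\Delta_ja,\dot\Delta_jb)_{L^2_x}$. As in the paper, $b$ is recovered from $\nu_*\|\nabla_x\dot\Delta_ju\|_{L^2_x}^2$ and $\|\dot\Delta_j(u-b)\|_{L^2_x}^2$ because $j\geq0$; your extra $\Theta$-cross term is harmless but unnecessary. You also locate the difficulty correctly, but you leave it open, and the functional you write in Step 4 does not close.

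Suppose $\mathcal L_{h,j}$ contains $\frac{\kappa}{2}\|\nabla_x\dot\Delta_j\varrho\|_{L^2_x}^2$ with $\kappa$ of order one, as you take it, and $\eta_1(\dot\Delta_ju,\nabla_x\dot\Delta_j\varrho)_{L^2_x}$ with $\eta_1$ small. Since $(\mu\Delta_xw+(\mu+\lambda)\nabla_x\dive_xw,\nabla_xr)_{L^2_x}=(2\mu+\lambda)(\nabla_x\dive_xw,\nabla_xr)_{L^2_x}$, the time derivative contains $\bigl(\eta_1(2\mu+\lambda)-\kappa\bigr)(\nabla_x\dive_x\dot\Delta_ju,\nabla_x\dot\Delta_j\varrho)_{L^2_x}$. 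This term can be of size $2^j\|\nabla_x\dot\Delta_ju\|_{L^2_x}\|\nabla_x\dot\Delta_j\varrho\|_{L^2_x}$. Absorbing it into $\nu_*\|\nabla_x\dot\Delta_ju\|_{L^2_x}^2+\eta_1\|\nabla_x\dot\Delta_j\varrho\|_{L^2_x}^2$ would require $\bigl(\eta_1(2\mu+\lambda)-\kappa\bigr)^2 2^{2j}\lesssim\nu_*\eta_1$ for all $j\geq0$, and no choice of small constants achieves that.

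The missing idea is an exact relation, not smallness: $\kappa=(2\mu+\lambda)\eta_1$. This is the block $\eta\bigl((\dot\Delta_ju,\nabla_x\dot\Delta_j\varrho)_{L^2_x}+\frac{2\mu+\lambda}{2}\|\nabla_x\dot\Delta_j\varrho\|_{L^2_x}^2\bigr)$ in \eqref{e67}, for which the two viscous pairings cancel identically. What remains is:
\begin{itemize}
\item the damping $\eta\|\nabla_x\dot\Delta_j\varrho\|_{L^2_x}^2$;
\item $\eta\|\dive_x\dot\Delta_ju\|_{L^2_x}^2$, absorbed by viscosity for $\eta\ll\nu_*$;
\item $\eta(\dot\Delta_j(u-b),\nabla_x\dot\Delta_j\varrho)_{L^2_x}$, handled by Young's inequality.
\end{itemize}
The weight $\sim\eta$ on $\|\nabla_x\dot\Delta_j\varrho\|^2$ still gives \eqref{EHsim} provided $\eta<(2\mu+\lambda)/2$. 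Your effective-velocity alternative encodes the same cancellation, but it is only named, not carried out.

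Two smaller points also need fixing. First, Cauchy--Schwarz on each pairing is not enough for the derivative of the cross term. It produces $(u\cdot\nabla_x\dot\Delta_ju,\nabla_x\dot\Delta_j\varrho)_{L^2_x}+(\dot\Delta_ju,\nabla_x(u\cdot\nabla_x\dot\Delta_j\varrho))_{L^2_x}$. Each term alone costs $\|u\|_{L^\infty_x}$ times an extra factor $2^j$, which is not of the form in \eqref{e62}. Only their sum, after integrating by parts, is bounded by $\|\nabla_xu\|_{L^\infty_x}\|\dot\Delta_j(\nabla_x\varrho,u)\|_{L^2_x}^2$.

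Second, at the $L^2$ level the paper uses the mass equation in conservative form, $\partial_t\varrho+\dive_xu=-\dive_x(\varrho u)$. That is where the term $2^j\|\dot\Delta_j(\varrho u)\|_{L^2_x}$ comes from. If you keep $u\cdot\nabla_x\varrho$ on the left there, you generate $[u,\dot\Delta_j]\cdot\nabla_x\varrho$ paired with $\dot\Delta_j\varrho$, which is not among the terms of $\mathcal R_j$.
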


\begin{proof}
We apply $\dot\Delta_j$ to \eqref{m1n}, take the corresponding $L^2(\mathbb R^d_x)$ and $L^2(\mathbb R^d_x\times\mathbb R^d_v)$ inner
products with $\dot\Delta_j(\varrho,u,f)$ and use
\eqref{e3} to obtain
\begin{equation}\label{E1}
\begin{aligned}
&\frac12\frac{d}{dt}\Big(
\|\dot\Delta_j(\varrho,u)\|_{L^2_x}^2
+\|\dot\Delta_jf\|_{L^2_{x,v}}^2\Big)
+\nu_*\|\nabla_x\dot\Delta_ju\|_{L^2_x}^2
+\|\dot\Delta_j(u-b)\|_{L^2_x}^2\\
&\quad
+\lambda_0\|
\dot\Delta_j\{\mathbf I-\mathbf P\}f\|_{L^2_xL^2_{v,\nu}}^2\\
&\lesssim
\Big(2^j\|\dot\Delta_j(\varrho u)\|_{L^2_x}
+\|\dot\Delta_j(au)\|_{L^2_x}
+\|\dot\Delta_j g(\varrho,u,f)\|_{L^2_x}\Big)
\|\dot\Delta_j(\varrho,u)\|_{L^2_x}\\
&\quad
+\left\|\dot\Delta_j\Big(
\frac12u\cdot v\,\mathbf Pf
-u\cdot\nabla_v\mathbf Pf+h(\varrho,u,f)\Big)\right\|_{L^2_{x,v}}
\|\dot\Delta_jf\|_{L^2_{x,v}}\\
&\quad
+\Big(\|\dive_xu\|_{L^\infty_x}
\|\dot\Delta_ju\|_{L^2_x}
+\|[u,\dot\Delta_j]\cdot\nabla_xu\|_{L^2_x}\Big)
\|\dot\Delta_ju\|_{L^2_x}.
\end{aligned}
\end{equation}
By \eqref{e22}, we infer
\begin{equation}\label{E3}
\begin{aligned}
&\frac{d}{dt}\int_{\mathbb R^d_x}
\nabla_x\dot\Delta_ja\cdot\dot\Delta_jb\,dx
+\|\nabla_x\dot\Delta_ja\|_{L^2_x}^2
-\|\dive_x\dot\Delta_jb\|_{L^2_x}^2\\
&\quad+\int_{\mathbb R^d_x}
\Big(\dot\Delta_j(b-u)
+\dive_x\Theta(\dot\Delta_j
\{\mathbf I-\mathbf P\}f)\Big)
\cdot\nabla_x\dot\Delta_ja\,dx\\
&\leq
\|\dot\Delta_j(au)\|_{L^2_x}
\|\nabla_x\dot\Delta_ja\|_{L^2_x}.
\end{aligned}
\end{equation}

Next, we estimate the higher-order energy of $\varrho$. To avoid losing
regularity, we rewrite the nonlinear terms as commutators. Set
\[
 R_{1,j}:=[u,\dot\Delta_j]\cdot\nabla_x\varrho
 -\dot\Delta_j(\varrho\dive_xu)\quad\text{and}\quad
 R_{2,j}:=[u,\dot\Delta_j]\cdot\nabla_xu
 -\dot\Delta_j(au)+\dot\Delta_j g(\varrho,u,f).
\]
The localized fluid equations are
\begin{align*}
&\partial_t\dot\Delta_j\varrho
+u\cdot\nabla_x\dot\Delta_j\varrho
+\dive_x\dot\Delta_ju
=R_{1,j},\\
&\partial_t\dot\Delta_ju
+u\cdot\nabla_x\dot\Delta_ju
+\nabla_x\dot\Delta_j\varrho
-\mu\Delta_x\dot\Delta_ju
-(\mu+\lambda)\nabla_x\dive_x\dot\Delta_ju
+\dot\Delta_ju-\dot\Delta_jb
=R_{2,j}.
\end{align*}
Direct differentiation gives
\begin{align}
&\frac d{dt}
\bigl(\dot\Delta_ju,\nabla_x\dot\Delta_j\varrho\bigr)_{L^2_x}
+\|\nabla_x\dot\Delta_j\varrho\|_{L^2_x}^2
-\|\dive_x\dot\Delta_ju\|_{L^2_x}^2
+\bigl(\dot\Delta_j(u-b),
\nabla_x\dot\Delta_j\varrho\bigr)_{L^2_x}
\nonumber\\
&\quad
-\bigl(
\mu\Delta_x\dot\Delta_ju
+(\mu+\lambda)\nabla_x\dive_x\dot\Delta_ju,
\nabla_x\dot\Delta_j\varrho
\bigr)_{L^2_x}
\nonumber\\
&=
-\bigl(
u\cdot\nabla_x\dot\Delta_ju,
\nabla_x\dot\Delta_j\varrho
\bigr)_{L^2_x}
-\bigl(
\dot\Delta_ju,
\nabla_x(u\cdot\nabla_x\dot\Delta_j\varrho)
\bigr)_{L^2_x}
\nonumber\\
&\qquad\quad
+\bigl(R_{2,j},
\nabla_x\dot\Delta_j\varrho\bigr)_{L^2_x}
+\bigl(\dot\Delta_ju,
\nabla_xR_{1,j}\bigr)_{L^2_x}.
\label{e64}
\end{align}
On the other hand, one has
\begin{align}
&\frac{2\mu+\lambda}{2}\frac d{dt}
\|\nabla_x\dot\Delta_j\varrho\|_{L^2_x}^2
+(2\mu+\lambda)
\bigl(
\nabla_x\dive_x\dot\Delta_ju,
\nabla_x\dot\Delta_j\varrho
\bigr)_{L^2_x}
\nonumber\\
&\qquad
=-(2\mu+\lambda)
\bigl(
\nabla_x(u\cdot\nabla_x\dot\Delta_j\varrho),
\nabla_x\dot\Delta_j\varrho
\bigr)_{L^2_x}
+(2\mu+\lambda)
\bigl(
\nabla_xR_{1,j},
\nabla_x\dot\Delta_j\varrho
\bigr)_{L^2_x}.
\label{e65}
\end{align}
An integration by parts gives
\begin{equation*}
\begin{aligned}
&\bigl(
\mu\Delta_x\dot\Delta_ju
+(\mu+\lambda)\nabla_x\dive_x\dot\Delta_ju,
\nabla_x\dot\Delta_j\varrho
\bigr)_{L^2_x}
=(2\mu+\lambda)
\bigl(
\nabla_x\dive_x\dot\Delta_ju,
\nabla_x\dot\Delta_j\varrho
\bigr)_{L^2_x},
\end{aligned}
\end{equation*}
and
\begin{align*}
&\left|
\bigl(
u\cdot\nabla_x\dot\Delta_ju,
\nabla_x\dot\Delta_j\varrho
\bigr)_{L^2_x}
+\bigl(
\dot\Delta_ju,
\nabla_x(u\cdot\nabla_x\dot\Delta_j\varrho)
\bigr)_{L^2_x}
\right|
+\left|
\bigl(
\nabla_x(u\cdot\nabla_x\dot\Delta_j\varrho),
\nabla_x\dot\Delta_j\varrho
\bigr)_{L^2_x}
\right|\\
&\qquad\lesssim
\|\nabla_xu\|_{L^\infty_x}
\|\dot\Delta_j(\nabla_x\varrho,u)\|_{L^2_x}^2.
\end{align*}
Moreover, Bernstein's inequality and the definitions of
\(R_{1,j}\), \(R_{2,j}\) lead to
\begin{align*}
&\left|
\bigl(
R_{2,j},
\nabla_x\dot\Delta_j\varrho
\bigr)_{L^2_x}
\right|
+\left|
\bigl(
\dot\Delta_ju,
\nabla_xR_{1,j}
\bigr)_{L^2_x}
\right|
+\left|
\bigl(
\nabla_xR_{1,j},
\nabla_x\dot\Delta_j\varrho
\bigr)_{L^2_x}
\right|\\
&\qquad\lesssim
\Big(
2^j\|\dot\Delta_j(\varrho\dive_xu)\|_{L^2_x}
+\|\dot\Delta_j(au)\|_{L^2_x}
+\|\dot\Delta_j g(\varrho,u,f)\|_{L^2_x}
+\mathcal R_j
\Big)
\|\dot\Delta_j(\nabla_x\varrho,u)\|_{L^2_x}.
\end{align*}
Then, we add \eqref{e64} and
\eqref{e65} to obtain
\begin{equation}\label{E2h}
\begin{aligned}
&\frac{d}{dt}\Big(
\int_{\mathbb R^d_x}\dot\Delta_ju\cdot
\nabla_x\dot\Delta_j\varrho\,dx
+\frac{2\mu+\lambda}{2}
\|\nabla_x\dot\Delta_j\varrho\|_{L^2_x}^2\Big)\\
&\quad+\|\nabla_x\dot\Delta_j\varrho\|_{L^2_x}^2
-\|\dive_x\dot\Delta_ju\|_{L^2_x}^2
+\int_{\mathbb R^d_x}\dot\Delta_j(u-b)\cdot
\nabla_x\dot\Delta_j\varrho\,dx\\
&\lesssim
\|\nabla_xu\|_{L^\infty_x}
\|\dot\Delta_j\nabla_x\varrho\|_{L^2_x}
\|\dot\Delta_j(\nabla_x\varrho,u)\|_{L^2_x}\\
&\quad+\Big(
2^j\|\dot\Delta_j(\varrho\dive_xu)\|_{L^2_x}
+\|\dot\Delta_j(au)\|_{L^2_x}
+\|\dot\Delta_j g(\varrho,u,f)\|_{L^2_x}
+\mathcal R_j\Big)
\|\dot\Delta_j(\nabla_x\varrho,u)\|_{L^2_x}.
\end{aligned}
\end{equation}

For a sufficiently small constant $\eta>0$, we define
\begin{equation}\label{e67}
\begin{aligned}
\mathcal L_{h,j}:={}&
\frac12\|\dot\Delta_j(\varrho,u)\|_{L^2_x}^2
+\frac12\|\dot\Delta_jf\|_{L^2_{x,v}}^2\\
&+\eta\Big(
\int_{\mathbb R^d_x}\dot\Delta_ju\cdot
\nabla_x\dot\Delta_j\varrho\,dx
+\frac{2\mu+\lambda}{2}
\|\nabla_x\dot\Delta_j\varrho\|_{L^2_x}^2\Big)+\eta 2^{-2j}\int_{\mathbb R^d_x}
\nabla_x\dot\Delta_ja\cdot\dot\Delta_jb\,dx.
\end{aligned}
\end{equation}
For $j\geq0$, one applies Bernstein's inequality and \eqref{E35} to control the two
cross terms in \eqref{e67}. Hence, if $\eta>0$ is sufficiently
small, we obtain \eqref{EHsim}. Moreover, one has
\[
\|\dot\Delta_ju\|_{L^2_x}^2
+\|\dot\Delta_j(u-b)\|_{L^2_x}^2
\geq\frac12\|\dot\Delta_jb\|_{L^2_x}^2.
\]
We combine
\eqref{E1}, $\eta2^{-2j}$ times \eqref{E3}, and $\eta$ times
\eqref{E2h}. Young's inequality uses at most half of
$\eta\|\nabla_x\dot\Delta_j\varrho\|_2^2$
and $\eta2^{-2j}\|\nabla_x\dot\Delta_ja\|_2^2$.
Moreover, for $j\geq0$,
\[
2^{-2j}\|\dive_x\dot\Delta_jb\|_2^2
\lesssim\|\nabla_x\dot\Delta_ju\|_2^2
+\|\dot\Delta_j(u-b)\|_2^2.
\]
Choosing $\eta$ sufficiently small, the remaining linear terms are
absorbed by the basic dissipations, which proves \eqref{e62}.
\end{proof}

We have the following lemma regarding the high-frequency estimates.

\begin{lemma}\label{l4}
Let $T>0$, and let $(\varrho,u,f)$ be a solution to the Cauchy
problem \eqref{m1n} for the NS--VFP system on $[0,T]$ satisfying
\eqref{e25}. Then, for every $t\in[0,T]$, we have
\begin{align}
&\|\varrho^h\|_{\widetilde L^\infty_t
(\dot B^{d/2}_{2,1})}
+\|u^h\|_{\widetilde L^\infty_t
(\dot B^{d/2-1}_{2,1})}
+\|f^h\|_{\widetilde L^\infty_t
(\dot{\mathcal B}^{d/2-1}_{2,1})}\nonumber\\
&\quad+\|\varrho^h\|_{L^1_t
(\dot B^{d/2}_{2,1})}
+\|u^h\|_{L^1_t
(\dot B^{d/2+1}_{2,1})}
+\|f^h\|_{L^1_t
(\dot{\mathcal B}^{d/2-1}_{2,1})}\nonumber\\
&\quad+\|(u-b)^h\|_{\widetilde L^2_t
(\dot B^{d/2-1}_{2,1})}
+\|
\{\mathbf I-\mathbf P\}f^h\|_{\widetilde L^2_t
(\dot{\mathcal B}^{d/2-1}_{2,1,\nu})}\nonumber\\
&\qquad\lesssim
\|(\varrho_0,u_0,f_0)\|_{\mathcal E_0}
+\|(\varrho,u,f)\|_{\mathcal E_t}^2
+\|(\varrho,u,f)\|_{\mathcal E_t}^3.\label{high}
\end{align}
\end{lemma}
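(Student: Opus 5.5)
The plan mirrors the proof of Lemma \ref{l2}, now starting from the high-frequency Lyapunov inequality \eqref{e62} of Lemma \ref{l3}. Fix $j\ge0$ (the high-frequency range, with the usual overlap at $j=0,1$ with the low-frequency blocks). Divide \eqref{e62} by $2\sqrt{\mathcal L_{h,j}+\kappa^2}$, integrate in time and let $\kappa\to0$. This gives
\[
\sup_{\tau\le t}\sqrt{\mathcal L_{h,j}(\tau)}+c\int_0^t\sqrt{\mathcal L_{h,j}}\,d\tau\lesssim\sqrt{\mathcal L_{h,j}(0)}+\|\mathcal S_j\|_{L^1_t},
\]
where $\mathcal S_j$ denotes the bracket on the right of \eqref{e62}. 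Integrating \eqref{e62} directly and applying Young's inequality, as in \eqref{e47}, yields the $L^2_t$ bounds for $\dot\Delta_j(u-b)$ and $\dot\Delta_j\{\mathbf I-\mathbf P\}f$.

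By \eqref{EHsim} and $j\ge0$, we have $\sqrt{\mathcal L_{h,j}}\sim 2^j\|\dot\Delta_j\varrho\|+\|\dot\Delta_j u\|+\|\dot\Delta_j f\|$. Multiplying by $2^{j(d/2-1)}$ and summing over $j\ge -1$ therefore gives the $\widetilde L^\infty_t$ and $L^1_t$ bounds for $\varrho^h$ in $\dot B^{d/2}_{2,1}$, for $u^h$ in $\dot B^{d/2-1}_{2,1}$, and for $f^h$ in $\dot{\mathcal B}^{d/2-1}_{2,1}$.

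The $L^1_t(\dot B^{d/2+1}_{2,1})$ bound for $u^h$ does not follow from damping alone. I recover it by treating $\eqref{m1n}_2$ as a Lamé heat equation at high frequencies. The source is
\[
-\nabla\varrho-u+b-u\cdot\nabla u-au+g,
\]
and its $L^1_t(\dot B^{d/2-1}_{2,1})^h$ norm is controlled by $\|\varrho^h\|_{L^1_t(\dot B^{d/2}_{2,1})}+\|(u,b)^h\|_{L^1_t(\dot B^{d/2-1}_{2,1})}$ together with quadratic terms. These linear pieces are already bounded by the damping estimate, while the low-frequency contributions of $\nabla\varrho$ and $b$ in the overlap region come from Lemma \ref{l2}. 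Maximal regularity for $\mu\Delta+(\mu+\lambda)\nabla\dive$ then gives the bound.

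The initial term is bounded by $\|(\varrho_0,u_0,f_0)\|_{\mathcal E_0}$. What remains is to bound
\[
\sum_{j\ge-1}2^{j(d/2-1)}\|\mathcal S_j\|_{L^1_t}\lesssim\|(\varrho,u,f)\|_{\mathcal E_t}^2+\|(\varrho,u,f)\|_{\mathcal E_t}^3,
\]
which I treat term by term.
\begin{itemize}
\item The terms $2^j\dot\Delta_j(\varrho u)$ and $2^j\dot\Delta_j(\varrho\dive u)$ are handled by the product estimate in $L^1_t(\dot B^{d/2}_{2,1})$. I use $\varrho\in L^\infty_t(\dot B^{d/2}_{2,1})$ with $u\in L^1_t(\dot B^{d/2+1}_{2,1})$, and $\varrho,u\in L^2_t(\dot B^{d/2}_{2,1})$ by \eqref{e50}.
\item The terms $au$ and $g$ in $L^1_t(\dot B^{d/2-1}_{2,1})$ follow exactly as in \eqref{e52}--\eqref{e55}. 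In $g$, the pieces $\frac{\varrho}{1+\varrho}(u-b)$ and $\frac{\varrho}{1+\varrho}au$ use the $L^2_t$ bound of $u-b$ and \eqref{e50}. The term $au$ is bounded by $\|a\|_{L^2_t(\dot B^{d/2}_{2,1})}\|u\|_{L^2_t(\dot B^{d/2-1}_{2,1})}$ at low frequencies of $a$, and by $\|a^h\|_{L^2_t(\dot B^{d/2-1}_{2,1})}\|u\|_{L^2_t(\dot B^{d/2}_{2,1})}$ at high frequencies.
\item The kinetic term $\frac12u\cdot v\mathbf Pf-u\cdot\nabla_v\mathbf Pf+h$ is bounded in $L^1_t(\dot{\mathcal B}^{d/2-1}_{2,1})$ by $\|u\|_{L^2_t(\dot B^{d/2}_{2,1})}\big(\|(a,b)\|_{L^2_t(\dot B^{d/2-1}_{2,1})}+\|\{\mathbf I-\mathbf P\}f\|_{L^2_t(\dot{\mathcal B}^{d/2-1}_{2,1,\nu})}\big)$, using \eqref{e54}.
\item The terms $\|\nabla u\|_{L^\infty}\|\dot\Delta_j(\nabla\varrho,u)\|$ are bounded using $\|u\|_{L^1_t(\dot B^{d/2+1}_{2,1})}\|(\varrho,u)\|_{\widetilde L^\infty_t}$.
\item The commutators $\mathcal R_j$ are handled by standard commutator estimates (Appendix A): $\sum 2^{j(d/2-1)}\|\mathcal R_j\|\lesssim\|\nabla u\|_{\dot B^{d/2}_{2,1}}\|(\varrho,u)\|_{\dot B^{d/2}_{2,1}\times\dot B^{d/2-1}_{2,1}}$, integrated in time.
\end{itemize}

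The main obstacle is $\|\dot\Delta_j(au)\|$ and the lower-order kinetic products. Because they contain no derivative, at high frequencies they require $a^h,b^h\in L^2_t(\dot B^{d/2-1}_{2,1})$, which is available only through interpolation in \eqref{e50}. Some care is also needed with the quadratic terms in $\mathbf Pf$ carrying the weight $v$: here $|v|M^{1/2}$ is harmless, so $L^2_v$ norms suffice.
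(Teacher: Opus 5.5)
Your overall scheme is the paper's: use \eqref{e62} block by block for $j\ge0$, sum against $2^{j(d/2-1)}$, then apply Lam\'e maximal regularity to recover $\|u\|^h_{L^1_t(\dot B^{d/2+1}_{2,1})}$. The gap is in the coupling terms that carry no spatial derivative.

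For $au$ you use $\|a\|_{L^2_t(\dot B^{d/2}_{2,1})}\|u\|_{L^2_t(\dot B^{d/2-1}_{2,1})}$. For $\frac12u\cdot v\mathbf Pf-u\cdot\nabla_v\mathbf Pf=a\,u\cdot vM^{1/2}+(v\otimes v-\mathrm{Id}):(u\otimes b)M^{1/2}$ you use $\|(a,b)\|_{L^2_t(\dot B^{d/2-1}_{2,1})}$. Neither norm is controlled by $\|(\varrho,u,f)\|_{\mathcal E_t}$.

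\begin{itemize}
\item At low frequencies, $u$, $a$ and $b$ are heat-like. They lie only in $L^\infty_t(\dot B^{d/2-1}_{2,1})\cap L^1_t(\dot B^{d/2+1}_{2,1})$, which interpolates to $L^2_t(\dot B^{d/2}_{2,1})$, not $L^2_t(\dot B^{d/2-1}_{2,1})$. For the heat flow, an $L^2_t(\dot B^{d/2-1}_{2,1})$ bound at low frequencies would require $\dot B^{d/2-2}$ data.
\item \eqref{e50} gives $L^2_t(\dot B^{d/2-1}_{2,1})$ only for $(a^h,b^h)$. Your last paragraph notices this, but the contributions of $a^\ell$ and $b^\ell$ are left uncontrolled.
\item The full product $au$ is not estimated in $L^1_t(\dot B^{d/2-1}_{2,1})$ in \eqref{e52}--\eqref{e55}, and it cannot be. Its low-frequency part is exactly the obstruction that forced the introduction of $V=b/(1+a)$.
\end{itemize}

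The missing idea is that only the blocks $j\ge0$ of these products enter \eqref{e62} and the Lam\'e source, and there the high-frequency localization lets you gain one derivative. Split $a=a^\ell+a^h$ and $b=b^\ell+b^h$. Then
\[
\sum_{j\ge0}2^{j(d/2-1)}\big\|\dot\Delta_j\big(a^\ell u,\,u\otimes b^\ell\big)\big\|_{L^2_x}\le\big\|\big(a^\ell u,\,u\otimes b^\ell\big)\big\|_{\dot B^{d/2}_{2,1}}\lesssim\|u\|_{\dot B^{d/2}_{2,1}}\|(a^\ell,b^\ell)\|_{\dot B^{d/2}_{2,1}},
\]
which is an $L^2_t\times L^2_t$ product by \eqref{e50}. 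The high parts are bounded through \eqref{uv2} by $\|u\|_{L^2_t(\dot B^{d/2}_{2,1})}\|(a^h,b^h)\|_{L^2_t(\dot B^{d/2-1}_{2,1})}$.

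Since $\|\dot\Delta_j(au)\cdot vM^{1/2}\|_{L^2_{x,v}}\lesssim\|\dot\Delta_j(au)\|_{L^2_x}$, and likewise for the $(v\otimes v-\mathrm{Id})M^{1/2}$ term, this controls both the fluid term $au$ and the kinetic source. The resulting bound is $\|u\|_{L^2_t(\dot B^{d/2}_{2,1})}\big(\|(a^\ell,b^\ell)\|_{L^2_t(\dot B^{d/2}_{2,1})}+\|(a^h,b^h)\|_{L^2_t(\dot B^{d/2-1}_{2,1})}\big)$, which is \eqref{non1}--\eqref{non2} in the paper. The same splitting is needed for $-au$ in your Lam\'e step.

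With this correction the rest of your argument goes through. Also sum over $j\ge0$ rather than $j\ge-1$, since \eqref{e62} is only available for $j\ge0$.
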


\begin{proof}
We first work in the high-frequency
seminorms, as required by the commutator estimates and integrations
by parts. After summing over \(j\geq0\), the inequality
\(\|z^h\|_{\dot B^s_{2,1}}\lesssim
\|z\|_{\dot B^s_{2,1}}^h\) recovers the projected norms in \eqref{high}. Applying the same argument as in
\eqref{e46}--\eqref{e47} to \eqref{e62},
we obtain
\begin{equation}\label{e68}
\begin{aligned}
&\|\dot\Delta_j(\varrho,\nabla_x\varrho,u)\|_{L^\infty_tL^2_x}
+\|\dot\Delta_jf\|_{L^\infty_tL^2_{x,v}}\\
&\quad+\|\dot\Delta_j(\varrho,\nabla_x\varrho,u)\|_{L^1_tL^2_x}
+\|\dot\Delta_jf\|_{L^1_tL^2_{x,v}}+\|\dot\Delta_j(u-b)\|_{L^2_tL^2_x}
+\|
\dot\Delta_j\{\mathbf I-\mathbf P\}f\|_{L^2_tL^2_xL^2_{v,\nu}}\\
&\lesssim
\|\dot\Delta_j(\varrho_0,\nabla_x\varrho_0,u_0)\|_{L^2_x}
+\|\dot\Delta_jf_0\|_{L^2_{x,v}}\\
&\quad+2^j\|\dot\Delta_j(\varrho u)\|_{L^1_tL^2_x}
+\|\dot\Delta_j(au)\|_{L^1_tL^2_x}
+\|\dot\Delta_j g(\varrho,u,f)\|_{L^1_tL^2_x}\\
&\quad+\left\|\dot\Delta_j\Big(
a\,u\cdot v\,M^{1/2} +(v\otimes v-\mathrm{Id}):(u\otimes b)M^{1/2}+h(\varrho,u,f)\Big)
\right\|_{L^1_tL^2_{x,v}}\\
&\quad+\int_0^t\Big(
\|\nabla_xu\|_{L^\infty_x}
\|\dot\Delta_j(\nabla_x\varrho,u)\|_{L^2_x}
+2^j\|\dot\Delta_j(\varrho\dive_xu)\|_{L^2_x}
+\mathcal R_j\Big)d\tau.
\end{aligned}
\end{equation}
Then, we multiply \eqref{e68} by $2^{j(d/2-1)}$ and sum over
$j\geq0$ to get 
\begin{equation}\label{l1ua}
\begin{aligned}
&\|\varrho\|_{\widetilde L^\infty_t
(\dot B^{d/2}_{2,1})}^{h}
+\|u\|_{\widetilde L^\infty_t
(\dot B^{d/2-1}_{2,1})}^{h}
+\|f\|_{\widetilde L^\infty_t
(\dot{\mathcal B}^{d/2-1}_{2,1})}^{h}
+\|\varrho\|_{L^1_t
(\dot B^{d/2}_{2,1})}^{h}\\
&\quad+\|u\|_{L^1_t
(\dot B^{d/2-1}_{2,1})}^{h}
+\|f\|_{L^1_t
(\dot{\mathcal B}^{d/2-1}_{2,1})}^{h}
+\|u-b\|_{\widetilde L^2_t
(\dot B^{d/2-1}_{2,1})}^{h}
+\|
\{\mathbf I-\mathbf P\}f\|_{\widetilde L^2_t
(\dot{\mathcal B}^{d/2-1}_{2,1,\nu})}^{h}\\
&\lesssim
\|\varrho_0\|_{\dot B^{d/2}_{2,1}}^{h}
+\|u_0\|_{\dot B^{d/2-1}_{2,1}}^{h}
+\|f_0\|_{\dot{\mathcal B}^{d/2-1}_{2,1}}^{h}
+\sum_{j\geq0}2^{j(d/2-1)}
\int_0^t\mathcal R_j\,d\tau\\
&\quad+\|\varrho u\|_{L^1_t
(\dot B^{d/2}_{2,1})}^{h}
+\|(au,g(\varrho,u,f))\|_{L^1_t
(\dot B^{d/2-1}_{2,1})}^{h}\\
&\quad+\left\|
a\,u\cdot v\,M^{1/2} +(v\otimes v-\mathrm{Id}):(u\otimes b)M^{1/2}+h(\varrho,u,f)
\right\|_{L^1_t
(\dot{\mathcal B}^{d/2-1}_{2,1})}^{h}\\
&\quad+\int_0^t\Big(
\|\nabla_xu\|_{L^\infty_x}
\big(\|\varrho\|_{\dot B^{d/2}_{2,1}}^{h}
+\|u\|_{\dot B^{d/2-1}_{2,1}}^{h}\big)
+\|\varrho\dive_xu\|_{\dot B^{d/2}_{2,1}}
\Big)d\tau.
\end{aligned}
\end{equation}

We now estimate the nonlinear terms in \eqref{l1ua}. For the
contribution involving $f^\ell$, the high-frequency localization
allows us to gain one derivative. By the decomposition
$(uf)^h=(uf^\ell)^h+(uf^h)^h$, the product estimates in Appendix~A,
\eqref{Xt} and \eqref{e50}, we obtain
\begin{equation}\label{non1}
\begin{aligned}
\|(uf)^h\|_{L^1_t(\dot{\mathcal B}^{d/2-1}_{2,1})}
&\lesssim
\|uf^\ell\|_{L^1_t(\dot{\mathcal B}^{d/2}_{2,1})}^h
+\|uf^h\|_{L^1_t(\dot{\mathcal B}^{d/2-1}_{2,1})}^h\\
&\lesssim
\|f^\ell\|_{L^2_t(\dot{\mathcal B}^{d/2}_{2,1})}
\|u\|_{L^2_t(\dot B^{d/2}_{2,1})}
+\|u\|_{L^2_t(\dot B^{d/2}_{2,1})}
\|f^h\|_{L^2_t(\dot{\mathcal B}^{d/2-1}_{2,1})}\\
&\lesssim \|(\varrho,u,f)\|_{\mathcal E_t}^2.
\end{aligned}
\end{equation}
Similarly, by the velocity-moment estimates and \eqref{e50}, we have
\begin{equation}\label{non2}
\begin{aligned}
&\left\|
a\,u\cdot v\,M^{1/2}
+(v\otimes v-\mathrm{Id}):(u\otimes b)M^{1/2}
\right\|_{L^1_t
(\dot{\mathcal B}^{d/2-1}_{2,1})}^{h}+\|au\|_{L^1_t
(\dot B^{d/2-1}_{2,1})}^{h}\\
&\lesssim
\|u\|_{L^2_t(\dot B^{d/2}_{2,1})}
\Big(
\|(a^\ell,b^\ell)\|_{L^2_t(\dot B^{d/2}_{2,1})}
+\|(a^h,b^h)\|_{L^2_t(\dot B^{d/2-1}_{2,1})}
\Big)
\lesssim \|(\varrho,u,f)\|_{\mathcal E_t}^2.
\end{aligned}
\end{equation}
For the other nonlinear terms, one has
\begin{equation}\label{e69}
\begin{aligned}
&\|\varrho u\|_{L^1_t
(\dot B^{d/2}_{2,1})}^{h}
+\|g(\varrho,u,f)\|_{L^1_t
(\dot B^{d/2-1}_{2,1})}^{h}+\left\| h(\varrho,u,f)
\right\|_{L^1_t
(\dot{\mathcal B}^{d/2-1}_{2,1})}^{h}\\
&\lesssim
\Big(
\|\varrho\|_{L^\infty_t
(\dot B^{d/2}_{2,1})}
+\|\varrho\|_{L^\infty_t
(\dot B^{d/2}_{2,1})}
\|f\|_{L^\infty_t
(\dot{\mathcal B}^{d/2-1}_{2,1})}\Big)
\|u\|_{L^1_t(\dot B^{d/2+1}_{2,1})}\\
&\quad+\|\varrho\|_{L^2_t
(\dot B^{d/2}_{2,1})}
\|u-b\|_{L^2_t
(\dot B^{d/2-1}_{2,1})}
\\
&\quad+\|u\|_{L^2_t
(\dot B^{d/2}_{2,1})}
\|\{\mathbf I-\mathbf P\}f\|_{
L^2_t(\dot{\mathcal B}^{d/2-1}_{2,1,\nu})}
+\|\varrho\|_{L^2_t
(\dot B^{d/2}_{2,1})}^2\\
&\lesssim\|(\varrho,u,f)\|_{\mathcal E_t}^2
+\|(\varrho,u,f)\|_{\mathcal E_t}^3.
\end{aligned}
\end{equation}
Furthermore, in view of Lemma \ref{l11}, we have
\begin{equation}\label{e70}
\begin{aligned}
&\int_0^t\Big(
\|\nabla_xu\|_{L^\infty_x}
\big(\|\varrho\|_{\dot B^{d/2}_{2,1}}^{h}
+\|u\|_{\dot B^{d/2-1}_{2,1}}^{h}\big)
+\|\varrho\dive_xu\|_{\dot B^{d/2}_{2,1}}
\Big)d\tau\\
&\quad\lesssim
\|u\|_{L^1_t(\dot B^{d/2+1}_{2,1})}
\Big(
\|\varrho\|_{L^\infty_t
(\dot B^{d/2}_{2,1})}
+\|u\|_{L^\infty_t
(\dot B^{d/2-1}_{2,1})}\Big)
\lesssim\|(\varrho,u,f)\|_{\mathcal E_t}^2.
\end{aligned}
\end{equation}
By Lemma \ref{l13}, the commutator term also satisfies
\begin{equation}\label{e71}
\sum_{j\geq0}2^{j(d/2-1)}
\int_0^t\mathcal R_j\,d\tau
\lesssim
\|u\|_{L^1_t(\dot B^{d/2+1}_{2,1})}
\Big(
\|u\|_{L^\infty_t
(\dot B^{d/2-1}_{2,1})}
+\|\varrho\|_{L^\infty_t
(\dot B^{d/2}_{2,1})}\Big)
\lesssim\|(\varrho,u,f)\|_{\mathcal E_t}^2.
\end{equation}
Substituting \eqref{non1}--\eqref{e71} into \eqref{l1ua} yields
\begin{equation}\label{l1ua111}
\begin{aligned}
&\|\varrho\|_{\widetilde L^\infty_t
(\dot B^{d/2}_{2,1})}^{h}
+\|u\|_{\widetilde L^\infty_t
(\dot B^{d/2-1}_{2,1})}^{h}
+\|f\|_{\widetilde L^\infty_t
(\dot{\mathcal B}^{d/2-1}_{2,1})}^{h}
+\|\varrho\|_{L^1_t
(\dot B^{d/2}_{2,1})}^{h}\\
&\quad+\|u\|_{L^1_t
(\dot B^{d/2-1}_{2,1})}^{h}
+\|f\|_{L^1_t
(\dot{\mathcal B}^{d/2-1}_{2,1})}^{h}
+\|u-b\|_{\widetilde L^2_t
(\dot B^{d/2-1}_{2,1})}^{h}
+\|
\{\mathbf I-\mathbf P\}f\|_{\widetilde L^2_t
(\dot{\mathcal B}^{d/2-1}_{2,1,\nu})}^{h}\\
&\quad\quad\lesssim \|\varrho_0\|_{\dot B^{d/2}_{2,1}}^{h}
+\|u_0\|_{\dot B^{d/2-1}_{2,1}}^{h}
+\|f_0\|_{\dot{\mathcal B}^{d/2-1}_{2,1}}^{h}+\|(\varrho,u,f)\|_{\mathcal E_t}^2
+\|(\varrho,u,f)\|_{\mathcal E_t}^3.
\end{aligned}
\end{equation}

It remains to estimate the high-frequency
$L^1_t\dot B^{d/2+1}_{2,1}$ norm of $u$. The maximal
regularity estimate in Lemma \ref{heat} applied to $\eqref{m1n}_2$
gives
\begin{align*}
\|u\|_{L^1_t(\dot B^{d/2+1}_{2,1})}^{h}
&\lesssim
\|u_0\|_{\dot B^{d/2-1}_{2,1}}^{h}
+\|\varrho\|_{L^1_t(\dot B^{d/2}_{2,1})}^{h}
+\|(u,b)\|_{L^1_t
(\dot B^{d/2-1}_{2,1})}^{h}\\
&\quad+\|u\cdot\nabla_xu\|_{L^1_t
(\dot B^{d/2-1}_{2,1})}^{h}
+\|au\|_{L^1_t
(\dot B^{d/2-1}_{2,1})}^{h}
+\|g(\varrho,u,f)\|_{L^1_t
(\dot B^{d/2-1}_{2,1})}^{h}\\
&\lesssim
\|\varrho_0\|_{\dot B^{d/2}_{2,1}}^{h}
+\|u_0\|_{\dot B^{d/2-1}_{2,1}}^{h}
+\|f_0\|_{\dot{\mathcal B}^{d/2-1}_{2,1}}^{h}
+\|(\varrho,u,f)\|_{\mathcal E_t}^2
+\|(\varrho,u,f)\|_{\mathcal E_t}^3,
\end{align*}
where the terms involving $\varrho$ and $(u,b)$ are controlled by  \eqref{l1ua111}. This completes the proof
of Lemma \ref{l4}.
\end{proof}

\vspace{2mm}

\begin{proof}[\textbf{Proof of Proposition \ref{p1}}]
Under assumption \eqref{e25}, combining the estimates in
Lemmas \ref{l2} and \ref{l4} and noting that the Chemin--Lerner
norms control the corresponding ordinary Lebesgue--Besov norms in
$\mathcal E_T$, we obtain
$$
\|(\varrho,u,f)\|_{\mathcal E_T}
\le C\|(\varrho_0,u_0,f_0)\|_{\mathcal E_0}
+C\|(\varrho_0,u_0,f_0)\|_{\mathcal E_0}^2
+C\|(\varrho,u,f)\|_{\mathcal E_T}^2
+C\|(\varrho,u,f)\|_{\mathcal E_T}^3.
$$
Choosing $\delta_0^*$ so that
$C(\delta_0^*+(\delta_0^*)^2)\leq\frac12$, we can absorb the last two
terms and proves
\eqref{e26}.
\end{proof}

\vspace{2mm}

\begin{proof}[\normalfont\bfseries Proof of Theorem \ref{t1}]
According to the local well-posedness theory in Appendix B, the Cauchy problem
\eqref{m1n} for the NS--VFP system admits a unique strong solution on
$[0,T_0]$ for some $T_0>0$. Let $T^*>0$ be the maximal existence
time.

By Proposition~\ref{p1}, \eqref{e30} and
\eqref{e44}, there exists a sufficiently small constant
$\delta_*>0$ such that, if $\|(\varrho,u,f)\|_{\mathcal E_T}\leq\delta_*$ for some $T<T^*$, then
\begin{equation}\label{e72}
\|(\varrho,u,f)\|_{\mathcal E_T}
\leq C_0\Big(
\|(\varrho_0,u_0,f_0)\|_{\mathcal E_0}
+\|(\varrho_0,u_0,f_0)\|_{\mathcal E_0}^2
\Big),
\end{equation}
and \eqref{e30} and \eqref{e44} hold on $[0,T]$.

We take $\delta_0>0$ in Theorem~\ref{t1} sufficiently small so
that, whenever $\|(\varrho_0,u_0,f_0)\|_{\mathcal E_0}\leq\delta_0$, we have
\[
2C_0\Big(
\|(\varrho_0,u_0,f_0)\|_{\mathcal E_0}
+\|(\varrho_0,u_0,f_0)\|_{\mathcal E_0}^2
\Big)<\delta_*.
\]
Set
\[
T_1:=\sup\left\{T<T^*:
\sup_{0\leq t\leq T}\|(\varrho,u,f)\|_{\mathcal E_t}
\leq
2C_0\Big(
\|(\varrho_0,u_0,f_0)\|_{\mathcal E_0}
+\|(\varrho_0,u_0,f_0)\|_{\mathcal E_0}^2
\Big)
\right\}.
\]
Time continuity implies that $T_1>0$. Assume $T_1<T^*$. For every $T<T_1$, the estimate in the
definition of $T_1$ is in fact strict using \eqref{e72}, which is a contradiction. Therefore, we have $T_1=T^*$. Choosing $(\varrho,u,f)(t)$ as a new initial datum with $t$ close to $T^*$ and using \eqref{e72} and local existence again, one can extend the solution beyond $T^*$, which implies $T^*=\infty$. Since $\delta_0$ is sufficiently small, \eqref{e6} follows.  The local theory in Appendix B gives the continuity of the
ordinary Besov norms of \(\varrho\) and \(u\). This completes the proof.
\end{proof}

\section{Uniform-in-viscosity regularity estimates}
\label{s6}

In this section, we prove Theorems \ref{t2} and
\ref{t3}, and we assume that $\mu=\lambda=\varepsilon\in (0,1]$ throughout.

\subsection{A priori estimate}

We first state the uniform counterpart of Proposition \ref{p1}.

\begin{proposition}\label{p2}
Let $T>0$ and $0<\varepsilon\leq1$, and let
$(\varrho^\varepsilon,u^\varepsilon,f^\varepsilon)$ be a smooth
solution to the Cauchy problem \eqref{m1n} for the NS--VFP system with
$\mu=\lambda=\varepsilon$ on $[0,T]$.
There exist constants $\delta_1^*>0$ and $C_1>0$, independent of
$T$ and $\varepsilon$, such that if
\begin{equation}\label{e73}
 \|(\varrho^\varepsilon,u^\varepsilon,f^\varepsilon)\|_{\mathcal Z_T}
 \leq\delta_1^*,
\end{equation}
then
\begin{equation}\label{e74}
 \|(\varrho^\varepsilon,u^\varepsilon,f^\varepsilon)\|_{\mathcal Z_T}
 +\varepsilon\|u^\varepsilon\|_
 {L^1_T(\dot B^{d/2+1}_{2,1}\cap
 \dot B^{d/2+2}_{2,1})}
 \leq C_1\Big(
 \|(\varrho_0^\varepsilon,u_0^\varepsilon,f_0^\varepsilon)\|_{\mathcal Z_0}
 +\|(\varrho_0^\varepsilon,u_0^\varepsilon,f_0^\varepsilon)\|_{\mathcal Z_0}^2
 \Big),
\end{equation}
where the $\mathcal Z_0$- and $\mathcal Z_T$-norms are defined by
\eqref{Z0} and \eqref{Zt}, respectively.
\end{proposition}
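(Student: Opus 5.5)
The plan is to mirror the proof of Proposition \ref{p1}: derive low- and high-frequency estimates separately, now at both regularity levels $d/2-1$ and $d/2+1$, with every constant independent of $\varepsilon$. The key requirement is that none of the dissipation of $u^\varepsilon$ may come from the viscous term. We therefore discard $\varepsilon\|\nabla_x\dot\Delta_j u\|^2$ from the linear Lyapunov functionals, or keep it only as a bonus term. The $\mathcal Z_T$ norm is then closed by a bootstrap under \eqref{e73}, giving
\[
X\lesssim \|\cdot\|_{\mathcal Z_0}+\|\cdot\|_{\mathcal Z_0}^2+X^2+X^3 .
\]

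\emph{Low frequencies ($j\le1$).} I would redo Lemma \ref{l1} for the variables $(\varrho^\ell,u^\ell,a^\ell,\widetilde V,\widetilde G)$ without using $\nu_*$. The basic energy identity \eqref{e38} still provides $\|\dot\Delta_j(u^\ell-\widetilde V)\|^2$ and $\|\dot\Delta_j\widetilde G\|^2_{L^2_xL^2_{v,\nu}}$. The cross terms $(\dot\Delta_ju^\ell,\nabla\dot\Delta_j\varrho^\ell)$ and $(\dot\Delta_j\widetilde V,\nabla\dot\Delta_ja^\ell)$ provide $2^{2j}\|\dot\Delta_j(\varrho^\ell,a^\ell)\|^2$. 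The error terms $\|\dive\dot\Delta_ju^\ell\|^2$ and $\|\dive\dot\Delta_j\widetilde V\|^2$ are controlled by $2^{2j}\|\dot\Delta_j\widetilde V\|^2$ plus the relative-velocity dissipation.

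To get dissipation for $\widetilde V$ itself, I add a third cross term that uses the second moment/$\Theta$ equation coming from \eqref{e35}, pairing $\Theta(\dot\Delta_j\widetilde G)$ with $\nabla\dot\Delta_j\widetilde V$ (Kawashima-type). This yields the displayed inequality
\[
2^{2j}\|\dot\Delta_ju\|\lesssim\|\dot\Delta_j(u-V)\|+2^{2j}\|\dot\Delta_jV\|,
\]
and hence $c2^{2j}\mathcal L_{\ell,j}$. Within the low-frequency range the estimate at index $d/2+1$ is the same as at $d/2-1$ up to a factor $2^{2j}\le4$. The nonlinear remainders are bounded exactly as in Lemma \ref{l2}. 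The viscous part of $g$ is bounded by $\varepsilon\|\varrho\|_{L^\infty(\dot B^{d/2})}\|u\|_{L^1(\dot B^{d/2+1})}$, which is acceptable since $\varepsilon\le1$.

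\emph{High frequencies ($j\ge-1$).} Here I would use the hyperbolic symmetrizer of the compressible Euler part. I apply $\dot\Delta_j$ to the symmetrized system, with $\varrho$ replaced by an enthalpy-type variable so that $P'(1+\varrho)/(1+\varrho)$ is absorbed. I then take $L^2$ energies of $(\varrho,u,f)$ at weight $2^{j(d/2+1)}$, so that $\dot\Delta_j\nabla u$ appears only through transport and commutator terms. These are controlled by $\|\nabla u\|_{L^\infty}$ and the commutator estimate Lemma \ref{l13}, and we have $\|u\|_{L^1(\dot B^{d/2+1})}$ in $\mathcal Z_T$. Damping comes from three sources:
\begin{itemize}
\item[(i)] drag: $\|\dot\Delta_j(u-b)\|^2$;
\item[(ii)] Fokker--Planck: $\|\{\mathbf I-\mathbf P\}f\|_\nu^2$;
\item[(iii)] cross terms $2^{-2j}$-weighted pairings $(\nabla\dot\Delta_ja,\dot\Delta_jb)$ and $(\nabla\dot\Delta_j\varrho,\dot\Delta_j u)$, and a $\Theta$ pairing for $b$.
\end{itemize}
Source (iii) gives damping of $b$ from \eqref{e22}, $a$ via $\nabla a$, and $\varrho$ via $u$. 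The damping of $u$ then follows from $u=(u-b)+b$.

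The $\varepsilon$-dependent dissipation $\varepsilon\|u\|_{L^1(\dot B^{d/2+2})}^h$ is then obtained from Lemma \ref{heat} applied with the viscous operator of size $\varepsilon$. The maximal-regularity constant gives $\varepsilon\|u\|_{L^1\dot B^{s+2}}\lesssim\|u_0\|_{\dot B^s}+\|\mathrm{source}\|_{L^1\dot B^s}$ with $s=d/2-1$ and $s=d/2$, uniformly in $\varepsilon$. All sources are already bounded in $L^1$ by the $\mathcal Z_T$ norm.

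The main obstacle is the high-frequency top-order estimate for $u$ and $\varrho$ without viscosity. The quasilinear terms $u\cdot\nabla u$, $\varrho\dive u$ and the pressure correction cost a derivative, which must be recovered by symmetrization and commutators rather than by maximal regularity. A further difficulty is that the viscous nonlinearity $\frac{\varrho}{1+\varrho}\varepsilon\Delta u$ at level $d/2+1$ requires $\varepsilon\|u\|_{L^1(\dot B^{d/2+3})}$, which is not available. I would avoid this by keeping the viscous term in divergence form in the energy estimate and integrating by parts against $\frac{1}{1+\varrho}$, using $\frac{\varepsilon}{1+\varrho}\Delta u$ as a dissipative term with a commutator error of size $\varepsilon\|\nabla\varrho\|_{L^\infty}\|u\|_{\dot B^{d/2+2}}$. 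That error is absorbed by the $\varepsilon\|u\|_{L^1(\dot B^{d/2+2})}$ bonus term and smallness.
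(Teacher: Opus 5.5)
Your proposal is correct and follows essentially the same route as the paper. At low frequencies the paper also obtains $\varepsilon$-independent dissipation of $V^\varepsilon$, and hence of $u^\varepsilon=(u^\varepsilon-V^\varepsilon)+V^\varepsilon$, by pairing the $\Theta$-moment of the $G$-equation with $\mathbb D_x\dot\Delta_jV^\varepsilon$. At high frequencies it combines three ingredients:
\begin{itemize}
\item the Friedrichs symmetrizer $\mathrm{diag}\big(P'(1+\varrho^\varepsilon)/(1+\varrho^\varepsilon),(1+\varrho^\varepsilon)\mathrm{Id}\big)$, which cancels the factor $1/(1+\varrho^\varepsilon)$ in the viscous term and leaves exactly the commutator you describe;
\item the $2^{-2j}$-weighted cross terms with $\nabla_x\dot\Delta_j\varrho^\varepsilon$, $\nabla_x\dot\Delta_ja^\varepsilon$ and $\Theta(\dot\Delta_j\{\mathbf I-\mathbf P\}f^\varepsilon)$ paired against $\mathbb D_x\dot\Delta_jb^\varepsilon$;
\item the Lam\'e estimate at index $d/2$ for $\varepsilon\|u^\varepsilon\|^h_{L^1_T(\dot B^{d/2+2}_{2,1})}$.
\end{itemize}

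There are two small points. First, the paper uses the uncut variables $V^\varepsilon=b^\varepsilon/(1+a^\varepsilon)$ and $G^\varepsilon$ at low frequencies, since $\mathcal Z_T$ already gives $\|a^\varepsilon\|_{L^\infty}\ll1$; your cut-off variables also work. Second, in the Lam\'e step the source $\frac{\varrho^\varepsilon}{1+\varrho^\varepsilon}\varepsilon(\Delta_xu^\varepsilon+2\nabla_x\dive_xu^\varepsilon)$ is not bounded by $\mathcal Z_T$ alone, contrary to your sentence that all sources are. It is bounded by $\|\varrho^\varepsilon\|_{L^\infty_T(\dot B^{d/2}_{2,1})}\,\varepsilon\|u^\varepsilon\|_{L^1_T(\dot B^{d/2+2}_{2,1})}$ and must be absorbed by smallness, as the paper does.
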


The proof follows by combining Lemmas
\ref{l6} and
\ref{l8}. The use of interpolation and the embedding
$\dot B^{d/2}_{2,1}\hookrightarrow L^\infty(\mathbb R^d_x)$ shows
$\|\varrho^\varepsilon\|_{L^\infty_{T,x}}
+\|a^\varepsilon\|_{L^\infty_{T,x}}\lesssim\delta_1^*\ll 1$. Thus, we have
\begin{equation}\label{e75}
 \begin{aligned}
&\frac12\leq1+\varrho^\varepsilon\leq\frac32\quad\text{and}\quad
 \frac12\leq1+a^\varepsilon\leq\frac32,\\
 & \frac{P'(1)}{2}\leq \frac{P'(1+\varrho^\varepsilon)}{1+\varrho^\varepsilon}\leq 2 P'(1)
 \quad\text{and}\quad |u^\varepsilon|\leq 1 \quad\text{on }[0,T]\times\mathbb R^d.
 \end{aligned}
\end{equation}

\subsection{Low-frequency estimates}
\label{s7}

Applying $\Theta$ to
\eqref{e29}, using
$\Theta(h(\varrho^\varepsilon,u^\varepsilon,f^\varepsilon))=0$, we obtain
\begin{equation}\label{e76}
\begin{aligned}
&\partial_t\Theta(G^\varepsilon)
+2\Theta(G^\varepsilon)
+2\mathbb D_x V^\varepsilon
+\dive_x\int_{\mathbb R^d_v}
(v\otimes v-\mathrm{Id})\otimes v\,
M^{1/2}G^\varepsilon\,dv\\
&\quad=
-a^\varepsilon
\bigl(u^\varepsilon\otimes b^\varepsilon
+b^\varepsilon\otimes u^\varepsilon\bigr)+
\Bigl(\nabla_xa^\varepsilon
+\dive_x\Theta(\{\mathbf I-\mathbf P\}f^\varepsilon)\Bigr)
\otimes b^\varepsilon\\
&\qquad+
b^\varepsilon\otimes
\Bigl(\nabla_xa^\varepsilon
+\dive_x\Theta(\{\mathbf I-\mathbf P\}f^\varepsilon)\Bigr)
-2\mathbb D_x(a^\varepsilon V^\varepsilon),
\end{aligned}
\end{equation}
where
\begin{equation*}
 \mathbb D_x z:=\frac12\bigl(\nabla_xz+(\nabla_xz)^{\mathsf T}\bigr).
\end{equation*}
The term $2\mathbb D_x V^\varepsilon$ is used below to recover the
dissipation of $\nabla_xV^\varepsilon$, uniformly with respect to $\varepsilon$. Combining this dissipation with the relaxation of $u^\varepsilon-V^\varepsilon$, we recover the uniform-in-$\varepsilon$ dissipation of $u^\varepsilon$.

\begin{lemma}
\label{l5}
Let $0<\varepsilon\leq1$ and $j\leq1$. There exist constants
$c,C>0$, independent of $\varepsilon$ and $j$, and a  
functional $\mathcal L_{\ell,j}^\varepsilon$ such that
\begin{equation}\label{e78}
 \mathcal L_{\ell,j}^\varepsilon\sim
 \|\dot\Delta_j(\varrho^\varepsilon,u^\varepsilon,
 a^\varepsilon,V^\varepsilon)\|_{L^2_x}^2
 +\|\dot\Delta_jG^\varepsilon\|_{L^2_{x,v}}^2,
\end{equation}
and
\begin{equation}\label{e79}
\begin{aligned}
\frac{\mathrm d}{\mathrm dt}\mathcal L_{\ell,j}^\varepsilon
&+c2^{2j}\mathcal L_{\ell,j}^\varepsilon
+c\|\dot\Delta_j(u^\varepsilon-V^\varepsilon)\|_{L^2_x}^2
+c\|\dot\Delta_jG^\varepsilon\|_{L^2_xL^2_{v,\nu}}^2\\
&+c\varepsilon\,2^{2j}\|\dot\Delta_ju^\varepsilon\|_{L^2_x}^2
\lesssim\mathcal R_{\ell,j}^\varepsilon
\sqrt{\mathcal L_{\ell,j}^\varepsilon},
\end{aligned}
\end{equation}
where $\mathcal R_{\ell,j}^\varepsilon$ is defined by
\begin{align}
\mathcal R_{\ell,j}^\varepsilon:={}&
\|\dot\Delta_j\nabla_x(\varrho^\varepsilon u^\varepsilon,
a^\varepsilon V^\varepsilon,b^\varepsilon\otimes b^\varepsilon)\|_{L^2_x}+\|\dot\Delta_j(u^\varepsilon\cdot\nabla_xu^\varepsilon,
g(\varrho^\varepsilon,u^\varepsilon,f^\varepsilon),
a^\varepsilon(u^\varepsilon-V^\varepsilon))\|_{L^2_x}
\nonumber\\
&+\left\|\dot\Delta_j\left(
\frac{a^\varepsilon}{1+a^\varepsilon}
\Big(\nabla_xa^\varepsilon
+\dive_x\Theta(\{\mathbf I-\mathbf P\}f^\varepsilon)\Big),
\frac{V^\varepsilon}{1+a^\varepsilon}\dive_xb^\varepsilon
\right)\right\|_{L^2_x}
\nonumber\\
&+\|\dot\Delta_j h(\varrho^\varepsilon,u^\varepsilon,
f^\varepsilon)\|_{L^2_{x,v}}+\|\dot\Delta_j(a^\varepsilon u^\varepsilon
\otimes b^\varepsilon)\|_{L^2_x}\nonumber\\
&+\left\|\dot\Delta_j\left(
\Big(\nabla_xa^\varepsilon
+\dive_x\Theta(\{\mathbf I-\mathbf P\}f^\varepsilon)\Big)
\otimes b^\varepsilon-\nabla_x(a^\varepsilon V^\varepsilon)
\right)\right\|_{L^2_x}.
\label{e80}
\end{align}
\end{lemma}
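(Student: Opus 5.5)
We follow the strategy of Lemma \ref{l1}, with two changes. We work with the full variables $(\varrho^\varepsilon,u^\varepsilon,a^\varepsilon,V^\varepsilon,G^\varepsilon)$ governed by $\eqref{m1n}_1$--$\eqref{m1n}_2$, \eqref{e24}, \eqref{e27} and \eqref{e29}; under \eqref{e75}, $V^\varepsilon=b^\varepsilon/(1+a^\varepsilon)$ is well defined, so no cut-off is needed. The second change is essential: none of the dissipation may come from viscosity, since its size $\varepsilon$ is arbitrarily small.

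\emph{Step 1: basic energy identity.} Apply $\dot\Delta_j$ and take $L^2$ inner products with $\dot\Delta_j(\varrho^\varepsilon,u^\varepsilon,a^\varepsilon,V^\varepsilon,G^\varepsilon)$. Since $u^\varepsilon-b^\varepsilon=(u^\varepsilon-V^\varepsilon)-a^\varepsilon V^\varepsilon$, the momentum equation becomes
\[
\partial_tu^\varepsilon+\nabla_x\varrho^\varepsilon-\varepsilon\Delta_x u^\varepsilon-2\varepsilon\nabla_x\dive_xu^\varepsilon+u^\varepsilon-V^\varepsilon=-a^\varepsilon(u^\varepsilon-V^\varepsilon)-u^\varepsilon\cdot\nabla_xu^\varepsilon+g.
\]
The linear couplings cancel pairwise. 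The pairs $\nabla\varrho/\dive u$ and $\nabla a/\dive V$ cancel. The pair $\dive\Theta(G)$ against $(v\otimes v-\mathrm{Id}):\nabla_xV\,M^{1/2}$ cancels because $\Theta$ is the adjoint. The relaxation terms $(u-V)\cdot u$ and $-(u-V)\cdot V$ combine into $\|\dot\Delta_j(u^\varepsilon-V^\varepsilon)\|^2$. Together with \eqref{e3} applied to $G^\varepsilon$ (note $\mathbf PG^\varepsilon=0$), this gives the analogue of \eqref{e38}, with dissipation
\[
\varepsilon 2^{2j}\|\dot\Delta_ju^\varepsilon\|^2+\|\dot\Delta_j(u^\varepsilon-V^\varepsilon)\|^2+\lambda_0\|\dot\Delta_jG^\varepsilon\|^2_{L^2_xL^2_{v,\nu}}.
\]
All nonlinear terms appear with $\mathcal R^\varepsilon_{\ell,j}$ times the energy.

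\emph{Step 2: recovering $\nabla V$ uniformly in $\varepsilon$.} This is the key new point. Previously \eqref{e42} used $\|\nabla u\|$, which now carries only the weight $\varepsilon$. Instead we use \eqref{e76}. Pairing $\dot\Delta_j\Theta(G^\varepsilon)$-type quantities with $\dot\Delta_j\mathbb D_xV^\varepsilon$ would require $\partial_t V$, so we take a cross term $-(\dot\Delta_jV^\varepsilon,\dive_x\dot\Delta_j\Theta(G^\varepsilon))$; equivalently, we pair with $\dot\Delta_j\Theta(G^\varepsilon)$ and integrate by parts. Using \eqref{e27} for $\partial_tV$ and \eqref{e76} for $\partial_t\Theta(G)$, its time derivative produces the good term $2\|\dot\Delta_j\mathbb D_xV^\varepsilon\|^2$. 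Korn's identity bounds this from below: $\|\mathbb D z\|^2\ge\frac12\|\nabla z\|^2$. The remaining linear errors are bounded by $\|\dot\Delta_j G\|_{L^2_{v,\nu}}$ (including the third-moment term and $2\Theta(G)$), by $2^j\|\dot\Delta_j(u-V)\|$, and by $2^{j}\|\dot\Delta_j a\|\,\|\dot\Delta_j\nabla G\|$. For $j\le1$ Bernstein's inequality makes each of these a small multiple of the dissipations once weighted by a small $\eta_3$. This yields $2^{2j}\|\dot\Delta_jV^\varepsilon\|^2$ and hence, via $u=(u-V)+V$, also $2^{2j}\|\dot\Delta_ju^\varepsilon\|^2$, independent of $\varepsilon$.

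\emph{Step 3: density dissipation and closing.} Add the cross terms $\eta_1(\dot\Delta_ju,\nabla\dot\Delta_j\varrho)$ and $\eta_2(\dot\Delta_jV,\nabla\dot\Delta_ja)$ as in \eqref{e39}--\eqref{e40}. Their derivatives give $\|\nabla\dot\Delta_j\varrho\|^2$ and $\|\nabla\dot\Delta_ja\|^2$, at the cost of $\|\dive\dot\Delta_ju\|^2$ and $\|\dive\dot\Delta_jV\|^2$, which Step 2 now controls uniformly. The viscous cross term $\varepsilon(\nabla\dive\dot\Delta_j u,\nabla\dot\Delta_j\varrho)$ is bounded by $\varepsilon 2^{2j}\|\dot\Delta_j u\|\,\|\nabla\dot\Delta_j\varrho\|\lesssim \varepsilon^2 2^{2j}\|\dot\Delta_ju\|^2+\frac14\|\nabla\dot\Delta_j\varrho\|^2$. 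Because $j\le1$ and $\varepsilon\le1$, the first piece is absorbed by the $\varepsilon2^{2j}$ dissipation. We then choose the weights hierarchically, $\eta_1,\eta_2\ll\eta_3\ll1$, so that \eqref{e78} holds and every linear error is absorbed. The lower bound $2^{2j}\mathcal L^\varepsilon_{\ell,j}\lesssim$ dissipation then follows from Bernstein's inequality, with all constants independent of $\varepsilon$ and $j$. This proves \eqref{e79}.

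The main obstacle is Step 2. We must check that every linear error produced by the $V$–$\Theta(G)$ cross term is either of lower order in $2^j$ or controlled by $\|G\|_{L^2_{v,\nu}}$ and $\|u-V\|$, with no hidden $\|\nabla u\|$ requiring viscosity. We must also fix the ordering of the small parameters so that the $\|\dive V\|^2$ cost in Step 3 is dominated by the gain from Step 2.
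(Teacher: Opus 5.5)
Your proposal is correct and follows the paper's proof. It uses the same basic energy identity, the same three cross terms, the whole-space identity $2\|\mathbb D_xz\|_{L^2_x}^2=\|\nabla_xz\|_{L^2_x}^2+\|\dive_xz\|_{L^2_x}^2$, and Bernstein's inequality for $j\le1$. Your $-(\dot\Delta_jV^\varepsilon,\dive_x\Theta(\dot\Delta_jG^\varepsilon))_{L^2_x}$ coincides with the paper's $(\Theta(\dot\Delta_jG^\varepsilon),\mathbb D_x\dot\Delta_jV^\varepsilon)_{L^2_x}$, because $\Theta(G^\varepsilon)$ is symmetric.

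The one step to tighten is the choice of weights. The paper puts the same weight $\eta_2$ on the $a$--$V$ and $\Theta(G)$--$\mathbb D_xV$ cross terms in \eqref{e86}. Two things follow from that choice:
\begin{itemize}
\item The acoustic-stress pairings $\mp(\dive_x\Theta(\dot\Delta_jG^\varepsilon),\nabla_x\dot\Delta_ja^\varepsilon)_{L^2_x}$ in \eqref{e83}--\eqref{e84} cancel exactly.
\item The cost $\|\dive_x\dot\Delta_jV^\varepsilon\|_{L^2_x}^2$ is paid by $2\|\mathbb D_x\dot\Delta_jV^\varepsilon\|_{L^2_x}^2-\|\dive_x\dot\Delta_jV^\varepsilon\|_{L^2_x}^2=\|\nabla_x\dot\Delta_jV^\varepsilon\|_{L^2_x}^2$.
\end{itemize}
So no hierarchy between those two weights is needed.

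With your separate weights $\eta_2\ll\eta_3$, the leftover $(\eta_3-\eta_2)(\dive_x\Theta(\dot\Delta_jG^\varepsilon),\nabla_x\dot\Delta_ja^\varepsilon)_{L^2_x}$ is not absorbed merely by taking $\eta_3$ small, contrary to your claim in Step 2. The only $a$-dissipation available is $\eta_2\|\nabla_x\dot\Delta_ja^\varepsilon\|_{L^2_x}^2$. Young's inequality therefore leaves a term $C(\eta_3^2/\eta_2)\|\dot\Delta_jG^\varepsilon\|_{L^2_xL^2_{v,\nu}}^2$, which must be absorbed by $\lambda_0\|\dot\Delta_jG^\varepsilon\|_{L^2_xL^2_{v,\nu}}^2$.

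You therefore need the two-sided window $\eta_3^2\ll\eta_2\ll\eta_3$, for instance $\eta_2=\eta_3^{3/2}$ with $\eta_3$ small. The ordering ``$\eta_2\ll\eta_3\ll1$'' alone is not enough: $\eta_2=\eta_3^3$ satisfies it and fails. Alternatively, take $\eta_2=\eta_3$ as the paper does, and the issue disappears.
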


\begin{proof}
Applying $\dot\Delta_j$ to the fluid equations in \eqref{m1n}, the
first equation of \eqref{e24},
\eqref{e27} and
\eqref{e29}, and taking the corresponding
$L^2(\mathbb R^d_x)$ and $L^2(\mathbb R^d_x\times\mathbb R^d_v)$ inner products with
$\dot\Delta_j\varrho^\varepsilon$,
$\dot\Delta_ju^\varepsilon$,
$\dot\Delta_ja^\varepsilon$,
$\dot\Delta_jV^\varepsilon$ and
$\dot\Delta_jG^\varepsilon$, respectively, we obtain
\begin{equation}\label{e81}
\begin{aligned}
&\frac12\frac{\mathrm d}{\mathrm dt}
 \left(
 \|\dot\Delta_j(\varrho^\varepsilon,u^\varepsilon,
 a^\varepsilon,V^\varepsilon)\|_{L^2_x}^2
 +\|\dot\Delta_jG^\varepsilon\|_{L^2_{x,v}}^2
 \right)\\
&\quad
 +\varepsilon\left(
 \|\nabla_x\dot\Delta_ju^\varepsilon\|_{L^2_x}^2
 +2\|\dive_x\dot\Delta_ju^\varepsilon\|_{L^2_x}^2
 \right)
 +\|\dot\Delta_j(u^\varepsilon-V^\varepsilon)\|_{L^2_x}^2
 +\lambda_0
 \|\dot\Delta_jG^\varepsilon
 \|_{L^2_xL^2_{v,\nu}}^2\\
&\qquad\lesssim
 \mathcal R_{\ell,j}^\varepsilon
 \left(
 \|\dot\Delta_j(\varrho^\varepsilon,u^\varepsilon,
 a^\varepsilon,V^\varepsilon)\|_{L^2_x}
 +\|\dot\Delta_jG^\varepsilon\|_{L^2_{x,v}}
 \right).
\end{aligned}
\end{equation}

To recover the dissipation of
$\dot\Delta_j\varrho^\varepsilon$, we take the $L^2(\mathbb R^d_x)$ inner product
of the localized momentum equation with
$\nabla_x\dot\Delta_j\varrho^\varepsilon$ and use the localized
continuity equation to treat the time derivative to obtain
\begin{align}
&\frac{\mathrm d}{\mathrm dt}\bigl(\dot\Delta_ju^\varepsilon,\, \nabla_x\dot\Delta_j\varrho^\varepsilon\bigr)_{L^2_x}+\|\nabla_x\dot\Delta_j\varrho^\varepsilon\|_{L^2_x}^2\nonumber\\
&\quad\leq \|\dive_x\dot\Delta_ju^\varepsilon\|_{L^2_x}^2-\bigl(\dot\Delta_j(u^\varepsilon-V^\varepsilon),\, \nabla_x\dot\Delta_j\varrho^\varepsilon\bigr)_{L^2_x}\nonumber\\
&\qquad+\bigl(\varepsilon\bigl(\Delta_x\dot\Delta_ju^\varepsilon+2\nabla_x\dive_x\dot\Delta_ju^\varepsilon\bigr),\, \nabla_x\dot\Delta_j\varrho^\varepsilon\bigr)_{L^2_x}\nonumber\\
&\qquad+C\mathcal R_{\ell,j}^\varepsilon\Big(\|\dot\Delta_j(\varrho^\varepsilon,u^\varepsilon,a^\varepsilon,V^\varepsilon)\|_{L^2_x}+\|\dot\Delta_jG^\varepsilon\|_{L^2_{x,v}}\Big).
\label{e82}
\end{align}

By applying $\dot\Delta_j$ to the first equation of
\eqref{e24} and to \eqref{e27}, taking the
$L^2(\mathbb R^d_x)$ inner product of the localized equation for $V^\varepsilon$
with $\nabla_x\dot\Delta_ja^\varepsilon$, we get
\begin{align}
&\frac{\mathrm d}{\mathrm dt}
 \bigl(\dot\Delta_jV^\varepsilon,
 \nabla_x\dot\Delta_ja^\varepsilon\bigr)_{L^2_x}
 +\|\nabla_x\dot\Delta_ja^\varepsilon\|_{L^2_x}^2
 \nonumber\\
&\quad\leq
 \|\dive_x\dot\Delta_jV^\varepsilon\|_{L^2_x}^2
 +\bigl(\dot\Delta_j(u^\varepsilon-V^\varepsilon),
 \nabla_x\dot\Delta_ja^\varepsilon\bigr)_{L^2_x}
 \nonumber\\
&\qquad
 -\bigl(\dive_x\Theta(\dot\Delta_jG^\varepsilon),
 \nabla_x\dot\Delta_ja^\varepsilon\bigr)_{L^2_x}
 +C\mathcal R_{\ell,j}^\varepsilon
 \|\dot\Delta_j(a^\varepsilon,V^\varepsilon)\|_{L^2_x}.
\label{e83}
\end{align}

We now recover the dissipation of
$\nabla_x\dot\Delta_jV^\varepsilon$. Applying $\dot\Delta_j$ to
\eqref{e76}, taking the $L^2(\mathbb R^d_x)$ inner
product with $\mathbb D_x\dot\Delta_jV^\varepsilon$, and using
\eqref{e27} to treat
$\partial_t\dot\Delta_jV^\varepsilon$, we integrate by parts and bound
the nonlinear source pairings using \eqref{e80}
and $j\leq1$ to obtain
\begin{align}
&\frac{\mathrm d}{\mathrm dt}\bigl(\Theta(\dot\Delta_jG^\varepsilon),\, \mathbb D_x\dot\Delta_jV^\varepsilon\bigr)_{L^2_x}+2\|\mathbb D_x\dot\Delta_jV^\varepsilon\|_{L^2_x}^2\nonumber\\
&\quad\leq \bigl(\dive_x\Theta(\dot\Delta_jG^\varepsilon),\, \nabla_x\dot\Delta_ja^\varepsilon\bigr)_{L^2_x}-\bigl(\dive_x\Theta(\dot\Delta_jG^\varepsilon),\, \dot\Delta_j(u^\varepsilon-V^\varepsilon)\bigr)_{L^2_x}\nonumber\\
&\qquad-2\bigl(\Theta(\dot\Delta_jG^\varepsilon),\, \mathbb D_x\dot\Delta_jV^\varepsilon\bigr)_{L^2_x}+\|\dive_x\Theta(\dot\Delta_jG^\varepsilon)\|_{L^2_x}^2\nonumber\\
&\qquad-\bigl(\Theta(v\cdot\nabla_x\dot\Delta_jG^\varepsilon),\, \mathbb D_x\dot\Delta_jV^\varepsilon\bigr)_{L^2_x}+C\mathcal R_{\ell,j}^\varepsilon\bigl(\|\dot\Delta_jV^\varepsilon\|_{L^2_x}+\|\dot\Delta_jG^\varepsilon\|_{L^2_{x,v}}\bigr).
\label{e84}
\end{align}

The two acoustic stress pairings in
\eqref{e83} and
\eqref{e84} cancel upon addition.
We also use the whole-space identity
\begin{equation}\label{e85}
2\|\mathbb D_x\dot\Delta_jV^\varepsilon\|_{L^2_x}^2
=\|\nabla_x\dot\Delta_jV^\varepsilon\|_{L^2_x}^2
+\|\dive_x\dot\Delta_jV^\varepsilon\|_{L^2_x}^2,
\end{equation}
which implies
$\|\dive_x\dot\Delta_jV^\varepsilon\|_2^2
\leq\|\mathbb D_x\dot\Delta_jV^\varepsilon\|_2^2$.

We define
\begin{equation}\label{e86}
\begin{aligned}
\mathcal L_{\ell,j}^\varepsilon:={}&
 \frac12
 \|\dot\Delta_j(\varrho^\varepsilon,u^\varepsilon,
 a^\varepsilon,V^\varepsilon)\|_{L^2_x}^2
 +\frac12\|\dot\Delta_jG^\varepsilon\|_{L^2_{x,v}}^2+\eta_1
 \bigl(\dot\Delta_ju^\varepsilon,
 \nabla_x\dot\Delta_j\varrho^\varepsilon\bigr)_{L^2_x}\\
&+\eta_2\left(
 \bigl(\dot\Delta_jV^\varepsilon,
 \nabla_x\dot\Delta_ja^\varepsilon\bigr)_{L^2_x}
 +\bigl(\Theta(\dot\Delta_jG^\varepsilon),
 \mathbb D_x\dot\Delta_jV^\varepsilon\bigr)_{L^2_x}
 \right).
\end{aligned}
\end{equation}
For $j\leq1$, Bernstein's inequality and the Gaussian moment bounds
show that the cross terms in \eqref{e86} are bounded by the
quadratic part of $\mathcal L_{\ell,j}^\varepsilon$. Hence, choosing
$\eta_1,\eta_2>0$ sufficiently small gives
\eqref{e78}.

To proceed, we first choose $\eta_2>0$ sufficiently small and then
take $0<\eta_1\ll\eta_2$. We add $\eta_1$ times \eqref{e82} and
$\eta_2$ times each of \eqref{e83} and \eqref{e84} to \eqref{e81}.
By Young's inequality, \eqref{e85}, and Bernstein's inequality for
$j\leq1$, all the terms arising from the linear couplings are absorbed
by the dissipative terms. Consequently,
\begin{align}
\frac{\mathrm d}{\mathrm dt}\mathcal L_{\ell,j}^\varepsilon
&+\frac{\eta_1}{2}
 \|\nabla_x\dot\Delta_j\varrho^\varepsilon\|_{L^2_x}^2
+\frac{\eta_2}{2}
 \|\nabla_x\dot\Delta_ja^\varepsilon\|_{L^2_x}^2
+\frac{\eta_2}{2}
 \|\mathbb D_x\dot\Delta_jV^\varepsilon\|_{L^2_x}^2
\nonumber\\
&\quad+\frac12
 \|\dot\Delta_j(u^\varepsilon-V^\varepsilon)\|_{L^2_x}^2
+\frac{\lambda_0}{2}
 \|\dot\Delta_jG^\varepsilon\|_{L^2_xL^2_{v,\nu}}^2+\frac{\varepsilon}{2}
 \|\nabla_x\dot\Delta_ju^\varepsilon\|_{L^2_x}^2
\lesssim
\mathcal R_{\ell,j}^\varepsilon
\sqrt{\mathcal L_{\ell,j}^\varepsilon}.
\label{e87}
\end{align}
Finally, Bernstein's inequality, the relative-velocity dissipation
and the microscopic coercivity imply
\begin{align*}
2^{2j}\mathcal L_{\ell,j}^\varepsilon
\lesssim{}&
\|\nabla_x\dot\Delta_j
(\varrho^\varepsilon,a^\varepsilon,V^\varepsilon)\|_{L^2_x}^2+\|\dot\Delta_j(u^\varepsilon-V^\varepsilon)\|_{L^2_x}^2
+\|\dot\Delta_jG^\varepsilon\|_{L^2_xL^2_{v,\nu}}^2,
\end{align*}
and $\varepsilon
\|\nabla_x\dot\Delta_ju^\varepsilon\|_{L^2_x}^2
\gtrsim
\varepsilon 2^{2j}
\|\dot\Delta_ju^\varepsilon\|_{L^2_x}^2$. Together with \eqref{e87}, these estimates prove \eqref{e79}, with
constants independent of $\varepsilon$ and $j$.
\end{proof}

We next derive the low-frequency estimates uniformly in $\varepsilon$. 
\begin{lemma}
\label{l6}
Let $0<\varepsilon\leq1$ and $T>0$. Suppose that
\eqref{e73} holds for a sufficiently small
constant $\delta_1^*>0$ independent of $T$ and $\varepsilon$.
Then, for every $t\in[0,T]$, we have
\begin{equation}\label{e88}
\begin{aligned}
&\|(\varrho^{\varepsilon,\ell},u^{\varepsilon,\ell})\|_
{\widetilde L^\infty_t(\dot B^{d/2-1}_{2,1})}
+\|f^{\varepsilon,\ell}\|_
{\widetilde L^\infty_t(\dot{\mathcal B}^{d/2-1}_{2,1})}
+\|(\varrho^{\varepsilon,\ell},u^{\varepsilon,\ell})\|_
{L^1_t(\dot B^{d/2+1}_{2,1})}
+\|f^{\varepsilon,\ell}\|_
{L^1_t(\dot{\mathcal B}^{d/2+1}_{2,1})}\\
&\quad
+\|(u^\varepsilon-b^\varepsilon)^\ell\|_
{\widetilde L^2_t(\dot B^{d/2-1}_{2,1})}
+\left\|\{\mathbf I-\mathbf P\}
f^{\varepsilon,\ell}\right\|_
{\widetilde L^2_t(\dot{\mathcal B}^{d/2-1}_{2,1,\nu})}
+\varepsilon\|u^{\varepsilon,\ell}\|_
{L^1_t(\dot B^{d/2+2}_{2,1})}\\
&\qquad\leq
C\|(\varrho_0^\varepsilon,u_0^\varepsilon,f_0^\varepsilon)\|_
{\mathcal Z_0}
+C\|(\varrho_0^\varepsilon,u_0^\varepsilon,f_0^\varepsilon)\|_
{\mathcal Z_0}^2
+C\|(\varrho^\varepsilon,u^\varepsilon,f^\varepsilon)\|_
{\mathcal Z_t}^2
+C\|(\varrho^\varepsilon,u^\varepsilon,f^\varepsilon)\|_
{\mathcal Z_t}^3\\
&\qquad\quad
+C\|(\varrho^\varepsilon,u^\varepsilon,f^\varepsilon)\|_
{\mathcal Z_t}
\varepsilon\|u^\varepsilon\|_
{L^1_t(\dot B^{d/2+2}_{2,1})},
\end{aligned}
\end{equation}
where the constant $C>0$ is independent of $\varepsilon$ and $t$.
\end{lemma}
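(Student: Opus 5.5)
We integrate the Lyapunov inequality \eqref{e79} of Lemma \ref{l5} exactly as in the proof of Lemma \ref{l2}. Dividing \eqref{e79} by $2\sqrt{\mathcal L^\varepsilon_{\ell,j}+\kappa^2}$ and letting $\kappa\to0$ gives the analogue of \eqref{e46}. A second integration combined with Young's inequality gives the analogue of \eqref{e47} for $\dot\Delta_j(u^\varepsilon-V^\varepsilon)$ and $\dot\Delta_jG^\varepsilon$. The same integration also gives a bound on $\sqrt{\varepsilon}\,2^j\|\dot\Delta_ju^\varepsilon\|_{L^2_tL^2_x}$. We then multiply by $2^{j(d/2-1)}$ and sum over $j\le1$.

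The only new piece of dissipation is the $\varepsilon$-term. To reach $\varepsilon\|u^{\varepsilon,\ell}\|_{L^1_t(\dot B^{d/2+2}_{2,1})}$ we use that $j\le1$. Bernstein gives
\[
\varepsilon\|u^{\varepsilon,\ell}\|_{L^1_t(\dot B^{d/2+2}_{2,1})}\lesssim\varepsilon\|u^{\varepsilon,\ell}\|_{L^1_t(\dot B^{d/2+1}_{2,1})}\le\|u^{\varepsilon,\ell}\|_{L^1_t(\dot B^{d/2+1}_{2,1})},
\]
since $\varepsilon\le1$. So it suffices to bound the $L^1_t(\dot B^{d/2+1}_{2,1})$ norm of $u^{\varepsilon,\ell}$ uniformly in $\varepsilon$. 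That bound comes from the $c2^{2j}\mathcal L^\varepsilon_{\ell,j}$ term, because $\mathcal L^\varepsilon_{\ell,j}$ contains $\|\dot\Delta_j u^\varepsilon\|_{L^2_x}^2$. This is the uniform-in-$\varepsilon$ recovery of $u$ that was achieved in Lemma \ref{l5} through the $\mathbb D_xV^\varepsilon$ dissipation. The initial terms are treated as in \eqref{e49}: $V_0=b_0/(1+a_0)$ and $G_0$ are bounded by $\|f_0\|+\|f_0\|^2$. Here we use \eqref{e75} and the fact that $\dot B^{d/2-1}_{2,1}\cap\dot B^{d/2+1}_{2,1}\hookrightarrow\dot B^{d/2}_{2,1}$, which controls $a_0$ in $L^\infty$ without any low-frequency cutoff.

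Next we control $\sum_{j\le1}2^{j(d/2-1)}\|\mathcal R^\varepsilon_{\ell,j}\|_{L^1_t}$ using the $\mathcal Z_t$ norm. Interpolating $\mathcal Z_t$ gives $(\varrho,u)\in L^2_t(\dot B^{d/2}_{2,1})$ and $f\in L^2_t(\dot{\mathcal B}^{d/2}_{2,1})$, together with $L^\infty_t$ bounds in $\dot B^{d/2}_{2,1}$. The derivative terms, $\nabla(\varrho u)$, $u\cdot\nabla u$, $\nabla(b\otimes b)$, $\nabla(aV)$, and the composition terms with $a/(1+a)$, go through as in \eqref{e52} using Lemmas \ref{l11}--\ref{l12}. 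The lower-order terms $a(u-V)$, $h$ and $\frac{\varrho}{1+\varrho}(u-b)$ go through as in \eqref{e54}, using the $L^2_t$ dissipation of $u-b$ and of $\{\mathbf I-\mathbf P\}f$ contained in $\mathcal Z_t$. The trilinear terms go through as in \eqref{e55}. In the present setting there are no truncation defects $\mathcal C_\ell$ or $\mathcal D_\ell$, since the full variables $V^\varepsilon,G^\varepsilon$ are well defined.

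The one delicate term is the viscous part of $g$, namely $\frac{\varrho}{1+\varrho}\varepsilon(\Delta u+2\nabla\dive u)$. The estimate must not rely on $\|u\|_{L^1_t(\dot B^{d/2+1})}$ carrying an $\varepsilon$-free factor at high frequencies in a lossy way. We bound it in $L^1_t(\dot B^{d/2-1}_{2,1})$ by
\[
\|\varrho\|_{L^\infty_t(\dot B^{d/2}_{2,1})}\,\varepsilon\|u\|_{L^1_t(\dot B^{d/2+1}_{2,1})}\lesssim\|(\varrho^\varepsilon,u^\varepsilon,f^\varepsilon)\|_{\mathcal Z_t}\,\varepsilon\|u^\varepsilon\|_{L^1_t(\dot B^{d/2+1}_{2,1})}.
\]
Since $u\in L^1_t(\dot B^{d/2+1}_{2,1})$ belongs to $\mathcal Z_t$, this is already quadratic. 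It is also compatible with the last term on the right of \eqref{e88}, which is presumably needed when the same sources are reused at high frequency.

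Finally we return to the original variables as in \eqref{e61}, via $b=(1+a)V$, $\{\mathbf I-\mathbf P\}f=G+\frac12(v\otimes v-\mathrm{Id}):(b\otimes b)M^{1/2}$ and $u-b=(u-V)-aV$. The quadratic corrections are bounded by $\|\cdot\|_{\mathcal Z_t}^2$ through the product estimates restricted to low frequencies. We expect the main obstacle to be checking that every low-frequency source is bounded uniformly in $\varepsilon$ without using viscous smoothing. In particular, terms like $\dive\Theta(\{\mathbf I-\mathbf P\}f)\otimes b$ must be controlled by $f\in L^1_t(\dot{\mathcal B}^{d/2+1}_{2,1})$ rather than by viscous regularity of $u$.
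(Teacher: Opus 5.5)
Your proposal is correct and follows the paper's proof essentially step by step. Like the paper, you integrate the Lyapunov inequality of Lemma~\ref{l5} blockwise, sum with weight $2^{j(d/2-1)}$ over $j\le1$, and bound $\mathcal R^\varepsilon_{\ell,j}$ (which has no truncation defects) by product and composition laws together with the $L^2_t$ relaxation and microscopic norms. You then recover $b^\varepsilon$, $\{\mathbf I-\mathbf P\}f^\varepsilon$ and $u^\varepsilon-b^\varepsilon$ from $V^\varepsilon$ and $G^\varepsilon$, and handle $\varepsilon\|u^{\varepsilon,\ell}\|_{L^1_t(\dot B^{d/2+2}_{2,1})}$ by Bernstein's inequality and $\varepsilon\le1$.

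The only deviation is the viscous part of $g$. You bound it by $\|\varrho^\varepsilon\|_{L^\infty_t(\dot B^{d/2}_{2,1})}\,\varepsilon\|u^\varepsilon\|_{L^1_t(\dot B^{d/2+1}_{2,1})}$, which is the right derivative count and already quadratic in $\mathcal Z_t$. The paper instead writes this contribution through $\varepsilon\|u^\varepsilon\|_{L^1_t(\dot B^{d/2+2}_{2,1})}$, which is where the last term of \eqref{e88} comes from. Your version makes that term unnecessary and still implies the stated inequality.
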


\begin{proof}
For $j\leq1$, arguing as in \eqref{e46}--\eqref{e47} and using
\eqref{e79}, we obtain
\begin{equation}\label{e89}
\begin{aligned}
&\|\dot\Delta_j(\varrho^\varepsilon,u^\varepsilon,
a^\varepsilon,V^\varepsilon)\|_{L^\infty_tL^2_x}
+\|\dot\Delta_jG^\varepsilon\|_{L^\infty_tL^2_{x,v}}\\
&\quad
+2^{2j}\|\dot\Delta_j(\varrho^\varepsilon,u^\varepsilon,
a^\varepsilon,V^\varepsilon)\|_{L^1_tL^2_x}
+2^{2j}\|\dot\Delta_jG^\varepsilon\|_{L^1_tL^2_{x,v}}\\
&\quad
+\|\dot\Delta_j(u^\varepsilon-V^\varepsilon)\|_{L^2_tL^2_x}
+\|\dot\Delta_jG^\varepsilon
\|_{L^2_tL^2_xL^2_{v,\nu}}\\
&\qquad\lesssim
\|\dot\Delta_j(\varrho_0^\varepsilon,u_0^\varepsilon,
a_0^\varepsilon,V_0^\varepsilon)\|_{L^2_x}
+\|\dot\Delta_jG_0^\varepsilon\|_{L^2_{x,v}}
+\|\mathcal R_{\ell,j}^\varepsilon\|_{L^1_t}.
\end{aligned}
\end{equation}
Then, multiplying \eqref{e89} by
$2^{j(d/2-1)}$ and summing over $j\leq1$, we have
\begin{align}
&\|(\varrho^\varepsilon,u^\varepsilon,a^\varepsilon,
V^\varepsilon)^\ell\|_
{\widetilde L^\infty_t(\dot B^{d/2-1}_{2,1})}
+\|G^{\varepsilon,\ell}\|_
{\widetilde L^\infty_t(\dot{\mathcal B}^{d/2-1}_{2,1})}
\nonumber\\
&\quad
+\|(\varrho^\varepsilon,u^\varepsilon,a^\varepsilon,
V^\varepsilon)^\ell\|_
{L^1_t(\dot B^{d/2+1}_{2,1})}
+\|G^{\varepsilon,\ell}\|_
{L^1_t(\dot{\mathcal B}^{d/2+1}_{2,1})}
\nonumber\\
&\quad
+\|(u^\varepsilon-V^\varepsilon)^\ell\|_
{\widetilde L^2_t(\dot B^{d/2-1}_{2,1})}
+\|G^{\varepsilon,\ell}\|_
{\widetilde L^2_t(\dot{\mathcal B}^{d/2-1}_{2,1,\nu})}
\nonumber\\
&\qquad\lesssim
\|(\varrho_0^\varepsilon,u_0^\varepsilon,a_0^\varepsilon,
V_0^\varepsilon)^\ell\|_{\dot B^{d/2-1}_{2,1}}
+\|G_0^{\varepsilon,\ell}\|_
{\dot{\mathcal B}^{d/2-1}_{2,1}}
+\sum_{j\leq1}2^{j(d/2-1)}
\|\mathcal R_{\ell,j}^\varepsilon\|_{L^1_t}.
\label{e90}
\end{align}

The next step is to estimate $\mathcal{R}^\varepsilon_{\ell,j}$ defined by \eqref{e80}. The definitions of $V^\varepsilon$ and $G^\varepsilon$ give
\[
 b^\varepsilon=(1+a^\varepsilon)V^\varepsilon\quad\text{and}\quad 
 \{\mathbf I-\mathbf P\}f^\varepsilon
 =G^\varepsilon+\frac12(v\otimes v-\mathrm{Id}):
 (b^\varepsilon\otimes b^\varepsilon)M^{1/2},
\]
and hence
$u^\varepsilon-b^\varepsilon
=u^\varepsilon-V^\varepsilon-a^\varepsilon V^\varepsilon$.
The combination of product and composition estimates (Lemmas \ref{l11} and \ref{l12}) yields
\begin{align}
&\|(a^\varepsilon V^\varepsilon,
b^\varepsilon\otimes b^\varepsilon)^\ell\|_
{\widetilde L^\infty_t(\dot B^{d/2-1}_{2,1})}
+\|(a^\varepsilon V^\varepsilon,
b^\varepsilon\otimes b^\varepsilon)^\ell\|_
{L^1_t(\dot B^{d/2+1}_{2,1})}
\nonumber\\
&\quad
+\|(a^\varepsilon V^\varepsilon,
b^\varepsilon\otimes b^\varepsilon)^\ell\|_
{\widetilde L^2_t(\dot B^{d/2-1}_{2,1})}
\lesssim
\|(\varrho^\varepsilon,u^\varepsilon,
f^\varepsilon)\|_{\mathcal Z_t}^2.
\label{e91}
\end{align}
A similar argument shows
\begin{align}
&\|(a_0^\varepsilon,
V_0^\varepsilon)^\ell\|_{\dot B^{d/2-1}_{2,1}}
+\|G_0^{\varepsilon,\ell}\|_
{\dot{\mathcal B}^{d/2-1}_{2,1}}\lesssim
\|(\varrho_0^\varepsilon,u_0^\varepsilon,f_0^\varepsilon)\|_
{\mathcal Z_0}
+\|(\varrho_0^\varepsilon,u_0^\varepsilon,f_0^\varepsilon)\|_
{\mathcal Z_0}^2.
\label{e92}
\end{align}
Moreover, interpolation between the
$L^\infty_t$ and $L^1_t$ components of $\mathcal Z_t$, together with
the moment bounds and the definition of $V^\varepsilon$, gives
\[
 \|(\varrho^\varepsilon,u^\varepsilon,a^\varepsilon,
 b^\varepsilon,V^\varepsilon)\|_
 {L^2_t(\dot B^{d/2}_{2,1})}
 +\|f^\varepsilon\|_
 {L^2_t(\dot{\mathcal B}^{d/2}_{2,1})}
 \lesssim
 \|(\varrho^\varepsilon,u^\varepsilon,
 f^\varepsilon)\|_{\mathcal Z_t}.
\]
The terms with one spatial derivative in
\eqref{e80} are therefore bounded by products of
two $L^2_t\dot B^{d/2}_{2,1}$ terms. The terms
$a^\varepsilon(u^\varepsilon-V^\varepsilon)$ and
$h(\varrho^\varepsilon,u^\varepsilon,f^\varepsilon)$ are controlled
by the relative-velocity and microscopic $L^2_t$ norms, using
$u^\varepsilon-V^\varepsilon
=u^\varepsilon-b^\varepsilon+a^\varepsilon V^\varepsilon$.
The quotient terms are handled by the composition estimates and
\eqref{e75}, while the terms containing
$a^\varepsilon u^\varepsilon\otimes b^\varepsilon$ are cubic. The only additional contribution comes from the viscous part of
$g(\varrho^\varepsilon,u^\varepsilon,f^\varepsilon)$. By \eqref{e4}, \eqref{e75}, \eqref{E35}, and the product and
composition estimates, we get
\begin{equation*}
\begin{aligned}
\|g(\varrho^\varepsilon,u^\varepsilon,f^\varepsilon)\|_
{L^1_t(\dot B^{d/2-1}_{2,1})}^{\ell}&\lesssim
\|\varrho^\varepsilon\|_{L^2_t(\dot B^{d/2}_{2,1})}^2
+\|\varrho^\varepsilon\|_{L^2_t(\dot B^{d/2}_{2,1})}
 \|u^\varepsilon-b^\varepsilon\|_
 {L^2_t(\dot B^{d/2-1}_{2,1})}\\
&\qquad+
\|\varrho^\varepsilon\|_{L^\infty_t(\dot B^{d/2}_{2,1})}
\|f^\varepsilon\|_
{L^\infty_t(\dot{\mathcal B}^{d/2-1}_{2,1})}
\|u^\varepsilon\|_{L^1_t(\dot B^{d/2+1}_{2,1})}\\
&\qquad+
\|\varrho^\varepsilon\|_{L^\infty_t(\dot B^{d/2}_{2,1})}
\,\varepsilon
\|u^\varepsilon\|_{L^1_t(\dot B^{d/2+2}_{2,1})}\\
&\quad\lesssim
\|(\varrho^\varepsilon,u^\varepsilon,f^\varepsilon)\|_{\mathcal Z_t}^2
+\|(\varrho^\varepsilon,u^\varepsilon,f^\varepsilon)\|_{\mathcal Z_t}^3\\
&\qquad+
\|(\varrho^\varepsilon,u^\varepsilon,f^\varepsilon)\|_{\mathcal Z_t}
\,\varepsilon
\|u^\varepsilon\|_{L^1_t(\dot B^{d/2+2}_{2,1})}.
\end{aligned}
\end{equation*}
Consequently, we have
\begin{equation}
\begin{aligned}
&\sum_{j\leq1}2^{j(d/2-1)}
\|\mathcal R_{\ell,j}^\varepsilon\|_{L^1_t}\\
&\quad\lesssim
\|(\varrho^\varepsilon,u^\varepsilon,
f^\varepsilon)\|_{\mathcal Z_t}^2
+\|(\varrho^\varepsilon,u^\varepsilon,
f^\varepsilon)\|_{\mathcal Z_t}^3
+\|(\varrho^\varepsilon,u^\varepsilon,
f^\varepsilon)\|_{\mathcal Z_t}
\varepsilon\|u^\varepsilon\|_
{L^1_t(\dot B^{d/2+2}_{2,1})}.
\label{e94}
\end{aligned}
\end{equation}

Combining \eqref{e90},
\eqref{e91},
\eqref{e92}, \eqref{e94} and $\|\cdot^\ell\|_{\dot{B}^s_{2,1}}\lesssim \|\cdot \|_{\dot{B}^s_{2,1}}^\ell$, we control all terms in
\eqref{e88} except the last viscous term.
Since $0<\varepsilon\leq1$, one has $ \varepsilon\|u^{\varepsilon,\ell}\|_
 {L^1_t(\dot B^{d/2+2}_{2,1})}
 \lesssim
 \|u^{\varepsilon,\ell}\|_
 {L^1_t(\dot B^{d/2+1}_{2,1})}$.
 This completes the proof of 
\eqref{e88}.
\end{proof}

\subsection{Uniform high-frequency estimates}
\label{s8}

We first derive a Lyapunov inequality for each high-frequency block.
The summation over $j\geq0$ and the weighted parabolic estimate will
be carried out in Lemma \ref{l8}.

\begin{lemma}
\label{l7}
Let $0<\varepsilon\leq1$, $T>0$ and $j\geq0$. Assume that
$(\varrho^\varepsilon,u^\varepsilon,f^\varepsilon)$ is a smooth
solution to \eqref{m1n} on $[0,T]$ and that
\eqref{e73} holds for a sufficiently small
constant $\delta_1^*>0$. There exist constants $c,C>0$, independent
of $\varepsilon$ and $j$, and a functional
$\mathcal L_{h,j}^\varepsilon$ such that
\begin{equation}\label{e95}
 \mathcal L_{h,j}^\varepsilon
 \sim
 \|\dot\Delta_j(\varrho^\varepsilon,u^\varepsilon)\|_{L^2_x}^2
 +\|\dot\Delta_jf^\varepsilon\|_{L^2_{x,v}}^2,
\end{equation}
and
\begin{align}
\frac{\mathrm d}{\mathrm dt}\mathcal L_{h,j}^\varepsilon
&+c\mathcal L_{h,j}^\varepsilon
+c\varepsilon
 \|\nabla_x\dot\Delta_ju^\varepsilon\|_{L^2_x}^2
+c\|\dot\Delta_j(u^\varepsilon-b^\varepsilon)\|_{L^2_x}^2
 \nonumber\\
&+c\left\|\dot\Delta_j
 \{\mathbf I-\mathbf P\}f^\varepsilon\right\|_{L^2_xL^2_{v,\nu}}^2
 \lesssim
 \mathcal R_{h,j}^\varepsilon
 \sqrt{\mathcal L_{h,j}^\varepsilon},
\label{e96}
\end{align}
with
\begin{equation}\label{e97}
\begin{aligned}
\mathcal R_{h,j}^\varepsilon:={}&
 \|[\dot\Delta_j,u^\varepsilon]\cdot
 \nabla_x\varrho^\varepsilon\|_{L^2_x}
 +\|[\dot\Delta_j,u^\varepsilon]\cdot
 \nabla_xu^\varepsilon\|_{L^2_x}\\
&+\|[\dot\Delta_j,\varrho^\varepsilon]
 \dive_xu^\varepsilon\|_{L^2_x}
 +\left\|\left[\dot\Delta_j,
 \frac{P'(1+\varrho^\varepsilon)}
 {1+\varrho^\varepsilon}\right]
 \nabla_x\varrho^\varepsilon\right\|_{L^2_x}\\
&+\varepsilon\left\|\left[\dot\Delta_j,
 \frac{1}{1+\varrho^\varepsilon}\right]
 \bigl(\Delta_xu^\varepsilon
 +2\nabla_x\dive_xu^\varepsilon\bigr)\right\|_{L^2_x}\\
&+\left\|\dot\Delta_j\left(
 \frac{a^\varepsilon u^\varepsilon
 +\varrho^\varepsilon(b^\varepsilon-u^\varepsilon)}
 {1+\varrho^\varepsilon}\right)\right\|_{L^2_x}
 +\|\dot\Delta_j(a^\varepsilon u^\varepsilon)\|_{L^2_x}\\
&+\|\dot\Delta_j(u^\varepsilon\otimes
 b^\varepsilon)\|_{L^2_x}
 +\|\dot\Delta_j
 h(\varrho^\varepsilon,u^\varepsilon,f^\varepsilon)
 \|_{L^2_{x,v}}\\
&+\Big(
 \|\varrho^\varepsilon\|_{L^\infty_x}
 +\|(\nabla_x\varrho^\varepsilon,
 \nabla_xu^\varepsilon)\|_{L^\infty_x}\Big)
 \|\dot\Delta_j(\varrho^\varepsilon,u^\varepsilon)\|_{L^2_x}.
\end{aligned}
\end{equation}
\end{lemma}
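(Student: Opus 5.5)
The estimate should come from a symmetrized energy method for the hyperbolic part, since the viscosity $\varepsilon$ can no longer be used to produce dissipation. We rewrite the fluid equations in the non-conservative symmetrizable form
\[
\partial_t\varrho^\varepsilon+u^\varepsilon\cdot\nabla_x\varrho^\varepsilon+(1+\varrho^\varepsilon)\dive_xu^\varepsilon=0,
\]
\[
\partial_tu^\varepsilon+u^\varepsilon\cdot\nabla_xu^\varepsilon+\frac{P'(1+\varrho^\varepsilon)}{1+\varrho^\varepsilon}\nabla_x\varrho^\varepsilon-\frac{\varepsilon}{1+\varrho^\varepsilon}\bigl(\Delta_xu^\varepsilon+2\nabla_x\dive_xu^\varepsilon\bigr)+u^\varepsilon-b^\varepsilon=-\frac{a^\varepsilon u^\varepsilon+\varrho^\varepsilon(b^\varepsilon-u^\varepsilon)}{1+\varrho^\varepsilon}.
\]
We then apply $\dot\Delta_j$ to these equations, writing every product with a variable coefficient as the coefficient times $\dot\Delta_j$ of the unknown plus a commutator. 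These commutators are exactly the first five terms of \eqref{e97}.

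The basic energy is the weighted one:
\[
E_j:=\frac12\int\frac{P'(1+\varrho^\varepsilon)}{(1+\varrho^\varepsilon)^2}|\dot\Delta_j\varrho^\varepsilon|^2dx+\frac12\|\dot\Delta_ju^\varepsilon\|_{L^2_x}^2+\frac12\|\dot\Delta_jf^\varepsilon\|_{L^2_{x,v}}^2.
\]
With these weights the pressure and divergence pairings cancel after integration by parts, up to terms carrying $\nabla_x\varrho^\varepsilon$ or $\nabla_xu^\varepsilon$. The time derivative of the weight and the transport terms $u^\varepsilon\cdot\nabla_x$ also only produce $\|(\nabla_x\varrho^\varepsilon,\nabla_xu^\varepsilon,\partial_t\varrho^\varepsilon)\|_{L^\infty_x}$ times $\|\dot\Delta_j(\varrho^\varepsilon,u^\varepsilon)\|_{L^2_x}^2$. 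We bound $\|\partial_t\varrho^\varepsilon\|_{L^\infty}$ through the continuity equation, and this is the last term of \eqref{e97}.

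For the viscous term, pairing $\frac{\varepsilon}{1+\varrho^\varepsilon}\dot\Delta_j(\Delta_xu^\varepsilon+2\nabla_x\dive_xu^\varepsilon)$ with $\dot\Delta_ju^\varepsilon$ gives the dissipation $\tfrac{2}{3}\varepsilon\|\nabla_x\dot\Delta_ju^\varepsilon\|_{L^2_x}^2$ by \eqref{e75}. It also leaves an error $\varepsilon\|\nabla_x\varrho^\varepsilon\|_{L^\infty}\|\nabla_x\dot\Delta_ju^\varepsilon\|_{L^2_x}\|\dot\Delta_ju^\varepsilon\|_{L^2_x}$. By Young's inequality and $\varepsilon\le1$, this error is absorbed into half the viscous dissipation plus $\|\nabla_x\varrho^\varepsilon\|_{L^\infty}^2\|\dot\Delta_ju^\varepsilon\|^2_{L^2_x}$, and smallness turns the latter into the last term of \eqref{e97}.

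The kinetic equation is handled as in \eqref{E1}. The drag couplings $(u-b)\cdot u$ and $-u\cdot b$ combine into $\|\dot\Delta_j(u^\varepsilon-b^\varepsilon)\|_{L^2_x}^2$. Coercivity \eqref{e3} gives the microscopic dissipation. The right-hand side of $\eqref{m1n}_3$ is bounded by $\|\dot\Delta_j(a^\varepsilon u^\varepsilon,u^\varepsilon\otimes b^\varepsilon)\|_{L^2_x}+\|\dot\Delta_jh\|_{L^2_{x,v}}$, using Gaussian moments in $v$.

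What remains is to obtain damping $c\mathcal L_{h,j}^\varepsilon$ for $j\ge0$ that is uniform in $\varepsilon$; this is the main obstacle. The approach is to add three small cross terms:
\[
\mathcal L_{h,j}^\varepsilon:=E_j+\eta_1 2^{-2j}(\dot\Delta_ju^\varepsilon,\nabla_x\dot\Delta_j\varrho^\varepsilon)_{L^2_x}+\eta_2 2^{-2j}(\dot\Delta_jb^\varepsilon,\nabla_x\dot\Delta_ja^\varepsilon)_{L^2_x}+\eta_3(\text{correction for }u).
\]
\begin{itemize}
\item The $\varrho$ cross term gives $2^{-2j}\|\nabla_x\dot\Delta_j\varrho^\varepsilon\|^2\sim\|\dot\Delta_j\varrho^\varepsilon\|^2$. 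Its price is $2^{-2j}\|\dive_x\dot\Delta_ju^\varepsilon\|^2\lesssim\|\dot\Delta_ju^\varepsilon\|^2$, plus the viscous pairing $\varepsilon2^{-2j}\|\nabla^2\dot\Delta_ju\|\,\|\nabla\dot\Delta_j\varrho\|$, which is controlled by $\varepsilon\|\nabla_x\dot\Delta_ju^\varepsilon\|^2$ and a small multiple of $\|\dot\Delta_j\varrho^\varepsilon\|^2$.
\item The $(a,b)$ cross term is as in \eqref{E3} and controls $\|\dot\Delta_ja^\varepsilon\|^2$.
\item The damping of $u$ itself must come from the drag relation and the stress moment \eqref{e76}. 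We have $\|\dot\Delta_ju\|\le\|\dot\Delta_j(u-b)\|+\|\dot\Delta_jb\|$, and $b$ is recovered from $\Theta$-moments: pairing $\partial_t\Theta(\{\mathbf I-\mathbf P\}f)+2\Theta+2\mathbb D_xb=\dots$ with $2^{-2j}\mathbb D_x\dot\Delta_jb^\varepsilon$ yields $2^{-2j}\|\nabla_x\dot\Delta_jb^\varepsilon\|^2\sim\|\dot\Delta_jb^\varepsilon\|^2$. This is exactly the $j\ge-1$ case of the heuristic in Section 1.
\end{itemize}
All the new linear errors are bounded by $\eta$ times the already-present dissipations $\|\dot\Delta_j(u-b)\|^2$, $\|\dot\Delta_j\{\mathbf I-\mathbf P\}f\|_\nu^2$ and $\varepsilon\|\nabla\dot\Delta_j u\|^2$, using Bernstein's inequality for $j\ge0$. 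We order the constants $\eta_3\ll\eta_2\ll\eta_1\ll1$.

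The delicate point is that the cross term for $\varrho$ couples to $u$ only through $\|\dot\Delta_ju^\varepsilon\|^2$, which is not dissipated directly. It must therefore be weighted below the $b$-recovery term, and the $b$-recovery term in turn below the microscopic coercivity. I expect verifying this hierarchy, together with checking that the nonlinear pairings fall into \eqref{e97}, to be the bulk of the work. Once it holds, \eqref{e95} follows from the smallness of $\eta_i$ and \eqref{e75}, and \eqref{e96} from summing the estimates.
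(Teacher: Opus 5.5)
Your scheme is essentially the paper's. You use a Friedrichs-symmetrized block energy plus the three $2^{-2j}$-weighted cross terms $(\dot\Delta_ju^\varepsilon,\nabla_x\dot\Delta_j\varrho^\varepsilon)_{L^2_x}$, $(\dot\Delta_jb^\varepsilon,\nabla_x\dot\Delta_ja^\varepsilon)_{L^2_x}$ and $(\Theta(\dot\Delta_j\{\mathbf I-\mathbf P\}f^\varepsilon),\mathbb D_x\dot\Delta_jb^\varepsilon)_{L^2_x}$. Your weights differ from the paper's $\bigl(P'(1+\varrho^\varepsilon)/(1+\varrho^\varepsilon),\,1+\varrho^\varepsilon\bigr)$ by the factor $1+\varrho^\varepsilon$, and both choices work:
the paper's choice makes the viscous pairing exact but leaves the drag error $\int\varrho^\varepsilon\dot\Delta_ju^\varepsilon\cdot\dot\Delta_j(u^\varepsilon-b^\varepsilon)\,dx$, which is why $\|\varrho^\varepsilon\|_{L^\infty_x}$ appears in \eqref{e97};
your choice makes the drag pairing exact but leaves a viscous error, which you correctly absorb by Young's inequality.

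The closing step fails with the hierarchy you state. You write $\eta_3\ll\eta_2\ll\eta_1\ll1$ and claim every new linear error is bounded by small multiples of $\|\dot\Delta_j(u^\varepsilon-b^\varepsilon)\|_{L^2_x}^2$, $\|\dot\Delta_j\{\mathbf I-\mathbf P\}f^\varepsilon\|_{L^2_xL^2_{v,\nu}}^2$ and $\varepsilon\|\nabla_x\dot\Delta_ju^\varepsilon\|_{L^2_x}^2$. That is false:
the $\varrho$ cross term costs $\eta_1 2^{-2j}\|\dive_x\dot\Delta_ju^\varepsilon\|_{L^2_x}^2\lesssim\eta_1\|\dot\Delta_ju^\varepsilon\|_{L^2_x}^2$;
the $(a,b)$ cross term costs $\eta_2 2^{-2j}\|\dive_x\dot\Delta_jb^\varepsilon\|_{L^2_x}^2\lesssim\eta_2\|\dot\Delta_jb^\varepsilon\|_{L^2_x}^2$.
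After writing $u=(u-b)+b$, both costs require control of $\|\dot\Delta_jb^\varepsilon\|_{L^2_x}^2$. The only $\varepsilon$-uniform source of that control is the $\Theta$--$\mathbb D_xb$ term, at strength $\eta_3$. Hence you need $\eta_1,\eta_2\ll\eta_3$, the reverse of your stated ordering. Your ``delicate point'' paragraph gets this right for $\eta_1$, which contradicts the line just before it.

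There is a further constraint you do not mention. Through $\partial_tb^\varepsilon$, the $\Theta$--$\mathbb D_xb$ term produces
\[
\eta_32^{-2j}\bigl(\dive_x\Theta(\dot\Delta_j\{\mathbf I-\mathbf P\}f^\varepsilon),\nabla_x\dot\Delta_ja^\varepsilon\bigr)_{L^2_x}\lesssim\eta_3\|\dot\Delta_j\{\mathbf I-\mathbf P\}f^\varepsilon\|_{L^2_{x,v}}\|\dot\Delta_ja^\varepsilon\|_{L^2_x}.
\]
This can be absorbed by the $a$-dissipation $\sim\eta_2\|\dot\Delta_ja^\varepsilon\|_{L^2_x}^2$ plus the microscopic coercivity only if $\eta_3^2\ll\eta_2$. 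A consistent choice is therefore $\eta_3=\eta$, $\eta_2=\eta^{3/2}$, $\eta_1=\eta^2$ with $\eta$ small.

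The paper avoids this second constraint by giving the $(a,b)$ and $\Theta$--$\mathbb D_xb$ cross terms the same weight $\eta_2$. The two stress pairings then cancel exactly. The identity $2\|\mathbb D_x\dot\Delta_jb^\varepsilon\|_{L^2_x}^2=\|\nabla_x\dot\Delta_jb^\varepsilon\|_{L^2_x}^2+\|\dive_x\dot\Delta_jb^\varepsilon\|_{L^2_x}^2$ absorbs the $\|\dive_x\dot\Delta_jb^\varepsilon\|_{L^2_x}^2$ cost, so only $\eta_1\ll\eta_2$ remains. With either fix your argument closes.
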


\begin{proof}
Applying $\dot\Delta_j$ to \eqref{m1n}, we obtain
\begin{equation}\label{e98}
\left\{
\begin{aligned}
&\partial_t\dot\Delta_j\varrho^\varepsilon
 +u^\varepsilon\cdot\nabla_x\dot\Delta_j\varrho^\varepsilon
 +(1+\varrho^\varepsilon)
 \dive_x\dot\Delta_ju^\varepsilon\\
&\quad=
 -[\dot\Delta_j,u^\varepsilon]\cdot
 \nabla_x\varrho^\varepsilon
 -[\dot\Delta_j,\varrho^\varepsilon]
 \dive_xu^\varepsilon,\\
&\partial_t\dot\Delta_ju^\varepsilon
 +u^\varepsilon\cdot\nabla_x\dot\Delta_ju^\varepsilon
 +\frac{P'(1+\varrho^\varepsilon)}
 {1+\varrho^\varepsilon}
 \nabla_x\dot\Delta_j\varrho^\varepsilon\\
&\quad
 \quad -\frac{\varepsilon}{1+\varrho^\varepsilon}
 \bigl(\Delta_x\dot\Delta_ju^\varepsilon
 +2\nabla_x\dive_x\dot\Delta_ju^\varepsilon\bigr)
 +\dot\Delta_j(u^\varepsilon-b^\varepsilon)\\
&\quad=
 -[\dot\Delta_j,u^\varepsilon]\cdot\nabla_xu^\varepsilon
 -\left[\dot\Delta_j,
 \frac{P'(1+\varrho^\varepsilon)}
 {1+\varrho^\varepsilon}\right]
 \nabla_x\varrho^\varepsilon\\
&\qquad
 +\varepsilon\left[\dot\Delta_j,
 \frac{1}{1+\varrho^\varepsilon}\right]
 \bigl(\Delta_xu^\varepsilon
 +2\nabla_x\dive_xu^\varepsilon\bigr)
 -\dot\Delta_j\left(
 \frac{a^\varepsilon u^\varepsilon
 +\varrho^\varepsilon(b^\varepsilon-u^\varepsilon)}
 {1+\varrho^\varepsilon}\right),
\end{aligned}
\right.
\end{equation}
and
\begin{align}
&\partial_t\dot\Delta_jf^\varepsilon
 +v\cdot\nabla_x\dot\Delta_jf^\varepsilon
 -\mathcal L\dot\Delta_jf^\varepsilon
 -\dot\Delta_ju^\varepsilon\cdot vM^{1/2}
 \nonumber\\
&\quad=
 \dot\Delta_j(a^\varepsilon u^\varepsilon)\cdot vM^{1/2}
 +(v\otimes v-\mathrm{Id}):
 \dot\Delta_j(u^\varepsilon\otimes b^\varepsilon)M^{1/2}
 +\dot\Delta_j
 h(\varrho^\varepsilon,u^\varepsilon,f^\varepsilon).
\label{e99}
\end{align}

We first derive the basic energy estimate. By
\eqref{e75}, the coefficients in the Friedrichs
symmetrizer
\[
\begin{pmatrix}
\dfrac{P'(1+\varrho^\varepsilon)}
      {1+\varrho^\varepsilon}
& 0\\[2mm]
0
& (1+\varrho^\varepsilon)\mathrm{Id}
\end{pmatrix}
\]
are bounded above and below by positive constants. Taking the
$L^2(\mathbb R^d_x)$ inner products of the two equations in
\eqref{e98} with
$\frac{P'(1+\varrho^\varepsilon)}
{1+\varrho^\varepsilon}\dot\Delta_j\varrho^\varepsilon$ and
$(1+\varrho^\varepsilon)\dot\Delta_ju^\varepsilon$, respectively,
and taking the $L^2(\mathbb R^d_x\times\mathbb R^d_v)$ inner product of
\eqref{e99} with
$\dot\Delta_jf^\varepsilon$, we obtain
\begin{align}
&\frac{\mathrm d}{\mathrm dt}\left(
 \frac12\int_{\mathbb R^d_x}
 \frac{P'(1+\varrho^\varepsilon)}{1+\varrho^\varepsilon}
 |\dot\Delta_j\varrho^\varepsilon|^2\,\mathrm dx
 +\frac12\int_{\mathbb R^d_x}
 (1+\varrho^\varepsilon)|\dot\Delta_ju^\varepsilon|^2\,\mathrm dx
 +\frac12\|\dot\Delta_jf^\varepsilon\|_{L^2_{x,v}}^2
 \right)
 \nonumber\\
&\quad+\varepsilon\Big(
 \|\nabla_x\dot\Delta_ju^\varepsilon\|_{L^2_x}^2
 +2\|\dive_x\dot\Delta_ju^\varepsilon\|_{L^2_x}^2\Big)
 +\|\dot\Delta_j(u^\varepsilon-b^\varepsilon)\|_{L^2_x}^2
 +\lambda_0\left\|\dot\Delta_j
 \{\mathbf I-\mathbf P\}f^\varepsilon\right\|_{L^2_xL^2_{v,\nu}}^2
 \nonumber\\
&\leq
 -\left(
 [\dot\Delta_j,u^\varepsilon]\cdot\nabla_x\varrho^\varepsilon
 +[\dot\Delta_j,\varrho^\varepsilon]\dive_xu^\varepsilon,\,
 \frac{P'(1+\varrho^\varepsilon)}
 {1+\varrho^\varepsilon}\dot\Delta_j\varrho^\varepsilon
 \right)_{L^2_x}
 \nonumber\\
&\quad+\left(
 -[\dot\Delta_j,u^\varepsilon]\cdot\nabla_xu^\varepsilon
 -\left[\dot\Delta_j,
 \frac{P'(1+\varrho^\varepsilon)}
 {1+\varrho^\varepsilon}\right]\nabla_x\varrho^\varepsilon
 +\varepsilon\left[\dot\Delta_j,
 \frac1{1+\varrho^\varepsilon}\right]
 \bigl(\Delta_xu^\varepsilon
 +2\nabla_x\dive_xu^\varepsilon\bigr)
 \right.
 \nonumber\\
&\hspace{26mm}\left.
 -\dot\Delta_j\left(
 \frac{a^\varepsilon u^\varepsilon
 +\varrho^\varepsilon(b^\varepsilon-u^\varepsilon)}
 {1+\varrho^\varepsilon}\right),\,
 (1+\varrho^\varepsilon)\dot\Delta_ju^\varepsilon
 \right)_{L^2_x}
 \nonumber\\
&\quad+\left(
 \dot\Delta_j(a^\varepsilon u^\varepsilon)\cdot vM^{1/2}
 +(v\otimes v-\mathrm{Id}):
 \dot\Delta_j(u^\varepsilon\otimes b^\varepsilon)M^{1/2}
 +\dot\Delta_jh(\varrho^\varepsilon,u^\varepsilon,f^\varepsilon),\,
 \dot\Delta_jf^\varepsilon
 \right)_{L^2_{x,v}}
 \nonumber\\
&\quad+\frac12\int_{\mathbb R^d_x}
 \left(
 \partial_t\left(
 \frac{P'(1+\varrho^\varepsilon)}
 {1+\varrho^\varepsilon}\right)
 +\dive_x\left(
 u^\varepsilon\frac{P'(1+\varrho^\varepsilon)}
 {1+\varrho^\varepsilon}\right)
 \right)
 |\dot\Delta_j\varrho^\varepsilon|^2\,\mathrm dx
 \nonumber\\
&\quad+\int_{\mathbb R^d_x}
 \nabla_xP'(1+\varrho^\varepsilon)\cdot
 \dot\Delta_ju^\varepsilon\,
 \dot\Delta_j\varrho^\varepsilon\,\mathrm dx
 -\int_{\mathbb R^d_x}
 \varrho^\varepsilon\dot\Delta_ju^\varepsilon\cdot
 \dot\Delta_j(u^\varepsilon-b^\varepsilon)\,\mathrm dx.
\label{e100}
\end{align}
Indeed, the continuity equation gives $ \partial_t\varrho^\varepsilon
 =-\dive_x\bigl((1+\varrho^\varepsilon)u^\varepsilon\bigr)$, and hence \eqref{e75} implies
\begin{align}
&\left\|
 \partial_t\left(
 \frac{P'(1+\varrho^\varepsilon)}
 {1+\varrho^\varepsilon}\right)
 +\dive_x\left(
 u^\varepsilon\frac{P'(1+\varrho^\varepsilon)}
 {1+\varrho^\varepsilon}\right)
 \right\|_{L^\infty_x}
 +\|\nabla_xP'(1+\varrho^\varepsilon)\|_{L^\infty_x}
 \nonumber\\
&\qquad\lesssim
 \|(\nabla_x\varrho^\varepsilon,
 \nabla_xu^\varepsilon)\|_{L^\infty_x}.
\label{e101}
\end{align}
By Cauchy--Schwarz, the moment bounds and
\eqref{e97}, the last integral in
\eqref{e100} is bounded by
$C\mathcal R_{h,j}^\varepsilon
(\|\dot\Delta_j(\varrho^\varepsilon,u^\varepsilon)\|_{L^2_x}
+\|\dot\Delta_jf^\varepsilon\|_{L^2_{x,v}})$.
It follows from \eqref{e75},
\eqref{e97},
\eqref{e100} and
\eqref{e101} that
\begin{align}
&\frac{\mathrm d}{\mathrm dt}\left(
 \frac12\int_{\mathbb R^d_x}
 \frac{P'(1+\varrho^\varepsilon)}{1+\varrho^\varepsilon}
 |\dot\Delta_j\varrho^\varepsilon|^2\,\mathrm dx
 +\frac12\int_{\mathbb R^d_x}
 (1+\varrho^\varepsilon)|\dot\Delta_ju^\varepsilon|^2\,\mathrm dx
 +\frac12\|\dot\Delta_jf^\varepsilon\|_{L^2_{x,v}}^2
 \right)
 \nonumber\\
&\quad
 +c\varepsilon\|\nabla_x\dot\Delta_ju^\varepsilon\|_{L^2_x}^2
 +c\|\dot\Delta_j(u^\varepsilon-b^\varepsilon)\|_{L^2_x}^2
 +c\left\|\dot\Delta_j
 \{\mathbf I-\mathbf P\}f^\varepsilon\right\|_{L^2_xL^2_{v,\nu}}^2
 \nonumber\\
&\qquad\lesssim
 \mathcal R_{h,j}^\varepsilon
 \left(
 \|\dot\Delta_j(\varrho^\varepsilon,u^\varepsilon)\|_{L^2_x}
 +\|\dot\Delta_jf^\varepsilon\|_{L^2_{x,v}}
 \right).
\label{e102}
\end{align}

We next recover the dissipation of
$\dot\Delta_j\varrho^\varepsilon$. By
$\eqref{e98}$, we have
\begin{align}
&2^{-2j}\frac{\mathrm d}{\mathrm dt}\bigl(\dot\Delta_ju^\varepsilon,\, \nabla_x\dot\Delta_j\varrho^\varepsilon\bigr)_{L^2_x}+2^{-2j}\int_{\mathbb R^d_x}\frac{P'(1+\varrho^\varepsilon)}{1+\varrho^\varepsilon}|\nabla_x\dot\Delta_j\varrho^\varepsilon|^2\,dx\nonumber\\
&\quad\leq 2^{-2j}\int_{\mathbb R^d_x}(1+\varrho^\varepsilon)|\dive_x\dot\Delta_ju^\varepsilon|^2\,dx-2^{-2j}\bigl(\dot\Delta_j(u^\varepsilon-b^\varepsilon),\, \nabla_x\dot\Delta_j\varrho^\varepsilon\bigr)_{L^2_x}\nonumber\\
&\qquad+2^{-2j}\biggl(\frac{\varepsilon}{1+\varrho^\varepsilon}\bigl(\Delta_x\dot\Delta_ju^\varepsilon+2\nabla_x\dive_x\dot\Delta_ju^\varepsilon\bigr),\, \nabla_x\dot\Delta_j\varrho^\varepsilon\bigr)_{L^2_x}\nonumber\\
&\qquad+C\mathcal R_{h,j}^\varepsilon\Big(\|\dot\Delta_j(\varrho^\varepsilon,u^\varepsilon)\|_{L^2_x}+\|\dot\Delta_jf^\varepsilon\|_{L^2_{x,v}}\bigg).
\label{e103}
\end{align}
To recover the dissipation of $a^\varepsilon$ and $b^\varepsilon$, we localize
\eqref{e22} and pair the resulting equations with $\nabla_x\dot\Delta_ja^\varepsilon$.
We also apply $\Theta\dot\Delta_j$ to \eqref{e23} and take the inner product of the resulting equation 
with $\mathbb D_x\dot\Delta_jb^\varepsilon$. Consequently, we have
\begin{align}
&2^{-2j}\frac{\mathrm d}{\mathrm dt}\Big(\bigl(\nabla_x\dot\Delta_ja^\varepsilon,\, \dot\Delta_jb^\varepsilon\bigr)_{L^2_x}+\bigl(\Theta(\dot\Delta_j\{\mathbf I-\mathbf P\}f^\varepsilon),\, \mathbb D_x\dot\Delta_jb^\varepsilon\bigr)_{L^2_x}\Big)\nonumber\\
&\quad+2^{-2j}\|\nabla_x\dot\Delta_ja^\varepsilon\|_{L^2_x}^2+2\,2^{-2j}\|\mathbb D_x\dot\Delta_jb^\varepsilon\|_{L^2_x}^2\nonumber\\
&\quad\leq2^{-2j}\|\dive_x\dot\Delta_jb^\varepsilon\|_{L^2_x}^2+2^{-2j}\bigl(\dot\Delta_j(u^\varepsilon-b^\varepsilon),\, \nabla_x\dot\Delta_ja^\varepsilon\bigr)_{L^2_x}\nonumber\\
&\qquad-2\,2^{-2j}\bigl(\Theta(\dot\Delta_j\{\mathbf I-\mathbf P\}f^\varepsilon),\, \mathbb D_x\dot\Delta_jb^\varepsilon\bigr)_{L^2_x}-2^{-2j}\bigl(\Theta(v\cdot\nabla_x\dot\Delta_j\{\mathbf I-\mathbf P\}f^\varepsilon),\, \mathbb D_x\dot\Delta_jb^\varepsilon\bigr)_{L^2_x}\nonumber\\
&\qquad-2^{-2j}\bigl(\dive_x\Theta(\dot\Delta_j\{\mathbf I-\mathbf P\}f^\varepsilon),\, \dot\Delta_j(u^\varepsilon-b^\varepsilon)\bigr)_{L^2_x}+2^{-2j}\|\dive_x\Theta(\dot\Delta_j\{\mathbf I-\mathbf P\}f^\varepsilon)\|_{L^2_x}^2\nonumber\\
&\qquad+C\mathcal R_{h,j}^\varepsilon\bigl(\|\dot\Delta_j(a^\varepsilon,b^\varepsilon)\|_{L^2_x}+\|\dot\Delta_j\{\mathbf I-\mathbf P\}f^\varepsilon\|_{L^2_{x,v}}\bigr).
\label{e104}
\end{align}

We now define the Lyapunov functional
\begin{equation}\label{e105}
\begin{aligned}
\mathcal L_{h,j}^\varepsilon:={}&
 \frac12\int_{\mathbb R^d_x}
 \frac{P'(1+\varrho^\varepsilon)}
 {1+\varrho^\varepsilon}
 |\dot\Delta_j\varrho^\varepsilon|^2\,\mathrm dx
 +\frac12\int_{\mathbb R^d_x}
 (1+\varrho^\varepsilon)
 |\dot\Delta_ju^\varepsilon|^2\,\mathrm dx\\
&+\frac12\|\dot\Delta_jf^\varepsilon\|_{L^2_{x,v}}^2
 +\eta_1 2^{-2j}
 \bigl(\dot\Delta_ju^\varepsilon,
 \nabla_x\dot\Delta_j\varrho^\varepsilon\bigr)_{L^2_x}\\
&+\eta_2 2^{-2j}\left(
 \bigl(\nabla_x\dot\Delta_ja^\varepsilon,
 \dot\Delta_jb^\varepsilon\bigr)_{L^2_x}
 +\bigl(\Theta(\dot\Delta_j
 \{\mathbf I-\mathbf P\}f^\varepsilon),
 \mathbb D_x\dot\Delta_jb^\varepsilon\bigr)_{L^2_x}
 \right).
\end{aligned}
\end{equation}
By Bernstein's inequality, \eqref{E35}, and the Gaussian moment
bounds, the cross terms in \eqref{e105} are controlled by its
quadratic energy terms. Hence, for sufficiently small
$\eta_1,\eta_2>0$, equivalence \eqref{e95} holds.

We next multiply \eqref{e103} and \eqref{e104} by $\eta_1$ and
$\eta_2$, respectively, and add the resulting inequalities to
\eqref{e102}. Using Young's inequality, $0<\varepsilon\leq1$, $2\|\mathbb D_xb\|_2^2=\|\nabla_xb\|_2^2+
\|\dive_xb\|_2^2$, $u=(u-b)+b$ and \eqref{e75}, we first choose $\eta_2>0$
sufficiently small and then take $0<\eta_1\ll\eta_2$. Thus, all the linear coupling terms are
absorbed by the dissipative terms, while the remaining terms are bounded by $\mathcal R_{h,j}^\varepsilon
\sqrt{\mathcal L_{h,j}^\varepsilon}$. This proves \eqref{e96}.
\end{proof}

\begin{lemma}
\label{l8}
Let $0<\varepsilon\leq1$ and $T>0$. Assume that
$(\varrho^\varepsilon,u^\varepsilon,f^\varepsilon)$ is a solution to
the Cauchy problem \eqref{m1n} for the NS--VFP system on $[0,T]$ satisfying 
\eqref{e73}.
Then, for every $t\in[0,T]$, we have
\begin{align}
&\|(\varrho^{\varepsilon,h},u^{\varepsilon,h})\|_
{\widetilde L^\infty_t(\dot B^{d/2+1}_{2,1})}
+\|f^{\varepsilon,h}\|_
{\widetilde L^\infty_t(\dot{\mathcal B}^{d/2+1}_{2,1})}
\nonumber\\
&\quad
+\|(\varrho^{\varepsilon,h},u^{\varepsilon,h})\|_
{L^1_t(\dot B^{d/2+1}_{2,1})}
+\|f^{\varepsilon,h}\|_
{L^1_t(\dot{\mathcal B}^{d/2+1}_{2,1})}
\nonumber\\
&\quad
+\|(u^\varepsilon-b^\varepsilon)^h\|_
{\widetilde L^2_t(\dot B^{d/2+1}_{2,1})}
+\left\|
\{\mathbf I-\mathbf P\}f^{\varepsilon,h}\right\|_
{\widetilde L^2_t(\dot{\mathcal B}^{d/2+1}_{2,1,\nu})}+\varepsilon\|u^{\varepsilon,h}\|_
{L^1_t(\dot B^{d/2+2}_{2,1})}
\nonumber\\
&\quad
\leq C\|(\varrho_0^\varepsilon,u_0^\varepsilon,f_0^\varepsilon)\|_{\mathcal Z_0}
+C\bigl(\|(\varrho^\varepsilon,u^\varepsilon,f^\varepsilon)\|_{\mathcal Z_t}\bigr)^2
+C\bigl(\|(\varrho^\varepsilon,u^\varepsilon,f^\varepsilon)\|_{\mathcal Z_t}\bigr)^3,
\label{e106}
\end{align}
where the constant $C>0$ is independent of $\varepsilon$ and $t$.
\end{lemma}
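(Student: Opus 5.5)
The plan is to integrate the block inequality \eqref{e96} of Lemma~\ref{l7} in time, as in \eqref{e46}--\eqref{e47}, and then sum over $j\ge0$ with the weight $2^{j(d/2+1)}$. For each $j\ge0$ we divide \eqref{e96} by $2\sqrt{\mathcal L_{h,j}^\varepsilon+\kappa^2}$, integrate over $[0,t]$ and let $\kappa\to0$. Together with \eqref{e95}, this gives
\[
\|\dot\Delta_j(\varrho^\varepsilon,u^\varepsilon)\|_{L^\infty_tL^2_x}+\|\dot\Delta_jf^\varepsilon\|_{L^\infty_tL^2_{x,v}}
+\|\dot\Delta_j(\varrho^\varepsilon,u^\varepsilon)\|_{L^1_tL^2_x}+\|\dot\Delta_jf^\varepsilon\|_{L^1_tL^2_{x,v}}
\lesssim \|\dot\Delta_j(\varrho^\varepsilon_0,u^\varepsilon_0)\|_{L^2_x}+\|\dot\Delta_jf^\varepsilon_0\|_{L^2_{x,v}}+\|\mathcal R^\varepsilon_{h,j}\|_{L^1_t}.
\]
Integrating \eqref{e96} directly and applying Young's inequality then yields the $L^2_t$ bounds for $\dot\Delta_j(u^\varepsilon-b^\varepsilon)$ and $\dot\Delta_j\{\mathbf I-\mathbf P\}f^\varepsilon$ in $L^2_{v,\nu}$. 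Multiplying by $2^{j(d/2+1)}$ and summing over $j\ge0$ produces every term on the left of \eqref{e106} except $\varepsilon\|u^{\varepsilon,h}\|_{L^1_t(\dot B^{d/2+2}_{2,1})}$. The initial data are bounded by $\|\cdot\|_{\mathcal Z_0}$. The lower-index parts present in the high-frequency $\mathcal Z_t$ norm are dominated because $j\ge0$.

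For the viscous term, the plan is the weighted parabolic estimate announced before Lemma~\ref{l7}. Write $\eqref{m1n}_2$ as the Lamé system $\partial_tu-\varepsilon(\Delta u+2\nabla\dive u)=S$ with $S=-u\cdot\nabla u-\nabla\varrho-(u-b)-au+g$. Maximal regularity (Lemma~\ref{heat}), applied at high frequencies, gives
\[
\varepsilon\|u^{\varepsilon,h}\|_{L^1_t(\dot B^{d/2+2}_{2,1})}\lesssim \|u_0^\varepsilon\|^h_{\dot B^{d/2}_{2,1}}+\|S\|^h_{L^1_t(\dot B^{d/2}_{2,1})},
\]
with a constant independent of $\varepsilon$, since the data loss is one derivative and is paid at the $\dot B^{d/2}$ level. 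The linear parts $\nabla\varrho^h$ and $u^h-b^h$ are controlled in $L^1_t\dot B^{d/2}_{2,1}$ by the block estimate just obtained, using $j\ge0$. Note that $\|b^h\|\lesssim\|f^h\|$.

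It remains to bound $\sum_{j\ge0}2^{j(d/2+1)}\|\mathcal R^\varepsilon_{h,j}\|_{L^1_t}$ and the nonlinear part of $S$ by $\|\cdot\|_{\mathcal Z_t}^2+\|\cdot\|_{\mathcal Z_t}^3$. The commutator terms are handled by Lemma~\ref{l13} at the Lipschitz index $d/2+1$. They are bounded by $\|\nabla(\varrho^\varepsilon,u^\varepsilon)\|_{L^1_t(\dot B^{d/2}_{2,1})}\|(\varrho^\varepsilon,u^\varepsilon)\|_{L^\infty_t(\dot B^{d/2+1}_{2,1})}$. The composition commutator with $P'(1+\varrho)/(1+\varrho)$ is treated the same way, using Lemma~\ref{l12}. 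The term $\|\nabla(\varrho,u)\|_{L^\infty_x}\|\dot\Delta_j(\varrho,u)\|_{L^2_x}$ is treated likewise. The zeroth-order products $au$, $u\otimes b$, $\varrho(b-u)$ and $h$ are bounded in $L^1_t(\dot B^{d/2+1}_{2,1})$ by the product law. This pairs $L^2_t\dot B^{d/2}\cap\dot B^{d/2+1}$ norms, obtained from $\mathcal Z_t$ by interpolation, with the $L^2_t$ dissipation of $u-b$ and $\{\mathbf I-\mathbf P\}f$ in the highest norm. For $h$, the microscopic part in $L^2_{v,\nu}$ absorbs the factors $v$ and $\nabla_v$. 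The term $\|\varrho^\varepsilon\|_{L^\infty_x}\|\dot\Delta_j(\varrho,u)\|$ appears to be linear, but it is harmless. It is quadratic because $\|\varrho\|_{L^\infty_x}\lesssim\|\varrho\|_{\dot B^{d/2}_{2,1}}$, and we use $\|(\varrho,u)^h\|_{L^1_t\dot B^{d/2+1}}$.

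I expect the main obstacle to be the viscous commutator $\varepsilon[\dot\Delta_j,(1+\varrho)^{-1}](\Delta u+2\nabla\dive u)$ and the viscous part of $g$ in $S$. At index $d/2+1$ they cost $\varepsilon\|u\|_{L^1_t\dot B^{d/2+2}}\|\varrho\|_{L^\infty_t\dot B^{d/2+1}}$. I would handle them by keeping the term $\varepsilon\|u\|_{L^1_t(\dot B^{d/2+2}_{2,1})}$, with the low-frequency part supplied by Lemma~\ref{l6}, multiplied by the small factor $\|\cdot\|_{\mathcal Z_t}$. This term is then absorbed on the left-hand side once \eqref{e106} is combined with \eqref{e88}. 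If the product of two $\mathcal Z_t$-type quantities must appear, as in the statement, I would note that the combined quantity in \eqref{e74} is what is actually squared.
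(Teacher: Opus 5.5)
Your estimates coincide with the paper's. You integrate \eqref{e96} blockwise and sum with weight $2^{j(d/2+1)}$. You use Lemma~\ref{l13} at the endpoint index $d/2+1$ for the transport, pressure and viscous commutators. You pair the zeroth-order products with the $L^2_t$ relaxation and microscopic dissipation. You apply the Lam\'e estimate of Lemma~\ref{heat} at index $d/2$ with $\nu_*=\varepsilon$.

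What the proposal does not do is prove \eqref{e106} as stated. Write $E:=\|(\varrho^\varepsilon,u^\varepsilon,f^\varepsilon)\|_{\mathcal Z_t}$. You leave $C\,E\,\varepsilon\|u^\varepsilon\|_{L^1_t(\dot B^{d/2+2}_{2,1})}$ on the right-hand side. You then either defer its absorption to the combination with Lemma~\ref{l6}, or propose to square the combined quantity of \eqref{e74} instead. Either way you obtain Proposition~\ref{p2}, but only a weaker form of the lemma. The stated right-hand side of \eqref{e106} contains no $\varepsilon$-weighted norm.

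The missing step is to absorb this term inside the lemma. This is possible because \eqref{e73} is one of its hypotheses, so $E\le\delta_1^*$.

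The low-frequency part needs no input from Lemma~\ref{l6}. Since $\varepsilon\le1$ and only $j\le1$ occur, Bernstein gives $\varepsilon\|u^{\varepsilon,\ell}\|_{L^1_t(\dot B^{d/2+2}_{2,1})}\lesssim\|u^\varepsilon\|_{L^1_t(\dot B^{d/2+1}_{2,1})}\le E$. So this part contributes only $E^2$.

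For the high-frequency part, set $X:=\varepsilon\|u^{\varepsilon,h}\|_{L^1_t(\dot B^{d/2+2}_{2,1})}$, which is finite for the smooth solutions considered, and $E_0:=\|(\varrho_0^\varepsilon,u_0^\varepsilon,f_0^\varepsilon)\|_{\mathcal Z_0}$. Then argue as follows.
\begin{itemize}
\item Your block estimate bounds every other left-hand term of \eqref{e106} by $E_0+E^2+E^3+EX$; this is \eqref{e112}.
\item Feed this bound into the linear terms $\|\varrho^\varepsilon\|^h_{L^1_t(\dot B^{d/2+1}_{2,1})}$ and $\|u^\varepsilon-b^\varepsilon\|^h_{L^1_t(\dot B^{d/2}_{2,1})}$ of the Lam\'e estimate.
\item Bound the viscous part of $g$ there by $\|\varrho^\varepsilon\|_{L^\infty_t(\dot B^{d/2}_{2,1})}\,\varepsilon\|u^\varepsilon\|_{L^1_t(\dot B^{d/2+2}_{2,1})}\lesssim E(X+E)$.
\item These give $X\lesssim E_0+E^2+E^3+EX$. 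For $\delta_1^*$ small, the $EX$ term is absorbed, so $X\lesssim E_0+E^2+E^3$, which is \eqref{e114}.
\item Inserting this back into \eqref{e112} gives \eqref{e106}.
\end{itemize}
This is exactly how the paper closes the lemma.
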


\begin{proof}
As in \eqref{e46}--\eqref{e47}, \eqref{e75} and \eqref{e96} imply
\begin{align}
&\|(\varrho^\varepsilon,u^\varepsilon)\|_
{\widetilde L^\infty_t(\dot B^{d/2+1}_{2,1})}^{h}
+\|f^\varepsilon\|_
{\widetilde L^\infty_t(\dot{\mathcal B}^{d/2+1}_{2,1})}^{h}
+\|(\varrho^\varepsilon,u^\varepsilon)\|_
{L^1_t(\dot B^{d/2+1}_{2,1})}^{h}
+\|f^\varepsilon\|_
{L^1_t(\dot{\mathcal B}^{d/2+1}_{2,1})}^{h}
\nonumber\\
&\quad
+\|u^\varepsilon-b^\varepsilon\|_
{\widetilde L^2_t(\dot B^{d/2+1}_{2,1})}^{h}
+\left\|
\{\mathbf I-\mathbf P\}f^\varepsilon\right\|_
{\widetilde L^2_t(\dot{\mathcal B}^{d/2+1}_{2,1,\nu})}^{h}
+\varepsilon^{1/2}
\|u^\varepsilon\|_
{\widetilde L^2_t(\dot B^{d/2+2}_{2,1})}^{h}
\nonumber\\
&\qquad\lesssim
\|(\varrho_0^\varepsilon,u_0^\varepsilon,f_0^\varepsilon)\|_{\mathcal Z_0}
+\sum_{j\geq0}2^{j(d/2+1)}
\|\mathcal R_{h,j}^\varepsilon\|_{L^1_t}.
\label{e107}
\end{align}
By Lemmas \ref{l12} and
\ref{l13}, we have
\begin{align*}
&\sum_{j\geq0}2^{j(d/2+1)}
\|[\dot\Delta_j,u^\varepsilon]\cdot
\nabla_x(\varrho^\varepsilon,u^\varepsilon)\|_{L^1_tL^2_x}
\\
&\quad+
\sum_{j\geq0}2^{j(d/2+1)}
\left(
\|[\dot\Delta_j,\varrho^\varepsilon]\dive_xu^\varepsilon\|_{L^1_tL^2_x}
+\left\|\left[\dot\Delta_j,
\frac{P'(1+\varrho^\varepsilon)}
{1+\varrho^\varepsilon}\right]
\nabla_x\varrho^\varepsilon\right\|_{L^1_tL^2_x}
\right)
\\
&\qquad\lesssim
\|(\nabla_x\varrho^\varepsilon,\nabla_xu^\varepsilon)\|_
{L^1_t(\dot B^{d/2}_{2,1})}
\|(\varrho^\varepsilon,u^\varepsilon)\|_
{L^\infty_t(\dot B^{d/2+1}_{2,1})}.
\end{align*}
For the commutator involving the viscous term, we also have
\begin{align*}
&\varepsilon
\sum_{j\geq0}2^{j(d/2+1)}
\left\|
\left[\dot\Delta_j,
\frac{1}{1+\varrho^\varepsilon}\right]
\bigl(\Delta_xu^\varepsilon
+2\nabla_x\dive_xu^\varepsilon\bigr)
\right\|_{L^1_tL^2_x}\lesssim
 \|\nabla_x\varrho^\varepsilon\|_{L^\infty_t(\dot B^{d/2}_{2,1})} 
\,\varepsilon
\|u^\varepsilon\|_
{L^1_t(\dot B^{d/2+2}_{2,1})}.
\end{align*}
Lemmas \ref{l11} and \ref{l12}, the moment bound
\eqref{E35} and H\"older's inequality in time give
\begin{align}
&\left\|
\frac{a^\varepsilon u^\varepsilon
+\varrho^\varepsilon(b^\varepsilon-u^\varepsilon)}
{1+\varrho^\varepsilon}
\right\|_{L^1_t(\dot B^{d/2+1}_{2,1})}
+\|a^\varepsilon u^\varepsilon\|_
{L^1_t(\dot B^{d/2+1}_{2,1})}
+\|u^\varepsilon\otimes b^\varepsilon\|_
{L^1_t(\dot B^{d/2+1}_{2,1})}
\nonumber\\
&\quad+\|h(\varrho^\varepsilon,u^\varepsilon,f^\varepsilon)\|_
{L^1_t(\dot{\mathcal B}^{d/2+1}_{2,1})}
\nonumber\\
&\qquad\lesssim
\Bigl(1+\|\varrho^\varepsilon\|_
{L^\infty_t(\dot B^{d/2}_{2,1}\cap\dot B^{d/2+1}_{2,1})}\Bigr)
\Bigl(
\|a^\varepsilon\|_
{L^2_t(\dot B^{d/2}_{2,1}\cap\dot B^{d/2+1}_{2,1})}
\|u^\varepsilon\|_
{L^2_t(\dot B^{d/2}_{2,1}\cap\dot B^{d/2+1}_{2,1})}
\nonumber\\
&\hspace{47mm}
+\|\varrho^\varepsilon\|_
{L^2_t(\dot B^{d/2}_{2,1}\cap\dot B^{d/2+1}_{2,1})}
\|u^\varepsilon-b^\varepsilon\|_
{L^2_t(\dot B^{d/2}_{2,1}\cap\dot B^{d/2+1}_{2,1})}
\Bigr)
\nonumber\\
&\qquad\quad
+\|u^\varepsilon\|_
{L^2_t(\dot B^{d/2}_{2,1}\cap\dot B^{d/2+1}_{2,1})}
\Bigl(
\|b^\varepsilon\|_
{L^2_t(\dot B^{d/2}_{2,1}\cap\dot B^{d/2+1}_{2,1})}
\nonumber\\
&\hspace{58mm}
+\|\{\mathbf I-\mathbf P\}f^\varepsilon\|_
{L^2_t(\dot{\mathcal B}^{d/2}_{2,1,\nu}
\cap\dot{\mathcal B}^{d/2+1}_{2,1,\nu})}
\Bigr).
\label{e110}
\end{align}
Indeed, the moment bound controls $a^\varepsilon$ and $b^\varepsilon$
by $f^\varepsilon$. Interpolation between the $L^\infty_t$ and
$L^1_t$ components of $\mathcal Z_t$, and between the two indices in
its $L^2_t$ components, shows that the right-hand side of
\eqref{e110} is bounded by
$\|(\varrho^\varepsilon,u^\varepsilon,f^\varepsilon)\|_{\mathcal Z_t}^2
+\|(\varrho^\varepsilon,u^\varepsilon,f^\varepsilon)\|_{\mathcal Z_t}^3$.
Consequently, we have
\begin{align}
&\sum_{j\geq0}2^{j(d/2+1)}
\|\mathcal R_{h,j}^\varepsilon\|_{L^1_t}
\nonumber\\
&\qquad\lesssim
\|(\varrho^\varepsilon,u^\varepsilon,f^\varepsilon)\|_{\mathcal Z_t}^2
+\|(\varrho^\varepsilon,u^\varepsilon,f^\varepsilon)\|_{\mathcal Z_t}^3
+\|(\varrho^\varepsilon,u^\varepsilon,f^\varepsilon)\|_{\mathcal Z_t}
\,\varepsilon
\|u^\varepsilon\|_
{L^1_t(\dot B^{d/2+2}_{2,1})}.
\label{e111}
\end{align}
Substituting \eqref{e111} into
\eqref{e107}, we obtain
\begin{align}
&\|(\varrho^\varepsilon,u^\varepsilon)\|_
{\widetilde L^\infty_t(\dot B^{d/2+1}_{2,1})}^{h}
+\|f^\varepsilon\|_
{\widetilde L^\infty_t(\dot{\mathcal B}^{d/2+1}_{2,1})}^{h}
+\|(\varrho^\varepsilon,u^\varepsilon)\|_
{L^1_t(\dot B^{d/2+1}_{2,1})}^{h}
+\|f^\varepsilon\|_
{L^1_t(\dot{\mathcal B}^{d/2+1}_{2,1})}^{h}
\nonumber\\
&\quad
+\|u^\varepsilon-b^\varepsilon\|_
{\widetilde L^2_t(\dot B^{d/2+1}_{2,1})}^{h}
+\left\|
\{\mathbf I-\mathbf P\}f^\varepsilon\right\|_
{\widetilde L^2_t(\dot{\mathcal B}^{d/2+1}_{2,1,\nu})}^{h}
+\varepsilon^{1/2}
\|u^\varepsilon\|_
{\widetilde L^2_t(\dot B^{d/2+2}_{2,1})}^{h}
\nonumber\\
&\qquad\lesssim
\|(\varrho_0^\varepsilon,u_0^\varepsilon,f_0^\varepsilon)\|_{\mathcal Z_0}
+\bigl(\|(\varrho^\varepsilon,u^\varepsilon,f^\varepsilon)\|_{\mathcal Z_t}\bigr)^2
+\bigl(\|(\varrho^\varepsilon,u^\varepsilon,f^\varepsilon)\|_{\mathcal Z_t}\bigr)^3
\nonumber\\
&\qquad\quad
+\|(\varrho^\varepsilon,u^\varepsilon,f^\varepsilon)\|_{\mathcal Z_t}
\,\varepsilon
\|u^\varepsilon\|_
{L^1_t(\dot B^{d/2+2}_{2,1})}^{h}.
\label{e112}
\end{align}

It remains to recover the higher-order estimate of $u^\varepsilon$ depending on $\varepsilon$. The Lam\'e maximal regularity estimate for the momentum equation $\eqref{m1n}_2$ at
regularity $d/2$ (cf. Lemma \ref{heat}) leads to
\begin{align*}
\varepsilon
\|u^\varepsilon\|_
{L^1_t(\dot B^{d/2+2}_{2,1})}^{h}
&\lesssim
\|u^\varepsilon_0\|_{\dot B^{d/2}_{2,1}}^{h}
+\|\varrho^\varepsilon\|_
{L^1_t(\dot B^{d/2+1}_{2,1})}^{h}
+\|u^\varepsilon-b^\varepsilon\|_
{L^1_t(\dot B^{d/2}_{2,1})}^{h}
\\
&\quad
+\|u^\varepsilon\cdot\nabla_xu^\varepsilon\|_
{L^1_t(\dot B^{d/2}_{2,1})}^{h}
+\left\|
\frac{a^\varepsilon u^\varepsilon
+\varrho^\varepsilon(b^\varepsilon-u^\varepsilon)}
{1+\varrho^\varepsilon}
\right\|_
{L^1_t(\dot B^{d/2}_{2,1})}^{h}
\\
&\quad
+\|(\varrho^\varepsilon,u^\varepsilon,f^\varepsilon)\|_{\mathcal Z_t}
\,\varepsilon
\|u^\varepsilon\|_
{L^1_t(\dot B^{d/2+2}_{2,1})}^{h}.
\end{align*}
For high frequencies, we have
\[
\|u^\varepsilon-b^\varepsilon\|_
{L^1_t(\dot B^{d/2}_{2,1})}^{h}
\lesssim
\|(u^\varepsilon,b^\varepsilon)\|_
{L^1_t(\dot B^{d/2+1}_{2,1})}^{h}.
\]
Moreover, it is standard to verify
\[
\|u^\varepsilon\cdot\nabla_xu^\varepsilon\|_
{L^1_t(\dot B^{d/2}_{2,1})}^{h}
\lesssim
\|u^\varepsilon\|_{L^2_t(\dot B^{d/2}_{2,1})}
\|u^\varepsilon\|_{L^2_t(\dot B^{d/2+1}_{2,1})}
\lesssim
\|(\varrho^\varepsilon,u^\varepsilon,f^\varepsilon)\|_{\mathcal Z_t}^2,
\]
and, by the composition and product estimates,
\[
\left\|
\frac{a^\varepsilon u^\varepsilon
+\varrho^\varepsilon(b^\varepsilon-u^\varepsilon)}
{1+\varrho^\varepsilon}
\right\|_{L^1_t(\dot B^{d/2}_{2,1})}^{h}
\lesssim
\|(\varrho^\varepsilon,u^\varepsilon,f^\varepsilon)\|_{\mathcal Z_t}^2
+\|(\varrho^\varepsilon,u^\varepsilon,f^\varepsilon)\|_{\mathcal Z_t}^3.
\]
Using these estimates and \eqref{e112}, we obtain
\begin{align}
\varepsilon
\|u^\varepsilon\|_
{L^1_t(\dot B^{d/2+2}_{2,1})}^{h}
\lesssim
\|(\varrho_0^\varepsilon,u_0^\varepsilon,f_0^\varepsilon)\|_{\mathcal Z_0}
+\bigl(\|(\varrho^\varepsilon,u^\varepsilon,f^\varepsilon)\|_{\mathcal Z_t}\bigr)^2
+\bigl(\|(\varrho^\varepsilon,u^\varepsilon,f^\varepsilon)\|_{\mathcal Z_t}\bigr)^3.
\label{e114}
\end{align}
Substituting \eqref{e114} into
\eqref{e112} and using $\|z^h\|_{\dot B^r_{2,1}}
\lesssim\|z\|_{\dot B^r_{2,1}}^h$, we end up with
\eqref{e106}. This finishes the proof of Lemma \ref{l8}.
\end{proof}

\begin{proof}[\textbf{Proof of Proposition
\ref{p2}}]
We now combine Lemmas \ref{l6} and
\ref{l8}. The Chemin--Lerner bounds in these
lemmas control the ordinary norms in \eqref{Zt}, and hence we get
\begin{align*}
&\|(\varrho^\varepsilon,u^\varepsilon,f^\varepsilon)\|_{\mathcal Z_T}
+\varepsilon\|u^\varepsilon\|_
{L^1_T(\dot B^{d/2+1}_{2,1}\cap
\dot B^{d/2+2}_{2,1})}\\
&\quad\leq C\|(\varrho_0^\varepsilon,u_0^\varepsilon,
f_0^\varepsilon)\|_{\mathcal Z_0}
+C\|(\varrho_0^\varepsilon,u_0^\varepsilon,
f_0^\varepsilon)\|_{\mathcal Z_0}^2
+C\|(\varrho^\varepsilon,u^\varepsilon,
f^\varepsilon)\|_{\mathcal Z_T}^2
+C\|(\varrho^\varepsilon,u^\varepsilon,
f^\varepsilon)\|_{\mathcal Z_T}^3\\
&\qquad+C\|(\varrho^\varepsilon,u^\varepsilon,
f^\varepsilon)\|_{\mathcal Z_T}
\varepsilon\|u^\varepsilon\|_
{L^1_T(\dot B^{d/2+1}_{2,1}\cap
\dot B^{d/2+2}_{2,1})}.
\end{align*}
Under \eqref{e73}, one can absorb the terms on the right-hand side involving the solution
norm by choosing $\delta_1^*$ sufficiently small, and therefore arrive at \eqref{e74}. \end{proof}

\begin{proof}[\textbf{Proof of Theorem \ref{t2}}]
For each fixed $\varepsilon\in(0,1]$, the local well-posedness theory in Appendix B
gives a unique strong solution to the Cauchy problem \eqref{m1n} for
the NS--VFP system on a maximal interval
$[0,T_*^\varepsilon)$.

As in the proof of Theorem
\ref{t1}, Proposition
\ref{p2} shows that there exists a sufficiently small constant
$\delta_1>0$, independent of $\varepsilon$, such that
\eqref{e7} implies, for every
$T<T_*^\varepsilon$,
\[
\begin{aligned}
&\|(\varrho^\varepsilon,u^\varepsilon,f^\varepsilon)\|_{\mathcal Z_T}
+\varepsilon\|u^\varepsilon\|_
{L^1_T(\dot B^{d/2+1}_{2,1}
\cap\dot B^{d/2+2}_{2,1})}\leq
C_1\Big(
\|(\varrho_0^\varepsilon,u_0^\varepsilon,f_0^\varepsilon)\|_{\mathcal Z_0}
+\|(\varrho_0^\varepsilon,u_0^\varepsilon,f_0^\varepsilon)\|_{\mathcal Z_0}^2
\Big).
\end{aligned}
\]
The above uniform estimate and \eqref{e75} allow
the local solution to be continued beyond any
$T<T_*^\varepsilon$. Hence $T_*^\varepsilon=\infty$.

Finally, since $\delta_1$ is sufficiently small, the quadratic term of
the initial norm is controlled by the linear one. Thus
\eqref{e8} follows.
\end{proof}

\begin{proof}[\textbf{Proof of Theorem \ref{t3}}]
Let the constant $\delta_2\leq\delta_1$. Assume that
\eqref{e9} holds. For every $\varepsilon\in(0,1]$, let
$(\varrho^\varepsilon,u^\varepsilon,f^\varepsilon)$ denote the unique
global strong solution to the Cauchy problem \eqref{m1n} for the
NS--VFP system with the initial data $(\varrho_0^\varepsilon,u_0^\varepsilon,f_0^\varepsilon)
=(\varrho_0,u_0,f_0)$ obtained in Theorem \ref{t2}. Estimate
\eqref{e8} holds for every $T>0$:
\begin{align}
&\|(\varrho^\varepsilon,u^\varepsilon)\|_
 {L^\infty_T(\dot B^{d/2-1}_{2,1}\cap\dot B^{d/2+1}_{2,1})}
+\|f^\varepsilon\|_
 {L^\infty_T(\dot{\mathcal B}^{d/2-1}_{2,1}\cap
 \dot{\mathcal B}^{d/2+1}_{2,1})}
\le C\|(\varrho_0,u_0,f_0)\|_{\mathcal Z_0}.
\label{e115}
\end{align}
Interpolating the $L^\infty_t$ and $L^1_t$ bounds in
\eqref{Zt} and applying the resulting $L^2_t$ bounds and the product
estimates to $\eqref{m1n}_3$, we obtain
\begin{align}
&\|\partial_t\varrho^\varepsilon\|_
 {L^2_T(\dot B^{d/2-1}_{2,1})}
+\|\partial_tu^\varepsilon\|_
 {L^2_T(\dot B^{d/2-1}_{2,1})}
\le C_T\|(\varrho_0,u_0,f_0)\|_{\mathcal Z_0},
\label{e116}\\
&\|\partial_t(\chi\psi f^\varepsilon)\|_
 {L^2_T(H^{-1}(\mathbb R^{2d}))}
\le C_{T,\chi,\psi}\|(\varrho_0,u_0,f_0)\|_{\mathcal Z_0}.
\label{e117}
\end{align}
Here $\chi\in C_c^\infty(\mathbb R^d_x)$ and
$\psi\in C_c^\infty(\mathbb R^d_v)$ are arbitrary. Let $K$ be a compact subset of $\mathbb R^d$ and let $0<\eta<1$.
By the local compact embedding of Besov spaces in the $x$-variable,
\eqref{e115}--\eqref{e117} and the
Aubin--Lions--Simon theorem, we may extract a subsequence such that
\begin{align}
(\varrho^\varepsilon,u^\varepsilon)
&\longrightarrow(\varrho,u)
&&\text{in }C([0,T]; H^{d/2-1-\eta}(K)),
\label{e118}\\
(a^\varepsilon,b^\varepsilon)
&\longrightarrow(a,b)
&&\text{in }L^2(0,T; H^{d/2-1}(K)),
\label{e119}\\
f^\varepsilon&\rightharpoonup f
&&\text{in }\mathcal D'((0,T)\times K\times\mathbb R^d_v).
\label{e120}
\end{align}
By a diagonal argument, the subsequence may be chosen so that the
above convergence holds for every $K$ and $T$. By interpolation between \eqref{e115} and
\eqref{e118}, the fluid variables converge strongly
locally in $C([0,T];H^s(K))$ for some $s>d/2$. Together with
\eqref{e119}--\eqref{e120}, this convergence allows us to pass to
the limit in the nonlinear terms. In particular,
$u^\varepsilon f^\varepsilon\to uf$ in distributions. The term
$u^\varepsilon\cdot\nabla_vf^\varepsilon$ is treated after integration
by parts in $v$. Moreover, \eqref{e8} gives
\[
 \left\|\frac{\varepsilon}{1+\varrho^\varepsilon}
 \bigl(\Delta_xu^\varepsilon
 +2\nabla_x\dive_xu^\varepsilon\bigr)\right\|_
 {L^1_T(\dot B^{d/2-1}_{2,1})}
 \lesssim\varepsilon
 \|u^\varepsilon\|_{L^1_T(\dot B^{d/2+1}_{2,1})}
 \longrightarrow0.
\]
Hence $(\varrho,u,f)$ satisfies the Euler--VFP system \eqref{m1n0}
in the distributional sense and attains the initial data
$(\varrho_0,u_0,f_0)$.

For each fixed dyadic block, weak lower semicontinuity gives the
corresponding estimate for the limit.  The continuity of
$\|(\varrho,u,f)\|_{\mathcal Z_t}$ follows from the same
finite-frequency argument as in the proof of Theorem
\ref{t6}. The positivity of the density and distribution
function follows by passing to the limit from the approximating
solutions. Uniqueness follows from similar calculations in Section
\ref{s9}.
\end{proof}

\section{Vanishing viscosity limit: quantitative error estimate}
\label{s9}

\subsection{Difference equations}

In this section, we prove Theorem \ref{t4} regarding the global convergence rate of the inviscid limit 
by analyzing the
difference between the two solutions and treating the viscous term as a source.
Let $Z^\varepsilon:=(\varrho^\varepsilon,u^\varepsilon,f^\varepsilon)$
and $Z:=(\varrho,u,f)$ be the solutions given by Theorems
\ref{t2} and \ref{t3}, respectively, where we omit the
superscript \(0\) for the inviscid solution. We set
$\delta\varrho:=\varrho^\varepsilon-\varrho$,
$\delta u:=u^\varepsilon-u$ and
$\delta f:=f^\varepsilon-f$,
and define \(\delta a,\delta b\) by taking the corresponding moments
of \(\delta f\). Since $1+a^\varepsilon$ and $1+a$ are bounded away
from zero, we may define, without a low-frequency cut-off,
\[
\begin{aligned}
 V^\varepsilon&:=\frac{b^\varepsilon}{1+a^\varepsilon}\quad\text{and}\quad&
 G^\varepsilon&:=\{\mathbf I-\mathbf P\}f^\varepsilon
 -\frac12(v\otimes v-\mathrm{Id}):(b^\varepsilon\otimes b^\varepsilon)M^{1/2},\\
 V&:=\frac{b}{1+a}\quad\text{and}\quad&
 G&:=\{\mathbf I-\mathbf P\}f
 -\frac12(v\otimes v-\mathrm{Id}):(b\otimes b)M^{1/2}.
\end{aligned}
\]
We define the corrected error variables by
\begin{equation}\label{e121}
 \delta V:=V^\varepsilon-V
 =\frac{\delta b-\delta a V}{1+a^\varepsilon}
 \quad\text{and}\quad
 \delta G:=G^\varepsilon-G.
\end{equation}
The original and corrected errors are related by
\begin{equation}\label{e122}
\begin{aligned}
 \delta b&=(1+a^\varepsilon)\delta V+\delta a V,\\
 \{\mathbf I-\mathbf P\}\delta f&
 =\delta G+\frac12(v\otimes v-\mathrm{Id}):
 \bigl(\delta b\otimes b^\varepsilon+b\otimes\delta b\bigr)M^{1/2}.
\end{aligned}
\end{equation}
One can use \eqref{e122}  to recover the estimates for
$\delta b$ and $\{\mathbf I-\mathbf P\}\delta f$ from those for
$\delta V$ and $\delta G$.

Define $q(r):=P'(1+r)/(1+r)$ and use the function $g^0$ defined
in \eqref{m1n0}.  Subtracting the Euler--VFP system \eqref{m1n0} from the NS--VFP
system \eqref{m1n}, we obtain
\begin{equation*}
\left\{
\begin{aligned}
&\partial_t\delta\varrho
 +u^\varepsilon\cdot\nabla\delta\varrho+\dive\delta u
 =-\delta u\cdot\nabla\varrho
  -\varrho^\varepsilon\dive\delta u
  -\delta\varrho\,\dive u,\\
&\partial_t\delta u
 +u^\varepsilon\cdot\nabla\delta u+\nabla\delta\varrho
 +\delta u-\delta b=-\delta u\cdot\nabla u
 -\delta a\,u^\varepsilon\\
&\qquad-a\,\delta u
 +g^0(\varrho^\varepsilon,u^\varepsilon,f^\varepsilon)
 -g^0(\varrho,u,f)
 +\frac{\varepsilon}{1+\varrho^\varepsilon}
   \bigl(\Delta_xu^\varepsilon+2\nabla_x\dive_xu^\varepsilon\bigr),\\
&\partial_t\delta f+v\cdot\nabla_x\delta f
 -\mathcal L\delta f-\delta u\cdot vM^{1/2}
 =\frac12\delta u\cdot vf^\varepsilon
 -\delta u\cdot\nabla_vf^\varepsilon
 +\frac12u\cdot v\delta f-u\cdot\nabla_v\delta f.
\end{aligned}
\right.
\end{equation*}
Subtracting the equations used in Subsection
\ref{s7}, we obtain 
\begin{equation}\label{e124}
\left\{
\begin{aligned}
&\partial_t\delta\varrho+\dive_x\delta u
 =-\dive_x(\varrho^\varepsilon\delta u+\delta\varrho\,u),\\
&\partial_t\delta u+\nabla_x\delta\varrho+\delta u-\delta V
 =-u^\varepsilon\cdot\nabla_x\delta u-\delta u\cdot\nabla_xu
 +g^0(\varrho^\varepsilon,u^\varepsilon,f^\varepsilon)-g^0(\varrho,u,f)\\
&\qquad\quad
 -\delta a\,(u^\varepsilon-V^\varepsilon)
 -a(\delta u-\delta V)
 +\frac{\varepsilon}{1+\varrho^\varepsilon}
 (\Delta_xu^\varepsilon+2\nabla_x\dive_xu^\varepsilon),\\
&\partial_t\delta a+\dive_x\delta V
 =-\dive_x(a^\varepsilon\delta V+\delta a V),
 \end{aligned}
 \right.
 \end{equation}
 and
\begin{equation}\label{e125}
\left\{
\begin{aligned}
&\partial_t\delta V+\nabla_x\delta a
 -(\delta u-\delta V)
 +\dive_x\Theta(\delta G)\\
&\qquad=
 \frac{a^\varepsilon}{1+a^\varepsilon}\Big(\nabla_xa^\varepsilon
 +\dive_x\Theta(\{\mathbf I-\mathbf P\}f^\varepsilon)\Big)
 -\frac{a}{1+a}\Big(\nabla_xa
 +\dive_x\Theta(\{\mathbf I-\mathbf P\}f)\Big)\\
&\qquad\quad
 +\frac{V^\varepsilon}{1+a^\varepsilon}\dive_xb^\varepsilon
 -\frac{V}{1+a}\dive_xb
 -\dive_x\bigl(\delta b\otimes b^\varepsilon+b\otimes\delta b\bigr),\\
&\partial_t\delta G+\{\mathbf I-\mathbf P\}
 \bigl(v\cdot\nabla_x\delta G\bigr)-\mathcal L\delta G
 +(v\otimes v-\mathrm{Id}):\nabla_x\delta V M^{1/2}\\
&\qquad=h(\varrho^\varepsilon,u^\varepsilon,f^\varepsilon)
 -h(\varrho,u,f)\\
&\qquad\quad -(v\otimes v-\mathrm{Id}):
 \Big(\delta a\,(u^\varepsilon\otimes b^\varepsilon)
 +a\bigl(\delta u\otimes b^\varepsilon+u\otimes\delta b\bigr)\Big)M^{1/2}\\
&\qquad\quad
 +(v\otimes v-\mathrm{Id}):\Big(\bigl(\nabla_x\delta a
 +\dive_x\Theta(\{\mathbf I-\mathbf P\}\delta f)\bigr)\otimes b^\varepsilon\\
&\qquad\quad
 +\bigl(\nabla_xa+\dive_x\Theta(\{\mathbf I-\mathbf P\}f)\bigr)
 \otimes\delta b-\nabla_x\bigl(\delta a\,V^\varepsilon
 +a\delta V\bigr)\Big)M^{1/2}\\
&\qquad\quad
 -\{\mathbf I-\mathbf P\}\left(v\cdot\nabla_x\left(
 \frac12(v\otimes v-\mathrm{Id}):
 \bigl(\delta b\otimes b^\varepsilon+b\otimes\delta b\bigr)
 M^{1/2}\right)\right).
\end{aligned}
\right.
\end{equation}
For the high-frequency analysis, 
the difference system is written as
\begin{equation}\label{e126}
\left\{
\begin{aligned}
&\partial_t\delta\varrho
 +u^\varepsilon\cdot\nabla\delta\varrho
 +(1+\varrho^\varepsilon)\dive\delta u
 =-\delta u\cdot\nabla\varrho
  -\delta\varrho\,\dive u,\\
&\partial_t\delta u
 +u^\varepsilon\cdot\nabla\delta u
 +q(\varrho^\varepsilon)\nabla\delta\varrho
 +\delta u-\delta b
 =-\delta u\cdot\nabla u
  -\bigl(q(\varrho^\varepsilon)-q(\varrho)\bigr)\nabla\varrho\\
&\qquad
 +\frac{\varrho^\varepsilon
  (u^\varepsilon-b^\varepsilon)-a^\varepsilon u^\varepsilon}
 {1+\varrho^\varepsilon}
 -\frac{\varrho(u-b)-au}{1+\varrho}
 +\frac{\varepsilon}{1+\varrho^\varepsilon}
 \bigl(\Delta_xu^\varepsilon
 +2\nabla_x\dive_xu^\varepsilon\bigr),\\
&\partial_t\delta f+v\cdot\nabla_x\delta f
 -\mathcal L\delta f-\delta u\cdot vM^{1/2}
 =\frac12\delta u\cdot vf^\varepsilon
 -\delta u\cdot\nabla_vf^\varepsilon
 +\frac12u\cdot v\delta f-u\cdot\nabla_v\delta f.
\end{aligned}
\right.
\end{equation}
The difference of the two microscopic equations is
\begin{equation}\label{e127}
\begin{aligned}
&\partial_t\{\mathbf I-\mathbf P\}\delta f
+\{\mathbf I-\mathbf P\}
 \bigl(v\cdot\nabla_x\{\mathbf I-\mathbf P\}\delta f\bigr)
-\mathcal L\{\mathbf I-\mathbf P\}\delta f
+(v\otimes v-\mathrm{Id}):\nabla_x\delta b\,M^{1/2}\\
&\qquad=
h(\varrho^\varepsilon,u^\varepsilon,f^\varepsilon)
-h(\varrho,u,f)
+(v\otimes v-\mathrm{Id}):
 \bigl(\delta u\otimes b^\varepsilon
       +u\otimes\delta b\bigr)M^{1/2}.
\end{aligned}
\end{equation}
We will analyze \eqref{e124}--\eqref{e125} at low
frequencies and \eqref{e126}--
\eqref{e127} at high frequencies.

\subsection{Error functional}

For $j\leq1$, we define
\begin{equation}\label{e128}
\begin{aligned}
\mathcal L_{\ell,j}^{\rm e}:={}&
 \frac12\|\dot\Delta_j(\delta\varrho,\delta u,
 \delta a,\delta V)\|_{L^2_x}^2
 +\frac12\|\dot\Delta_j\delta G\|_{L^2_{x,v}}^2+\eta_1(\dot\Delta_j\delta u,
\nabla_x\dot\Delta_j\delta\varrho)_{L^2_x}\\
&+\eta_2\left((\dot\Delta_j\delta V,
\nabla_x\dot\Delta_j\delta a)_{L^2_x}
+(\Theta(\dot\Delta_j\delta G),
\mathbb D_x\dot\Delta_j\delta V)_{L^2_x}\right),
\end{aligned}
\end{equation}
where $0<\eta_1\ll\eta_2\ll1$ are chosen as in Lemma
\ref{l5}. Bernstein's inequality
gives
\begin{equation*}
 \mathcal L_{\ell,j}^{\rm e}\sim
 \|\dot\Delta_j(\delta\varrho,\delta u,
 \delta a,\delta V)\|_{L^2_x}^2
 +\|\dot\Delta_j\delta G\|_{L^2_{x,v}}^2.
\end{equation*}

Applying the energy argument of Lemma
\ref{l5} to
\eqref{e124}--\eqref{e125}, with the functional
\eqref{e128} and the same parameter choices, we obtain
\begin{align}
\frac{\mathrm d}{\mathrm dt}\mathcal L_{\ell,j}^{\rm e}
&+c2^{2j}\mathcal L_{\ell,j}^{\rm e}
+c\|\dot\Delta_j(\delta u-\delta V)\|_{L^2_x}^2
+c\|\dot\Delta_j\delta G\|_{L^2_xL^2_{v,\nu}}^2
\nonumber\\
&\lesssim\left(\mathcal R_{\ell,j}^{\rm e}
+\varepsilon\left\|\dot\Delta_j\left(
\frac{\Delta_xu^\varepsilon+2\nabla_x\dive_xu^\varepsilon}
{1+\varrho^\varepsilon}\right)\right\|_{L^2_x}\right)
\sqrt{\mathcal L_{\ell,j}^{\rm e}}.
\label{e130}
\end{align}
Here
\begin{equation}\label{e131}
\begin{aligned}
\mathcal R_{\ell,j}^{\rm e}:={}&
\|\dot\Delta_j\nabla_x(\varrho^\varepsilon\delta u
+\delta\varrho\,u)\|_{L^2_x}
+\|\dot\Delta_j\nabla_x(a^\varepsilon\delta V+\delta a V)\|_{L^2_x}\\
&+\|\dot\Delta_j\nabla_x(\delta b\otimes b^\varepsilon
+b\otimes\delta b)\|_{L^2_x}
+\|\dot\Delta_j(u^\varepsilon\cdot\nabla_x\delta u
+\delta u\cdot\nabla_xu)\|_{L^2_x}\\
&+\|\dot\Delta_j(g^0(\varrho^\varepsilon,u^\varepsilon,f^\varepsilon)
-g^0(\varrho,u,f))\|_{L^2_x}
+\|\dot\Delta_j(\delta a(u^\varepsilon-V^\varepsilon)
+a(\delta u-\delta V))\|_{L^2_x}\\
&+\left\|\dot\Delta_j\left(
\begin{aligned}
&\frac{a^\varepsilon}{1+a^\varepsilon}\Big(\nabla_xa^\varepsilon
+\dive_x\Theta(\{\mathbf I-\mathbf P\}f^\varepsilon)\Big)\\
&-\frac{a}{1+a}\Big(\nabla_xa
+\dive_x\Theta(\{\mathbf I-\mathbf P\}f)\Big)
\end{aligned}\right)\right\|_{L^2_x}\\
&+\left\|\dot\Delta_j\left(
\frac{V^\varepsilon}{1+a^\varepsilon}\dive_xb^\varepsilon
-\frac{V}{1+a}\dive_xb\right)\right\|_{L^2_x}
+\|\dot\Delta_j(h(\varrho^\varepsilon,u^\varepsilon,f^\varepsilon)
-h(\varrho,u,f))\|_{L^2_{x,v}}\\
&+\left\|\dot\Delta_j\left(
\Big(\delta a(u^\varepsilon\otimes b^\varepsilon)
+a(\delta u\otimes b^\varepsilon+u\otimes\delta b)\Big)
\right)\right\|_{L^2_x}\\
&+\left\|\dot\Delta_j\left(
\big(\nabla_x\delta a
+\dive_x\Theta(\{\mathbf I-\mathbf P\}\delta f)\big)
\otimes b^\varepsilon
\right)\right\|_{L^2_x}
\\
&+\left\|\dot\Delta_j\left(
\big(\nabla_xa+\dive_x\Theta(\{\mathbf I-\mathbf P\}f)\big)
\otimes\delta b\right)\right\|_{L^2_x}
\\
&+\|\dot\Delta_j\nabla_x(\delta a V^\varepsilon+a\delta V)\|_{L^2_x}.
\end{aligned}
\end{equation}

For \(j\geq0\), we apply \(\dot\Delta_j\) to the original
error variables and use the high-frequency energy of the preceding section.
For each error variable
\(z\in\{\delta\varrho,\delta u,\delta a,\delta b,\delta f\}\),
we write \(z_j:=\dot\Delta_jz\) and define
\begin{equation}\label{e132}
\begin{aligned}
\mathcal L_{h,j}^{\rm e}:={}&
\frac12\int_{\mathbb R^d_x}
q(\varrho^\varepsilon)|\delta\varrho_j|^2\,\mathrm dx
+\frac12\int_{\mathbb R^d_x}
(1+\varrho^\varepsilon)|\delta u_j|^2\,\mathrm dx
+\frac12\|\delta f_j\|_{L^2_{x,v}}^2\\
&+\eta_3 2^{-2j}\int_{\mathbb R^d_x}
\delta u_j\cdot\nabla_x\delta\varrho_j\,\mathrm dx\\
&+\eta_4 2^{-2j}\int_{\mathbb R^d_x}\Big(
\nabla_x\delta a_j\cdot\delta b_j
+\Theta\bigl(\{\mathbf I-\mathbf P\}\delta f_j\bigr):
\mathbb D_x\delta b_j\Big)\,\mathrm dx,
\end{aligned}
\end{equation}
where $0<\eta_3\ll\eta_4\ll1$ are chosen in the same way as
$\eta_1$ and $\eta_2$ in the proof of Lemma
\ref{l7}. \eqref{e75} and Bernstein's inequality imply
\begin{equation*}
\mathcal L_{h,j}^{\rm e}\sim
\|(\delta\varrho_j,\delta u_j)\|_{L^2_x}^2
+\|\delta f_j\|_{L^2_{x,v}}^2
\quad\text{for}\quad j\geq0.
\end{equation*}
Applying the argument of Lemma \ref{l7} to
\eqref{e126}--\eqref{e127}, with the functional \eqref{e132} and
the same choice of $\eta_3$ and $\eta_4$, we obtain
\begin{align}
\frac{\mathrm d}{\mathrm dt}\mathcal L_{h,j}^{\rm e}
&+c\mathcal L_{h,j}^{\rm e}
+c\|\dot\Delta_j(\delta u-\delta b)\|_{L^2_x}^2
+c\left\|\dot\Delta_j
\{\mathbf I-\mathbf P\}\delta f\right\|_{L^2_xL^2_{v,\nu}}^2
\nonumber\\
&\lesssim\left(\mathcal R_{h,j}^{\rm e}
+\varepsilon\left\|\dot\Delta_j\left(
\frac{\Delta_xu^\varepsilon+2\nabla_x\dive_xu^\varepsilon}
{1+\varrho^\varepsilon}\right)\right\|_{L^2_x}\right)
\sqrt{\mathcal L_{h,j}^{\rm e}}.
\label{e134}
\end{align}
Here
\begin{align}
\mathcal R_{h,j}^{\rm e}:={}&
\|(\nabla_xu^\varepsilon,\nabla_x\varrho^\varepsilon)\|_{L^\infty_x}
\|(\delta\varrho_j,\delta u_j)\|_{L^2_x}
\nonumber\\
&+\sum_{z\in\{\delta\varrho,\delta u\}}
\|[u^\varepsilon,\dot\Delta_j]\cdot\nabla_xz\|_{L^2_x}
+\|[\dot\Delta_j,\varrho^\varepsilon]\dive_x\delta u\|_{L^2_x}
+\|[\dot\Delta_j,q(\varrho^\varepsilon)]
\nabla_x\delta\varrho\|_{L^2_x}
\nonumber\\
&+\left\|\dot\Delta_j\bigl(
\delta u\cdot\nabla_x\varrho,
\delta\varrho\,\dive_xu,
\delta u\cdot\nabla_xu,
\bigl(q(\varrho^\varepsilon)-q(\varrho)\bigr)
\nabla_x\varrho\bigr)\right\|_{L^2_x}
\nonumber\\
&+\left\|\dot\Delta_j\left(
\frac{\varrho^\varepsilon
(u^\varepsilon-b^\varepsilon)-a^\varepsilon u^\varepsilon}
{1+\varrho^\varepsilon}
-\frac{\varrho(u-b)-au}{1+\varrho}\right)\right\|_{L^2_x}
\nonumber\\
&+\|\dot\Delta_j(\delta a\,u^\varepsilon
+a\,\delta u)\|_{L^2_x}
+\|\dot\Delta_j(\delta u\otimes b^\varepsilon
+u\otimes\delta b)\|_{L^2_x}
\nonumber\\
&+\left\|\dot\Delta_j\left(
\frac12\delta u\cdot vf^\varepsilon
-\delta u\cdot\nabla_vf^\varepsilon
+\frac12u\cdot v\delta f-u\cdot\nabla_v\delta f
\right)\right\|_{L^2_{x,v}}.
\label{e135}
\end{align}

\subsection{Estimate and convergence}

We define
\begin{align}
\mathfrak D_T^\varepsilon:={}&
 \|(\delta\varrho^\ell,\delta u^\ell)\|_
 {\widetilde L^\infty_T(\dot B^{d/2-1}_{2,1})}
 +\|\delta f^\ell\|_
 {\widetilde L^\infty_T(\dot{\mathcal B}^{d/2-1}_{2,1})}
\nonumber\\
&+\|(\delta\varrho^h,\delta u^h)\|_
 {\widetilde L^\infty_T(\dot B^{d/2-1}_{2,1})}
 +\|\delta f^h\|_
 {\widetilde L^\infty_T(\dot{\mathcal B}^{d/2-1}_{2,1})}
\nonumber\\
&+\|(\delta\varrho^\ell,\delta u^\ell)\|_
 {L^1_T(\dot B^{d/2+1}_{2,1})}
 +\|\delta f^\ell\|_
 {L^1_T(\dot{\mathcal B}^{d/2+1}_{2,1})}
\nonumber\\
&+\|(\delta\varrho^h,\delta u^h)\|_
 {L^1_T(\dot B^{d/2-1}_{2,1})}
 +\|\delta f^h\|_
 {L^1_T(\dot{\mathcal B}^{d/2-1}_{2,1})}
\nonumber\\
&+\|(\delta u-\delta b)^\ell\|_
 {\widetilde L^2_T(\dot B^{d/2-1}_{2,1})}
 +\|(\delta u-\delta b)^h\|_
 {\widetilde L^2_T(\dot B^{d/2-1}_{2,1})}
\nonumber\\
&+\|\{\mathbf I-\mathbf P\}\delta f^\ell\|_
 {\widetilde L^2_T(\dot{\mathcal B}^{d/2-1}_{2,1,\nu})}
 +\|\{\mathbf I-\mathbf P\}\delta f^h\|_
 {\widetilde L^2_T(\dot{\mathcal B}^{d/2-1}_{2,1,\nu})},
\label{e136}
\end{align}
and its initial part 
$\mathfrak D_0^\varepsilon
:=\|(\delta\varrho_0,\delta u_0)\|_{\dot B^{d/2-1}_{2,1}}
+\|\delta f_0\|_{\dot{\mathcal B}^{d/2-1}_{2,1}}$.

Applying the same argument as in
\eqref{e46}--\eqref{e47} to
\eqref{e130} and \eqref{e134}, multiplying by
$2^{j(d/2-1)}$ and summing over the corresponding frequencies, we obtain
\begin{align}
\mathfrak D_T^\varepsilon
\lesssim{}& \mathfrak D_0^\varepsilon
+\sum_{j\leq1}2^{j(d/2-1)}
 \|\mathcal R_{\ell,j}^{\rm e}\|_{L^1_T}
+\sum_{j\geq0}2^{j(d/2-1)}
 \|\mathcal R_{h,j}^{\rm e}\|_{L^1_T}
\nonumber\\
&+\varepsilon
\left\|
\frac{\Delta_xu^\varepsilon+2\nabla_x\dive_xu^\varepsilon}
{1+\varrho^\varepsilon}
\right\|_{L^1_T(\dot B^{d/2-1}_{2,1})}.
\label{e137}
\end{align}
Here we used \eqref{e121} and \eqref{e122} to recover the
low-frequency norms of $\delta b$ and
$\{\mathbf I-\mathbf P\}\delta f$ from those of
$\delta a$, $\delta V$ and $\delta G$.

We next estimate the remainder terms in \eqref{e137}. For
\(Q\in C^\infty\) with
\(Q(0)=0\), the product and composition laws yield
\begin{align*}
\|z^\varepsilon w^\varepsilon-zw\|_{\dot B^{d/2-1}_{2,1}}
&\lesssim
 \|z^\varepsilon-z\|_{\dot B^{d/2-1}_{2,1}}
 \|w^\varepsilon\|_{\dot B^{d/2}_{2,1}}
 +\|z\|_{\dot B^{d/2}_{2,1}}
 \|w^\varepsilon-w\|_{\dot B^{d/2-1}_{2,1}},\\
\|Q(z^\varepsilon)-Q(z)\|_{\dot B^{d/2-1}_{2,1}}
&\lesssim
 C\bigl(\|z^\varepsilon\|_{\dot B^{d/2+1}_{2,1}},
        \|z\|_{\dot B^{d/2+1}_{2,1}}\bigr)
 \|z^\varepsilon-z\|_{\dot B^{d/2-1}_{2,1}}.
\end{align*}
Theorems \ref{t2} and \ref{t3} give uniform bounds for the two
solutions, while \eqref{e136} implies
\begin{align}
&\|(\delta\varrho^\ell,\delta u^\ell,
   \delta a^\ell,\delta b^\ell)\|_
 {\widetilde L^2_T(\dot B^{d/2}_{2,1})}
+\|\delta f^\ell\|_
 {\widetilde L^2_T(\dot{\mathcal B}^{d/2}_{2,1})}
\nonumber\\
&\quad
+\|(\delta\varrho^h,\delta u^h,
   \delta a^h,\delta b^h)\|_
 {\widetilde L^2_T(\dot B^{d/2-1}_{2,1})}
+\|\delta f^h\|_
 {\widetilde L^2_T(\dot{\mathcal B}^{d/2-1}_{2,1})}
\lesssim\mathfrak D_T^\varepsilon.
\label{e138}
\end{align}
By \eqref{E35}, \eqref{e121} and
\eqref{e122}, the same low-frequency bounds hold for $\delta V$
and $\delta G$. The product and composition estimates also give
\[
\|\delta V\|_{\widetilde L^\infty_T(\dot B^{d/2-1}_{2,1})}
+\|\delta V^h\|_{L^1_T(\dot B^{d/2-1}_{2,1})}
\lesssim\mathfrak D_T^\varepsilon.
\]
We shall use
H\"older's inequality in time in the forms
$L^\infty_T\times L^1_T$ and $L^2_T\times L^2_T$. Note that
\begin{align}
g^0(\varrho^\varepsilon,u^\varepsilon,f^\varepsilon)
-g^0(\varrho,u,f)&=
-\bigl(q(\varrho^\varepsilon)-q(\varrho)\bigr)
 \nabla\varrho
-\bigl(q(\varrho^\varepsilon)-1\bigr)\nabla\delta\varrho
\nonumber\\
&\quad+\Big(\frac{\varrho^\varepsilon}{1+\varrho^\varepsilon}
-\frac{\varrho}{1+\varrho}\Big)
 \bigl(u^\varepsilon-b^\varepsilon
 +a^\varepsilon u^\varepsilon\bigr)
\nonumber\\
&\quad+\frac{\varrho}{1+\varrho}
 \bigl(\delta u-\delta b+\delta a\,u^\varepsilon
 +a\,\delta u\bigr),
\label{e139}\\
\frac12u^\varepsilon\cdot vf^\varepsilon
-u^\varepsilon\cdot\nabla_vf^\varepsilon
-\frac12u\cdot vf+u\cdot\nabla_vf
&=\frac12\delta u\cdot v f^\varepsilon
-\delta u\cdot\nabla_vf^\varepsilon
\nonumber\\
&\quad+\frac12u\cdot v\delta f-u\cdot\nabla_v\delta f.
\label{e140}
\end{align}
We also use
\begin{align*}
a^\varepsilon V^\varepsilon-aV
&=a^\varepsilon\delta V+\delta a\,V\quad\text{and}\quad b^\varepsilon\otimes b^\varepsilon-b\otimes b
=\delta b\otimes b^\varepsilon
+b\otimes\delta b,\\
a^\varepsilon(u^\varepsilon-V^\varepsilon)-a(u-V)
&=\delta a\,(u^\varepsilon-V^\varepsilon)
+a(\delta u-\delta V),\\
u^\varepsilon\cdot\nabla_xu^\varepsilon-u\cdot\nabla_xu
&=\delta u\cdot\nabla_xu
+u^\varepsilon\cdot\nabla_x\delta u.
\end{align*}
The product and composition estimates in Lemmas \ref{l11} and
\ref{l12}, together with \eqref{e138}, yield
\begin{align}
&\|(\delta u\cdot\nabla\varrho,
\delta u\cdot\nabla u,
\delta\varrho\,\dive u,
\bigl(q(\varrho^\varepsilon)-q(\varrho)\bigr)
\nabla\varrho)\|_
 {L^1_T(\dot B^{d/2-1}_{2,1})}
\nonumber\\
&\qquad\lesssim
\bigl(\|(\varrho^\varepsilon,u^\varepsilon,f^\varepsilon)\|_{\mathcal Z_T}
+\|(\varrho,u,f)\|_{\mathcal Z_T}\bigr)\mathfrak D_T^\varepsilon.
\label{e141}
\end{align}
The commutator estimate in Lemma \ref{l13}, the composition estimate
in Lemma \ref{l12} and \eqref{e138} give
\begin{align}
&\sum_{j\geq0}2^{j(d/2-1)}
\Big(
\|[u^\varepsilon,\dot\Delta_j]\cdot
\nabla_x\delta\varrho\|_{L^1_TL^2_x}
+\|[u^\varepsilon,\dot\Delta_j]\cdot
\nabla_x\delta u\|_{L^1_TL^2_x}
\nonumber\\
&\quad
+\|[\dot\Delta_j,\varrho^\varepsilon]
\dive_x\delta u\|_{L^1_TL^2_x}
+\|[\dot\Delta_j,q(\varrho^\varepsilon)]
\nabla_x\delta\varrho\|_{L^1_TL^2_x}
\Big)
\nonumber\\
&\quad
+\|(\nabla_xu^\varepsilon,\nabla_x\varrho^\varepsilon)\|_
 {L^1_T(L^\infty_x)}
\|(\delta\varrho,\delta u)\|_
 {L^\infty_T(\dot B^{d/2-1}_{2,1})}
\nonumber\\
&\qquad\lesssim
\|(\varrho^\varepsilon,u^\varepsilon,f^\varepsilon)\|_{\mathcal Z_T}
\|(\delta\varrho,\delta u)\|_
 {L^\infty_T(\dot B^{d/2-1}_{2,1})}.
\label{e142}
\end{align}
At high frequencies, the terms
$u^\varepsilon\cdot\nabla_x(\delta\varrho,\delta u)$,
$(1+\varrho^\varepsilon)\dive_x\delta u$ and
$q(\varrho^\varepsilon)\nabla_x\delta\varrho$ are kept on the
left-hand side of \eqref{e126}; the resulting commutators are
estimated by \eqref{e142}. At low frequencies, the product estimate
gives, for example,
\begin{align*}
\|\dive_x(\varrho^\varepsilon\delta u)^\ell\|_
 {L^1_T(\dot B^{d/2-1}_{2,1})}&\lesssim
\|(\varrho^\varepsilon\delta u^\ell)^\ell\|_
 {L^1_T(\dot B^{d/2}_{2,1})}
+\|(\varrho^\varepsilon\delta u^h)^\ell\|_
 {L^1_T(\dot B^{d/2-1}_{2,1})}
\\
&\quad\lesssim
\|\varrho^\varepsilon\|_{L^2_T(\dot B^{d/2}_{2,1})}
\|\delta u^\ell\|_{L^2_T(\dot B^{d/2}_{2,1})}
+\|\varrho^\varepsilon\|_{L^\infty_T(\dot B^{d/2}_{2,1})}
\|\delta u^h\|_{L^1_T(\dot B^{d/2-1}_{2,1})}
\\
&\quad
\lesssim
\|(\varrho^\varepsilon,u^\varepsilon,f^\varepsilon)\|_{\mathcal Z_T}
\mathfrak D_T^\varepsilon.
\end{align*}
For $(\nabla_x\delta a)\otimes b^\varepsilon$, we write
$(\nabla_x\delta a)\otimes b^\varepsilon
=\nabla_x(\delta a\,b^\varepsilon)-\delta a\,\nabla_xb^\varepsilon$
and obtain
\begin{align*}
&\|((\nabla_x\delta a)\otimes b^\varepsilon)^\ell\|_
 {L^1_T(\dot B^{d/2-1}_{2,1})}\\
&\quad\lesssim
\|\delta a^\ell\|_{L^2_T(\dot B^{d/2}_{2,1})}
\|b^\varepsilon\|_{L^2_T(\dot B^{d/2}_{2,1})}
+\|\delta a^h\|_{L^1_T(\dot B^{d/2-1}_{2,1})}
\|b^\varepsilon\|_{L^\infty_T(\dot B^{d/2}_{2,1})}\\
&\qquad
+\|\delta a^h\|_{L^\infty_T(\dot B^{d/2-1}_{2,1})}
\|b^\varepsilon\|_{L^1_T(\dot B^{d/2+1}_{2,1})}
\lesssim
\|(\varrho^\varepsilon,u^\varepsilon,f^\varepsilon)\|_{\mathcal Z_T}
\mathfrak D_T^\varepsilon.
\end{align*}
The identities
\[
u^\varepsilon-V^\varepsilon
=u^\varepsilon-b^\varepsilon+a^\varepsilon V^\varepsilon
\quad\text{and}\quad
\delta u-\delta V
=\delta u-\delta b+a^\varepsilon\delta V+\delta a V,
\]
the moment estimate \eqref{E35} and Lemmas \ref{l11}--\ref{l12}
imply
\[
\|u^\varepsilon-V^\varepsilon\|_
 {L^2_T(\dot B^{d/2-1}_{2,1}\cap\dot B^{d/2}_{2,1})}
\lesssim\|Z^\varepsilon\|_{\mathcal Z_T}
\quad\text{and}\quad
\|\delta u-\delta V\|_{L^2_T(\dot B^{d/2-1}_{2,1})}
\lesssim\mathfrak D_T^\varepsilon.
\]
Together with \eqref{e138} and \eqref{uv2}, these bounds give
\begin{align*}
&\|\delta a\,(u^\varepsilon-V^\varepsilon)\|_
 {L^1_T(\dot B^{d/2-1}_{2,1})}^{\ell}
+\|a(\delta u-\delta V)\|_
 {L^1_T(\dot B^{d/2-1}_{2,1})}^{\ell}
\\
&\quad\lesssim
\|\delta a^\ell\|_{L^2_T(\dot B^{d/2}_{2,1})}
\|u^\varepsilon-V^\varepsilon\|_
 {L^2_T(\dot B^{d/2-1}_{2,1})}
+\|\delta a^h\|_{L^2_T(\dot B^{d/2-1}_{2,1})}
\|u^\varepsilon-V^\varepsilon\|_
 {L^2_T(\dot B^{d/2}_{2,1})}
\\
&\qquad\quad
+\|a\|_{L^2_T(\dot B^{d/2}_{2,1})}
\|\delta u-\delta V\|_
 {L^2_T(\dot B^{d/2-1}_{2,1})}
\\
&\quad
\lesssim
\bigl(\|(\varrho^\varepsilon,u^\varepsilon,f^\varepsilon)\|_{\mathcal Z_T}
+\|(\varrho,u,f)\|_{\mathcal Z_T}\bigr)
\mathfrak D_T^\varepsilon.
\end{align*}
Using \eqref{e141} and estimating the remaining terms in
\eqref{e139} by the same product laws, we obtain
\[
\|g^0(\varrho^\varepsilon,u^\varepsilon,f^\varepsilon)
-g^0(\varrho,u,f)\|_{L^1_T(\dot B^{d/2-1}_{2,1})}^{\ell}
\lesssim
\bigl(\|(\varrho^\varepsilon,u^\varepsilon,f^\varepsilon)\|_{\mathcal Z_T}
+\|(\varrho,u,f)\|_{\mathcal Z_T}\bigr)
\mathfrak D_T^\varepsilon.
\]
Lemmas \ref{l11} and \ref{l12}, Theorems \ref{t2} and
\ref{t3}, \eqref{e138} and H\"older's inequality in time imply
\begin{align}
&\left\|
\frac{\varrho^\varepsilon
(u^\varepsilon-b^\varepsilon)-a^\varepsilon u^\varepsilon}
{1+\varrho^\varepsilon}
-\frac{\varrho(u-b)-au}{1+\varrho}
\right\|_{L^1_T(\dot B^{d/2-1}_{2,1})}^{h}
\nonumber\\
&\quad
+\|\delta a\,u^\varepsilon+a\,\delta u\|_
 {L^1_T(\dot B^{d/2-1}_{2,1})}^{h}
+\|\delta u\otimes b^\varepsilon+u\otimes\delta b\|_
 {L^1_T(\dot B^{d/2-1}_{2,1})}^{h}
\nonumber\\
&\qquad\lesssim
\bigl(\|(\varrho^\varepsilon,u^\varepsilon,f^\varepsilon)\|_{\mathcal Z_T}
+\|(\varrho,u,f)\|_{\mathcal Z_T}\bigr)\mathfrak D_T^\varepsilon.
\label{e145}
\end{align}
The definition \eqref{e4}, the velocity norm \eqref{L12} and
H\"older's inequality in time also give
\begin{align*}
&\|h(\varrho^\varepsilon,u^\varepsilon,f^\varepsilon)
-h(\varrho,u,f)\|_{L^1_T(\dot{\mathcal B}^{d/2-1}_{2,1})}\\
&\quad\lesssim
\|\delta u^\ell\|_{L^2_T(\dot B^{d/2}_{2,1})}
\|\{\mathbf I-\mathbf P\}f^\varepsilon\|_
{L^2_T(\dot{\mathcal B}^{d/2-1}_{2,1,\nu})}+
\|\delta u^h\|_{L^2_T(\dot B^{d/2-1}_{2,1})}
\|\{\mathbf I-\mathbf P\}f^\varepsilon\|_
{L^2_T(\dot{\mathcal B}^{d/2}_{2,1,\nu})}\\
&\qquad+
\|u\|_{L^2_T(\dot B^{d/2}_{2,1})}
\|\{\mathbf I-\mathbf P\}\delta f\|_
{L^2_T(\dot{\mathcal B}^{d/2-1}_{2,1,\nu})}\\
&\quad\lesssim
\bigl(\|(\varrho^\varepsilon,u^\varepsilon,f^\varepsilon)\|_{\mathcal Z_T}
+\|(\varrho,u,f)\|_{\mathcal Z_T}\bigr)\mathfrak D_T^\varepsilon.
\end{align*}
We also write
\begin{align*}
&\frac{a^\varepsilon}{1+a^\varepsilon}
 \Big(\nabla_xa^\varepsilon
 +\dive_x\Theta(\{\mathbf I-\mathbf P\}f^\varepsilon)\Big)
-\frac{a}{1+a}
 \Big(\nabla_xa+\dive_x\Theta(\{\mathbf I-\mathbf P\}f)\Big)\\
&\quad=\left(\frac{a^\varepsilon}{1+a^\varepsilon}
-\frac{a}{1+a}\right)
 \Big(\nabla_xa^\varepsilon
 +\dive_x\Theta(\{\mathbf I-\mathbf P\}f^\varepsilon)\Big)
 +\frac{a}{1+a}
 \Big(\nabla_x\delta a
 +\dive_x\Theta(\{\mathbf I-\mathbf P\}\delta f)\Big),\\
&\frac{V^\varepsilon}{1+a^\varepsilon}\dive_xb^\varepsilon
-\frac{V}{1+a}\dive_xb
=\left(\frac{V^\varepsilon}{1+a^\varepsilon}
-\frac{V}{1+a}\right)\dive_xb^\varepsilon
+\frac{V}{1+a}\dive_x\delta b.
\end{align*}
By \eqref{e138}, \eqref{uv2}, Lemma
\ref{l12} and H\"older's inequality in time, one has
\begin{align*}
&\left\|
\frac{a^\varepsilon}{1+a^\varepsilon}\Big(\nabla_xa^\varepsilon
+\dive_x\Theta(\{\mathbf I-\mathbf P\}f^\varepsilon)\Big)
-\frac{a}{1+a}\Big(\nabla_xa
+\dive_x\Theta(\{\mathbf I-\mathbf P\}f)\Big)\right\|_
{L^1_T(\dot B^{d/2-1}_{2,1})}^{\ell}
\\
&\quad+\left\|\frac{V^\varepsilon}{1+a^\varepsilon}\dive_xb^\varepsilon
-\frac{V}{1+a}\dive_xb\right\|_
{L^1_T(\dot B^{d/2-1}_{2,1})}^{\ell}
\\
&\quad\quad\lesssim
\bigl(\|(\varrho^\varepsilon,u^\varepsilon,f^\varepsilon)\|_{\mathcal Z_T}
+\|(\varrho,u,f)\|_{\mathcal Z_T}\bigr)\mathfrak D_T^\varepsilon.
\end{align*}

For the cubic nonlinear term, we finally use the identity
\[
a^\varepsilon(u^\varepsilon\otimes b^\varepsilon)-a(u\otimes b)
=\delta a\,(u^\varepsilon\otimes b^\varepsilon)
+a(\delta u\otimes b^\varepsilon+u\otimes\delta b).
\]
The product estimate then gives
\[
\begin{aligned}
&\|a^\varepsilon(u^\varepsilon\otimes b^\varepsilon)
-a(u\otimes b)\|_{L^1_T(\dot B^{d/2-1}_{2,1})}\\
&\quad\lesssim
\bigl(\|(\varrho^\varepsilon,u^\varepsilon,f^\varepsilon)\|_{\mathcal Z_T}
+\|(\varrho,u,f)\|_{\mathcal Z_T}\bigr)^2
\mathfrak D_T^\varepsilon
\lesssim
\bigl(\|(\varrho^\varepsilon,u^\varepsilon,f^\varepsilon)\|_{\mathcal Z_T}
+\|(\varrho,u,f)\|_{\mathcal Z_T}\bigr)
\mathfrak D_T^\varepsilon.
\end{aligned}
\]
The last inequality uses the uniform smallness of the two solutions.
The two low-frequency product estimates above control the remaining
terms in \eqref{e131}. Equation \eqref{e140}, together with
\eqref{e145} and the estimate for $h$, controls the last term in
\eqref{e135}. The preceding estimates imply
\begin{equation}\label{e147}
\sum_{j\leq1}2^{j(d/2-1)}
 \|\mathcal R_{\ell,j}^{\rm e}\|_{L^1_T}
 +\sum_{j\geq0}2^{j(d/2-1)}
 \|\mathcal R_{h,j}^{\rm e}\|_{L^1_T}
 \lesssim
 \bigl(\|(\varrho^\varepsilon,u^\varepsilon,f^\varepsilon)\|_{\mathcal Z_T}
 +\|(\varrho,u,f)\|_{\mathcal Z_T}\bigr)
 \mathfrak D_T^\varepsilon.
\end{equation}

The viscous source determines the convergence rate $\mathcal{O}(\varepsilon)$. Indeed, Lemma \ref{l12} and Theorem \ref{t2} give
\begin{equation}
 \varepsilon\left\|
 \frac{\Delta_xu^\varepsilon+2\nabla_x\dive_xu^\varepsilon}{1+\varrho^\varepsilon}
 \right\|_{L^1_T(\dot B^{d/2-1}_{2,1})}
 \lesssim\varepsilon\|(\varrho_0^\varepsilon,u_0^\varepsilon,f_0^\varepsilon)\|_{\mathcal Z_0}.
\label{e148}
\end{equation}

Substituting \eqref{e147} and
\eqref{e148} into \eqref{e137}, we obtain
\begin{equation}\label{e149}
\mathfrak D_T^\varepsilon
\le C\mathfrak D_0^\varepsilon
+C\bigl(\|Z^\varepsilon\|_{\mathcal Z_T}
+\|Z\|_{\mathcal Z_T}\bigr)
\mathfrak D_T^\varepsilon
+C\varepsilon
\|(\varrho_0^\varepsilon,u_0^\varepsilon,f_0^\varepsilon)\|_{\mathcal Z_0}.
\end{equation}
By the uniform estimates of the two solutions, we absorb the
second term on the right-hand side of \eqref{e149}
and obtain
\[
\mathfrak D_T^\varepsilon
\le C(\mathfrak D_0^\varepsilon+\varepsilon).
\]
Here $C$ is independent of $T$ and $\varepsilon$.
Together with \eqref{e11}, this proves
\eqref{e12}. Interpolation with the uniform
$\dot B^{d/2+1}_{2,1}$ bound gives
\[
\|(\delta\varrho,\delta u)(t)\|_{\dot B^{d/2}_{2,1}}
+\|\delta f(t)\|_{\dot{\mathcal B}^{d/2}_{2,1}}
\le C\sqrt{\varepsilon},
\]
and the corresponding $L^\infty$ estimate follows from the critical
Besov embeddings. The proof of Theorem \ref{t4} is complete.

\section{Uniform time-decay estimates}
\label{s10}

In this section, we establish the optimal decay estimates. We only give the proof for $0<\varepsilon\le1$, as the case $\varepsilon=0$ is the same,
with the viscous terms omitted. 

\subsection{Propagation of the lower-order norm}

We first establish the uniform evolution of the lower-order norms. Let
\[
K_0:=\|(\varrho_0,u_0,f_0)\|_{\mathcal Z_0}
+\|(\varrho_0,u_0)\|_{\dot B^{\sigma_1}_{2,\infty}}
+\|f_0\|_{\dot{\mathcal B}^{\sigma_1}_{2,\infty}},
\]
and set
\begin{align}
\mathcal N_{\sigma_1}^\varepsilon(T):={}&
 \|(\varrho^{\varepsilon,\ell},u^{\varepsilon,\ell})\|_
 {\widetilde L^\infty_T(\dot B^{\sigma_1}_{2,\infty})}
 +\|f^{\varepsilon,\ell}\|_
 {\widetilde L^\infty_T
  (\dot{\mathcal B}^{\sigma_1}_{2,\infty})}
\nonumber\\
&+\|(\varrho^{\varepsilon,\ell},u^{\varepsilon,\ell})\|_
 {\widetilde L^1_T(\dot B^{\sigma_1+2}_{2,\infty})}
 +\|f^{\varepsilon,\ell}\|_
 {\widetilde L^1_T(\dot{\mathcal B}^{\sigma_1+2}_{2,\infty})}
\nonumber\\
&+\|(u^\varepsilon-V^\varepsilon)^\ell\|_
 {\widetilde L^2_T(\dot B^{\sigma_1}_{2,\infty})}
 +\|
 G^{\varepsilon,\ell}\|_
 {\widetilde L^2_T
  (\dot{\mathcal B}^{\sigma_1}_{2,\infty,\nu})}.
\label{e150}
\end{align}

\begin{proposition}\label{p3}
Under the assumptions of Theorem \ref{t5}, we have
\begin{align}
&\|(\varrho^\varepsilon,u^\varepsilon)\|_
 {\widetilde L^\infty_T(\dot B^{\sigma_1}_{2,\infty})}
 +\|f^\varepsilon\|_
 {\widetilde L^\infty_T(\dot{\mathcal B}^{\sigma_1}_{2,\infty})}
 +\mathcal N_{\sigma_1}^\varepsilon(T)
 \le C K_0\quad\text{for all}\quad T>0.
\label{e151}
\end{align}
\end{proposition}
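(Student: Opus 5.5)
The plan is to run the low-frequency Lyapunov inequality of Lemma \ref{l5}, which is uniform in $\varepsilon$, in the weaker norm $\dot B^{\sigma_1}_{2,\infty}$. The high-frequency part of the $\sigma_1$-norm comes for free. Indeed, for $j\ge0$ and $\sigma_1<d/2-1$,
\[
2^{j\sigma_1}\|\dot\Delta_j z\|_{L^2}\le 2^{j(d/2-1)}\|\dot\Delta_j z\|_{L^2},
\]
so $\|z^h\|_{\widetilde L^\infty_T(\dot B^{\sigma_1}_{2,\infty})}\lesssim \|z\|_{\mathcal Z_T}\lesssim K_0$ by Theorem \ref{t2}. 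Hence only $\mathcal N^\varepsilon_{\sigma_1}(T)$ needs work.

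\textbf{Step 1 (dyadic estimate).} For $j\le1$, I take \eqref{e79}, divide by $\sqrt{\mathcal L^\varepsilon_{\ell,j}}$ and integrate as in \eqref{e46}--\eqref{e47}. This gives \eqref{e89} for every block. I then multiply by $2^{j\sigma_1}$ and take $\sup_{j\le1}$ in place of $\sum_j 2^{j(d/2-1)}$, which bounds the $(a,V,G)$-version of $\mathcal N^\varepsilon_{\sigma_1}(T)$ by
\[
\|(\varrho_0,u_0,a_0,V_0)^\ell\|_{\dot B^{\sigma_1}_{2,\infty}}+\|G_0^\ell\|_{\dot{\mathcal B}^{\sigma_1}_{2,\infty}}+\sup_{j\le1}2^{j\sigma_1}\|\mathcal R^\varepsilon_{\ell,j}\|_{L^1_T}.
\]
To return from $(a,V,G)$ to $f$, I use $b=(1+a)V$ and the definition \eqref{e19} of $G$, exactly as in \eqref{e91}--\eqref{e92}. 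The quadratic corrections $aV$ and $b\otimes b$ are estimated in $\dot B^{\sigma_1}_{2,\infty}$ by the product law
\[
\|zw\|_{\dot B^{\sigma_1}_{2,\infty}}\lesssim \|z\|_{\dot B^{\sigma_1}_{2,\infty}}\|w\|_{\dot B^{d/2}_{2,1}},\qquad -d/2\le\sigma_1<d/2.
\]
This product law comes from the appendix and is presumably among \eqref{uv1}--\eqref{uv2}. The initial corrections are therefore controlled by $K_0+K_0^2$.

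\textbf{Step 2 (nonlinear remainder).} This is the main step. Every term of $\mathcal R^\varepsilon_{\ell,j}$ in \eqref{e80} has to be bounded in $L^1_T(\dot B^{\sigma_1}_{2,\infty})$, at low frequencies, by $\|Z^\varepsilon\|_{\mathcal Z_T}\big(K_0+\mathcal N^\varepsilon_{\sigma_1}(T)\big)$. The recipe is to put one factor in a $\sigma_1$-norm and the other in a critical $\dot B^{d/2}_{2,1}$ norm.

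For divergence-form terms such as $\nabla(\varrho u)$ and $\nabla(aV)$, I use the time pairing
\[
\|\nabla(zw)\|_{L^1_T\dot B^{\sigma_1}_{2,\infty}}\lesssim \|z\|_{\widetilde L^2_T\dot B^{\sigma_1+1}_{2,\infty}}\|w\|_{L^2_T\dot B^{d/2}_{2,1}},
\]
and obtain the $\widetilde L^2_T(\dot B^{\sigma_1+1}_{2,\infty})$ bound by interpolating the $\widetilde L^\infty_T\dot B^{\sigma_1}$ and $\widetilde L^1_T\dot B^{\sigma_1+2}$ parts of $\mathcal N$, with the high-frequency part supplied by $\mathcal Z_T$. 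For the derivative-free terms $a(u-V)$, $h$ and $\frac{\varrho}{1+\varrho}(u-b)$, I place the $\widetilde L^2_T\dot B^{\sigma_1}_{2,\infty}$ dissipation of $u-V$ or $G$ from $\mathcal N$ against $L^2_T\dot B^{d/2}_{2,1}$ of the other factor. Truncation defects are handled as in \eqref{e36}, using the gain available at high frequencies.

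I expect the delicate point to be the endpoint $\sigma_1=-d/2$. There the product law needs $\dot B^{-d/2}_{2,\infty}$ with $\sigma_1+d/2=0$, so I would fall back on the $L^1\hookrightarrow\dot B^{-d/2}_{2,\infty}$-type estimate
\[
\|zw\|_{\dot B^{-d/2}_{2,\infty}}\lesssim\|z\|_{L^2}\|w\|_{L^2}.
\]
With that, the remainder can be bounded by products of two critical norms, giving $\|Z^\varepsilon\|^2_{\mathcal Z_T}\lesssim K_0^2$.

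The viscous part of $g$ costs one factor of $\varepsilon\|u^\varepsilon\|_{L^1_T\dot B^{d/2+1}}$ times $\|\varrho\|_{L^\infty_T\dot B^{\sigma_1+\dots}}$, which is again controlled by \eqref{e8}. Collecting terms gives
\[
\mathcal N^\varepsilon_{\sigma_1}(T)\le C K_0+C\|Z^\varepsilon\|_{\mathcal Z_T}\,\mathcal N^\varepsilon_{\sigma_1}(T)+CK_0^2.
\]
Since $\|Z^\varepsilon\|_{\mathcal Z_T}\le C_1\delta_2$ is small, this absorbs and yields \eqref{e151} with constants independent of $\varepsilon$ and $T$. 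The same estimates apply to the Euler--VFP solution, where the viscous term is simply absent.
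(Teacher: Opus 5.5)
Your plan is the paper's proof. You weight the block estimate \eqref{e89} by $2^{j\sigma_1}$ and take $\sup_{j\le1}$, take the high frequencies and the $\widetilde L^2_T(\dot B^{\sigma_1+1}_{2,\infty})$ bounds from $\mathcal Z_T$ and time interpolation, bound each term of $\mathcal R^\varepsilon_{\ell,j}$ by a $\sigma_1$-level factor times a $\dot B^{d/2}_{2,1}$ factor, and absorb using the smallness of $\|(\varrho^\varepsilon,u^\varepsilon,f^\varepsilon)\|_{\mathcal Z_T}$.

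Three fixes are needed.
\begin{itemize}
\item Your product law is \eqref{uv3}, which only needs $s_1+s_2\ge0$, so it already covers $\sigma_1=-d/2$. The $L^2\times L^2$ fallback is unnecessary, and it would not close anyway, because $\mathcal Z_T$ gives no $L^2_tL^2_x$ control of undifferentiated low-frequency quantities such as $a^\varepsilon$.
\item The remainder should be bounded in $\widetilde L^1_T(\dot B^{\sigma_1}_{2,\infty})$, i.e.\ $\sup_{j\le1}2^{j\sigma_1}\|\mathcal R^\varepsilon_{\ell,j}\|_{L^1_T}$, by H\"older in time on each dyadic block. A $\widetilde L^2_T$ factor cannot give the stronger $L^1_T(\dot B^{\sigma_1}_{2,\infty})$ norm you wrote.
\item Every quadratic term, including the initial corrections, should keep one small critical factor, so that it is $\lesssim\delta_1K_0$ rather than $K_0^2$. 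This matters because the $\dot B^{\sigma_1}_{2,\infty}$ part of $K_0$ is not assumed small, so $CK_0+CK_0^2$ does not give the linear bound.
\end{itemize}
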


\begin{proof}
Multiplying \eqref{e89} by
\(2^{j\sigma_1}\) and taking the supremum over \(j\leq1\), we obtain
\begin{equation}\label{e152}
\mathcal N_{\sigma_1}^\varepsilon(T)
\lesssim
 \|(\varrho_0^{\varepsilon},a_0^{\varepsilon},u_0^{\varepsilon},V_0^\varepsilon)\|_
 {\dot B^{\sigma_1}_{2,\infty}}
+\|G_0^\varepsilon\|_{\dot{\mathcal B}^{\sigma_1}_{2,\infty}}
+\sup_{j\leq1}2^{j\sigma_1}
 \|\mathcal R_{\ell,j}^\varepsilon\|_{L^1_T}.
\end{equation}
Time interpolation for
\((\varrho^{\varepsilon,\ell},u^{\varepsilon,\ell})\) and
\(f^{\varepsilon,\ell}\) in \eqref{e150} yields
\begin{equation}\label{e153}
\|(\varrho^{\varepsilon,\ell},u^{\varepsilon,\ell})\|_{\widetilde L^2_T(\dot B^{\sigma_1+1}_{2,\infty})}
+\|f^{\varepsilon,\ell}\|_{\widetilde L^2_T(\dot{\mathcal B}^{\sigma_1+1}_{2,\infty})}
\lesssim\mathcal N_{\sigma_1}^\varepsilon(T).
\end{equation}
Since \(\sigma_1+1<d/2\), the high-frequency estimates in
\(\mathcal Z_T\) imply
\begin{align}
&\|(\varrho^{\varepsilon,h},u^{\varepsilon,h})\|_
 {\widetilde L^\infty_T(\dot B^{\sigma_1}_{2,\infty})}
 +\|f^{\varepsilon,h}\|_
 {\widetilde L^\infty_T(\dot{\mathcal B}^{\sigma_1}_{2,\infty})}
 +\|(\varrho^{\varepsilon,h},u^{\varepsilon,h})\|_
 {\widetilde L^2_T(\dot B^{\sigma_1+1}_{2,\infty})}
 +\|f^{\varepsilon,h}\|_
 {\widetilde L^2_T(\dot{\mathcal B}^{\sigma_1+1}_{2,\infty})}
\nonumber\\
&\quad+\|(u^\varepsilon-b^\varepsilon)^h\|_
 {\widetilde L^2_T(\dot B^{\sigma_1}_{2,\infty})}
 +\|\{\mathbf I-\mathbf P\}f^{\varepsilon,h}\|_
 {\widetilde L^2_T(\dot{\mathcal B}^{\sigma_1}_{2,\infty,\nu})}
 \lesssim
 \|(\varrho^\varepsilon,u^\varepsilon,f^\varepsilon)\|_{\mathcal Z_T}.
\label{e154}
\end{align}
Interpolation between the two positive regularity indices in
\eqref{Zt} also gives
\begin{equation}\label{e155}
 \|(\varrho^\varepsilon,u^\varepsilon)\|_{L^2_T(\dot B^{d/2}_{2,1})}
 +\|f^\varepsilon\|_{L^2_T(\dot{\mathcal B}^{d/2}_{2,1})}
 \lesssim
 \|(\varrho^\varepsilon,u^\varepsilon,f^\varepsilon)\|_{\mathcal Z_T}.
\end{equation}
Estimate \eqref{e155} also holds for the moments
\((a^\varepsilon,b^\varepsilon)\). Thus, \eqref{e153}--\eqref{e155},
the definitions of \(V^\varepsilon\) and \(G^\varepsilon\), and Lemmas
\ref{l11}--\ref{l12} imply
\begin{align}
&\|(\varrho^\varepsilon,u^\varepsilon)\|_
 {\widetilde L^\infty_T(\dot B^{\sigma_1}_{2,\infty})}
 +\|f^\varepsilon\|_
 {\widetilde L^\infty_T(\dot{\mathcal B}^{\sigma_1}_{2,\infty})}
 +\|(\varrho^\varepsilon,u^\varepsilon)\|_
 {\widetilde L^2_T(\dot B^{\sigma_1+1}_{2,\infty})}
 +\|f^\varepsilon\|_
 {\widetilde L^2_T(\dot{\mathcal B}^{\sigma_1+1}_{2,\infty})}
\nonumber\\
&\qquad\lesssim
 \mathcal N_{\sigma_1}^\varepsilon(T)
 +\|(\varrho^\varepsilon,u^\varepsilon,f^\varepsilon)\|_{\mathcal Z_T},
\label{e1561} \\
&\|u^\varepsilon-V^\varepsilon\|_
 {\widetilde L^2_T(\dot B^{\sigma_1}_{2,\infty})}
 +\|G^\varepsilon\|_
 {\widetilde L^2_T(\dot{\mathcal B}^{\sigma_1}_{2,\infty,\nu})}\lesssim
 \mathcal N_{\sigma_1}^\varepsilon(T)
 +\|(\varrho^\varepsilon,u^\varepsilon,f^\varepsilon)\|_{\mathcal Z_T}.
\label{e156}
\end{align}

Then, we analyze the nonlinear term in \eqref{e152}. By the definitions of $V_0^\varepsilon$ and $G_0^\varepsilon$, the
product estimate \eqref{uv3} and the composition estimate in Lemma
\ref{l12} yield
\[
\begin{aligned}
&\|(a_0^{\varepsilon},V_0^\varepsilon)\|_{\dot B^{\sigma_1}_{2,\infty}}
+\|G_0^\varepsilon\|_{\dot{\mathcal B}^{\sigma_1}_{2,\infty}}\lesssim
\|f_0^\varepsilon\|_{\dot{\mathcal B}^{\sigma_1}_{2,\infty}}
+\|(a_0^\varepsilon,b_0^\varepsilon)\|_{\dot B^{d/2}_{2,1}}
 \|(a_0^\varepsilon,b_0^\varepsilon)\|_{\dot B^{\sigma_1}_{2,\infty}}\leq (1+\delta_1)K_0.
\end{aligned}
\]
By Minkowski's inequality, one has $\|z\|_{\widetilde L^1_T(\dot B^s_{2,\infty})}
\leq
\|z\|_{L^1_T(\dot B^s_{2,\infty})}$, and the same inequality holds for the $L^2_v$-valued Besov spaces.
By \eqref{e155}--\eqref{e156}, \eqref{uv3} and H\"older's inequality
in time applied to each dyadic block, we obtain
\begin{align*}
\|(a^\varepsilon(u^\varepsilon-V^\varepsilon))^\ell\|_
 {\widetilde L^1_T(\dot B^{\sigma_1}_{2,\infty})}
&\lesssim
\|a^\varepsilon\|_{L^2_T
 (\dot B^{d/2}_{2,1})}
\|u^\varepsilon-V^\varepsilon\|_
 {\widetilde L^2_T(\dot B^{\sigma_1}_{2,\infty})}
\\
&\lesssim
\|(\varrho^\varepsilon,u^\varepsilon,f^\varepsilon)\|_{\mathcal Z_T}
\Big(
\mathcal N_{\sigma_1}^\varepsilon(T)
+\|(\varrho^\varepsilon,u^\varepsilon,f^\varepsilon)\|_{\mathcal Z_T}
\Big).
\end{align*}
Similarly, we use the $\widetilde L^2_T\dot B^{\sigma_1+1}_{2,\infty}$
bound for one factor and the
$L^2_T\dot B^{d/2}_{2,1}$ bound for the other to obtain
\begin{align*}
&\|(\varrho^\varepsilon u^\varepsilon)^\ell\|_
 {\widetilde L^1_T(\dot B^{\sigma_1+1}_{2,\infty})}
+\|(a^\varepsilon V^\varepsilon)^\ell\|_
 {\widetilde L^1_T(\dot B^{\sigma_1+1}_{2,\infty})}
\\
&\quad\lesssim
\|\varrho^\varepsilon\|_
 {\widetilde L^2_T(\dot B^{\sigma_1+1}_{2,\infty})}
\|u^\varepsilon\|_{L^2_T(\dot B^{d/2}_{2,1})}
+\|a^\varepsilon\|_
 {\widetilde L^2_T(\dot B^{\sigma_1+1}_{2,\infty})}
\|V^\varepsilon\|_{L^2_T(\dot B^{d/2}_{2,1})}
\\
&\quad\lesssim
\|(\varrho^\varepsilon,u^\varepsilon,f^\varepsilon)\|_{\mathcal Z_T}
\Big(
\mathcal N_{\sigma_1}^\varepsilon(T)
+\|(\varrho^\varepsilon,u^\varepsilon,f^\varepsilon)\|_{\mathcal Z_T}
\Big),\\
&\|(u^\varepsilon\cdot\nabla_xu^\varepsilon)^\ell\|_
 {\widetilde L^1_T(\dot B^{\sigma_1}_{2,\infty})}
+\|(b^\varepsilon\otimes b^\varepsilon)^\ell\|_
 {\widetilde L^1_T(\dot B^{\sigma_1+1}_{2,\infty})}
+\|h(\varrho^\varepsilon,u^\varepsilon,f^\varepsilon)^\ell\|_
 {\widetilde L^1_T(\dot{\mathcal B}^{\sigma_1}_{2,\infty})}
\\
&\quad\lesssim
\|u^\varepsilon\|_{L^2_T(\dot B^{d/2}_{2,1})}
\|u^\varepsilon\|_
 {\widetilde L^2_T(\dot B^{\sigma_1+1}_{2,\infty})}
+\|b^\varepsilon\|_{L^2_T(\dot B^{d/2}_{2,1})}
\|b^\varepsilon\|_
 {\widetilde L^2_T(\dot B^{\sigma_1+1}_{2,\infty})}
\\
&\qquad
+\|u^\varepsilon\|_{L^2_T(\dot B^{d/2}_{2,1})}
\|\{\mathbf I-\mathbf P\}f^\varepsilon\|_
 {\widetilde L^2_T
  (\dot{\mathcal B}^{\sigma_1}_{2,\infty,\nu})}
\\
&\quad\lesssim
\|(\varrho^\varepsilon,u^\varepsilon,f^\varepsilon)\|_{\mathcal Z_T}
\Big(
\mathcal N_{\sigma_1}^\varepsilon(T)
+\|(\varrho^\varepsilon,u^\varepsilon,f^\varepsilon)\|_{\mathcal Z_T}
\Big).
\end{align*}
Estimates \eqref{e155}--\eqref{e156} handle the differentiated
quadratic terms, while the Cauchy--Schwarz inequality in time and the
microscopic dissipation handle
$h(\varrho^\varepsilon,u^\varepsilon,f^\varepsilon)$.

By the composition estimate in Lemma \ref{l12},
\eqref{e155}, \eqref{e156}
and the product estimate \eqref{uv3}, we obtain
\begin{equation*}
\begin{aligned}
&\left\|
 \frac{a^\varepsilon}{1+a^\varepsilon}
 \bigl(\nabla a^\varepsilon
 +\dive\Theta(\{\mathbf I-\mathbf P\}f^\varepsilon)\bigr)
 \right\|_
 {\widetilde L^1_T(\dot B^{\sigma_1}_{2,\infty})}
+\left\|
 \frac{V^\varepsilon}{1+a^\varepsilon}
 \dive b^\varepsilon
 \right\|_
 {\widetilde L^1_T(\dot B^{\sigma_1}_{2,\infty})}\\
&\qquad\lesssim
 \|(\varrho^\varepsilon,u^\varepsilon,f^\varepsilon)\|_{\mathcal Z_T}
 \Big(
 \mathcal N_{\sigma_1}^\varepsilon(T)
 +\|(\varrho^\varepsilon,u^\varepsilon,f^\varepsilon)\|_{\mathcal Z_T}
 \Big).
\end{aligned}
\end{equation*}

Estimates \eqref{e155}--\eqref{e156}, \eqref{uv3} and Lemma
\ref{l12} also control the pressure and drag terms in
$g(\varrho^\varepsilon,u^\varepsilon,f^\varepsilon)$. Indeed, since
$u^\varepsilon-b^\varepsilon
 =u^\varepsilon-V^\varepsilon-a^\varepsilon V^\varepsilon$, it follows from \eqref{e154} and
\eqref{e156} that
\[
 \|u^\varepsilon-b^\varepsilon\|_
 {\widetilde L^2_T(\dot B^{\sigma_1}_{2,\infty})}
 \lesssim
 \mathcal N_{\sigma_1}^\varepsilon(T)
 +\|(\varrho^\varepsilon,u^\varepsilon,f^\varepsilon)\|_{\mathcal Z_T}
 +\|(\varrho^\varepsilon,u^\varepsilon,f^\varepsilon)\|_{\mathcal Z_T}
  \mathcal N_{\sigma_1}^\varepsilon(T).
\]
Moreover, the identities
\[
 b^\varepsilon=(1+a^\varepsilon)V^\varepsilon\quad\text{and}\quad 
 \{\mathbf I-\mathbf P\}f^\varepsilon
 =G^\varepsilon+\frac12(v\otimes v-\mathrm{Id}):
 (b^\varepsilon\otimes b^\varepsilon)M^{1/2}
\]
allow us to recover the low-frequency norms of $b^\varepsilon$ and
$\{\mathbf I-\mathbf P\}f^\varepsilon$ from those of
$V^\varepsilon$ and $G^\varepsilon$. The differentiated and cubic
terms in \eqref{e80} are then controlled by the
same product estimates.

For the viscosity term, by \eqref{e8} and the product
estimate \eqref{uv3}, we have
\begin{equation*}
\begin{aligned}
&\varepsilon\left\|
 \frac{\varrho^\varepsilon}{1+\varrho^\varepsilon}
 \bigl(\Delta_xu^\varepsilon
 +2\nabla_x\dive_xu^\varepsilon\bigr)\right\|_
 {\widetilde L^1_T(\dot B^{\sigma_1}_{2,\infty})}^{\ell}\\
&\qquad\lesssim
 \left\|\frac{\varrho^\varepsilon}
 {1+\varrho^\varepsilon}\right\|_
 {\widetilde L^\infty_T(\dot B^{\sigma_1}_{2,\infty})}
 \varepsilon
 \|\Delta_xu^\varepsilon
 +2\nabla_x\dive_xu^\varepsilon\|_
 {L^1_T(\dot B^{d/2}_{2,1})}\\
&\qquad\lesssim
 \|(\varrho^\varepsilon,u^\varepsilon,f^\varepsilon)\|_{\mathcal Z_T}
 \Big(
 \mathcal N_{\sigma_1}^\varepsilon(T)
 +\|(\varrho^\varepsilon,u^\varepsilon,f^\varepsilon)\|_{\mathcal Z_T}
 \Big).
\end{aligned}
\end{equation*}
The cubic terms contain one additional quantity bounded by
\(\|(\varrho^\varepsilon,u^\varepsilon,f^\varepsilon)\|_{\mathcal Z_T}
\le C\delta_1\le1\). The preceding estimates imply
\begin{equation}\label{e162}
 \sup_{j\leq1}2^{j\sigma_1}
 \|\mathcal R_{\ell,j}^\varepsilon\|_{L^1_T}
 \lesssim
 \|(\varrho^\varepsilon,u^\varepsilon,f^\varepsilon)\|_{\mathcal Z_T}
 \mathcal N_{\sigma_1}^\varepsilon(T)
 +\bigl(
 \|(\varrho^\varepsilon,u^\varepsilon,f^\varepsilon)\|_{\mathcal Z_T}
 \bigr)^2.
\end{equation}
Substituting \eqref{e162} into
\eqref{e152} and using
\(\|(\varrho^\varepsilon,u^\varepsilon,f^\varepsilon)\|_{\mathcal Z_T}
\le C\|(\varrho_0,u_0,f_0)\|_{\mathcal Z_0}\le C\delta_1\), we get
\begin{align*}
\mathcal N_{\sigma_1}^\varepsilon(T)
&\le C K_0
+C\delta_1\mathcal N_{\sigma_1}^\varepsilon(T)
+C\delta_1\|(\varrho_0,u_0,f_0)\|_{\mathcal Z_0},
\end{align*}
Together with \(\delta_1\ll 1\) and \eqref{e1561}, this proves \eqref{e151}.
\end{proof}

\subsection{Time-weighted estimate}

 We will close the estimate for the weighted functional
\begin{align}
\mathcal D_\alpha^\varepsilon(T):={}&
 \|\langle t\rangle^\alpha
  (\varrho^{\varepsilon,\ell},u^{\varepsilon,\ell})\|_
 {\widetilde L^\infty_T(\dot B^{d/2+1}_{2,1})}
 +\|\langle t\rangle^\alpha f^{\varepsilon,\ell}\|_
 {\widetilde L^\infty_T(\dot{\mathcal B}^{d/2+1}_{2,1})}
\nonumber\\
&+\|\langle t\rangle^\alpha
  (\varrho^{\varepsilon,h},u^{\varepsilon,h})\|_
 {\widetilde L^\infty_T(\dot B^{d/2+1}_{2,1})}
 +\|\langle t\rangle^\alpha f^{\varepsilon,h}\|_
 {\widetilde L^\infty_T(\dot{\mathcal B}^{d/2+1}_{2,1})}
\nonumber\\
&+\|\langle t\rangle^\alpha
  (\varrho^{\varepsilon,\ell},u^{\varepsilon,\ell})\|_
 {L^1_T(\dot B^{d/2+3}_{2,1})}
 +\|\langle t\rangle^\alpha f^{\varepsilon,\ell}\|_
 {L^1_T(\dot{\mathcal B}^{d/2+3}_{2,1})}
\nonumber\\
&+\|\langle t\rangle^\alpha
  (\varrho^{\varepsilon,h},u^{\varepsilon,h})\|_
 {L^1_T(\dot B^{d/2+1}_{2,1})}
+\|\langle t\rangle^\alpha f^{\varepsilon,h}\|_
 {L^1_T(\dot{\mathcal B}^{d/2+1}_{2,1})}
\nonumber\\
&+\|\langle t\rangle^\alpha
 (u^\varepsilon-V^\varepsilon)^\ell\|_
 {\widetilde L^2_T(\dot B^{d/2+1}_{2,1})}
 +\|\langle t\rangle^\alpha
 (u^\varepsilon-b^\varepsilon)^h\|_
 {\widetilde L^2_T(\dot B^{d/2+1}_{2,1})}
\nonumber\\
&+\|\langle t\rangle^\alpha
 G^{\varepsilon,\ell}\|_
 {\widetilde L^2_T(\dot{\mathcal B}^{d/2+1}_{2,1,\nu})}
+\|\langle t\rangle^\alpha
 \{\mathbf I-\mathbf P\}f^{\varepsilon,h}\|_
 {\widetilde L^2_T(\dot{\mathcal B}^{d/2+1}_{2,1,\nu})}
\nonumber\\
&+\varepsilon\|\langle t\rangle^\alpha
 u^{\varepsilon,h}\|_
 {L^1_T(\dot B^{d/2+2}_{2,1})},
\label{e163}
\end{align}
with $\alpha>\frac{1}{2}(\frac{d}{2}+1-\sigma_1)>1$ and
$\langle t\rangle:=1+t$.

It should be emphasized that, compared with the low-frequency index $d/2-1$ in 
\eqref{e90} used in the global existence argument, the low-frequency
regularity in \eqref{e163} is raised to $d/2+1$ in order to establish
the highest-order decay estimate. Accordingly, the corresponding
dissipation is measured at regularity $d/2+3$. Because this changes
the regularities used at low and high frequencies, new nonlinear
estimates are needed to close the weighted energy estimate.

\begin{lemma}\label{l9}
For every \(\eta>0\), there exists \(C_\eta>0\) such that
\begin{equation}
\begin{aligned}
\mathcal D_\alpha^\varepsilon(T)
&\lesssim
 C_\eta
 K_0
 \langle T\rangle^{\alpha-\frac{1}{2}(\frac{d}{2}+1-\sigma_1)}+\Bigl(\eta+(\|(\varrho^\varepsilon,u^\varepsilon,f^\varepsilon)\|_{\mathcal Z_T})^{1/2}\Bigr)
 \mathcal D_\alpha^\varepsilon(T).
\end{aligned}
\label{e164}
\end{equation}
\end{lemma}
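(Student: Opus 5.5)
The plan is to apply the frequency-localized Lyapunov inequalities of Lemmas \ref{l5} and \ref{l7} with the time weight $\langle t\rangle^\alpha$. At low frequencies we multiply \eqref{e79} by $\langle t\rangle^{2\alpha}$ (equivalently, we work with $\langle t\rangle^\alpha\sqrt{\mathcal L^\varepsilon_{\ell,j}}$). This gives
\[
\frac{d}{dt}\big(\langle t\rangle^\alpha\sqrt{\mathcal L^\varepsilon_{\ell,j}}\big)+c2^{2j}\langle t\rangle^\alpha\sqrt{\mathcal L^\varepsilon_{\ell,j}}\lesssim \alpha\langle t\rangle^{\alpha-1}\sqrt{\mathcal L^\varepsilon_{\ell,j}}+\langle t\rangle^\alpha\mathcal R^\varepsilon_{\ell,j}.
\]
We integrate in time, multiply by $2^{j(d/2+1)}$ and sum over $j\le1$, exactly as in \eqref{e89}--\eqref{e90}. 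The dissipation $\langle t\rangle^\alpha(u^\varepsilon-V^\varepsilon)$ and $\langle t\rangle^\alpha G^\varepsilon$ in $\widetilde L^2_T$ is recovered through the analogue of \eqref{e47}. At high frequencies we do the same with \eqref{e96}, using the uniform damping $c\mathcal L^\varepsilon_{h,j}$, multiplying by $2^{j(d/2+1)}$ and summing over $j\ge0$. The term $\varepsilon\|\langle t\rangle^\alpha u^{\varepsilon,h}\|_{L^1_T(\dot B^{d/2+2}_{2,1})}$ comes from the Lam\'e maximal regularity applied to $\langle t\rangle^\alpha u^\varepsilon$, as in the proof of Lemma \ref{l8}. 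We then recover $f^{\varepsilon,\ell}$ from $(a^\varepsilon,V^\varepsilon,G^\varepsilon)$ via \eqref{e91}.

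The first source is the weight-derivative term $\alpha\|\langle t\rangle^{\alpha-1}(\cdot)\|_{L^1_T}$.
\begin{itemize}
\item At high frequencies we bound it by $\eta\|\langle t\rangle^\alpha(\cdot)^h\|_{L^1_T}+C_\eta\|(\cdot)^h\|_{L^1_T}$ when $\langle t\rangle$ is large, and handle bounded times trivially. This absorbs the weighted part into $\mathcal D^\varepsilon_\alpha$. The unweighted part is bounded by $\|\cdot\|_{\mathcal Z_T}\lesssim K_0$.
\item At low frequencies we split each block. For $2^{2j}\langle t\rangle\ge A$ we use $\langle t\rangle^{\alpha-1}\le A^{-1}2^{2j}\langle t\rangle^\alpha$, which is absorbed as $\eta\mathcal D^\varepsilon_\alpha$. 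For $2^{2j}\langle t\rangle\le A$ we interpolate with the propagated lower-order norm of Proposition \ref{p3}: $2^{j(d/2+1)}\|\dot\Delta_j z\|\le 2^{j(d/2+1-\sigma_1)}\mathcal N^\varepsilon_{\sigma_1}\lesssim A^{\frac12(d/2+1-\sigma_1)}\langle t\rangle^{-\frac12(d/2+1-\sigma_1)}K_0$.
\end{itemize}
Summing over $j$ in the $\ell^1$ sense costs only a constant, since the frequency sum is geometric. Integrating $\langle t\rangle^{\alpha-1-\frac12(d/2+1-\sigma_1)}$ over $[0,T]$ gives $K_0\langle T\rangle^{\alpha-\frac12(d/2+1-\sigma_1)}$, because $\alpha>\frac12(d/2+1-\sigma_1)$. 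This yields the first term of \eqref{e164}. The initial data contribute $\|(\varrho_0,u_0,f_0)\|_{\mathcal Z_0}\le K_0$.

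The main obstacle is the weighted nonlinear remainders $\sum_{j\le1}2^{j(d/2+1)}\|\langle t\rangle^\alpha\mathcal R^\varepsilon_{\ell,j}\|_{L^1_T}$ and $\sum_{j\ge0}2^{j(d/2+1)}\|\langle t\rangle^\alpha\mathcal R^\varepsilon_{h,j}\|_{L^1_T}$. These must be bounded by $\|\cdot\|_{\mathcal Z_T}^{1/2}\mathcal D^\varepsilon_\alpha$ plus a term of size $K_0\langle T\rangle^{\alpha-\frac12(d/2+1-\sigma_1)}$. The difficulty is that low frequencies are now measured at $d/2+1$ with dissipation at $d/2+3$, which mismatches the unweighted high-frequency regularity.
\begin{itemize}
\item Each quadratic term $F(z,w)$ is handled by putting the full weight $\langle t\rangle^\alpha$ on one factor, controlled in $\widetilde L^\infty_T\dot B^{d/2+1}_{2,1}$ or in the $L^2_T$ dissipation norms of $\mathcal D^\varepsilon_\alpha$. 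The other factor is unweighted and controlled in $L^1_T\dot B^{d/2+1}_{2,1}$ or $L^2_T\dot B^{d/2}_{2,1}$ by $\mathcal Z_T$.
\item The derivative loss in the low-frequency part at index $d/2+1$ is harmless since $j\le1$: Bernstein's inequality lets us trade $2^{j(d/2+1)}$ for $2^{j(d/2-1)}$, so the estimates of Lemma \ref{l6} apply.
\item For the high-frequency commutators we use Lemma \ref{l13} at index $d/2+1$, as in Lemma \ref{l8}.
\item The Lipschitz-type quantities such as $\|\langle t\rangle^\alpha\nabla(\varrho^\varepsilon,u^\varepsilon)\|_{L^2_TL^\infty}$ are not directly in $\mathcal D^\varepsilon_\alpha$. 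I would interpolate $\langle t\rangle^{\alpha}\|z\|_{\dot B^{d/2+1}_{2,1}}$ as a geometric mean of $\langle t\rangle^{2\alpha}$-type weighted norms and unweighted $\mathcal Z_T$ norms. This yields products of the form $\|\cdot\|_{\mathcal Z_T}^{1/2}\mathcal D^\varepsilon_\alpha$ plus pure $\mathcal Z_T$ terms, which explains the square root in \eqref{e164}.
\end{itemize}
Any remaining term carrying weight but no weighted norm is again split as in the weight-derivative step.
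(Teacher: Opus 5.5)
Your treatment of the weight-derivative terms is fine: the splitting at $2^{2j}\langle t\rangle\sim A$ is a hands-on version of the interpolation \eqref{e166} plus Young's inequality that the paper uses. The high-frequency damping argument is also fine.

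\textbf{The gap.} It lies in the step you yourself call the main obstacle, the weighted low-frequency remainders. Trading $2^{j(d/2+1)}$ for $2^{j(d/2-1)}$ and then invoking the estimates of Lemma \ref{l6} goes in the wrong direction. Those estimates bound the remainder by products such as $\|a^\varepsilon\|_{L^2_T(\dot B^{d/2}_{2,1})}\|u^\varepsilon-V^\varepsilon\|_{L^2_T(\dot B^{d/2-1}_{2,1})}$. Once $\langle t\rangle^\alpha$ is placed on one factor, you therefore need a \emph{weighted low-frequency} norm at regularity $\le d/2$.

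$\mathcal D^\varepsilon_\alpha$ contains no such norm. At low frequencies it controls only $\widetilde L^\infty_T(\dot B^{d/2+1}_{2,1})$ and $L^1_T(\dot B^{d/2+3}_{2,1})$, hence $\widetilde L^2_T(\dot B^{d/2+2}_{2,1})$. It also controls $(u^\varepsilon-V^\varepsilon)^\ell$ and $G^{\varepsilon,\ell}$ in $\widetilde L^2_T$ at index $d/2+1$. For $j\le1$ a higher index never controls a lower one.

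Moreover, such weighted lower-order norms cannot be expected to be of size $\langle T\rangle^{\alpha-\frac12(\frac d2+1-\sigma_1)}$, since lower Besov norms decay more slowly. So the estimate cannot close along this route. Note that the paper even keeps $(\varrho^\varepsilon u^\varepsilon)^\ell$, $(a^\varepsilon V^\varepsilon)^\ell$ and $(b^\varepsilon\otimes b^\varepsilon)^\ell$ at index $d/2+2$, precisely so that the weighted factor can be taken in $\widetilde L^2_T(\dot B^{d/2+2}_{2,1})$.

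\textbf{What is needed.} The opposite of trading down: keep the low-frequency output at index $d/2+1$ (resp.\ $d/2+2$ for the divergence-form terms) and use the interpolated weighted bounds \eqref{e172}. Then split every product into low--low, low--high and high--high interactions, paraproduct-wise as in \eqref{e178}, and apply \eqref{uv1}. In low--low interactions the weighted factor appears only in $\dot B^{d/2+1}$ or $\dot B^{d/2+2}$. The unweighted factor is taken in $L^2_T(\dot B^{d/2}_{2,1})$, $L^2_T(\dot B^{d/2-1}_{2,1})$ or $L^1_T(\dot B^{d/2+1}_{2,1})$. Only pieces containing a high-frequency factor may be lowered by one derivative, using the low-frequency localization of the output.

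\textbf{Your last bullet.} It should be dropped. Interpolating with $\langle t\rangle^{2\alpha}$-weighted norms produces quantities outside $\mathcal D^\varepsilon_\alpha$. Lipschitz factors never need a weight: put $\langle t\rangle^\alpha$ on $(\varrho^\varepsilon,u^\varepsilon)$ in $\widetilde L^\infty_T(\dot B^{d/2+1}_{2,1})$ and keep $\|\nabla(\varrho^\varepsilon,u^\varepsilon)\|_{L^1_T(\dot B^{d/2}_{2,1})}$ unweighted, as in the commutator bounds via Lemma \ref{l13}.

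The square root in \eqref{e164} has a different origin. It comes from recovering the weighted $\widetilde L^2_T$ dissipation by integrating the Lyapunov inequality multiplied by $\langle t\rangle^{2\alpha}$. This yields $\bigl(\|\langle t\rangle^\alpha\sqrt{\mathcal L^\varepsilon_{\ell,j}}\|_{L^\infty_T}\|\langle t\rangle^\alpha\mathcal R^\varepsilon_{\ell,j}\|_{L^1_T}\bigr)^{1/2}$, and hence a factor $\|(\varrho^\varepsilon,u^\varepsilon,f^\varepsilon)\|_{\mathcal Z_T}^{1/2}\mathcal D^\varepsilon_\alpha(T)$.
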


\begin{proof}
We first consider the low frequencies. We divide
\eqref{e79} by
\(2\sqrt{\mathcal L_{\ell,j}^\varepsilon+\kappa^2}\),
multiply the resulting inequality by \(\langle t\rangle^\alpha\),
and then let \(\kappa\to0\) to obtain
\begin{equation}\label{e165}
\frac d{dt}\Big(
 \langle t\rangle^\alpha
 \sqrt{\mathcal L_{\ell,j}^\varepsilon}\Big)
 +c2^{2j}\langle t\rangle^\alpha
 \sqrt{\mathcal L_{\ell,j}^\varepsilon}
\lesssim
 \alpha\langle t\rangle^{\alpha-1}
 \sqrt{\mathcal L_{\ell,j}^\varepsilon}
+\langle t\rangle^\alpha
 \mathcal R_{\ell,j}^\varepsilon.
\end{equation}
Integrating \eqref{e165} over $(0,T)$, multiplying by
$2^{j(d/2+1)}$ and summing over $j\leq1$, we obtain
\begin{equation}\label{lownon}
\begin{aligned}
&\|\langle t\rangle^\alpha
(\varrho^{\varepsilon,\ell},u^{\varepsilon,\ell})\|_
{\widetilde L^\infty_T(\dot B^{d/2+1}_{2,1})}
+\|\langle t\rangle^\alpha f^{\varepsilon,\ell}\|_
{\widetilde L^\infty_T(\dot{\mathcal B}^{d/2+1}_{2,1})}
\\
&\quad
+\|\langle t\rangle^\alpha
(\varrho^{\varepsilon,\ell},u^{\varepsilon,\ell})\|_
{L^1_T(\dot B^{d/2+3}_{2,1})}
+\|\langle t\rangle^\alpha f^{\varepsilon,\ell}\|_
{L^1_T(\dot{\mathcal B}^{d/2+3}_{2,1})}
\\
&\lesssim
\|(\varrho^\varepsilon_0,u^\varepsilon_0,f^\varepsilon_0)\|_
{\mathcal Z_0}
+\alpha\int_0^T\langle t\rangle^{\alpha-1}
\Big(
\|(\varrho^{\varepsilon,\ell},u^{\varepsilon,\ell})(t)\|_
{\dot B^{d/2+1}_{2,1}}
+\|f^{\varepsilon,\ell}(t)\|_
{\dot{\mathcal B}^{d/2+1}_{2,1}}
\Big)\,dt
\\
&\qquad
+\sum_{j\leq1}2^{j(d/2+1)}
\|\langle t\rangle^\alpha
\mathcal R_{\ell,j}^\varepsilon\|_{L^1_T}.
\end{aligned}
\end{equation}
Here we used \eqref{e78}, the moment bounds and the definitions of
$V^\varepsilon$ and $G^\varepsilon$.
For the term arising from the time weight, applying interpolation in Lemma \ref{l10} yields
\begin{equation}\label{e166}
\|f^{\varepsilon,\ell}(t)\|_{\dot{\mathcal B}^{d/2+1}_{2,1}}\lesssim
\|f^{\varepsilon,\ell}(t)\|_{\dot{\mathcal B}^{\sigma_1}_{2,\infty}}^{\frac{2}{d/2+3-\sigma_1}}
\|f^{\varepsilon,\ell}(t)\|_{\dot{\mathcal B}^{d/2+3}_{2,1}}^{\frac{d/2+1-\sigma_1}{d/2+3-\sigma_1}}.
\end{equation}
Since the first term on the right-hand
side is bounded by \(\mathcal N_{\sigma_1}^\varepsilon(T)\), inserting
the time weight into \eqref{e166}, we
obtain
\begin{align*}
&\langle t\rangle^{\alpha-1}
\|f^{\varepsilon,\ell}(t)\|_
{\dot{\mathcal B}^{d/2+1}_{2,1}}\lesssim
\bigl(
\langle t\rangle^\alpha
\|f^{\varepsilon,\ell}(t)\|_
{\dot{\mathcal B}^{d/2+3}_{2,1}}
\bigr)^{
\frac{d/2+1-\sigma_1}
{d/2+3-\sigma_1}}
\bigl(\mathcal N_{\sigma_1}^\varepsilon(T)\bigr)^{
\frac{2}{d/2+3-\sigma_1}}
\langle t\rangle^{
\frac{2\alpha-\frac{d}{2}-3+\sigma_1}
{\frac{d}{2}+3-\sigma_1}},
\end{align*}
which, together with Young's inequality, implies, for every \(\eta>0\),
\begin{align*}
\langle t\rangle^{\alpha-1}
\|f^{\varepsilon,\ell}(t)\|_
{\dot{\mathcal B}^{d/2+1}_{2,1}}\le
\eta\langle t\rangle^\alpha
\|f^{\varepsilon,\ell}(t)\|_
{\dot{\mathcal B}^{d/2+3}_{2,1}}
+C_\eta
\mathcal N_{\sigma_1}^\varepsilon(T)
\langle t\rangle^{
\alpha-\frac{1}{2}(\frac{d}{2}+1-\sigma_1)-1}.
\end{align*}
Since \(\alpha>\frac{1}{2}(\frac{d}{2}+1-\sigma_1)\), integrating
the above inequality and applying the same argument to
\(\varrho^{\varepsilon,\ell}\) and \(u^{\varepsilon,\ell}\), we obtain
\begin{align}
&\int_0^T\langle t\rangle^{\alpha-1}
\Big(
\|(\varrho^{\varepsilon,\ell},u^{\varepsilon,\ell})(t)\|_
{\dot B^{d/2+1}_{2,1}}
+\|f^{\varepsilon,\ell}(t)\|_
{\dot{\mathcal B}^{d/2+1}_{2,1}}
\Big)\,dt
\nonumber\\
&\quad\lesssim
C_\eta
\mathcal N_{\sigma_1}^\varepsilon(T)
\langle T\rangle^{
\alpha-\frac{1}{2}(\frac{d}{2}+1-\sigma_1)}
+\eta\mathcal D_\alpha^\varepsilon(T).
\label{e169}
\end{align}

It remains to estimate the nonlinear term in \eqref{lownon}.
By \eqref{e155}, the definitions of $\mathcal Z_T$ and
$\mathcal D_\alpha^\varepsilon(T)$, and time interpolation, we have
\begin{align}
&\|(\varrho^{\varepsilon,\ell},u^{\varepsilon,\ell})\|_
{L^2_T(\dot B^{d/2}_{2,1})}
+\|f^{\varepsilon,\ell}\|_
{L^2_T(\dot{\mathcal B}^{d/2}_{2,1})}
\nonumber\\
&\quad
+\|(\varrho^{\varepsilon,h},u^{\varepsilon,h})\|_
{L^2_T(\dot B^{d/2+1}_{2,1})}
+\|f^{\varepsilon,h}\|_
{L^2_T(\dot{\mathcal B}^{d/2+1}_{2,1})}
\lesssim
\|(\varrho^\varepsilon,u^\varepsilon,f^\varepsilon)\|_{\mathcal Z_T},
\label{e171}\\
&\|\langle t\rangle^\alpha
(\varrho^{\varepsilon,\ell},u^{\varepsilon,\ell})\|_
{\widetilde L^2_T(\dot B^{d/2+2}_{2,1})}
+\|\langle t\rangle^\alpha f^{\varepsilon,\ell}\|_
{\widetilde L^2_T(\dot{\mathcal B}^{d/2+2}_{2,1})}
\nonumber\\
&\quad
+\|\langle t\rangle^\alpha
(\varrho^{\varepsilon,h},u^{\varepsilon,h})\|_
{\widetilde L^2_T(\dot B^{d/2+1}_{2,1})}
+\|\langle t\rangle^\alpha f^{\varepsilon,h}\|_
{\widetilde L^2_T(\dot{\mathcal B}^{d/2+1}_{2,1})}
\lesssim\mathcal D_\alpha^\varepsilon(T).
\label{e172}
\end{align}
The moment bounds, Lemmas \ref{l11}--\ref{l12} and the fixed
low-frequency localization show that the estimates in
\eqref{e171}--\eqref{e172} also hold for
$a^\varepsilon$, $b^\varepsilon$ and
$V^\varepsilon=b^\varepsilon/(1+a^\varepsilon)$.

By \eqref{e171}--\eqref{e172} and \eqref{uv1}, we get
\[
\begin{aligned}
\|\langle t\rangle^\alpha(\varrho^\varepsilon u^\varepsilon)^\ell\|_
{L^1_T(\dot B^{d/2+2}_{2,1})}&\lesssim
\|\varrho^\varepsilon\|_{L^2_T(\dot B^{d/2}_{2,1})}
\Big(\|\langle t\rangle^\alpha u^{\varepsilon,\ell}\|_
{L^2_T(\dot B^{d/2+2}_{2,1})}
+\|\langle t\rangle^\alpha u^{\varepsilon,h}\|_
{L^2_T(\dot B^{d/2+1}_{2,1})}\Big)\\
&\qquad+
\|u^\varepsilon\|_{L^2_T(\dot B^{d/2}_{2,1})}
\Big(\|\langle t\rangle^\alpha\varrho^{\varepsilon,\ell}\|_
{L^2_T(\dot B^{d/2+2}_{2,1})}
+\|\langle t\rangle^\alpha\varrho^{\varepsilon,h}\|_
{L^2_T(\dot B^{d/2+1}_{2,1})}\Big).
\end{aligned}
\]
Applying the same estimate to $a^\varepsilon V^\varepsilon$ and
$b^\varepsilon\otimes b^\varepsilon$, we obtain
\begin{align*}
&\|\langle t\rangle^\alpha
 (\varrho^\varepsilon u^\varepsilon)^\ell\|_
 {L^1_T(\dot B^{d/2+2}_{2,1})}
+\|\langle t\rangle^\alpha
 (a^\varepsilon V^\varepsilon)^\ell\|_
 {L^1_T(\dot B^{d/2+2}_{2,1})}
\\
&\quad
+\|\langle t\rangle^\alpha
 (b^\varepsilon\otimes b^\varepsilon)^\ell\|_
 {L^1_T(\dot B^{d/2+2}_{2,1})}
\lesssim
\|(\varrho^\varepsilon,u^\varepsilon,f^\varepsilon)\|_{\mathcal Z_T}\mathcal D_\alpha^\varepsilon(T).
\end{align*}

For the convection term, we first write
\[
(u^\varepsilon\cdot\nabla u^\varepsilon)^\ell
=\big(u^{\varepsilon,\ell}\cdot\nabla u^{\varepsilon,\ell}
+u^{\varepsilon,h}\cdot\nabla u^{\varepsilon,\ell}
+u^{\varepsilon,\ell}\cdot\nabla u^{\varepsilon,h}
+u^{\varepsilon,h}\cdot\nabla u^{\varepsilon,h}\big)^\ell.
\]
By \eqref{uv1} and the fixed low-frequency localization, one has
\begin{align*}
&\|\langle t\rangle^\alpha
(u^{\varepsilon,\ell}\cdot\nabla u^{\varepsilon,\ell})^\ell\|_
{L^1_T(\dot B^{d/2+1}_{2,1})}\\
&\quad\lesssim
\|u^{\varepsilon,\ell}\|_{L^2_T(\dot B^{d/2}_{2,1})}
\|\langle t\rangle^\alpha u^{\varepsilon,\ell}\|_{L^2_T(\dot B^{d/2+2}_{2,1})}
+\|u^{\varepsilon,\ell}\|_{L^1_T(\dot B^{d/2+1}_{2,1})}
\|\langle t\rangle^\alpha u^{\varepsilon,\ell}\|_{L^\infty_T(\dot B^{d/2+1}_{2,1})},\\
&\|\langle t\rangle^\alpha
(u^{\varepsilon,h}\cdot\nabla u^{\varepsilon,\ell})^\ell\|_
{L^1_T(\dot B^{d/2+1}_{2,1})}\\
&\quad\lesssim
\|u^{\varepsilon,h}\|_{L^2_T(\dot B^{d/2+1}_{2,1})}
\|\langle t\rangle^\alpha u^{\varepsilon,\ell}\|_{L^2_T(\dot B^{d/2+2}_{2,1})}
+\|u^{\varepsilon,\ell}\|_{L^1_T(\dot B^{d/2+1}_{2,1})}
\|\langle t\rangle^\alpha u^{\varepsilon,h}\|_{L^\infty_T(\dot B^{d/2+1}_{2,1})},\\
&\|\langle t\rangle^\alpha
\big(u^{\varepsilon,\ell}\cdot\nabla u^{\varepsilon,h}
+u^{\varepsilon,h}\cdot\nabla u^{\varepsilon,h}\big)^\ell\|_
{L^1_T(\dot B^{d/2+1}_{2,1})}\\
&\quad\lesssim
\|\langle t\rangle^\alpha
\big(u^{\varepsilon,\ell}\cdot\nabla u^{\varepsilon,h}
+u^{\varepsilon,h}\cdot\nabla u^{\varepsilon,h}\big)\|_
{L^1_T(\dot B^{d/2}_{2,1})}\\
&\quad\lesssim
\Big(\|u^{\varepsilon,\ell}\|_{L^2_T(\dot B^{d/2}_{2,1})}
+\|u^{\varepsilon,h}\|_{L^2_T(\dot B^{d/2+1}_{2,1})}\Big)
\|\langle t\rangle^\alpha u^{\varepsilon,h}\|_{L^2_T(\dot B^{d/2+1}_{2,1})}.
\end{align*}
The preceding estimates therefore imply
\begin{equation*}
\|\langle t\rangle^\alpha u^\varepsilon\cdot\nabla u^\varepsilon\|_{L^1_T(\dot B^{d/2+1}_{2,1})}^{\ell}\lesssim\|(\varrho^\varepsilon,u^\varepsilon,f^\varepsilon)\|_{\mathcal Z_T}\mathcal D_\alpha^\varepsilon(T).
\end{equation*}
Using
$u^\varepsilon-V^\varepsilon=u^\varepsilon-b^\varepsilon
+a^\varepsilon V^\varepsilon$, the product estimate and
\eqref{e171}--\eqref{e172} imply
\[
\|\langle t\rangle^\alpha(u^\varepsilon-V^\varepsilon)\|_
{L^2_T(\dot B^{d/2+1}_{2,1})}
\lesssim
\bigl(1+\|(\varrho^\varepsilon,u^\varepsilon,f^\varepsilon)\|_{\mathcal Z_T}\bigr)
\mathcal D_\alpha^\varepsilon(T).
\]
Applying \eqref{uv1}, one obtains
\begin{align}
\|\langle t\rangle^\alpha
 (a^\varepsilon(u^\varepsilon-V^\varepsilon))^\ell\|_
 {L^1_T(\dot B^{d/2+1}_{2,1})}&\lesssim
\|a^\varepsilon\|_{L^2_T(\dot B^{d/2}_{2,1})}
\|\langle t\rangle^\alpha(u^\varepsilon-V^\varepsilon)\|_
{L^2_T(\dot B^{d/2+1}_{2,1})}
\nonumber\\
&\qquad+
\|u^\varepsilon-V^\varepsilon\|_
{L^2_T(\dot B^{d/2-1}_{2,1})}
\|\langle t\rangle^\alpha a^{\varepsilon,\ell}\|_
{L^2_T(\dot B^{d/2+2}_{2,1})}
\nonumber\\
&\qquad+
\|u^\varepsilon-V^\varepsilon\|_
{L^2_T(\dot B^{d/2}_{2,1})}
\|\langle t\rangle^\alpha a^{\varepsilon,h}\|_
{L^2_T(\dot B^{d/2+1}_{2,1})}
\nonumber\\
&\quad
\lesssim
\|(\varrho^\varepsilon,u^\varepsilon,f^\varepsilon)\|_{\mathcal Z_T}\mathcal D_\alpha^\varepsilon(T).
\label{e178}
\end{align}
The definition of $G^\varepsilon$ and the product estimate first yield
\[
\|\langle t\rangle^\alpha
\{\mathbf I-\mathbf P\}f^\varepsilon\|_
{L^2_T(\dot{\mathcal B}^{d/2+1}_{2,1,\nu})}
\lesssim
\bigl(1+\|(\varrho^\varepsilon,u^\varepsilon,f^\varepsilon)\|_{\mathcal Z_T}\bigr)
\mathcal D_\alpha^\varepsilon(T).
\]
The definition of $L^2_{v,\nu}$ and \eqref{uv1} then give
\begin{align}
\|\langle t\rangle^\alpha
h(\varrho^\varepsilon,u^\varepsilon,f^\varepsilon)\|_
 {L^1_T(\dot{\mathcal B}^{d/2+1}_{2,1})}^{\ell}&\lesssim
\|u^\varepsilon\|_{L^2_T(\dot B^{d/2}_{2,1})}
\|\langle t\rangle^\alpha\{\mathbf I-\mathbf P\}f^\varepsilon\|_
{L^2_T(\dot{\mathcal B}^{d/2+1}_{2,1,\nu})}
\nonumber\\
&\qquad+
\|\{\mathbf I-\mathbf P\}f^\varepsilon\|_
{L^2_T(\dot{\mathcal B}^{d/2-1}_{2,1,\nu})}
\|\langle t\rangle^\alpha u^{\varepsilon,\ell}\|_
{L^2_T(\dot B^{d/2+2}_{2,1})}
\nonumber\\
&\qquad+
\|\{\mathbf I-\mathbf P\}f^\varepsilon\|_
{L^2_T(\dot{\mathcal B}^{d/2}_{2,1,\nu})}
\|\langle t\rangle^\alpha u^{\varepsilon,h}\|_
{L^2_T(\dot B^{d/2+1}_{2,1})}
\nonumber\\
&\quad
\lesssim
\|(\varrho^\varepsilon,u^\varepsilon,f^\varepsilon)\|_{\mathcal Z_T}\mathcal D_\alpha^\varepsilon(T).
\label{e180}
\end{align}

For the variable coefficients, the composition estimate in Lemma
\ref{l12} yields, at every index used below,
\begin{align*}
\left\|\frac{a^\varepsilon}
 {1+a^\varepsilon}\right\|_
 {\dot B^r_{2,1}}
&\lesssim\|a^\varepsilon\|_{\dot B^r_{2,1}}
\quad\text{and}\quad
\left\|\frac{V^\varepsilon}
 {1+a^\varepsilon}\right\|_
 {\dot B^r_{2,1}}
\lesssim
\|V^\varepsilon\|_{\dot B^r_{2,1}}
+\|V^\varepsilon\|_{L^\infty_x}
\|a^\varepsilon\|_{\dot B^r_{2,1}},
\end{align*}
and hence
\begin{align*}
&\left\|\langle t\rangle^\alpha
 \frac{a^\varepsilon}{1+a^\varepsilon}
 \Big(\nabla a^\varepsilon+
 \dive\Theta(\{\mathbf I-\mathbf P\}f^\varepsilon)\Big)\right\|_
 {L^1_T(\dot B^{d/2+1}_{2,1})}^{\ell}
\\
&\quad+
\left\|\langle t\rangle^\alpha
 \frac{V^\varepsilon}{1+a^\varepsilon}
 \dive b^\varepsilon\right\|_
 {L^1_T(\dot B^{d/2+1}_{2,1})}^{\ell}
\lesssim
\|(\varrho^\varepsilon,u^\varepsilon,f^\varepsilon)\|_{\mathcal Z_T}\mathcal D_\alpha^\varepsilon(T).
\end{align*}
For the terms involving high frequencies, the low-frequency
localization allows us to estimate one order lower. Hence, by
\eqref{uv1} and the weighted
$L^2_T(\dot B^{d/2+2}_{2,1})$ estimate, we obtain
\begin{align*}
&\|\langle t\rangle^\alpha
 (a^\varepsilon u^\varepsilon\otimes b^\varepsilon)^\ell\|_
 {L^1_T(\dot B^{d/2+1}_{2,1})}
\\
&\quad+
\left\|\langle t\rangle^\alpha\left(
 \bigl(\nabla a^\varepsilon+
 \dive\Theta(\{\mathbf I-\mathbf P\}f^\varepsilon)\bigr)
 \otimes b^\varepsilon-\nabla(a^\varepsilon V^\varepsilon)
 \right)^\ell\right\|_{L^1_T(\dot B^{d/2+1}_{2,1})}\\
 &\quad\quad\lesssim
\|(\varrho^\varepsilon,u^\varepsilon,f^\varepsilon)\|_{\mathcal Z_T}
\mathcal D_\alpha^\varepsilon(T).
\end{align*}
Set \(q(z):=P'(1+z)/(1+z)\). For \(s=d/2,d/2+1\), the composition
estimate in Lemma \ref{l12} gives
\begin{equation*}
\|q(\varrho^\varepsilon)-1\|_{\dot B^s_{2,1}}
+\left\|\frac{\varrho^\varepsilon}
{1+\varrho^\varepsilon}\right\|_{\dot B^s_{2,1}}
\lesssim
\|\varrho^\varepsilon\|_{\dot B^s_{2,1}},
\end{equation*}
Together with \eqref{uv1} and the same low-frequency estimate, this
implies
\begin{equation*}
 \|\langle t\rangle^\alpha
 (q(\varrho^\varepsilon)-1)\nabla\varrho^\varepsilon\|_
 {L^1_T(\dot B^{d/2+1}_{2,1})}^{\ell}
 \lesssim\|(\varrho^\varepsilon,u^\varepsilon,f^\varepsilon)\|_{\mathcal Z_T}
 \mathcal D_\alpha^\varepsilon(T).
\end{equation*}
Moreover, the identity
$u^\varepsilon-b^\varepsilon
=u^\varepsilon-V^\varepsilon-a^\varepsilon V^\varepsilon$,
the definition of $\mathcal D_\alpha^\varepsilon$, the high-frequency
relaxation bound and the product estimate yield
\begin{align}
\|\langle t\rangle^\alpha
 (u^\varepsilon-b^\varepsilon)\|_
 {L^2_T(\dot B^{d/2+1}_{2,1})}
&\lesssim
\mathcal D_\alpha^\varepsilon(T)
+\|\langle t\rangle^\alpha
(a^\varepsilon V^\varepsilon)\|_
 {L^2_T(\dot B^{d/2+1}_{2,1})}
\nonumber\\
&\lesssim
\bigl(1+\|(\varrho^\varepsilon,u^\varepsilon,f^\varepsilon)\|_{\mathcal Z_T}\bigr)
\mathcal D_\alpha^\varepsilon(T).
\label{e186}
\end{align}
Applying the same paraproduct decomposition as in
\eqref{e178}, with the coefficient
$\varrho^\varepsilon/(1+\varrho^\varepsilon)$, gives
\begin{equation*}
\left\|\langle t\rangle^\alpha
\frac{\varrho^\varepsilon}{1+\varrho^\varepsilon}
(u^\varepsilon-b^\varepsilon)\right\|_
{L^1_T(\dot B^{d/2+1}_{2,1})}^{\ell}
\lesssim
\|(\varrho^\varepsilon,u^\varepsilon,f^\varepsilon)\|_{\mathcal Z_T}
\mathcal D_\alpha^\varepsilon(T).
\end{equation*}
We also have
\begin{align*}
\left\|\langle t\rangle^\alpha
 \frac{\varrho^\varepsilon}{1+\varrho^\varepsilon}
 a^\varepsilon u^\varepsilon\right\|_
 {L^1_T(\dot B^{d/2+1}_{2,1})}^{\ell}&\lesssim
\|(\varrho^\varepsilon,a^\varepsilon,u^\varepsilon)\|_
{L^\infty_T(\dot B^{d/2-1}_{2,1})}^{2}
\|\langle t\rangle^\alpha
(\varrho^{\varepsilon,\ell},a^{\varepsilon,\ell},u^{\varepsilon,\ell})\|_
{L^1_T(\dot B^{d/2+3}_{2,1})}
\\
&\qquad+
\|(\varrho^\varepsilon,a^\varepsilon,u^\varepsilon)\|_
{L^\infty_T(\dot B^{d/2}_{2,1})}^{2}
\|\langle t\rangle^\alpha
(\varrho^{\varepsilon,h},a^{\varepsilon,h},u^{\varepsilon,h})\|_
{L^1_T(\dot B^{d/2+1}_{2,1})}
\\
&\quad\lesssim
(\|(\varrho^\varepsilon,u^\varepsilon,f^\varepsilon)\|_{\mathcal Z_T})^2
\mathcal D_\alpha^\varepsilon(T).
\end{align*}
The first inequality follows from Bernstein's inequality at low
frequencies and the critical high-frequency bounds; the reciprocal
coefficient is controlled by Lemma \ref{l12}.
For the variable-viscosity term, we split
$u^\varepsilon=u^{\varepsilon,\ell}+u^{\varepsilon,h}$ and obtain
\begin{align*}
&\varepsilon\left\|\langle t\rangle^\alpha
 \frac{\varrho^\varepsilon}{1+\varrho^\varepsilon}
 \bigl(\Delta_xu^\varepsilon+2\nabla_x\dive_xu^\varepsilon\bigr)\right\|_
 {L^1_T(\dot B^{d/2+1}_{2,1})}^{\ell}
\\
&\quad\lesssim
\|(\varrho^\varepsilon,u^\varepsilon,f^\varepsilon)\|_{\mathcal Z_T}
\Big(\|\langle t\rangle^\alpha u^{\varepsilon,\ell}\|_
 {L^1_T(\dot B^{d/2+3}_{2,1})}
+\varepsilon\|\langle t\rangle^\alpha u^{\varepsilon,h}\|_
 {L^1_T(\dot B^{d/2+2}_{2,1})}\Big)
\\
&\quad\lesssim
\|(\varrho^\varepsilon,u^\varepsilon,f^\varepsilon)\|_{\mathcal Z_T}
\mathcal D_\alpha^\varepsilon(T).
\end{align*}
The preceding estimates and \eqref{e80} yield
\begin{equation}\label{e170}
 \sum_{j\leq1}2^{j(d/2+1)}
 \|\langle t\rangle^\alpha
 \mathcal R_{\ell,j}^\varepsilon\|_{L^1_T}
 \lesssim
 \|(\varrho^\varepsilon,u^\varepsilon,f^\varepsilon)\|_{\mathcal Z_T}
 \mathcal D_\alpha^\varepsilon(T).
\end{equation}
Inserting \eqref{e169} and \eqref{e170} into the summed form of
\eqref{e165}, we obtain
\begin{align}
&\|\langle t\rangle^\alpha
 (\varrho^{\varepsilon,\ell},u^{\varepsilon,\ell})\|_
 {\widetilde L^\infty_T(\dot B^{d/2+1}_{2,1})}
+\|\langle t\rangle^\alpha f^{\varepsilon,\ell}\|_
 {\widetilde L^\infty_T(\dot{\mathcal B}^{d/2+1}_{2,1})}
\nonumber\\
&\quad+\|\langle t\rangle^\alpha
 (\varrho^{\varepsilon,\ell},u^{\varepsilon,\ell})\|_
 {L^1_T(\dot B^{d/2+3}_{2,1})}+\|\langle t\rangle^\alpha f^{\varepsilon,\ell}\|_
 {L^1_T(\dot{\mathcal B}^{d/2+3}_{2,1})}
\nonumber\\
&\quad\lesssim
\|(\varrho_0^\varepsilon,u_0^\varepsilon,f_0^\varepsilon)\|_{\mathcal Z_0}
+C_\eta\mathcal N_{\sigma_1}^\varepsilon(T)
 \langle T\rangle^{\alpha-\frac{1}{2}(\frac{d}{2}+1-\sigma_1)}+\bigl(\eta+\|(\varrho^\varepsilon,u^\varepsilon,f^\varepsilon)\|_{\mathcal Z_T}\bigr)
 \mathcal D_\alpha^\varepsilon(T).
\label{e192}
\end{align}

In addition, the weighted \(L^2_T\) terms follow by multiplying
\eqref{e79} by \(\langle t\rangle^{2\alpha}\):
\begin{align}
&\frac d{dt}\bigl(\langle t\rangle^{2\alpha}
 \mathcal L_{\ell,j}^\varepsilon\bigr)
+c\langle t\rangle^{2\alpha}
 \Big(2^{2j}\mathcal L_{\ell,j}^\varepsilon
 +\|\dot\Delta_j(u^\varepsilon-V^\varepsilon)\|_{L^2_x}^2
 +\|\dot\Delta_jG^\varepsilon\|_{L^2_xL^2_{v,\nu}}^2\Big)
\nonumber\\
&\qquad\lesssim
\alpha\langle t\rangle^{2\alpha-1}
 \mathcal L_{\ell,j}^\varepsilon
+\langle t\rangle^{2\alpha}
 \mathcal R_{\ell,j}^\varepsilon
 \sqrt{\mathcal L_{\ell,j}^\varepsilon}.
\label{e190}
\end{align}

Integrating \eqref{e190} and taking the square root, we obtain
\begin{align*}
&\|\langle t\rangle^\alpha
 \dot\Delta_j(u^\varepsilon-V^\varepsilon)\|_{L^2_TL^2_x}
 +\|\langle t\rangle^\alpha
 \dot\Delta_jG^\varepsilon\|_{L^2_TL^2_xL^2_{v,\nu}}
 \\
&\quad\lesssim
 \sqrt{\mathcal L_{\ell,j}^\varepsilon(0)}
 +\Big(
 \|\langle t\rangle^\alpha
 \sqrt{\mathcal L_{\ell,j}^\varepsilon}\|_{L^\infty_T}
 \int_0^T\langle t\rangle^{\alpha-1}
 \sqrt{\mathcal L_{\ell,j}^\varepsilon}\,dt
 \Big)^{1/2}
 \\
&\qquad\quad
 +\left(
 \|\langle t\rangle^\alpha
 \sqrt{\mathcal L_{\ell,j}^\varepsilon}\|_{L^\infty_T}
 \|\langle t\rangle^\alpha
 \mathcal R_{\ell,j}^\varepsilon\|_{L^1_T}
 \right)^{1/2}.
\end{align*}
Multiplying by \(2^{j(d/2+1)}\), summing over \(j\leq1\) and
using the Cauchy--Schwarz inequality in the dyadic variable, we obtain from
\eqref{e78},
\eqref{e169},
\eqref{e170} and
\eqref{e192}
\begin{align}
&\|\langle t\rangle^\alpha
 (u^\varepsilon-V^\varepsilon)^\ell\|_
 {\widetilde L^2_T(\dot B^{d/2+1}_{2,1})}
 +\|\langle t\rangle^\alpha
 G^{\varepsilon,\ell}\|_
 {\widetilde L^2_T(\dot{\mathcal B}^{d/2+1}_{2,1,\nu})}
 \nonumber\\
&\quad\leq
 C_\eta
 \bigl(\|(\varrho^\varepsilon,u^\varepsilon,f^\varepsilon)\|_{\mathcal Z_T}
 +\mathcal N_{\sigma_1}^\varepsilon(T)\bigr)
 \langle T\rangle^{\alpha-\frac{1}{2}(\frac{d}{2}+1-\sigma_1)} \nonumber\\
 &\quad\quad\quad\quad \quad\quad\quad\quad 
 +\bigl(\eta
 +C\|(\varrho^\varepsilon,u^\varepsilon,f^\varepsilon)\|_
 {\mathcal Z_T}^{1/2}\bigr)
 \mathcal D_\alpha^\varepsilon(T).
\label{e194}
\end{align}
Here we also use the moment bounds and the recovery estimates
for \(V^\varepsilon\) and \(G^\varepsilon\).

We next consider the high frequencies. Dividing \eqref{e96} by the
regularized square root of $\mathcal L_{h,j}^\varepsilon$, multiplying
by $\langle t\rangle^\alpha$ and passing to the limit, we obtain
\begin{align}
&\frac d{dt}\Big(
\langle t\rangle^\alpha
\sqrt{\mathcal L_{h,j}^\varepsilon}\Big)
+c\langle t\rangle^\alpha
\sqrt{\mathcal L_{h,j}^\varepsilon}\leq
\alpha\langle t\rangle^{\alpha-1}
\sqrt{\mathcal L_{h,j}^\varepsilon}
+C\langle t\rangle^\alpha
\mathcal R_{h,j}^\varepsilon,
\qquad j\geq0.
\label{e197}
\end{align}
Let $T_\alpha:=4\alpha/c$. For $t\geq T_\alpha$, the first term on
the damping term on the left absorbs the first term on the right.
On $[0,T_\alpha]$, the time weight is bounded, and
\eqref{e112} gives
\begin{align*}
&\sum_{j\geq0}2^{j(d/2+1)}
\int_0^{\min\{T,T_\alpha\}}
\alpha\langle t\rangle^{\alpha-1}
\sqrt{\mathcal L_{h,j}^\varepsilon(t)}\,dt\lesssim
\|(\varrho^\varepsilon,u^\varepsilon,f^\varepsilon)\|_{\mathcal Z_T}.
\end{align*}
Hence, integrating \eqref{e197} separately on
$[0,\min\{T,T_\alpha\}]$ and, if $T>T_\alpha$, on
$[T_\alpha,T]$, we obtain
\begin{align}
&\sum_{j\geq0}2^{j(d/2+1)}
\Big(
\|\langle t\rangle^\alpha
\sqrt{\mathcal L_{h,j}^\varepsilon}\|_{L^\infty_T}
+\|\langle t\rangle^\alpha
\sqrt{\mathcal L_{h,j}^\varepsilon}\|_{L^1_T}
\Big)
\nonumber\\
&\qquad\lesssim
\|(\varrho_0^\varepsilon,u_0^\varepsilon,f_0^\varepsilon)\|_{\mathcal Z_0}
+
\|(\varrho^\varepsilon,u^\varepsilon,f^\varepsilon)\|_{\mathcal Z_T}
+\sum_{j\geq0}2^{j(d/2+1)}
\|\langle t\rangle^\alpha
\mathcal R_{h,j}^\varepsilon\|_{L^1_T}.
\label{e199}
\end{align}
To analyze \(\mathcal R_{h,j}^\varepsilon\), Lemma \ref{l13} implies
\begin{align*}
&\sum_{j\geq0}2^{j(d/2+1)}
\|\langle t\rangle^\alpha
 [\dot\Delta_j,u^\varepsilon]\cdot
 \nabla(\varrho^\varepsilon,u^\varepsilon)\|_{L^1_TL^2_x}
\\
&\qquad\lesssim
\|\nabla u^\varepsilon\|_{L^1_T(\dot B^{d/2}_{2,1})}
\|\langle t\rangle^\alpha
 (\varrho^\varepsilon,u^\varepsilon)\|_
 {L^\infty_T(\dot B^{d/2+1}_{2,1})}
\lesssim
\|(\varrho^\varepsilon,u^\varepsilon,f^\varepsilon)\|_{\mathcal Z_T}\mathcal D_\alpha^\varepsilon(T).
\end{align*}
Similarly, one has
\begin{align*}
&\sum_{j\geq0}2^{j(d/2+1)}
\left\|\langle t\rangle^\alpha
 \Big(
 [\dot\Delta_j,\varrho^\varepsilon]\dive u^\varepsilon,
 [\dot\Delta_j,q(\varrho^\varepsilon)]
 \nabla\varrho^\varepsilon\Big)\right\|_{L^1_TL^2_x}
\\
&\qquad\lesssim
\|\nabla\varrho^\varepsilon\|_{L^1_T
 (\dot B^{d/2}_{2,1})}
\|\langle t\rangle^\alpha
 (\varrho^\varepsilon,u^\varepsilon)\|_
 {L^\infty_T(\dot B^{d/2+1}_{2,1})}
\lesssim
\|(\varrho^\varepsilon,u^\varepsilon,f^\varepsilon)\|_{\mathcal Z_T}\mathcal D_\alpha^\varepsilon(T).
\end{align*}
Moreover, the product and composition laws in Lemmas \ref{l11} and \ref{l12} give
\begin{align*}
&\left\|\langle t\rangle^\alpha
 \frac{a^\varepsilon u^\varepsilon}
 {1+\varrho^\varepsilon}\right\|_
 {L^1_T(\dot B^{d/2+1}_{2,1})}^{h}
+\left\|\langle t\rangle^\alpha
 \frac{\varrho^\varepsilon(b^\varepsilon-u^\varepsilon)}
 {1+\varrho^\varepsilon}\right\|_
 {L^1_T(\dot B^{d/2+1}_{2,1})}^{h}
\\
&\quad
+\|\langle t\rangle^\alpha
 u^\varepsilon\otimes b^\varepsilon\|_
 {L^1_T(\dot B^{d/2+1}_{2,1})}^{h}
\lesssim
\|(\varrho^\varepsilon,u^\varepsilon,f^\varepsilon)\|_{\mathcal Z_T}\mathcal D_\alpha^\varepsilon(T).
\end{align*}
The first and third terms follow from the weighted high-frequency
bounds. For the second one, we use \eqref{e178} and \eqref{e186}.
The proof of \eqref{e180} also controls the full
$\dot{\mathcal B}^{d/2+1}_{2,1}$ norm. In particular,
\begin{equation*}
\|\langle t\rangle^\alpha
h(\varrho^\varepsilon,u^\varepsilon,f^\varepsilon)\|_
{L^1_T(\dot{\mathcal B}^{d/2+1}_{2,1})}^{h}
\lesssim
\|(\varrho^\varepsilon,u^\varepsilon,f^\varepsilon)\|_{\mathcal Z_T}
\mathcal D_\alpha^\varepsilon(T).
\end{equation*}

For the viscous commutator, write
$u^\varepsilon=u^{\varepsilon,\ell}+u^{\varepsilon,h}$.
By Lemmas \ref{l12} and \ref{l13}, we have
\begin{align*}
&\varepsilon\sum_{j\geq0}2^{j(d/2+1)}
\left\|\langle t\rangle^\alpha
\left[\dot\Delta_j,\frac1{1+\varrho^\varepsilon}\right]
\bigl(\Delta_xu^\varepsilon
+2\nabla_x\dive_xu^\varepsilon\bigr)\right\|_
{L^1_TL^2_x}
\\
&\quad\lesssim
\left\|\langle t\rangle^\alpha
\nabla_x\frac1{1+\varrho^\varepsilon}\right\|_
{L^\infty_T(\dot B^{d/2}_{2,1})}
\varepsilon\|u^{\varepsilon,\ell}\|_
{L^1_T(\dot B^{d/2+2}_{2,1})}
\\
&\qquad\quad+
\left\|\nabla_x\frac1{1+\varrho^\varepsilon}\right\|_
{L^\infty_T(\dot B^{d/2}_{2,1})}
\varepsilon\|\langle t\rangle^\alpha
u^{\varepsilon,h}\|_
{L^1_T(\dot B^{d/2+2}_{2,1})}
\\
&\quad\lesssim
\|(\varrho^\varepsilon,u^\varepsilon,f^\varepsilon)\|_{\mathcal Z_T}
\mathcal D_\alpha^\varepsilon(T).
\end{align*}
Together with the preceding bounds, this estimate yields
\begin{equation}\label{e205}
\sum_{j\geq0}2^{j(d/2+1)}
\|\langle t\rangle^\alpha
\mathcal R_{h,j}^\varepsilon\|_{L^1_T}
\lesssim
\|(\varrho^\varepsilon,u^\varepsilon,f^\varepsilon)\|_{\mathcal Z_T}
\mathcal D_\alpha^\varepsilon(T).
\end{equation}
Equations \eqref{e95}, \eqref{e199} and \eqref{e205} then give
\begin{align*}
&\|\langle t\rangle^\alpha
(\varrho^{\varepsilon,h},u^{\varepsilon,h})\|_
{\widetilde L^\infty_T(\dot B^{d/2+1}_{2,1})}
+\|\langle t\rangle^\alpha f^{\varepsilon,h}\|_
{\widetilde L^\infty_T(\dot{\mathcal B}^{d/2+1}_{2,1})}
\\
&\quad
+\|\langle t\rangle^\alpha
(\varrho^{\varepsilon,h},u^{\varepsilon,h})\|_
{L^1_T(\dot B^{d/2+1}_{2,1})}
+\|\langle t\rangle^\alpha f^{\varepsilon,h}\|_
{L^1_T(\dot{\mathcal B}^{d/2+1}_{2,1})}
\\
&\qquad\lesssim
\|(\varrho_0^\varepsilon,u_0^\varepsilon,f_0^\varepsilon)\|_{\mathcal Z_0}
+\|(\varrho^\varepsilon,u^\varepsilon,f^\varepsilon)\|_{\mathcal Z_T}
+\|(\varrho^\varepsilon,u^\varepsilon,f^\varepsilon)\|_{\mathcal Z_T}
\mathcal D_\alpha^\varepsilon(T).
\end{align*}

For the weighted \(L^2_T\) estimates, we multiply
\eqref{e96} by
\(\langle t\rangle^{2\alpha}\) and obtain
\begin{align}
&\frac d{dt}\Big(
\langle t\rangle^{2\alpha}
\mathcal L_{h,j}^\varepsilon\Big)
+\Big(c-\frac{2\alpha}{\langle t\rangle}\Big)
\langle t\rangle^{2\alpha}
\mathcal L_{h,j}^\varepsilon
\nonumber\\
&\quad
+c\langle t\rangle^{2\alpha}\Big(
\varepsilon\|\nabla_x\dot\Delta_ju^\varepsilon\|_{L^2_x}^2
+\|\dot\Delta_j(u^\varepsilon-b^\varepsilon)\|_{L^2_x}^2
+\|\dot\Delta_j
\{\mathbf I-\mathbf P\}f^\varepsilon\|_{L^2_xL^2_{v,\nu}}^2
\Big)
\nonumber\\
&\qquad\lesssim
\langle t\rangle^{2\alpha}
\mathcal R_{h,j}^\varepsilon
\sqrt{\mathcal L_{h,j}^\varepsilon}.
\label{e206}
\end{align}
Choose $T_\alpha>0$ such that
\[
c-\frac{2\alpha}{\langle t\rangle}\geq\frac c2
\quad\text{for }t\geq T_\alpha.
\]
On $[0,\min\{T,T_\alpha\}]$, the desired estimate follows from the
unweighted high-frequency estimate and interpolation. If
$T>T_\alpha$, integrating \eqref{e206} over $[T_\alpha,T]$ and using
\[
\int_0^T\langle t\rangle^{2\alpha}
\mathcal R_{h,j}^\varepsilon
\sqrt{\mathcal L_{h,j}^\varepsilon}\,dt
\leq
\|\langle t\rangle^\alpha
\sqrt{\mathcal L_{h,j}^\varepsilon}\|_{L^\infty_T}
\|\langle t\rangle^\alpha
\mathcal R_{h,j}^\varepsilon\|_{L^1_T},
\]
we obtain, after taking the square root, multiplying by
\(2^{j(d/2+1)}\) and summing over \(j\geq0\),
\begin{align*}
&\varepsilon^{1/2}
\|\langle t\rangle^\alpha u^\varepsilon\|_
{\widetilde L^2_T(\dot B^{d/2+2}_{2,1})}^{h}
+\|\langle t\rangle^\alpha
(u^\varepsilon-b^\varepsilon)\|_
{\widetilde L^2_T(\dot B^{d/2+1}_{2,1})}^{h}
\nonumber\\
&\quad
+\|\langle t\rangle^\alpha
\{\mathbf I-\mathbf P\}f^\varepsilon\|_
{\widetilde L^2_T(\dot{\mathcal B}^{d/2+1}_{2,1,\nu})}^{h}
\nonumber\\
&\qquad\lesssim
\|(\varrho^\varepsilon,u^\varepsilon,f^\varepsilon)\|_{\mathcal Z_T}
+\sum_{j\geq0}2^{j(d/2+1)}
\left(
\|\langle t\rangle^\alpha
\sqrt{\mathcal L_{h,j}^\varepsilon}\|_{L^\infty_T}
\|\langle t\rangle^\alpha
\mathcal R_{h,j}^\varepsilon\|_{L^1_T}
\right)^{1/2}.
\end{align*}
The Cauchy--Schwarz inequality,
\eqref{e199} and
\eqref{e205} bound the last term by
\[
C\bigl(
\|(\varrho^\varepsilon,u^\varepsilon,f^\varepsilon)\|_{\mathcal Z_T}
\bigr)^{1/2}
\mathcal D_\alpha^\varepsilon(T).
\]
Consequently, one has
\begin{align}
&\varepsilon^{1/2}
\|\langle t\rangle^\alpha u^\varepsilon\|_
{\widetilde L^2_T(\dot B^{d/2+2}_{2,1})}^{h}
+\|\langle t\rangle^\alpha
(u^\varepsilon-b^\varepsilon)\|_
{\widetilde L^2_T(\dot B^{d/2+1}_{2,1})}^{h}
+\|\langle t\rangle^\alpha
\{\mathbf I-\mathbf P\}f^\varepsilon\|_
{\widetilde L^2_T(\dot{\mathcal B}^{d/2+1}_{2,1,\nu})}^{h}
\nonumber\\
&\qquad\lesssim
\|(\varrho^\varepsilon,u^\varepsilon,f^\varepsilon)\|_{\mathcal Z_T}
+\bigl(
\|(\varrho^\varepsilon,u^\varepsilon,f^\varepsilon)\|_{\mathcal Z_T}
\bigr)^{1/2}
\mathcal D_\alpha^\varepsilon(T).
\label{e207}
\end{align}

It remains to estimate the last viscous term in
\eqref{e163}. After applying $\dot\Delta_j$ to the momentum
equation and multiplying by $\langle t\rangle^\alpha$, one obtains
\begin{align*}
&\partial_t(\langle t\rangle^\alpha\dot\Delta_ju^\varepsilon)
-\varepsilon\Delta_x
(\langle t\rangle^\alpha\dot\Delta_ju^\varepsilon)
-2\varepsilon\nabla_x\dive_x
(\langle t\rangle^\alpha\dot\Delta_ju^\varepsilon)
+\langle t\rangle^\alpha\dot\Delta_ju^\varepsilon
\\
&\qquad=
\langle t\rangle^\alpha\dot\Delta_j\bigl(
b^\varepsilon-\nabla_x\varrho^\varepsilon
-u^\varepsilon\cdot\nabla_xu^\varepsilon
+g(\varrho^\varepsilon,u^\varepsilon,f^\varepsilon)
-a^\varepsilon u^\varepsilon\bigr)
+\alpha\langle t\rangle^{\alpha-1}
\dot\Delta_ju^\varepsilon.
\end{align*}
Since the Fourier symbol of
$1-\varepsilon\Delta_x-2\varepsilon\nabla_x\dive_x$ is bounded from
below by $c(1+\varepsilon\,2^{2j})$ on the support of
$\dot\Delta_j$, Duhamel's formula gives
\begin{align}
&(1+\varepsilon\,2^{2j})
\|\langle t\rangle^\alpha
\dot\Delta_ju^\varepsilon\|_{L^1_TL^2_x} \nonumber\\
&\quad\lesssim
\|\dot\Delta_ju_0^\varepsilon\|_{L^2_x}
+\|\langle t\rangle^\alpha
\dot\Delta_j(b^\varepsilon-\nabla_x\varrho^\varepsilon)\|_
{L^1_TL^2_x}+\|\langle t\rangle^\alpha
\dot\Delta_j(u^\varepsilon\cdot\nabla_xu^\varepsilon)\|_
{L^1_TL^2_x}
\nonumber\\
&\qquad
+\left\|\langle t\rangle^\alpha
\dot\Delta_j\bigl(
g(\varrho^\varepsilon,u^\varepsilon,f^\varepsilon)
-a^\varepsilon u^\varepsilon\bigr)\right\|_
{L^1_TL^2_x}
+\alpha\|\langle t\rangle^{\alpha-1}
\dot\Delta_ju^\varepsilon\|_{L^1_TL^2_x}.
\label{e210}
\end{align}
For $j\geq0$, multiply \eqref{e210} by $2^{jd/2}$ and sum over
$j\geq0$. The left-hand side controls
\[
\|\langle t\rangle^\alpha u^\varepsilon\|_
{L^1_T(\dot B^{d/2}_{2,1})}^{h}
+\varepsilon
\|\langle t\rangle^\alpha u^\varepsilon\|_
{L^1_T(\dot B^{d/2+2}_{2,1})}^{h}.
\]
For $t\geq2\alpha$, the term arising from the derivative of the
weight is absorbed by the first norm on the left, since
$\alpha/\langle t\rangle\leq1/2$. Its contribution on
$[0,\min\{T,2\alpha\}]$ is bounded by the unweighted
high-frequency estimate. Moreover, \eqref{e199} and \eqref{e205}
control the initial term and
$\langle t\rangle^\alpha(b^\varepsilon-\nabla_x\varrho^\varepsilon)$.
It remains to estimate the high-frequency nonlinear terms in
$L^1_T(\dot B^{d/2}_{2,1})$. To that end, we write
\[
(u^\varepsilon\cdot\nabla u^\varepsilon)^h
=\big(u^{\varepsilon,\ell}\cdot\nabla u^{\varepsilon,\ell}
+u^{\varepsilon,h}\cdot\nabla u^{\varepsilon,\ell}
+u^{\varepsilon,\ell}\cdot\nabla u^{\varepsilon,h}
+u^{\varepsilon,h}\cdot\nabla u^{\varepsilon,h}\big)^h.
\]
For the low--low term, the high-frequency localization and
\eqref{uv1} give
\begin{align*}
\|\langle t\rangle^\alpha
(u^{\varepsilon,\ell}\cdot\nabla u^{\varepsilon,\ell})^h\|_
{L^1_T(\dot B^{d/2}_{2,1})}
&\lesssim
\|\langle t\rangle^\alpha
u^{\varepsilon,\ell}\cdot\nabla u^{\varepsilon,\ell}\|_
{L^1_T(\dot B^{d/2+1}_{2,1})}\\
&\lesssim
\|u^{\varepsilon,\ell}\|_{L^2_T(\dot B^{d/2}_{2,1})}
\|\langle t\rangle^\alpha u^{\varepsilon,\ell}\|_{L^2_T(\dot B^{d/2+2}_{2,1})}\\
&\qquad
+\|u^{\varepsilon,\ell}\|_{L^1_T(\dot B^{d/2+1}_{2,1})}
\|\langle t\rangle^\alpha u^{\varepsilon,\ell}\|_{L^\infty_T(\dot B^{d/2+1}_{2,1})}.
\end{align*}
For the remaining three terms, the product estimate gives
\begin{align*}
&\|\langle t\rangle^\alpha
\big(u^{\varepsilon,h}\cdot\nabla u^{\varepsilon,\ell}
+u^{\varepsilon,\ell}\cdot\nabla u^{\varepsilon,h}
+u^{\varepsilon,h}\cdot\nabla u^{\varepsilon,h}\big)^h\|_
{L^1_T(\dot B^{d/2}_{2,1})}\\
&\quad\lesssim
\|u^{\varepsilon,h}\|_{L^2_T(\dot B^{d/2+1}_{2,1})}
\|\langle t\rangle^\alpha u^{\varepsilon,\ell}\|_{L^2_T(\dot B^{d/2+1}_{2,1})}
+\|u^{\varepsilon,\ell}\|_{L^2_T(\dot B^{d/2}_{2,1})}
\|\langle t\rangle^\alpha u^{\varepsilon,h}\|_{L^2_T(\dot B^{d/2+1}_{2,1})}\\
&\qquad
+\|u^{\varepsilon,h}\|_{L^2_T(\dot B^{d/2+1}_{2,1})}
\|\langle t\rangle^\alpha u^{\varepsilon,h}\|_{L^2_T(\dot B^{d/2+1}_{2,1})}.
\end{align*}
Thus, one has
\[
\|\langle t\rangle^\alpha
(u^\varepsilon\cdot\nabla u^\varepsilon)^h\|_
{L^1_T(\dot B^{d/2}_{2,1})}
\lesssim
\|(\varrho^\varepsilon,u^\varepsilon,f^\varepsilon)\|_{\mathcal Z_T}
\mathcal D_\alpha^\varepsilon(T).
\]
Estimating $a^\varepsilon u^\varepsilon$ and
$g^0(\varrho^\varepsilon,u^\varepsilon,f^\varepsilon)$ in the same way, we obtain
\begin{align*}
&\sum_{j\geq0}2^{j d/2}
\frac{\varepsilon\,2^{2j}}{1+\varepsilon\,2^{2j}}
\Big(
\|\langle t\rangle^\alpha
\dot\Delta_j(u^\varepsilon\cdot\nabla_xu^\varepsilon)\|_
{L^1_TL^2_x}
+\|\langle t\rangle^\alpha
\dot\Delta_j(a^\varepsilon u^\varepsilon)\|_
{L^1_TL^2_x}
\\
&\hspace{34mm}
+\|\langle t\rangle^\alpha
\dot\Delta_jg^0(\varrho^\varepsilon,u^\varepsilon,f^\varepsilon)\|_
{L^1_TL^2_x}
\Big)
\lesssim
\|(\varrho^\varepsilon,u^\varepsilon,f^\varepsilon)\|_{\mathcal Z_T}
\mathcal D_\alpha^\varepsilon(T).
\end{align*}
For the viscous part of $g$, Lemma \ref{l12}, the product estimate
and \eqref{e8} give
\begin{align*}
&\varepsilon\left\|\langle t\rangle^\alpha
\frac{\varrho^\varepsilon}{1+\varrho^\varepsilon}
\bigl(\Delta_xu^\varepsilon
+2\nabla_x\dive_xu^\varepsilon\bigr)\right\|_
{L^1_T(\dot B^{d/2}_{2,1})}^{h}
\\
&\quad\lesssim
\left\|\langle t\rangle^\alpha
\frac{\varrho^\varepsilon}{1+\varrho^\varepsilon}\right\|_
{L^\infty_T(\dot B^{d/2}_{2,1})}
\varepsilon\|u^{\varepsilon,\ell}\|_
{L^1_T(\dot B^{d/2+2}_{2,1})}+
\left\|\frac{\varrho^\varepsilon}
{1+\varrho^\varepsilon}\right\|_
{L^\infty_T(\dot B^{d/2}_{2,1})}
\varepsilon\|\langle t\rangle^\alpha
u^{\varepsilon,h}\|_
{L^1_T(\dot B^{d/2+2}_{2,1})}
\\
&\quad\lesssim
\|(\varrho^\varepsilon,u^\varepsilon,f^\varepsilon)\|_{\mathcal Z_T}
\mathcal D_\alpha^\varepsilon(T)
+\|(\varrho^\varepsilon,u^\varepsilon,f^\varepsilon)\|_{\mathcal Z_T}
\varepsilon\|\langle t\rangle^\alpha
u^{\varepsilon,h}\|_
{L^1_T(\dot B^{d/2+2}_{2,1})}.
\end{align*}
For sufficiently small initial data, we absorb the last term into the
left-hand side.

The preceding estimates yield
\begin{align}
\varepsilon\|\langle t\rangle^\alpha u^{\varepsilon,h}\|_
{L^1_T(\dot B^{d/2+2}_{2,1})}&\lesssim
C_\eta
\bigl(
\|(\varrho^\varepsilon,u^\varepsilon,f^\varepsilon)\|_{\mathcal Z_T}
+\mathcal N_{\sigma_1}^\varepsilon(T)
\bigr)
\langle T\rangle^{\alpha-\frac{1}{2}(\frac{d}{2}+1-\sigma_1)}
\nonumber\\
&\qquad\quad
+\Big(
\eta+
\bigl(
\|(\varrho^\varepsilon,u^\varepsilon,f^\varepsilon)\|_{\mathcal Z_T}
\bigr)^{1/2}
\Big)
\mathcal D_\alpha^\varepsilon(T).
\label{e214}
\end{align}
Finally, by \eqref{e192}, \eqref{e194}, \eqref{e199}, \eqref{e205},
\eqref{e207} and \eqref{e214}, together with Proposition \ref{p3}, we conclude \eqref{e164}.
\end{proof}

\subsection{Proof of the decay estimates}

\begin{proof}[Proof of Theorem \ref{t5}]
We first choose \(\eta>0\) sufficiently small and then reduce the
admissible upper bound for \(\delta_1\), if necessary, to absorb
the last term in \eqref{e164}. By Theorem
\ref{t2} and Proposition \ref{p3}, we obtain
\begin{equation}\label{e215}
 \mathcal D_\alpha^\varepsilon(T)
 \le C K_0 \langle T\rangle^{\alpha-\frac{1}{2}(\frac{d}{2}+1-\sigma_1)},
\end{equation}
from which we infer
\begin{equation}\label{e216}
 \|(\varrho^\varepsilon,u^\varepsilon)(t)\|_
 {\dot B^{d/2+1}_{2,1}}
 +\|f^\varepsilon(t)\|_
 {\dot{\mathcal B}^{d/2+1}_{2,1}}
 \le C K_0
 \langle t\rangle^{-\frac{1}{2}(\frac{d}{2}+1-\sigma_1)}.
\end{equation}
For \(\sigma_1<\sigma<d/2+1\), interpolating
\eqref{e151} and \eqref{e216} by Lemma
\ref{l10} gives \eqref{e13}. The endpoint
\(\sigma=d/2+1\) follows directly from \eqref{e216}.

For the relaxation variables, the moment equation and
\(\eqref{m1n}_2\) imply
\begin{align}
\partial_t(u^\varepsilon-b^\varepsilon)
+2(u^\varepsilon-b^\varepsilon)
={}&-\nabla\varrho^\varepsilon+\nabla a^\varepsilon
+\dive\Theta(\{\mathbf I-\mathbf P\}f^\varepsilon)
-u^\varepsilon\cdot\nabla u^\varepsilon
-2a^\varepsilon u^\varepsilon
\nonumber\\
&-\Big(\frac{P'(1+\varrho^\varepsilon)}
 {1+\varrho^\varepsilon}-1\Big)\nabla\varrho^\varepsilon
+\frac{\varrho^\varepsilon}{1+\varrho^\varepsilon}
 \bigl(u^\varepsilon-b^\varepsilon
 +a^\varepsilon u^\varepsilon\bigr)
\nonumber\\
&\quad
+\frac{\varepsilon}{1+\varrho^\varepsilon}
 \bigl(\Delta_xu^\varepsilon+2\nabla_x\dive_xu^\varepsilon\bigr).
\label{e217}
\end{align}
Fix \(\sigma\in(\sigma_1,d/2]\). The estimate \eqref{e13} leads to
\begin{align}
&\|(\nabla\varrho^\varepsilon,\nabla a^\varepsilon,
\dive\Theta(\{\mathbf I-\mathbf P\}f^\varepsilon))(t)\|_{
 \dot B^\sigma_{2,1}}
\le
C K_0 \langle t\rangle^{-\frac{\sigma+1-\sigma_1}{2}}.
\label{e218}
\end{align}
For \(\sigma_1<\sigma\leq d/2\), the product law yields, for example,
\[
\begin{aligned}
\|a^\varepsilon u^\varepsilon\|_{\dot B^\sigma_{2,1}}
&\lesssim
\|a^\varepsilon\|_{\dot B^\sigma_{2,1}}
\|u^\varepsilon\|_{\dot B^{d/2}_{2,1}}\leq
C K_0 \langle t\rangle^{
-\frac{\sigma-\sigma_1}{2}
-\frac{\frac{d}{2}-\sigma_1}{2}}\leq
C K_0 \langle t\rangle^{
-\frac{\sigma+1-\sigma_1}{2}},
\end{aligned}
\]
where the last inequality follows from
\(d/2-\sigma_1>1\). Together with the composition estimate in Lemma
\ref{l12}, the same product estimate implies
\begin{align}
&\|u^\varepsilon\cdot\nabla u^\varepsilon,
a^\varepsilon u^\varepsilon,
\frac{\varrho^\varepsilon}{1+\varrho^\varepsilon}
a^\varepsilon u^\varepsilon,
(q(\varrho^\varepsilon)-1)\nabla\varrho^\varepsilon\|_
{\dot B^\sigma_{2,1}}
\leq
C K_0 \langle t\rangle^{
-\frac{\sigma+1-\sigma_1}{2}},
\label{e219}\\
&\left\|
\frac{\varrho^\varepsilon}{1+\varrho^\varepsilon}
(u^\varepsilon-b^\varepsilon)\right\|_
{\dot B^\sigma_{2,1}}
\lesssim
\delta_1\|u^\varepsilon-b^\varepsilon\|_
{\dot B^\sigma_{2,1}}.
\label{e220}
\end{align}
For the viscous source, the low frequencies satisfy
\begin{equation}\label{e224}
 \varepsilon\,2^{2j}\|\dot\Delta_ju^\varepsilon\|_{L^2_x}
 \lesssim2^j\|\dot\Delta_ju^\varepsilon\|_{L^2_x}
 \quad\text{for}\quad j\leq1.
\end{equation}
Multiplying \eqref{e224} by \(2^{j\sigma}\) and summing over
\(j\leq1\) bounds the low-frequency viscous source by
\(\|u^\varepsilon\|_{\dot B^{\sigma+1}_{2,1}}^\ell\), which has
the decay rate in \eqref{e218}.

We decompose the viscous source as
\[
\frac{\varepsilon}{1+\varrho^\varepsilon}\bigl(\Delta_xu^\varepsilon+2\nabla_x\dive_xu^\varepsilon\bigr)
=\varepsilon\bigl(\Delta_xu^\varepsilon+2\nabla_x\dive_xu^\varepsilon\bigr)
-\varepsilon\frac{\varrho^\varepsilon}
{1+\varrho^\varepsilon}\bigl(\Delta_xu^\varepsilon+2\nabla_x\dive_xu^\varepsilon\bigr).
\]
We split $u^\varepsilon=u^{\varepsilon,\ell}+u^{\varepsilon,h}$. For
\(\sigma\in(\sigma _1,d/2]\), the part containing
\(u^{\varepsilon,\ell}\) satisfies
\begin{align}
&\varepsilon\left\|
\Big(\frac{\varrho^\varepsilon}
{1+\varrho^\varepsilon}\bigl(\Delta_xu^{\varepsilon,\ell}
+2\nabla_x\dive_xu^{\varepsilon,\ell}\bigr)\Big)^\ell
\right\|_{\dot B^\sigma_{2,1}}\lesssim
\|\varrho^\varepsilon\|_{\dot B^{d/2}_{2,1}}
\|u^{\varepsilon,\ell}\|_{\dot B^{\sigma+1}_{2,1}}
\le C K_0 \langle t\rangle^{-\frac{\sigma+1-\sigma_1}{2}}.
\label{e225}
\end{align}
For the part containing
\(u^{\varepsilon,h}\), we split the convolution at \(t/2\). Using \eqref{e13},
\eqref{e214} and \eqref{e215}, for
\(t\ge2\) we obtain
\begin{align}
&\int_0^te^{-2(t-\tau)}\varepsilon
\left\|\Big(\frac{\varrho^\varepsilon}
{1+\varrho^\varepsilon}\bigl(\Delta_xu^{\varepsilon,h}
+2\nabla_x\dive_xu^{\varepsilon,h}\bigr)\Big)^\ell
(\tau)\right\|_{\dot B^\sigma_{2,1}}\,d\tau
\nonumber\\
&\quad\le
C K_0 e^{-t}
+C K_0 \langle t\rangle^{-\alpha-\frac{\sigma-\sigma_1}{2}}
\varepsilon\|\langle\tau\rangle^\alpha u^{\varepsilon,h}\|_
{L^1_t(\dot B^{d/2+2}_{2,1})}
\nonumber\\
&\quad\le
C K_0 e^{-t}
+C K_0 \langle t\rangle^{-\frac{\sigma-\sigma_1}{2}
-\frac{1}{2}(\frac{d}{2}+1-\sigma_1)} \le
C K_0 \langle t\rangle^{-\frac{\sigma+1-\sigma _1}{2}}.
\label{e226}
\end{align}
At high frequencies, we first write
\[
\left(\frac{\Delta_xu^\varepsilon+2\nabla_x\dive_xu^\varepsilon}
{1+\varrho^\varepsilon}\right)^h
=
\left(\frac{\Delta_xu^{\varepsilon,\ell}+2\nabla_x\dive_xu^{\varepsilon,\ell}}
{1+\varrho^\varepsilon}\right)^h
+
\left(\frac{\Delta_xu^{\varepsilon,h}+2\nabla_x\dive_xu^{\varepsilon,h}}
{1+\varrho^\varepsilon}\right)^h.
\]
Using the weighted low-frequency estimate and
\eqref{e214}, together with the product estimates and the composition
estimate in Lemma \ref{l12}, we obtain
\begin{align}
&\varepsilon\left\|\langle t\rangle^\alpha
\left(\frac{\Delta_xu^\varepsilon
+2\nabla_x\dive_xu^\varepsilon}
{1+\varrho^\varepsilon}\right)^h\right\|_
{L^1_T(\dot B^\sigma_{2,1})}
\nonumber\\
&\quad\lesssim
\Big(1+\|\varrho^\varepsilon\|_
{L^\infty_T(\dot B^{d/2}_{2,1})}\Big)
\|\langle t\rangle^\alpha u^{\varepsilon,\ell}\|_
{L^1_T(\dot B^{d/2+3}_{2,1})}
+\Big(1+\|\varrho^\varepsilon\|_
{L^\infty_T(\dot B^{d/2+1}_{2,1})}\Big)
\varepsilon\|\langle t\rangle^\alpha u^{\varepsilon,h}\|_
{L^1_T(\dot B^{d/2+2}_{2,1})}
\nonumber\\
&\quad\lesssim C\mathcal D_\alpha^\varepsilon(T)
\quad\text{for}\quad \sigma\leq d/2.
\label{e227}
\end{align}
The same estimate without the time weight follows from
\eqref{e8}. For $t\geq2$, we split the convolution
into $[0,t/2]$ and $[t/2,t]$. Since
$e^{-2(t-\tau)}\leq e^{-t}$ on $[0,t/2]$, the early-time integral satisfies
\begin{align*}
&\int_0^{t/2} e^{-2(t-\tau)}\varepsilon
\left\|\left(\frac{\Delta_xu^\varepsilon
+2\nabla_x\dive_xu^\varepsilon}
{1+\varrho^\varepsilon}\right)^h(\tau)\right\|_
{\dot B^\sigma_{2,1}}\,d\tau
\lesssim C K_0 e^{-t}.
\end{align*}
For $\tau\in[t/2,t]$,
$\langle\tau\rangle^{-\alpha}\lesssim
\langle t\rangle^{-\alpha}$. Hence,
\eqref{e227} gives
\begin{align*}
&\int_{t/2}^t e^{-2(t-\tau)}\varepsilon
\left\|\left(\frac{\Delta_xu^\varepsilon
+2\nabla_x\dive_xu^\varepsilon}
{1+\varrho^\varepsilon}\right)^h(\tau)\right\|_
{\dot B^\sigma_{2,1}}\,d\tau\\
&\quad\lesssim
\langle t\rangle^{-\alpha}
\varepsilon\left\|\langle\tau\rangle^\alpha
\left(\frac{\Delta_xu^\varepsilon
+2\nabla_x\dive_xu^\varepsilon}
{1+\varrho^\varepsilon}\right)^h\right\|_
{L^1(t/2,t;\dot B^\sigma_{2,1})}\\
&\quad\lesssim
C\langle t\rangle^{-\alpha}
\mathcal D_\alpha^\varepsilon(t).
\end{align*}
Therefore, \eqref{e215} gives, for \(\sigma\leq d/2\),
\begin{equation}
\begin{aligned}
&\int_0^t e^{-2(t-\tau)}\varepsilon
\left\|\left(\frac{\Delta_xu^\varepsilon
+2\nabla_x\dive_xu^\varepsilon}
{1+\varrho^\varepsilon}\right)^h(\tau)\right\|_
{\dot B^\sigma_{2,1}}\,d\tau\\
&\qquad\lesssim
C K_0 e^{-t}
+C\langle t\rangle^{-\alpha}
\mathcal D_\alpha^\varepsilon(t)\\
&\qquad\leq
C K_0 \langle t\rangle^{-\frac{1}{2}(\frac{d}{2}+1-\sigma_1)}
\leq C K_0
\langle t\rangle^{-\frac{\sigma+1-\sigma_1}{2}}.
\end{aligned}
\label{e229}
\end{equation}
The unweighted estimate covers
\(0\le t\le2\).

For $\sigma_1<\sigma\le d/2$, \eqref{e217} and
the preceding source bounds imply that
$\|u^\varepsilon-b^\varepsilon\|_{\dot B^\sigma_{2,1}}$
is locally absolutely continuous in time, hence differentiable almost everywhere.
We apply Duhamel's formula to \eqref{e217} and use
\begin{equation}\label{e230}
 \int_0^t e^{-c(t-\tau)}\langle\tau\rangle^{-r}\,d\tau
 \le C_r\langle t\rangle^{-r}
 \quad\text{for}\quad r>0.
\end{equation}
This follows by splitting \([0,t]\) into \([0,t/2]\) and
\([t/2,t]\).
Equations \eqref{e218}--
\eqref{e229} imply
\begin{align*}
\|(u^\varepsilon-b^\varepsilon)(t)\|_{\dot B^\sigma_{2,1}}
&\le e^{-2t}\|u_0^\varepsilon-b_0^\varepsilon\|_
 {\dot B^\sigma_{2,1}}
+C K_0\int_0^te^{-2(t-\tau)}
 \langle\tau\rangle^{-\frac{\sigma+1-\sigma_1}{2}}\,d\tau\nonumber\\
&\quad+C\delta_1\int_0^te^{-2(t-\tau)}
 \|(u^\varepsilon-b^\varepsilon)(\tau)\|_
 {\dot B^\sigma_{2,1}}\,d\tau.
\end{align*}
Multiplying by $\langle t\rangle^{\frac{\sigma+1-\sigma_1}{2}}$
and using \eqref{e230}, we obtain
\begin{align*}
&\langle t\rangle^{\frac{\sigma+1-\sigma_1}{2}}
\|(u^\varepsilon-b^\varepsilon)(t)\|_{
 \dot B^\sigma_{2,1}}
\le C K_0+C\delta_1
\sup_{0\le\tau\le t}
\langle\tau\rangle^{\frac{\sigma+1-\sigma_1}{2}}
\|(u^\varepsilon-b^\varepsilon)(\tau)\|_{
 \dot B^\sigma_{2,1}}.
\end{align*}
After reducing \(\delta_1\), we absorb the last term in the preceding
inequality and obtain
\begin{align}
\|(u^\varepsilon-b^\varepsilon)(t)\|_{
 \dot B^\sigma_{2,1}}
&\le C K_0
\langle t\rangle^{-\frac{\sigma+1-\sigma_1}{2}}
\quad\text{for}\quad \sigma\in(\sigma_1,d/2].
\label{e233}
\end{align}

Finally, we estimate $\{\mathbf I-\mathbf P\}f^\varepsilon$.
By \eqref{e23},
\begin{align*}
&\partial_t\{\mathbf I-\mathbf P\}f^\varepsilon
+\{\mathbf I-\mathbf P\}(v\cdot\nabla_x\{\mathbf I-\mathbf P\}f^\varepsilon)
-\mathcal L\{\mathbf I-\mathbf P\}f^\varepsilon
\\
&\qquad=-(v\otimes v-\mathrm{Id}):\nabla b^\varepsilon M^{1/2}
+h(\varrho^\varepsilon,u^\varepsilon,f^\varepsilon)
+(v\otimes v-\mathrm{Id}):
 (u^\varepsilon\otimes b^\varepsilon)M^{1/2}.
\end{align*}
Applying \(\dot\Delta_j\), taking the \(L^2(\mathbb R^d_x\times\mathbb R^d_v)\) inner product with
\(\dot\Delta_j\{\mathbf I-\mathbf P\}f^\varepsilon\) and using \eqref{e3}, we obtain
\begin{align}
&\frac12\frac d{dt}\|\dot\Delta_j\{\mathbf I-\mathbf P\}f^\varepsilon\|_{L^2_{x,v}}^2
+\lambda_0\|\dot\Delta_j\{\mathbf I-\mathbf P\}f^\varepsilon\|_{L^2_xL^2_{v,\nu}}^2
\nonumber\\
&\qquad\le C
\Big(2^j\|\dot\Delta_jb^\varepsilon\|_{L^2_x}
+\|\dot\Delta_j(u^\varepsilon\otimes b^\varepsilon)\|_{L^2_x}
\Big)
\|\dot\Delta_j\{\mathbf I-\mathbf P\}f^\varepsilon\|_{L^2_{x,v}}
\nonumber\\
&\qquad\quad
+\Big|\bigl(\dot\Delta_j h(\varrho^\varepsilon,u^\varepsilon,f^\varepsilon),
 \dot\Delta_j\{\mathbf I-\mathbf P\}f^\varepsilon\bigr)_{L^2_{x,v}}\Big|.
\label{e235}
\end{align}
By \eqref{e4} and the vector-valued product estimate \eqref{uv2},
including its critical case, for $-d/2<\sigma\le d/2$, one has
\[
\|h(\varrho^\varepsilon,u^\varepsilon,f^\varepsilon)\|_
{\dot{\mathcal B}^{\sigma}_{2,1}}
\le C_\sigma
\|u^\varepsilon\|_{\dot B^{d/2}_{2,1}}
\|\{\mathbf I-\mathbf P\}f^\varepsilon\|_
{\dot{\mathcal B}^{\sigma}_{2,1,\nu}}.
\]
For $\sigma_1<\sigma\le d/2$, we divide
\eqref{e235} by the block norm
(with the usual regularization at zero) and sum with weight $2^{j\sigma}$.
By Cauchy--Schwarz,
\[
\|\{\mathbf I-\mathbf P\}f^\varepsilon\|_
{\dot{\mathcal B}^{\sigma}_{2,1,\nu}}^2
\le
\|\{\mathbf I-\mathbf P\}f^\varepsilon\|_
{\dot{\mathcal B}^{\sigma}_{2,1}}
\sum_j2^{j\sigma}
\frac{\|\dot\Delta_j\{\mathbf I-\mathbf P\}f^\varepsilon\|_
{L^2_xL^2_{v,\nu}}^2}
{\|\dot\Delta_j\{\mathbf I-\mathbf P\}f^\varepsilon\|_{L^2_{x,v}}}.
\]
The dyadic energy identities and the time integrability of the source
and dissipation terms ensure that
$\|\{\mathbf I-\mathbf P\}f^\varepsilon\|_{\dot{\mathcal B}^\sigma_{2,1}}$
is locally absolutely continuous, so its time derivative below is understood
almost everywhere.
The preceding inequality and Young's inequality bound the $h$ term
by half of the summed dissipation plus
$C_\sigma\|u^\varepsilon\|_{\dot B^{d/2}_{2,1}}^2
\|\{\mathbf I-\mathbf P\}f^\varepsilon\|_{
\dot{\mathcal B}^{\sigma}_{2,1}}$.
Since $\|\cdot\|_{L^2_{v,\nu}}\ge\|\cdot\|_{L^2_v}$, it follows that
\[
\begin{aligned}
&\frac{d}{dt}\|\{\mathbf I-\mathbf P\}f^\varepsilon\|_
{\dot{\mathcal B}^{\sigma}_{2,1}}
+c\|\{\mathbf I-\mathbf P\}f^\varepsilon\|_
{\dot{\mathcal B}^{\sigma}_{2,1}}\\
&\quad\le
C_\sigma\|u^\varepsilon\|_{\dot B^{d/2}_{2,1}}^2
\|\{\mathbf I-\mathbf P\}f^\varepsilon\|_
{\dot{\mathcal B}^{\sigma}_{2,1}}
+C\bigl(\|\nabla_xb^\varepsilon\|_{\dot B^\sigma_{2,1}}
+\|u^\varepsilon\otimes b^\varepsilon\|_{\dot B^\sigma_{2,1}}\bigr),
\end{aligned}
\]
where $c=\lambda_0/2$. Interpolating the
$L^\infty_t\dot B^{d/2-1}_{2,1}$ and $L^1_t\dot B^{d/2+1}_{2,1}$
bounds in \eqref{e8} gives
$\int_0^\infty\|u^\varepsilon(\tau)\|_{\dot B^{d/2}_{2,1}}^2\,d\tau
\lesssim\delta_1^2$. Hence, the integrating factor is bounded by
$e^{C_\sigma\delta_1^2}$, independently of $t$ and $\varepsilon$.
It follows that
\[
\begin{aligned}
\|\{\mathbf I-\mathbf P\}f^\varepsilon(t)\|_
{\dot{\mathcal B}^\sigma_{2,1}}
\lesssim{}&e^{-ct}\|\{\mathbf I-\mathbf P\}f^\varepsilon_0\|_
{\dot{\mathcal B}^\sigma_{2,1}}\\
&+\int_0^t e^{-c(t-\tau)}\Big(
\|\nabla_xb^\varepsilon(\tau)\|_{\dot B^\sigma_{2,1}}
+\|u^\varepsilon\otimes b^\varepsilon(\tau)\|_{\dot B^\sigma_{2,1}}
\Big)\,d\tau.
\end{aligned}
\]
Equation \eqref{e13} and the product estimate give
\[
\|\nabla_xb^\varepsilon(t)\|_{\dot B^\sigma_{2,1}}
+\|u^\varepsilon\otimes b^\varepsilon(t)\|_{\dot B^\sigma_{2,1}}
\le C K_0 \langle t\rangle^{-\frac{\sigma+1-\sigma_1}{2}}.
\]
Combining the preceding estimate with
\eqref{e230}, we obtain
\begin{equation}
\|\{\mathbf I-\mathbf P\}f^\varepsilon(t)\|_
{\dot{\mathcal B}^\sigma_{2,1}}
\le C K_0
\langle t\rangle^{-\frac{\sigma+1-\sigma_1}{2}}
\quad\text{for}\quad \sigma_1<\sigma\leq d/2.
\label{e236}
\end{equation}
Therefore, \eqref{e233} and
\eqref{e236} imply the improved decay estimate 
\eqref{e16} for $\sigma_1<\sigma\leq d/2$.

If \(\varepsilon=0\), the terms
\eqref{e224}--\eqref{e229} are absent, and the remaining argument is
unchanged.
\end{proof}

\appendix
\section{Besov spaces and some analytic tools}

This appendix contains the notation and the main analytic estimates used
throughout the paper. We refer to \cite[Chapters 2--3]{bahouri1}
for a systematic presentation of homogeneous Besov spaces.

Fix a radial function \(\varphi\in C^\infty_c(\mathbb R^d)\) such that
\[
 \operatorname{supp}\varphi\subset
 \{\xi:3/4\le|\xi|\le8/3\}\quad \text{and}\quad 
 \sum_{j\in\mathbb Z}\varphi(2^{-j}\xi)=1
 \quad(\xi\ne0).
\]
The homogeneous dyadic blocks are defined by
\[
 \dot\Delta_ju
 :=\mathcal F^{-1}\bigl(\varphi(2^{-j}\cdot)\widehat u\bigr)
 =2^{jd}\check\varphi(2^j\cdot)*u
 \quad\text{where}\quad \check\varphi:=\mathcal F^{-1}\varphi.
\]
To avoid the usual ambiguity caused by polynomials, all homogeneous spaces
below are understood in \(\mathcal S'_h(\mathbb R^d)\), the standard
realization of tempered distributions for which the homogeneous
Littlewood--Paley decomposition is valid. In particular, we have
\[
 u=\sum_{j\in\mathbb Z}\dot\Delta_ju\quad\hbox{in }\mathcal S'
 \quad\text{and}\quad 
 \dot\Delta_j\dot\Delta_k u=0\quad\hbox{if }|j-k|\ge2.
\]
Given \(s\in\mathbb R\) and \(1\le p,r\le\infty\), the homogeneous Besov norm is defined by
\[
 \|u\|_{\dot B^s_{p,r}}
 :=\left\|\bigl(2^{js}\|\dot\Delta_ju\|_{L^p_x}\bigr)_
 {j\in\mathbb Z}\right\|_{\ell^r}.
\]
Then \(\dot B^s_{p,r}\) is the set of all \(u\in\mathcal S'_h\) for
which the above quantity is finite. If \(f=f(x,v)\), only the space variable
is localized, and we write
\[
 \|f\|_{\dot{\mathcal B}^s_{p,r}}
 :=\left\|\bigl(2^{js}
 \|\dot\Delta_jf\|_{L^p_xL^2_v}\bigr)_
 {j\in\mathbb Z}\right\|_{\ell^r},
\]
i.e., \(\dot B^s_{p,r}(\mathbb R^d_x;L^2(\mathbb R^d_v))\).
To describe the velocity dissipation norm in \eqref{L12}, set
\[
 \|f\|_{\dot{\mathcal B}^s_{p,r,\nu}}
 :=\left\|\bigl(2^{js}
 \|\dot\Delta_jf\|_{L^p_xL^2_{v,\nu}}\bigr)_
 {j\in\mathbb Z}\right\|_{\ell^r}.
\]

For \(T>0\) and \(1\le q\le\infty\), the corresponding
Chemin--Lerner norm is given by
\[
 \|u\|_{\widetilde L^q_T(\dot B^s_{p,r})}
 :=\left\|\bigl(2^{js}
 \|\dot\Delta_ju\|_{L^q(0,T;L^p_x)}\bigr)_
 {j\in\mathbb Z}\right\|_{\ell^r}.
\]
By Minkowski's inequality, one has
\[
\|u\|_{\widetilde L^q_T(\dot B^s_{p,r})}
\leq\;(\text{resp. }\geq)\;
\|u\|_{L^q_T(\dot B^s_{p,r})}
\quad\text{if }r\geq q
\;(\text{resp. }r\leq q).
\]
The notation \(\widetilde L^q_T(\dot{\mathcal B}^s_{p,r})\) is
defined in the same way, and so is
\(\widetilde L^q_T(\dot{\mathcal B}^s_{p,r,\nu})\).

Consider the low-high frequency decomposition $u=u^\ell+u^h$ with
\[
 u^\ell:=\sum_{j\leq0}\dot\Delta_ju
 \quad \text{and}\quad u^h:=u-u^\ell,
\]
and the low-high frequency Besov norms
\[
 \|u\|_{\dot B^s_{p,r}}^\ell
 :=\left\|\bigl(2^{js}\|\dot\Delta_ju\|_{L^p_x}\bigr)_{j\leq1}\right\|_{\ell^r}
 \quad\text{and}\quad 
 \|u\|_{\dot B^s_{p,r}}^h
 :=\left\|\bigl(2^{js}\|\dot\Delta_ju\|_{L^p_x}\bigr)_{j\geq0}\right\|_{\ell^r}.
\]
 In particular, we have
$\|u^\ell\|_{\dot B^s_{p,1}}
=\|u^\ell\|_{\dot B^s_{p,1}}^\ell$ and
$\|u^h\|_{\dot B^s_{p,1}}
=\|u^h\|_{\dot B^s_{p,1}}^h$. For every \(s'>0\), we also have
\begin{equation*}
 \|u^\ell\|_{\dot B^s_{p,1}}
 \lesssim\|u\|_{\dot B^s_{p,1}}^\ell
 \lesssim\|u\|_{\dot B^{s-s'}_{p,1}}^\ell
 \quad\text{and}\quad 
 \|u^h\|_{\dot B^s_{p,1}}
 \lesssim\|u\|_{\dot B^s_{p,1}}^h
 \lesssim\|u\|_{\dot B^{s+s'}_{p,1}}^h.
\end{equation*}
The same argument applies to the Chemin--Lerner norms and to the mixed
\(L^p_xL^2_v\) norms.

We record below the Besov estimates used in the proof. Their
time-dependent versions follow directly from H\"older's inequality in
time, while the standard Bernstein inequalities will be used without
further comment.

\begin{lemma}\label{l10}
The following properties hold:
\begin{itemize}
\item For \(s\in\mathbb R\), \(1\le p_1\le p_2\le\infty\) and
\(1\le r_1\le r_2\le\infty\), we have
\(\dot B^s_{p_1,r_1}\hookrightarrow
\dot B^{s-d(1/p_1-1/p_2)}_{p_2,r_2}\).

\item For \(1\le p\le q\le\infty\), we have
\(\dot B^0_{p,1}\hookrightarrow L^p(\mathbb R^d_x)\hookrightarrow
\dot B^0_{p,\infty}\hookrightarrow\dot B^\sigma_{q,\infty},
\) where \(\sigma:=-d(1/p-1/q)\).

\item If \(p<\infty\), then
\(\dot B^{d/p}_{p,1}\hookrightarrow C_0(\mathbb R^d)\). Moreover,
for \(s>0\) and \(\delta>0\), we have
\(
H^{s+\delta}(\mathbb R^d_x)\hookrightarrow\dot B^s_{2,1}
\hookrightarrow\dot H^s(\mathbb R^d_x)
\).

\item If \(s_1<s_2\) and \(0<\theta<1\), then we have
\(\|u\|_{\dot B^{\theta s_1+(1-\theta)s_2}_{p,1}}
\lesssim
\|u\|_{\dot B^{s_1}_{p,\infty}}^\theta
\|u\|_{\dot B^{s_2}_{p,\infty}}^{1-\theta}\).

\end{itemize}
\end{lemma}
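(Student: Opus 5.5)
These four properties are standard facts about homogeneous Besov spaces, all proved in \cite[Chapter 2]{bahouri1}. The plan is to recall the short dyadic arguments behind each. In every case the only tools are Bernstein's inequality on the blocks $\dot\Delta_j u$ and elementary $\ell^r$ manipulations.

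\emph{Embedding.} Since $\dot\Delta_j u$ has Fourier support in an annulus of size $2^j$, Bernstein's inequality gives
\[
\|\dot\Delta_j u\|_{L^{p_2}}\lesssim 2^{jd(1/p_1-1/p_2)}\|\dot\Delta_j u\|_{L^{p_1}}.
\]
Multiplying by $2^{j(s-d(1/p_1-1/p_2))}$ and using $\ell^{r_1}\hookrightarrow\ell^{r_2}$ yields the first item.

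\emph{Lebesgue embeddings.} For $\dot B^0_{p,1}\hookrightarrow L^p$, write $u=\sum_j\dot\Delta_ju$ in $\mathcal S'_h$ and apply the triangle inequality; this is where the realization in $\mathcal S'_h$ matters. For $L^p\hookrightarrow\dot B^0_{p,\infty}$, use Young's convolution inequality with the uniformly bounded $L^1$ norm of $2^{jd}\check\varphi(2^j\cdot)$. The last inclusion in the chain is then the first item with $r_1=r_2=\infty$.

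\emph{Continuity and Sobolev comparison.} For the third item, each block is bounded and continuous, and Bernstein gives $\|\dot\Delta_ju\|_{L^\infty}\lesssim2^{jd/p}\|\dot\Delta_ju\|_{L^p}$. Hence the series converges uniformly. Each block lies in $C_0$ because $p<\infty$ and the block has compact Fourier support, so the sum lies in $C_0$. For the Sobolev comparisons:
\begin{itemize}
\item $\dot B^s_{2,1}\hookrightarrow\dot B^s_{2,2}=\dot H^s$ by $\ell^1\subset\ell^2$ together with Plancherel.
\item $H^{s+\delta}\hookrightarrow\dot B^s_{2,1}$ follows by splitting the sum. For $j\ge0$, Cauchy--Schwarz gives
\[
\sum_{j\ge0} 2^{js}\|\dot\Delta_ju\|_{L^2}\le\Big(\sum_j 2^{-2j\delta}\Big)^{1/2}\|u\|_{\dot H^{s+\delta}}.
\]
For $j<0$, use $2^{js}\|\dot\Delta_ju\|_{L^2}\le 2^{js}\|u\|_{L^2}$, which is summable because $s>0$.
\end{itemize}

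\emph{Interpolation.} This is the only step with a genuine choice. Set $s=\theta s_1+(1-\theta)s_2$, and let $A=\|u\|_{\dot B^{s_1}_{p,\infty}}$ and $B=\|u\|_{\dot B^{s_2}_{p,\infty}}$. Then
\[
2^{js}\|\dot\Delta_ju\|_{L^p}\le\min\{2^{j(s-s_1)}A,\;2^{j(s-s_2)}B\},
\]
where $s-s_1=(1-\theta)(s_2-s_1)>0$ and $s-s_2<0$. Split the sum at $N$ with $2^{N(s_2-s_1)}\approx B/A$. The two resulting geometric series sum to at most a constant times
\[
2^{N(s-s_1)}A+2^{N(s-s_2)}B\approx A^\theta B^{1-\theta}.
\]
The only care needed is choosing $N$ as an integer, and that affects the bound only up to a constant depending on $s_2-s_1$. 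I expect no real obstacle anywhere in the lemma; this optimization over $N$ is the one place requiring a definite choice.
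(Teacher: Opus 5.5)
Your proposal is correct and matches the paper, which gives no proof of its own and refers to \cite[Chapter 2]{bahouri1}; there these four facts are proved by the same Bernstein, Young and split-at-$N$ dyadic arguments you use. One point should be tightened: $L^p\hookrightarrow\dot B^0_{p,\infty}$ also requires $L^p\subset\mathcal S'_h$, which holds for $p<\infty$ (low-frequency cut-offs of an $L^p$ function tend to zero in $L^\infty$ by Bernstein's inequality) but fails for $p=\infty$ because of constants, so at that endpoint your Young's-inequality argument gives only the norm inequality.
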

The same embeddings hold for the \(L^2(\mathbb R^d_v)\)-valued Besov spaces. In
particular, $ \dot{\mathcal B}^{d/2}_{2,1}
 =\dot B^{d/2}_{2,1}(\mathbb R^d_x;L^2(\mathbb R^d_v))
 \hookrightarrow L^\infty(\mathbb R^d_x;L^2(\mathbb R^d_v))$ holds.

We use the following product laws.

\begin{lemma}\label{l11}
The following statements hold:
\begin{itemize}
\item Let \(s>0\) and \(1\le p,r\le\infty\). Then
\(\dot B^s_{p,r}\cap\dot B^{d/2}_{2,1}\) is an algebra, and the
following estimate holds:
\begin{equation}\label{uv1}
\|uv\|_{\dot B^{d/2+1}_{2,1}}
\lesssim
\|u\|_{\dot B^{d/2}_{2,1}}
\|v\|_{\dot B^{d/2+1}_{2,1}}
+\|u\|_{\dot B^{d/2+1}_{2,1}}
\|v\|_{\dot B^{d/2}_{2,1}}.
\end{equation}

\item Let \(2\le p\le\infty\), \(1\le r\le\infty\) and suppose that
\(s_1<d/p\), \(s_2\le d/p\) and \(s_1+s_2>0\).
Then, we have
\begin{equation}\label{uv2}
\|uv\|_{\dot B^{s_1+s_2-d/p}_{p,r}}
\lesssim
\|u\|_{\dot B^{s_1}_{p,r}}
\|v\|_{\dot B^{s_2}_{p,1}}.
\end{equation}

\item Let \(2\le p\le\infty\), \(1\le r\le\infty\) and assume that
\(s_1+s_2\ge0\), \(s_1\le d/p\) and \(s_2<d/p\).
Then, the following estimate holds:
\begin{equation}\label{uv3}
\|uv\|_{\dot B^{s_1+s_2-d/p}_{p,\infty}}
\lesssim
\|u\|_{\dot B^{s_1}_{p,1}}
\|v\|_{\dot B^{s_2}_{p,\infty}}.
\end{equation}
\end{itemize}
\end{lemma}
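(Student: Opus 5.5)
The product laws in Lemma~\ref{l11} are standard, and I would prove them with Bony's paraproduct decomposition
\[
uv=T_uv+T_vu+R(u,v),\qquad T_uv:=\sum_{j}\dot S_{j-1}u\,\dot\Delta_jv,\qquad R(u,v):=\sum_{|j-k|\le1}\dot\Delta_ju\,\dot\Delta_kv.
\]
Each term is estimated block by block, following \cite[Chapter~2]{bahouri1}. The two tools are the quasi-orthogonality of the blocks and Bernstein's inequality. Quasi-orthogonality means that $\dot S_{j-1}u\,\dot\Delta_jv$ has Fourier support in an annulus of size $2^j$, while $\dot\Delta_ju\,\dot\Delta_kv$ with $|j-k|\le1$ has Fourier support in a ball of size $2^j$. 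Bernstein's inequality lets us trade $L^p$ integrability against powers of $2^j$.

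\textbf{Paraproduct terms.} For $T_uv$ the annular support gives
\[
2^{j(s_1+s_2-d/p)}\|\dot\Delta_j T_uv\|_{L^p}\lesssim \sum_{|j'-j|\le4}2^{j'(s_1+s_2-d/p)}\|\dot S_{j'-1}u\|_{L^\infty}\|\dot\Delta_{j'}v\|_{L^p}.
\]
By Bernstein,
\[
\|\dot S_{j'-1}u\|_{L^\infty}\lesssim \sum_{k\le j'-2}2^{kd/p}\|\dot\Delta_ku\|_{L^p}=\sum_{k\le j'-2}2^{k(d/p-s_1)}\,2^{ks_1}\|\dot\Delta_ku\|_{L^p}.
\]
The condition $s_1<d/p$ makes this a convolution with an $\ell^1$ kernel, so it is bounded by $2^{j'(d/p-s_1)}$ times an $\ell^r$ sequence. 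For \eqref{uv2} I put the $\ell^r$ norm on $u$ and bound $2^{j's_2}\|\dot\Delta_{j'}v\|_{L^p}$ pointwise by $\|v\|_{\dot B^{s_2}_{p,1}}$. For $T_vu$ the roles are reversed. The endpoint $s_2=d/p$ is allowed because then $\|\dot S_{j-1}v\|_{L^\infty}\lesssim\|v\|_{\dot B^{d/p}_{p,1}}$ holds directly, with no kernel summation needed; this is why $v$ is measured with third index $1$.

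\textbf{Remainder.} $R(u,v)$ is the only place the sign condition $s_1+s_2>0$ (respectively $\ge0$) enters. Using the ball support, then Bernstein from $L^{p/2}$ to $L^p$ (valid since $p\ge2$), then H\"older,
\[
\|\dot\Delta_jR(u,v)\|_{L^p}\lesssim 2^{jd/p}\sum_{k\ge j-3}\|\dot\Delta_ku\|_{L^p}\|\widetilde{\dot\Delta}_kv\|_{L^p},
\]
where $\widetilde{\dot\Delta}_k:=\sum_{|k'-k|\le1}\dot\Delta_{k'}$. Multiplying by $2^{j(s_1+s_2-d/p)}$ produces the factor $2^{(j-k)(s_1+s_2)}$ with $k\ge j-3$. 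When $s_1+s_2>0$ this is a summable kernel, and Young's inequality gives the $\ell^r$ bound. In the borderline case $s_1+s_2=0$ of \eqref{uv3} the kernel is constant and not summable. There the bound holds only in $\ell^\infty$, because $\sum_k\|\dot\Delta_ku\|\,\|\dot\Delta_kv\|\le\|u\|_{\dot B^{s_1}_{p,1}}\|v\|_{\dot B^{s_2}_{p,\infty}}$. This explains why \eqref{uv3} is stated with third index $\infty$ and why $u$ must carry $\ell^1$ summability.

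\textbf{The algebra property and \eqref{uv1}.} Here the remainder is harmless since $s>0$. For the paraproducts I use $\|\dot S_{j-1}u\|_{L^\infty}\lesssim\|u\|_{L^\infty}\lesssim\|u\|_{\dot B^{d/2}_{2,1}}$, which gives $\|uv\|_{\dot B^s_{p,r}}\lesssim\|u\|_{L^\infty}\|v\|_{\dot B^s_{p,r}}+\|v\|_{L^\infty}\|u\|_{\dot B^s_{p,r}}$. Taking $s=d/2+1$ and $p=2$, $r=1$ yields \eqref{uv1}.

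\textbf{Vector-valued versions.} The $L^2_v$-valued (and $L^2_{v,\nu}$-valued) versions used throughout follow verbatim, because one factor is scalar in $v$ and H\"older is applied in $x$ only.

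\textbf{Main obstacle.} I expect the main difficulty to be bookkeeping rather than any single hard estimate. The constraints $s_1<d/p$ versus $s_2\le d/p$, and which factor carries which third index, must be matched correctly between the two paraproducts and the remainder, especially at the critical endpoints.
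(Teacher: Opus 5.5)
Your paraproduct argument is correct. It is the standard proof from \cite[Chapter~2]{bahouri1}, which the paper only cites and does not reproduce. One detail should be made explicit in the first bullet, where $1\le p<2$ is allowed. There the remainder has to be bounded by H\"older in $L^p\times L^\infty$ together with $\|\widetilde{\dot\Delta}_kv\|_{L^\infty}\lesssim\|v\|_{L^\infty}$, not by the $L^{p/2}\to L^p$ Bernstein step. Closedness of the intersection also needs that $\dot B^{d/2}_{2,1}$ is itself an algebra, which is the same estimate with $s=d/2$, $p=2$, $r=1$.
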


The nonlinear functions appearing in the system are handled by the following
composition estimate.

\begin{lemma}\label{l12}
Let \(Q\) be smooth on an interval \(I\) containing the range of \(z\),
and assume that \(Q(0)=0\). If \(s>0, 1\le p,r\le\infty\) and
\(z\in\dot B^s_{p,r}\cap L^\infty(\mathbb R^d_x)\), then the
following estimate holds:
\[
\|Q(z)\|_{\dot B^s_{p,r}}
\le C_z\|z\|_{\dot B^s_{p,r}},
\]
where \(C_z\) depends on \(s,p,r,d\), \(\|z\|_{L^\infty_x}\) and finitely
many derivatives of \(Q\) on \(I\).

Let \(2\le p<\infty\), $-d/p<s\le0$ and $1\le r\le\infty$. For
\(z\in\dot B^s_{p,r}\cap\dot B^{d/p}_{p,1}\), the following
estimate holds:
\[
\|Q(z)\|_{\dot B^s_{p,r}}
\le C_z\bigl(1+\|z\|_{\dot B^{d/p}_{p,1}}\bigr)
\|z\|_{\dot B^s_{p,r}}.
\]
The same estimate holds for \(s=-d/p\) if \(r=\infty\).
\end{lemma}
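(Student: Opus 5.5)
The statement to prove is the composition estimate, Lemma~\ref{l12}. The plan is to treat the positive-regularity case and the nonpositive-regularity case separately.

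\emph{Case $s>0$.} We use the standard Meyer linearization. Write $Q(z)=\sum_{k}\bigl(Q(S_{k+1}z)-Q(S_kz)\bigr)$, which converges in $\mathcal S'_h$ because $Q(0)=0$ and $S_kz\to0$ as $k\to-\infty$. Each telescoping term is rewritten as $m_k\,\dot\Delta_kz$, where
\[
m_k:=\int_0^1Q'\bigl(S_kz+\tau\dot\Delta_kz\bigr)\,d\tau .
\]
Two bounds on $m_k$ are needed. First, $\|m_k\|_{L^\infty}\le C(\|z\|_{L^\infty})$. Second, by the chain rule and Bernstein's inequality, $\|\partial^\alpha m_k\|_{L^\infty}\le C_\alpha 2^{k|\alpha|}$; here $C_\alpha$ depends on $\|z\|_{L^\infty}$ and finitely many derivatives of $Q$ on $I$, because $S_kz$ and $\dot\Delta_kz$ are spectrally localized and bounded by $\|z\|_{L^\infty}$.

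For fixed $j$, we split the sum $\sum_k\dot\Delta_j(m_k\dot\Delta_kz)$ at $k=j$:
\begin{itemize}
\item For $k\ge j$, the $L^\infty$ bound gives $\|\dot\Delta_j(m_k\dot\Delta_kz)\|_{L^p}\lesssim\|\dot\Delta_kz\|_{L^p}$.
\item For $k<j$, we trade $N>s$ derivatives through $\dot\Delta_j$. This gives $2^{-jN}\|\partial^N(m_k\dot\Delta_kz)\|_{L^p}\lesssim 2^{(k-j)N}\|\dot\Delta_kz\|_{L^p}$.
\end{itemize}
Multiplying by $2^{js}$, the $k\ge j$ part gives the kernel $2^{(j-k)s}\mathbf 1_{k\ge j}$, which is summable because $s>0$. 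The $k<j$ part gives the kernel $2^{(k-j)(N-s)}\mathbf 1_{k<j}$, which is summable because $N>s$. Young's inequality for series in $\ell^r$ then yields the first estimate.

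\emph{Case $-d/p<s\le0$.} We use the identity $Q(z)=Q'(0)z+\widetilde Q(z)z$, where $\widetilde Q(z)=\int_0^1\bigl(Q'(\tau z)-Q'(0)\bigr)\,d\tau$ satisfies $\widetilde Q(0)=0$. The linear term is trivially bounded by $|Q'(0)|\,\|z\|_{\dot B^s_{p,r}}$.

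For the product $\widetilde Q(z)\,z$, we apply \eqref{uv2} with $s_1=s$ and $s_2=d/p$; the conditions $s_1<d/p$, $s_2\le d/p$ and $s_1+s_2=s+d/p>0$ all hold. This gives
\[
\|\widetilde Q(z)z\|_{\dot B^{s}_{p,r}}\lesssim\|z\|_{\dot B^{s}_{p,r}}\|\widetilde Q(z)\|_{\dot B^{d/p}_{p,1}} .
\]
The positive-regularity case, applied with $r=1$, bounds $\|\widetilde Q(z)\|_{\dot B^{d/p}_{p,1}}$ by $C_z\|z\|_{\dot B^{d/p}_{p,1}}$. Here $\|z\|_{L^\infty}\lesssim\|z\|_{\dot B^{d/p}_{p,1}}$ by Lemma~\ref{l10}, so $C_z$ is controlled. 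This is exactly the claimed bound $C_z(1+\|z\|_{\dot B^{d/p}_{p,1}})\|z\|_{\dot B^s_{p,r}}$.

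\emph{Endpoint $s=-d/p$, $r=\infty$.} Here $s_1+s_2=0$, so \eqref{uv2} no longer applies. We use \eqref{uv3} instead, with $u=\widetilde Q(z)\in\dot B^{d/p}_{p,1}$ (so $s_1=d/p$) and $v=z\in\dot B^{-d/p}_{p,\infty}$ (so $s_2=-d/p<d/p$); the hypotheses $s_1+s_2=0\ge0$, $s_1\le d/p$, $s_2<d/p$ are satisfied. This gives the product bound in $\dot B^{-d/p}_{p,\infty}$. The requirement $p\ge2$ in the hypothesis matches the ranges in \eqref{uv2}--\eqref{uv3}.

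The main obstacle I expect is the uniform derivative bound on $m_k$. It needs derivatives of $Q$ only on the range of $S_kz+\tau\dot\Delta_kz$, and this range may exceed the range of $z$ by a constant multiple of $\|z\|_{L^\infty}$. If so, I would take $I$ large enough to contain the interval $[-C\|z\|_\infty,C\|z\|_\infty]$, noting that $S_kz+\tau\dot\Delta_kz$ is bounded by $C\|z\|_\infty$ since the kernels of $S_k$ and $\dot\Delta_k$ are uniformly $L^1$-bounded. This is harmless in all the paper's applications, where $z$ is small.
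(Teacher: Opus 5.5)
The paper states this lemma without proof, as a standard tool from Bahouri--Chemin--Danchin. Your argument is the standard one and is correct in substance: Meyer's first linearization for $s>0$; for $-d/p<s\le0$, the splitting $Q(z)=Q'(0)z+\widetilde Q(z)z$ combined with \eqref{uv2}, or \eqref{uv3} at the endpoint $s=-d/p$, $r=\infty$, together with the positive case at index $d/p$. Your index checks for the product laws are all right.

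The one point to tighten is the range issue you raise at the end. Enlarging $I$ to contain $[-C\|z\|_{L^\infty},C\|z\|_{L^\infty}]$ changes the hypothesis. Your remark that this is harmless because $z$ is always small is also not accurate.

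In Appendix~B the composition estimate is applied with large data to $1/(1+\varrho^n)$, $\varrho^n/(1+\varrho^n)$ and $\int_0^{\varrho^n}P'(1+r)/(1+r)\,dr$. There only $1+\varrho^n\ge c_0/2$ is known. The function $S_k\varrho^n+\tau\dot\Delta_k\varrho^n=(1-\tau)S_k\varrho^n+\tau S_{k+1}\varrho^n$ may leave $(-1,\infty)$, because these operators are not positivity-preserving. Indeed, their kernels have Fourier transforms equal to $1$ near the origin and hence must change sign.

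The clean fix proves the lemma as stated. The composition $Q(z)$ only involves the values of $Q$ on the range of $z$. This closed range must lie in $I$ in any case, since otherwise $Q(z)$ need not even be bounded. So choose $\chi\in C_c^\infty$, supported where $Q$ is smooth, with $\chi\equiv1$ near the compact interval spanned by $0$ and the closed range of $z$. Then replace $Q$ by $\chi Q\in C^\infty(\mathbb R)$.

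With this replacement:
\begin{itemize}
\item $(\chi Q)(0)=0$ and $(\chi Q)(z)=Q(z)$.
\item Your bounds on $m_k$ hold with no constraint on the range of $S_kz+\tau\dot\Delta_kz$.
\item The constants depend on finitely many derivatives of $Q$ on $\operatorname{supp}\chi$, hence on how far the range of $z$ stays from $\partial I$. This is exactly what \eqref{e44}, \eqref{e75} and $1+\varrho^n\ge c_0/2$ supply in the applications.
\item The same replacement handles $\widetilde Q$ in the second part. For $y$ in that interval all $\tau y$ stay where $\chi\equiv1$, so the modified $\widetilde Q$ agrees with the original on the range of $z$.
\end{itemize}
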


For the high-frequency analysis, we also need a commutator estimate.

\begin{lemma}\label{l13}
Let \(1\le p\le\infty\) and
\(-\min\{d/p,d/p'\}<s\le1+d/p\). Then
\begin{align*}
\sum_{j\in\mathbb Z}2^{js}
\|[w,\dot\Delta_j]\partial_{x_i}z\|_{L^p}
&\lesssim
\|\nabla w\|_{\dot B^{d/p}_{p,1}}
\|z\|_{\dot B^s_{p,1}}\quad\text{for}\quad i=1,\cdots, d,
\end{align*}
where \([A,B]:=AB-BA\) denotes the commutator.
\end{lemma}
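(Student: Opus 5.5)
The statement is the commutator estimate of Lemma \ref{l13}, which is classical (cf. \cite[Lemma 2.100]{bahouri1}). I would prove it by Bony's paraproduct decomposition in $x$ alone, so the same argument works verbatim in the $L^2_v$-valued setting.

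\textbf{Step 1 (decomposition).} Write $w\,\partial_iz=T_w\partial_iz+T_{\partial_iz}w+R(w,\partial_iz)$. This splits the commutator as
\[
[w,\dot\Delta_j]\partial_iz=[T_w,\dot\Delta_j]\partial_iz+T_{\partial_i\dot\Delta_jz}w-\dot\Delta_jT_{\partial_iz}w+R(w,\partial_i\dot\Delta_jz)-\dot\Delta_jR(w,\partial_iz).
\]

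\textbf{Step 2 (main term).} For $[T_w,\dot\Delta_j]\partial_iz=\sum_{|k-j|\le4}[\dot S_{k-1}w,\dot\Delta_j]\partial_i\dot\Delta_kz$, I would use the kernel representation of $\dot\Delta_j$ and the first-order Taylor formula. This gives
\[
\|[\dot S_{k-1}w,\dot\Delta_j]\partial_i\dot\Delta_kz\|_{L^p}\lesssim2^{-j}\|\nabla w\|_{L^\infty}2^k\|\dot\Delta_kz\|_{L^p},
\]
hence a contribution $\lesssim\|\nabla w\|_{L^\infty}2^{-js}c_j\|z\|_{\dot B^s_{p,1}}$ with $(c_j)\in\ell^1$. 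The embedding $\dot B^{d/p}_{p,1}\hookrightarrow L^\infty$ then gives the claimed bound for this term. It imposes no restriction on $s$.

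\textbf{Step 3 (remaining paraproduct and remainder terms).}
\begin{itemize}
\item $T_{\partial_i\dot\Delta_jz}w$ involves only the blocks $\dot\Delta_{k'}w$ with $k'\ge j-2$. Writing $\dot\Delta_{k'}w\sim2^{-k'}\dot\Delta_{k'}\nabla w$ and using $\|\dot\Delta_{k'}\nabla w\|_{L^\infty}\lesssim\|\nabla w\|_{\dot B^{d/p}_{p,1}}$ handles it directly.
\item $\dot\Delta_jT_{\partial_iz}w$ is controlled by the standard paraproduct bound $\|T_{\partial_iz}w\|_{\dot B^{s}_{p,1}}\lesssim\|\partial_iz\|_{\dot B^{s-1-d/p}_{\infty,1}}\|w\|_{\dot B^{1+d/p}_{p,1}}$. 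This requires the negative index $s-1-d/p\le0$, which is where the upper restriction $s\le1+d/p$ enters; the endpoint $s=1+d/p$ is admissible because the outer summation is $\ell^1$.
\item The two remainders are bounded by the usual remainder estimate, which requires $s+d/p>0$, respectively $s+d/p'>0$ after duality/Bernstein when $p>2$. This is the source of the lower restriction $s>-\min\{d/p,d/p'\}$.
\end{itemize}

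\textbf{Step 4 (summation).} Summing $2^{js}$ times each piece over $j$ gives the estimate.

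The only delicate points are the endpoint bookkeeping in the $\dot\Delta_jT_{\partial_iz}w$ term at $s=1+d/p$ and the remainder lower bound. I would simply follow \cite[Lemma 2.100 and Remark 2.101]{bahouri1} there, rather than redo them.
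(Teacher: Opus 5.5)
Your proposal is correct and is the standard Bony-decomposition proof of the commutator estimate in \cite{bahouri1}; the paper states Lemma~\ref{l13} without proof and relies on that reference. The only inaccuracy is in the remainder bookkeeping. The term $R(w,\partial_i\dot\Delta_jz)$ imposes no condition on $s$, since only $|k-j|\le2$ contribute. For $\dot\Delta_jR(w,\partial_iz)$, the condition $s+d/p>0$ comes from Bernstein $L^{p/2}\to L^p$ when $p\ge2$, while $s+d/p'>0$ comes from Bernstein $L^1\to L^p$ plus H\"older in $L^{p'}\times L^p$ when $p\le2$ (not $p>2$); together these give exactly $s>-\min\{d/p,d/p'\}$.
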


We consider the Lam\'e system
\begin{equation}\label{lame}
\left\{
\begin{aligned}
&\partial_tu-\mu\Delta u-(\mu+\lambda)\nabla\dive u=g_2
\quad\text{for}\quad x\in\mathbb R^d
\quad\text{and}\quad t\in(0,T),\\
&u|_{t=0}=u_0.
\end{aligned}
\right.
\end{equation}
Its maximal regularity estimate takes the following form.

\begin{lemma}\label{heat}
Let \(T>0\), \(\mu>0\), \(2\mu+\lambda>0\),
\(s\in\mathbb R\), \(1\le r\le\infty\) and
\(1\le q_2\le q_1\le\infty\). Set
\(\nu_*:=\min\{\mu,2\mu+\lambda\}\). If $u_0\in\dot B^s_{2,r}$ and $
 g_2\in\widetilde L^{q_2}(0,T;\dot B^{s-2+2/q_2}_{2,r})$, then the solution to \eqref{lame} satisfies
\[
 \nu_*^{1/q_1}
 \|u\|_{\widetilde L^{q_1}_T
 (\dot B^{s+2/q_1}_{2,r})}
 \le C\Big(
 \|u_0\|_{\dot B^s_{2,r}}
 +\nu_*^{1/q_2-1}
 \|g_2\|_{\widetilde L^{q_2}_T
 (\dot B^{s-2+2/q_2}_{2,r})}\Big).
\]
The constant \(C\) does not depend on \(T\), \(u_0\) or \(g_2\).
\end{lemma}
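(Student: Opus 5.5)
This Lam\'e maximal regularity estimate is standard, and I would prove it blockwise in Fourier variables using the Helmholtz decomposition. First I apply $\dot\Delta_j$ to \eqref{lame} and split $\dot\Delta_j u=\mathbb P\dot\Delta_j u+\mathbb Q\dot\Delta_j u$ into its divergence-free and gradient parts, with $\mathbb Q=\nabla\Delta^{-1}\dive$ and $\mathbb P=\mathrm{Id}-\mathbb Q$. Both projectors are Fourier multipliers of degree zero, so they commute with $\dot\Delta_j$ and are bounded on $L^2$. The Lam\'e operator then decouples into two heat equations,
\[
\partial_t\mathbb P\dot\Delta_j u-\mu\Delta\mathbb P\dot\Delta_j u=\mathbb P\dot\Delta_j g_2
\quad\text{and}\quad
\partial_t\mathbb Q\dot\Delta_j u-(2\mu+\lambda)\Delta\mathbb Q\dot\Delta_j u=\mathbb Q\dot\Delta_j g_2 .
\]
Since $\widehat{\dot\Delta_j u}$ is supported in $\{|\xi|\sim2^j\}$, the heat semigroup acts on each block with an exponential bound. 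Using Plancherel, or the energy identity and Bernstein's inequality, I obtain
\[
\|e^{\kappa t\Delta}\dot\Delta_j w\|_{L^2}\le e^{-c\kappa 2^{2j}t}\|\dot\Delta_j w\|_{L^2},\qquad \kappa\in\{\mu,2\mu+\lambda\}.
\]
Because $\kappa\ge\nu_*$, the decay rate is at least $c\nu_*2^{2j}$ for both parts.

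Next I write each part through Duhamel's formula and add the two parts back together, which gives
\[
\|\dot\Delta_j u(t)\|_{L^2}\lesssim e^{-c\nu_*2^{2j}t}\|\dot\Delta_j u_0\|_{L^2}+\int_0^t e^{-c\nu_*2^{2j}(t-\tau)}\|\dot\Delta_j g_2(\tau)\|_{L^2}\,d\tau .
\]
I then take the $L^{q_1}(0,T)$ norm in time. The initial-data term contributes $(\nu_*2^{2j})^{-1/q_1}\|\dot\Delta_j u_0\|_{L^2}$. For the Duhamel term I apply Young's convolution inequality with $1+1/q_1=1/\rho+1/q_2$, where $\rho\in[1,\infty]$ is admissible because $q_2\le q_1$. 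This bounds the Duhamel term by $\|e^{-c\nu_*2^{2j}\cdot}\|_{L^\rho}\|\dot\Delta_j g_2\|_{L^{q_2}_T L^2}$, and $\|e^{-c\nu_*2^{2j}\cdot}\|_{L^\rho}\lesssim(\nu_*2^{2j})^{-1/\rho}=(\nu_*2^{2j})^{-1-1/q_1+1/q_2}$. Multiplying through by $\nu_*^{1/q_1}2^{j(s+2/q_1)}$ yields
\[
\nu_*^{1/q_1}2^{j(s+2/q_1)}\|\dot\Delta_j u\|_{L^{q_1}_TL^2}\lesssim 2^{js}\|\dot\Delta_j u_0\|_{L^2}+\nu_*^{1/q_2-1}2^{j(s-2+2/q_2)}\|\dot\Delta_j g_2\|_{L^{q_2}_TL^2}.
\]
Finally I take the $\ell^r$ norm over $j\in\mathbb Z$. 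This gives exactly the Chemin--Lerner estimate in the statement, because the time norm sits inside the $\ell^r$ sum in the definition of $\widetilde L^q_T$. The constant does not depend on $T$, since every integral in time is bounded by the corresponding integral over $(0,\infty)$.

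The only delicate points are bookkeeping. First, the powers of $\nu_*$ must be tracked when $\mu$ and $2\mu+\lambda$ differ; I handle this by bounding both decay rates below by $\nu_*$ before applying Young's inequality. Second, the endpoint cases $q_1=\infty$ and $q_2=1$ must be covered, and Young's inequality handles them directly since $\rho$ may take the values $1$ and $\infty$. Neither point poses a real obstacle.
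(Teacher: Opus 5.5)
Your proof is correct. The Helmholtz splitting, the blockwise bound $e^{-c\nu_*2^{2j}t}$, Young's inequality in time with $1/\rho=1+1/q_1-1/q_2$ and the final $\ell^r$ summation give exactly the stated powers $\nu_*^{1/q_1}$ and $\nu_*^{1/q_2-1}$, with a constant independent of $T$. The paper does not prove this lemma---it records it in Appendix~A as a standard tool (cf.\ \cite{bahouri1})---and your argument is the standard proof of it.
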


We end this appendix with an estimate for the linear
Fokker--Planck equation
\begin{equation}\label{lvfp}
\left\{
\begin{aligned}
&\partial_tf+v\cdot\nabla_xf-\mathcal Lf=g_3,\\
&f|_{t=0}=f_0.
\end{aligned}
\right.
\end{equation}

\begin{lemma}\label{l15}
Let \(T>0\), \(s\in\mathbb R\) and \(1\le r\le\infty\). If $f_0\in\dot{\mathcal B}^s_{2,r}$ and $g_3\in\widetilde L^1(0,T;\dot{\mathcal B}^s_{2,r})$, then \eqref{lvfp} has a unique solution such that
\begin{equation}\label{e238}
\|f\|_{\widetilde L^\infty_T(\dot{\mathcal B}^s_{2,r})}
+\|\{\mathbf I-\mathbf P\}f\|_
 {\widetilde L^2_T(\dot{\mathcal B}^s_{2,r,\nu})}
\le C\Big(
\|f_0\|_{\dot{\mathcal B}^s_{2,r}}
+\|g_3\|_{\widetilde L^1_T(\dot{\mathcal B}^s_{2,r})}\Big).
\end{equation}
The constant \(C\) is independent of \(T\), \(f_0\) and \(g_3\).
\end{lemma}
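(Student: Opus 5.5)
The plan is to work block by block in frequency. Apply $\dot\Delta_j$ to \eqref{lvfp}; since $\dot\Delta_j$ acts only in $x$, it commutes with $v\cdot\nabla_x$ and with $\mathcal L$. Writing $f_j:=\dot\Delta_jf$ and $g_j:=\dot\Delta_jg_3$, we get
\begin{equation*}
\partial_tf_j+v\cdot\nabla_xf_j-\mathcal Lf_j=g_j .
\end{equation*}
Taking the $L^2(\mathbb R^d_x\times\mathbb R^d_v)$ inner product with $f_j$, the transport term vanishes by skew-symmetry. The coercivity \eqref{e3}, integrated in $x$, gives
\begin{equation*}
\frac12\frac{d}{dt}\|f_j\|_{L^2_{x,v}}^2+\|b_j\|_{L^2_x}^2+\lambda_0\|\{\mathbf I-\mathbf P\}f_j\|_{L^2_xL^2_{v,\nu}}^2\le\|g_j\|_{L^2_{x,v}}\|f_j\|_{L^2_{x,v}}.
\end{equation*}
Dividing by $\sqrt{\|f_j\|^2+\kappa^2}$, integrating in time and letting $\kappa\to0$, exactly as in \eqref{e46}, yields $\|f_j\|_{L^\infty_TL^2_{x,v}}\le\|\dot\Delta_jf_0\|_{L^2_{x,v}}+\|g_j\|_{L^1_TL^2_{x,v}}$. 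Note that no hypocoercive correction is needed here, because the claimed estimate only asks for $\widetilde L^\infty$ control of $f$ and $\widetilde L^2$ control of the microscopic part, and the decay of $a$ is never used.

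Next I integrate the energy inequality directly on $[0,T]$ and use Young's inequality, as in \eqref{e47}. This gives
\begin{equation*}
\|\{\mathbf I-\mathbf P\}f_j\|_{L^2_TL^2_xL^2_{v,\nu}}^2\lesssim\|\dot\Delta_jf_0\|_{L^2_{x,v}}^2+\|g_j\|_{L^1_TL^2_{x,v}}\|f_j\|_{L^\infty_TL^2_{x,v}},
\end{equation*}
and inserting the previous bound, $\|\{\mathbf I-\mathbf P\}f_j\|_{L^2_TL^2_xL^2_{v,\nu}}\lesssim\|\dot\Delta_jf_0\|_{L^2_{x,v}}+\|g_j\|_{L^1_TL^2_{x,v}}$. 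I then multiply both block estimates by $2^{js}$ and take the $\ell^r$ norm over $j\in\mathbb Z$. Because the Chemin--Lerner norms put the time norm inside the dyadic sum, this gives \eqref{e238} immediately, with a constant independent of $T$, $s$ and $r$.

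For existence and uniqueness the equation is linear, so uniqueness follows from the same energy estimate with $f_0=0$ and $g_3=0$. For existence I would first solve with data and source smooth and truncated to finitely many frequencies, for which classical theory of the kinetic Fokker--Planck equation gives a solution in $C([0,T];L^2_{x,v})$. The uniform bound \eqref{e238} then lets me pass to the limit by density in $\dot{\mathcal B}^s_{2,r}$ (weak-$*$ when $r=\infty$).

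The main obstacle is justifying the energy identity at the regularity available: the term $\mathcal L f_j$ paired with $f_j$ and the vanishing transport term need $f_j\in L^2_TL^2_xL^2_{v,\nu}$ and suitable decay in $v$. I would handle this at the level of the regularized approximations, where everything is smooth and decaying, and obtain the estimate for the limit by lower semicontinuity.
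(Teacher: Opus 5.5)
Your proposal is correct and follows essentially the same route as the paper: localize with $\dot\Delta_j$, use the $L^2_{x,v}$ energy identity with the coercivity \eqref{e3} (no hypocoercive correction), argue as in \eqref{e46}--\eqref{e47} to get the $L^\infty_T$ and $L^2_T$ block bounds, weight by $2^{js}$ and take the $\ell^r$ norm, and obtain existence by regularizing the data and passing to the limit. The only cosmetic difference is that you keep the nonnegative term $\|b_j\|_{L^2_x}^2$, which the paper discards, and you spell out the justification of the energy identity, which the paper omits.
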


\begin{proof}
We apply \(\dot\Delta_j\) to \eqref{lvfp} and obtain
\begin{equation*}
\partial_t\dot\Delta_jf+v\cdot\nabla_x\dot\Delta_jf
-\mathcal L\dot\Delta_jf=\dot\Delta_jg_3.
\end{equation*}
Taking the \(L^2(\mathbb R^d_x\times\mathbb R^d_v)\) inner product of this equation with
\(\dot\Delta_jf\) and using \eqref{e3}, one obtains
\[
\frac12\frac d{dt}\|\dot\Delta_jf\|_{L^2_{x,v}}^2
+\lambda_0\|
\dot\Delta_j\{\mathbf I-\mathbf P\}f\|_{L^2_xL^2_{v,\nu}}^2
\le\|\dot\Delta_jg_3\|_{L^2_{x,v}}
\|\dot\Delta_jf\|_{L^2_{x,v}}.
\]
Arguing as in
\eqref{e46}--\eqref{e47}, it follows that
\[
\|\dot\Delta_jf\|_{L^\infty_TL^2_{x,v}}
+\|
\dot\Delta_j\{\mathbf I-\mathbf P\}f\|_{L^2_TL^2_xL^2_{v,\nu}}
\lesssim
\|\dot\Delta_jf_0\|_{L^2_{x,v}}
+\|\dot\Delta_jg_3\|_{L^1_TL^2_{x,v}}.
\]
Multiplying this inequality by \(2^{js}\) and taking the \(\ell^r\)
norm yields \eqref{e238}. Finally, one can regularize the data
and pass to the limit with the preceding uniform estimates to prove the existence. The details are omitted.
\end{proof}

\section{Local well-posedness}
\label{s11}

In this appendix, we prove the local well-posedness result in Theorem
\ref{t6} for the Cauchy problem \eqref{m1n} for the NS--VFP
system with large initial data
in the critical Besov spaces and the propagation of additional
regularity on the same lifespan.

\begin{theorem}\label{t6}
Let \(d\ge2\), and let \(\mu>0\) and \(2\mu+\lambda>0\) be fixed.
Let \(\nu_*:=\min\{\mu,2\mu+\lambda\}\).
\begin{itemize}
\item Assume \(1+\varrho_0\ge c_0>0\), $ M+M^{1/2}f_0\ge0$, $ \varrho_0\in\dot B^{d/2}_{2,1}$, $u_0\in\dot B^{d/2-1}_{2,1}$ and $
 f_0\in\dot{\mathcal B}^{d/2-1}_{2,1}$. There
exists \(T>0\) such that
the Cauchy problem \eqref{m1n} for the NS--VFP system has a unique
solution satisfying
\begin{equation*}
\left\{
\begin{aligned}
&\varrho\in C([0,T];\dot B^{d/2}_{2,1}),\\
&u\in C([0,T];\dot B^{d/2-1}_{2,1})
 \cap L^1(0,T;\dot B^{d/2+1}_{2,1}),\\
&f(t)\in\dot{\mathcal B}^{d/2-1}_{2,1}
 \quad\hbox{for all }t\in[0,T]\quad\text{and}\quad
 t\mapsto\|f(t)\|_{\dot{\mathcal B}^{d/2-1}_{2,1}}
 \in C([0,T]),\\
&
 \{\mathbf I-\mathbf P\}f
 \in L^2(0,T;
 \dot{\mathcal B}^{d/2-1}_{2,1,\nu});
\end{aligned}
\right.
\end{equation*}
\item if in addition \(\varrho_0\in\dot B^{d/2-1}_{2,1}\), then $ \varrho\in C([0,T];\dot B^{d/2-1}_{2,1})$;
\item if $ (\varrho_0,u_0)\in\dot B^{d/2+1}_{2,1}$ and $f_0\in\dot{\mathcal B}^{d/2+1}_{2,1}$, then
\begin{equation*}
\left\{
\begin{aligned}
&(\varrho,u)\in C([0,T];\dot B^{d/2+1}_{2,1}),\\
&f(t)\in\dot{\mathcal B}^{d/2+1}_{2,1}
\quad\hbox{for all }t\in[0,T]\quad\text{and}\quad
t\mapsto\|f(t)\|_{\dot{\mathcal B}^{d/2+1}_{2,1}}
\in C([0,T]),\\
&u\in L^1(0,T;\dot B^{d/2+2}_{2,1}),
\quad u-b\in L^2(0,T;\dot B^{d/2+1}_{2,1})
\quad\text{and}\quad \{\mathbf I-\mathbf P\}f
\in L^2(0,T;
\dot{\mathcal B}^{d/2+1}_{2,1,\nu}).
\end{aligned}
\right.
\end{equation*}
\end{itemize}
\end{theorem}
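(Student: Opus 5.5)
We construct the solution by a Picard-type iteration (equivalently, a contraction in a weaker norm) built on the linear estimates of Appendix A: the Lamé maximal regularity estimate (Lemma \ref{heat}) for the momentum equation, transport estimates for the continuity equation, and Lemma \ref{l15} for the kinetic equation. Because the data are large, we cannot rely on smallness of the whole solution. Instead, we split $u=u_L+\bar u$, where $u_L:=e^{t(\mu\Delta+(\mu+\lambda)\nabla\dive)}u_0$ is the free Lamé flow. Since $\|u_L\|_{L^1_T(\dot B^{d/2+1}_{2,1})}\to0$ as $T\to0$ (dominated convergence on dyadic blocks), this quantity provides the smallness. The density is handled as in Danchin's critical theory: $\varrho$ is transported by $u$, so $\|\varrho\|_{L^\infty_T(\dot B^{d/2}_{2,1})}\le e^{C\|u\|_{L^1_T\dot B^{d/2+1}_{2,1}}}\big(\|\varrho_0\|_{\dot B^{d/2}_{2,1}}+\|u\|_{L^1_T\dot B^{d/2+1}_{2,1}}\big)$. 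The bound $1+\varrho\ge c_0/2$ is kept on $[0,T]$ by the Lagrangian formula for $\rho$.

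\textbf{Iteration scheme.} We define iterates $(\varrho^{n+1},u^{n+1},f^{n+1})$ by solving the linear problems with coefficients frozen at step $n$:
\begin{itemize}
\item a transport equation for $\varrho^{n+1}$ with velocity $u^n$;
\item a Lamé system for $u^{n+1}$ with the frozen viscosity coefficient written as constant viscosity plus the perturbation $\frac{\varrho^n}{1+\varrho^n}(\mu\Delta+\dots)u^n$;
\item the linear Vlasov--Fokker--Planck equation \eqref{lvfp} for $f^{n+1}$ with sources $u^n\cdot vM^{1/2}$, $\frac12u^n\cdot vf^n$ and $-u^n\cdot\nabla_vf^n$.
\end{itemize}
For the velocity-derivative term we do not place $u^n\cdot\nabla_vf^n$ in the $\widetilde L^1_T$ source. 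Instead, we pair it in the energy identity and use the $L^2_T\dot{\mathcal B}^{d/2-1}_{2,1,\nu}$ dissipation of $\{\mathbf I-\mathbf P\}f$ from \eqref{e3}, exactly as $h$ was treated in \eqref{e54}. The macroscopic part $\mathbf P f$ is harmless because $\nabla_v\mathbf Pf$ is a moment. This gives the bound
\[
\|u^n\|_{L^2_T\dot B^{d/2}_{2,1}}\|\{\mathbf I-\mathbf P\}f^{n}\|_{L^2_T\dot{\mathcal B}^{d/2-1}_{2,1,\nu}}+\|u^n\|_{L^1_T\dot B^{d/2}_{2,1}}\|f^n\|_{L^\infty_T\dot{\mathcal B}^{d/2-1}_{2,1}}.
\]
Both terms are small for small $T$, after interpolating $\|u^n\|_{L^2_T\dot B^{d/2}_{2,1}}^2\le\|u^n\|_{L^\infty_T\dot B^{d/2-1}_{2,1}}\|u^n\|_{L^1_T\dot B^{d/2+1}_{2,1}}$.

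The drag terms in the momentum equation, $u-b$ and $au$, are lower order. In $L^1_T\dot B^{d/2-1}_{2,1}$ they are bounded by $T\|(u,f)\|_{L^\infty_T}$ plus $\|a\|_{L^2_T\dot B^{d/2}_{2,1}}\|u\|_{L^2_T\dot B^{d/2-1}_{2,1}}$. The product $a\,u$ with $a\in\dot B^{d/2-1}_{2,1}$ only requires $u\in\dot B^{d/2}_{2,1}$, which holds in $L^2_T$. With these ingredients, uniform bounds on a short interval $T$ follow by induction. The interval depends on the data profile, through $u_L$ and the high-frequency tail of $\varrho_0$ in the term $\frac{\varrho}{1+\varrho}\Delta u$, and not only on its size. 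That high-frequency term is controlled in the usual way by splitting $\varrho_0$ into a smooth part and a small remainder.

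\textbf{Convergence and uniqueness.} We estimate differences in the space with one derivative less: $\delta\varrho\in L^\infty_T\dot B^{d/2-1}_{2,1}$, $\delta u\in L^\infty_T\dot B^{d/2-2}_{2,1}\cap L^1_T\dot B^{d/2}_{2,1}$, $\delta f\in L^\infty_T\dot{\mathcal B}^{d/2-2}_{2,1}$. This loss is forced by the transport structure, and the same estimate gives uniqueness. I expect this step to be the main obstacle. When $d=2$, the index $d/2-2=-1$ falls at the endpoint of the product laws \eqref{uv2}, and $u\cdot\nabla_v f$ in negative regularity couples with the $v$-dissipation norm. For $d=2$ I would switch to the $\dot B^{0}_{2,\infty}$-type space and close with a logarithmic (Osgood) interpolation inequality, as in Danchin's treatment of the two-dimensional critical case.

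\textbf{Remaining assertions.} Time continuity of the Besov norms of $\varrho,u$ follows from the transport and Lamé theory. For $\|f(t)\|_{\dot{\mathcal B}^{s}_{2,1}}$, each dyadic block is continuous and the tails are uniformly small, since the $\ell^1$ sums are dominated via the $\widetilde L^\infty_T$ bound; this is the finite-frequency argument. The lower-regularity statement $\varrho\in C\dot B^{d/2-1}_{2,1}$ follows from the transport estimate at index $d/2-1$, with coefficient $\|\nabla u\|_{L^1_T L^\infty}$. Propagation of $\dot B^{d/2+1}_{2,1}$ regularity is a linear estimate with coefficients already controlled on $[0,T]$: the commutator Lemma \ref{l13} with $s=d/2+1$, the Lamé estimate at index $d/2+1$, and Lemma \ref{l15} at index $d/2+1$ with the $\nu$-pairing as above. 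Gronwall then gives bounds on the same $[0,T]$. Nonnegativity of $F$ follows from the maximum principle for the Fokker--Planck equation, since $\dive_v((u-v)F)$ is a first-order term with bounded-in-$x$ smooth coefficient. Positivity of $\rho$ follows from the continuity equation.
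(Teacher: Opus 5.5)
Most of your architecture matches the paper: the Picard iteration, the splitting $u=u_L+\bar u$ with $\|u_L\|_{L^1_T(\dot B^{d/2+1}_{2,1})}\to0$, the transport estimate with the Lagrangian lower bound on $1+\varrho$, Lemma~\ref{l15} with the $\nu$-norm absorbing $u\cdot\nabla_v\{\mathbf I-\mathbf P\}f$, the finite-frequency argument for continuity of $\|f(t)\|$, and lower-order difference estimates. You do not split off $f_L$, which is harmless, because $\{\mathbf I-\mathbf P\}f^n$ always multiplies the small factor $\|u^n\|_{L^2_T(\dot B^{d/2}_{2,1})}$.

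\textbf{The gap is the momentum step.} You keep the constant-coefficient Lam\'e operator on $u^{n+1}$ and put $\frac{\varrho^n}{1+\varrho^n}(\mu\Delta+(\mu+\lambda)\nabla\dive)u^n$ into the source.
\begin{itemize}
\item The $u_L$ part of this source is fine. The $\bar u^n$ part is only bounded by $C\mu\bigl\|\frac{\varrho^n}{1+\varrho^n}\bigr\|\,\|\bar u^n\|_{L^1_T(\dot B^{d/2+1}_{2,1})}$.
\item This is linear in the very quantity you must keep below $\eta_0$. Its constant is of the size of the density variation, which is large, since only $1+\varrho_0\ge c_0$ is assumed.
\item Hence neither the induction bound $\|\bar u^{n+1}\|\le\eta_0$ nor contraction can hold.
\item Shrinking $T$ does not help. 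At frequency $2^j$ the parabolic time scale is $2^{-2j}$, so the Lam\'e solution map $L^1_T(\dot B^{d/2-1}_{2,1})\to L^1_T(\dot B^{d/2+1}_{2,1})$ has norm about $1/\nu_*$ uniformly in $T$.
\item Splitting the coefficient into a smooth part $S$ plus a small remainder does not rescue this. The remainder is fine. For the smooth part, all derivatives still fall on $\bar u^n$, and the principal paraproduct $T_S\Delta\bar u^n$ is bounded only by $\|S\|_{L^\infty}\|\bar u^n\|_{\dot B^{d/2+1}_{2,1}}$, with no gain from smoothness.
\end{itemize}

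\textbf{The fix, as in the paper.} Keep $\frac1{1+\varrho^n}$ in the principal part acting on $u^{n+1}$, as in the second equation of \eqref{e244}, and use the variable-coefficient Lam\'e maximal regularity estimate of \cite{danchin4}.
\begin{itemize}
\item That estimate uses the lower bound $\frac1{1+\varrho^n}\ge c$ inside the localized energy estimate.
\item The smooth part of the coefficient then enters only through commutators $[\dot\Delta_j,\frac1{1+\varrho^n}]$. There a derivative falls on the coefficient, giving lower-order terms in $u$ that are small for small $T$. The small high-frequency remainder is handled perturbatively.
\item For $w^{n+1}=u^{n+1}-u_L$, the only term linear in velocity with a large constant is then $\frac{\varrho^n}{1+\varrho^n}(\mu\Delta+(\mu+\lambda)\nabla\dive)u_L$. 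It is bounded by $C_M\|u_L\|_{L^1_T(\dot B^{d/2+1}_{2,1})}\to0$.
\item This yields \eqref{e251}, which closes by taking $\eta_0$ small and then $T$ small.
\end{itemize}
The same issue recurs in your propagation of $\dot B^{d/2+1}_{2,1}$ regularity. There you again invoke the constant-coefficient Lam\'e estimate to get $u\in L^1_T(\dot B^{d/2+2}_{2,1})$. The term $\frac{\varrho}{1+\varrho}\Delta u$ is top order with a non-small coefficient, and Gronwall cannot absorb it.

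A minor slip: $\|a\|_{L^2_T(\dot B^{d/2}_{2,1})}$ is not available locally. However, the bound $T^{1/2}\|a\|_{L^\infty_T(\dot B^{d/2-1}_{2,1})}\|u\|_{L^2_T(\dot B^{d/2}_{2,1})}$ that you give right after suffices.
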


\begin{proof}
Since \(\mu\) and \(\lambda\) are fixed, the constants below may
depend on these coefficients, \(c_0\) and the size of the initial
data. We let \(u_L\) and \(f_L\) solve
\begin{equation*}
\left\{
\begin{aligned}
&\partial_tu_L-\mu\Delta_xu_L
-(\mu+\lambda)\nabla_x\dive_xu_L=0
\quad\text{and}\quad u_L|_{t=0}=u_0,\\
&\partial_tf_L+v\cdot\nabla_xf_L-\mathcal Lf_L=0
\quad\text{and}\quad f_L|_{t=0}=f_0.
\end{aligned}
\right.
\end{equation*}
By Lemmas \ref{heat} and \ref{l15}, their uniform-in-time norms
are controlled by the initial data, while
\begin{align}
&\|u_L\|_{\widetilde L^2_T
(\dot B^{d/2}_{2,1})}
+\|u_L\|_{L^1_T
(\dot B^{d/2+1}_{2,1})}
+\|\{\mathbf I-\mathbf P\}f_L\|_
{\widetilde L^2_T
(\dot{\mathcal B}^{d/2-1}_{2,1,\nu})}
\longrightarrow0
\quad\hbox{as }T\to0.
\label{e243}
\end{align}
This short-time smallness allows \(u_0\) and \(f_0\) to be
arbitrarily large.

We construct a Picard sequence. We set
$(\varrho^1,u^1,f^1):=(\varrho_0,u_L,f_L)$. For \(n\ge1\), let
\((\varrho^{n+1},u^{n+1},f^{n+1})\) solve
\begin{equation}\label{e244}
\left\{
\begin{aligned}
&\partial_t\varrho^{n+1}
+u^n\cdot\nabla_x\varrho^{n+1}
=-(1+\varrho^{n+1})\dive_xu^n,\\
&\partial_tu^{n+1}
-\frac{1}{1+\varrho^n}\Bigl(\mu\Delta_xu^{n+1}
+(\mu+\lambda)\nabla_x\dive_xu^{n+1}\Bigr)
\\
&\qquad=-u^n\cdot\nabla_xu^n
-\nabla_x\int_0^{\varrho^n}
\frac{P'(1+r)}{1+r}\,dr
+\frac{b^n-u^n-a^n u^n}{1+\varrho^n},\\
&\partial_tf^{n+1}
+v\cdot\nabla_xf^{n+1}
-\mathcal Lf^{n+1}
=u^n\cdot vM^{1/2}
+\frac12u^n\cdot vf^n
-u^n\cdot\nabla_vf^n,\\
&(\varrho^{n+1},u^{n+1},f^{n+1})|_{t=0}
=(\varrho_0,u_0,f_0),
\end{aligned}
\right.
\end{equation}
with
\[
a^n:=\int_{\mathbb R^d_v}M^{1/2}f^n\,dv
\quad\text{and}\quad 
b^n:=\int_{\mathbb R^d_v}vM^{1/2}f^n\,dv.
\]
For a given $(\varrho^{n},u^{n},f^{n})$ in the solution space, the existence of $(\varrho^{n+1},u^{n+1},f^{n+1})$ follows from standard linear theory for transport equations, parabolic
systems with variable coefficients and the linear
Fokker--Planck equation.

We establish a priori estimates on a time interval independent of $n$.
We choose \(M>0\) sufficiently large depending only on the initial
norms, and let \(0<\eta_0\ll1\). We assume
\begin{align}
&\|\varrho^n\|_{\widetilde L^\infty_T
(\dot B^{d/2}_{2,1})}
+\|u^n\|_{\widetilde L^\infty_T
(\dot B^{d/2-1}_{2,1})}
+\|f^n\|_{\widetilde L^\infty_T
(\dot{\mathcal B}^{d/2-1}_{2,1})}
\le M
\quad\text{and}\quad 1+\varrho^n\ge\frac{c_0}{2},
\label{e245}\\
&\|u^n\|_{\widetilde L^2_T
(\dot B^{d/2}_{2,1})}
+\|u^n\|_{L^1_T
(\dot B^{d/2+1}_{2,1})}
+\|\{\mathbf I-\mathbf P\}f^n\|_
{\widetilde L^2_T
(\dot{\mathcal B}^{d/2-1}_{2,1,\nu})}
\le2\eta_0.
\label{e246}
\end{align}
By \eqref{e243}, the bounds \eqref{e245}--\eqref{e246} hold for the first
iterate after decreasing \(T\).

Applying the transport estimate to
$\eqref{e244}_1$, we obtain
\begin{align*}
&\|\varrho^{n+1}\|_{\widetilde L^\infty_T
(\dot B^{d/2}_{2,1})}\\
&\quad\le
\exp\Big(C\|u^n\|_{L^1_T
(\dot B^{d/2+1}_{2,1})}\Big)
\Big(
\|\varrho_0\|_{\dot B^{d/2}_{2,1}}
+C\bigl(1+\|\varrho^{n+1}\|_{\widetilde L^\infty_T
(\dot B^{d/2}_{2,1})}\bigr)
\|u^n\|_{L^1_T
(\dot B^{d/2+1}_{2,1})}
\Big).
\end{align*}
Hence, if \(\eta_0\) is sufficiently small, the last term may be
absorbed, and we have
\begin{equation}\label{e248}
\|\varrho^{n+1}\|_{\widetilde L^\infty_T
(\dot B^{d/2}_{2,1})}
\le
2e^{C\eta_0}
\bigl(\|\varrho_0\|_{\dot B^{d/2}_{2,1}}+C\eta_0\bigr).
\end{equation}
Moreover, the flow \(X^n\) of \(u^n\) satisfies
\[
(1+\varrho^{n+1})(t,X^n(t,x))
=(1+\varrho_0)(x)
\exp\Big(-\int_0^t
\dive_xu^n(\tau,X^n(\tau,x))\,d\tau\Big),
\]
which, together with \eqref{e246}, implies \(1+\varrho^{n+1}\ge c_0e^{-2\eta_0}\ge c_0/2\) for $\eta_0\ll 1$.

We set \(w^{n+1}:=u^{n+1}-u_L\). It follows from
\(\eqref{e244}_2\) that
\begin{equation*}
\left\{
\begin{aligned}
&\partial_tw^{n+1}
-\frac{1}{1+\varrho^n}\Bigl(\mu\Delta_xw^{n+1}
+(\mu+\lambda)\nabla_x\dive_xw^{n+1}\Bigr)\\
&\qquad=-u^n\cdot\nabla_xu^n
-\nabla_x\int_0^{\varrho^n}
\frac{P'(1+r)}{1+r}\,dr
+\frac{b^n-u^n-a^nu^n}{1+\varrho^n}\\
&\qquad\quad-\frac{\varrho^n}{1+\varrho^n}
\Bigl(\mu\Delta_xu_L
+(\mu+\lambda)\nabla_x\dive_xu_L\Bigr),\\
&w^{n+1}|_{t=0}=0.
\end{aligned}
\right.
\end{equation*}
\begin{samepage}
By the variable-coefficient Lam\'e estimate
(cf. \cite{danchin4}), we have
\begin{align}
&\|w^{n+1}\|_{\widetilde L^\infty_T
(\dot B^{d/2-1}_{2,1})}
+\sqrt{\nu_*}\|w^{n+1}\|_{\widetilde L^2_T
(\dot B^{d/2}_{2,1})}
+\nu_*\|w^{n+1}\|_{L^1_T
(\dot B^{d/2+1}_{2,1})}\nonumber\\
&\quad\le C_M\Bigg(
\|u^n\cdot\nabla_xu^n\|_{L^1_T
(\dot B^{d/2-1}_{2,1})}
+\Big\|\nabla_x\int_0^{\varrho^n}
\frac{P'(1+r)}{1+r}\,dr\Big\|_{L^1_T
(\dot B^{d/2-1}_{2,1})}\nonumber\\
&\hspace{27mm}
+\Big\|\frac{b^n-u^n-a^nu^n}{1+\varrho^n}\Big\|_
{L^1_T(\dot B^{d/2-1}_{2,1})}\nonumber\\
&\hspace{27mm}
+\Big\|
\frac{\varrho^n}{1+\varrho^n}
\Bigl(\mu\Delta_xu_L
+(\mu+\lambda)\nabla_x\dive_xu_L\Bigr)
\Big\|_{L^1_T(\dot B^{d/2-1}_{2,1})}
\Bigg),
\label{e250}
\end{align}
\end{samepage}
where \(C_M\) is independent of \(n\). To analyze the right-hand side of \eqref{e250}, we deduce from the product and
composition estimates that
\begin{align*}
&\|u^n\cdot\nabla_xu^n\|_{L^1_T
(\dot B^{d/2-1}_{2,1})}
\lesssim
\|u^n\|_{\widetilde L^2_T
(\dot B^{d/2}_{2,1})}^2
\lesssim\eta_0^2,\\
&\Big\|\nabla_x\int_0^{\varrho^n}
\frac{P'(1+r)}{1+r}\,dr\Big\|_{L^1_T
(\dot B^{d/2-1}_{2,1})}
\le C_MT,\\
&\Big\|\frac{b^n-u^n}{1+\varrho^n}\Big\|_
{L^1_T(\dot B^{d/2-1}_{2,1})}
\le C_MT,\\
&\Big\|\frac{a^nu^n}{1+\varrho^n}\Big\|_
{L^1_T(\dot B^{d/2-1}_{2,1})}
\le C_MT^{1/2}M\eta_0,\\
&\Big\|
\frac{\varrho^n}{1+\varrho^n}
\Bigl(\mu\Delta_xu_L
+(\mu+\lambda)\nabla_x\dive_xu_L\Bigr)
\Big\|_{L^1_T(\dot B^{d/2-1}_{2,1})}
\le C_M\|u_L\|_{L^1_T
(\dot B^{d/2+1}_{2,1})}.
\end{align*}
Consequently, we have
\begin{align}
&\|w^{n+1}\|_{\widetilde L^\infty_T
(\dot B^{d/2-1}_{2,1})}
+\sqrt{\nu_*}\|w^{n+1}\|_{\widetilde L^2_T
(\dot B^{d/2}_{2,1})}
+\nu_*\|w^{n+1}\|_{L^1_T
(\dot B^{d/2+1}_{2,1})}\nonumber\\
&\quad\le C_M\Big(
\|u_L\|_{L^1_T(\dot B^{d/2+1}_{2,1})}
+\eta_0^2+T^{1/2}M\eta_0+T\Big).
\label{e251}
\end{align}

We set \(h^{n+1}:=f^{n+1}-f_L\). By
\(\eqref{e244}_3\), the remainder satisfies
\begin{equation}\label{e252}
\left\{
\begin{aligned}
&\partial_th^{n+1}
+v\cdot\nabla_xh^{n+1}
-\mathcal Lh^{n+1}
=u^n\cdot vM^{1/2}
+\frac12u^n\cdot vf^n
-u^n\cdot\nabla_vf^n,\\
&h^{n+1}|_{t=0}=0.
\end{aligned}
\right.
\end{equation}
We apply Lemma \ref{l15} and use the decomposition
\(f^n=\mathbf Pf^n+\{\mathbf I-\mathbf P\}f^n\) to obtain
\begin{align}
&\|h^{n+1}\|_{\widetilde L^\infty_T
(\dot{\mathcal B}^{d/2-1}_{2,1})}
+\|\{\mathbf I-\mathbf P\}h^{n+1}\|_
{\widetilde L^2_T
(\dot{\mathcal B}^{d/2-1}_{2,1,\nu})}
\nonumber\\
&\qquad\le C\Big(
\|u^n\|_{L^1_T(\dot B^{d/2-1}_{2,1})}+\|u^n\mathbf Pf^n\|_{L^1_T
(\dot{\mathcal B}^{d/2-1}_{2,1})}
+\|u^n
\{\mathbf I-\mathbf P\}f^n\|_{L^1_T
(\dot{\mathcal B}^{d/2-1}_{2,1,\nu})}
\Big).
\label{e253}
\end{align}
The terms on the right-hand side of \eqref{e253} satisfy
\begin{align*}
&\|u^n\|_{L^1_T(\dot B^{d/2-1}_{2,1})}
\le TM,\\
&\|u^n\mathbf Pf^n\|_{L^1_T
(\dot{\mathcal B}^{d/2-1}_{2,1})}
\lesssim
T^{1/2}
\|u^n\|_{\widetilde L^2_T
(\dot B^{d/2}_{2,1})}
\|\mathbf Pf^n\|_{\widetilde L^\infty_T
(\dot{\mathcal B}^{d/2-1}_{2,1})}
\le CT^{1/2}M\eta_0,\\
&\|u^n
\{\mathbf I-\mathbf P\}f^n\|_{L^1_T
(\dot{\mathcal B}^{d/2-1}_{2,1,\nu})}
\lesssim
\|u^n\|_{\widetilde L^2_T
(\dot B^{d/2}_{2,1})}
\|\{\mathbf I-\mathbf P\}f^n\|_
{\widetilde L^2_T
(\dot{\mathcal B}^{d/2-1}_{2,1,\nu})}
\le C\eta_0^2.
\end{align*}
It thus follows that
\begin{align}
&\|h^{n+1}\|_{\widetilde L^\infty_T
(\dot{\mathcal B}^{d/2-1}_{2,1})}
+\|\{\mathbf I-\mathbf P\}h^{n+1}\|_
{\widetilde L^2_T
(\dot{\mathcal B}^{d/2-1}_{2,1,\nu})}
\le C_M\bigl(T+T^{1/2}M\eta_0+\eta_0^2\bigr).
\label{e254}
\end{align}

We first take \(M\) according to the initial norms and then choose
\(\eta_0>0\), depending on \(M\), \(c_0\), \(\mu\) and \(\lambda\),
sufficiently small. By \eqref{e243}, we may finally
decrease \(T\) so that the linear dissipative norms and all the terms
containing \(T\) or \(T^{1/2}\) in
\eqref{e251}--\eqref{e254}
are sufficiently small. Equations \eqref{e248},
\eqref{e251} and
\eqref{e254} then improve
\eqref{e245}--\eqref{e246} by replacing $n$ by $n+1$. This proves
the required bounds uniformly with respect to \(n\).

The convergence of the iteration and uniqueness follow from the
standard lower-order difference estimates;
cf. \cite{danchin4}. We omit the details. The standard theory of transport and parabolic equations implies the 
continuity of \(\varrho\) and \(u\).

It remains to justify the continuity of the norm of \(f\). For each
fixed \(j\in\mathbb Z\), we apply \(\dot\Delta_j\) to the kinetic
equation and take the \(L^2_{x,v}\) inner product with
\(\dot\Delta_jf\), we deduce for
\(0\leq t_1<t_2\leq T\) that
\begin{align*}
&\left|\|\dot\Delta_jf(t_2)\|_{L^2_{x,v}}^2
-\|\dot\Delta_jf(t_1)\|_{L^2_{x,v}}^2\right|\\
&\quad\leq2\int_{t_1}^{t_2}\bigg(
\left|\left\langle\mathcal L\dot\Delta_jf,
\dot\Delta_jf\right\rangle_{L^2_{x,v}}\right|+
\left\|\dot\Delta_j\Big(
u\cdot vM^{1/2}+a\,u\cdot vM^{1/2}\Big)
\right\|_{L^2_{x,v}}\|\dot\Delta_jf\|_{L^2_{x,v}}\\
&\qquad\quad+
\left\|\dot\Delta_j\Big(
(v\otimes v-\mathrm{Id}):(u\otimes b)M^{1/2}
\Big)\right\|_{L^2_{x,v}}
\|\dot\Delta_jf\|_{L^2_{x,v}}+
\|\dot\Delta_jh(\varrho,u,f)\|_{L^2_{x,v}}
\|\dot\Delta_jf\|_{L^2_{x,v}}\bigg)d\tau.
\end{align*}
By \eqref{e245}--\eqref{e246} and the
product estimates in Appendix A, the integrand on the right-hand side
belongs to \(L^1(0,T)\). Hence, the right-hand side tends to zero as
\(t_1\to t_2\), and
\(t\mapsto\|\dot\Delta_jf(t)\|_{L^2_{x,v}}\) is continuous.

On the other hand, the \(\widetilde L^\infty_T
(\dot{\mathcal B}^{d/2-1}_{2,1})\) bound gives
\[
 \sup_{0\leq t\leq T}\sum_{|j|>\bar J}2^{j(d/2-1)}
 \|\dot\Delta_jf(t)\|_{L^2_{x,v}}
 \leq\sum_{|j|>\bar J}2^{j(d/2-1)}
 \|\dot\Delta_jf\|_{L^\infty_TL^2_{x,v}}
 \longrightarrow0
 \quad\hbox{as }\bar J\to\infty.
\]
Consequently, the preceding bound gives
\begin{align*}
&\left|\|f(t_1)\|_{\dot{\mathcal B}^{d/2-1}_{2,1}}
-\|f(t_2)\|_{\dot{\mathcal B}^{d/2-1}_{2,1}}\right|\\
&\quad\leq
\sum_{|j|\leq\bar J}2^{j(d/2-1)}
\left|\|\dot\Delta_jf(t_1)\|_{L^2_{x,v}}
-\|\dot\Delta_jf(t_2)\|_{L^2_{x,v}}\right|+2\sum_{|j|>\bar J}2^{j(d/2-1)}
\|\dot\Delta_jf\|_{L^\infty_TL^2_{x,v}}.
\end{align*}
First letting \(t_1\to t_2\) and then
\(\bar J\to\infty\) proves
\(t\mapsto\|f(t)\|_{\dot{\mathcal B}^{d/2-1}_{2,1}}
\in C([0,T])\). Finally, the standard maximum
principle (cf. \cite{degond1}) for the Fokker--Planck equation preserves
$M+M^{1/2}f\geq0$.

If \(\varrho_0\in\dot B^{d/2-1}_{2,1}\), the transport
estimate at the lower index gives
\begin{align*}
\|\varrho\|_{\widetilde L^\infty_T
(\dot B^{d/2-1}_{2,1})}
\le{}&
\exp\Big(C\|u\|_{L^1_T
(\dot B^{d/2+1}_{2,1})}\Big)
\Big(
\|\varrho_0\|_{\dot B^{d/2-1}_{2,1}}+C\bigl(1+\|\varrho\|_{\widetilde L^\infty_T
(\dot B^{d/2}_{2,1})}\bigr)
\|u\|_{L^1_T(\dot B^{d/2}_{2,1})}
\Big).
\end{align*}
The continuity equation then gives
\(\varrho\in C([0,T];\dot B^{d/2-1}_{2,1})\).

Finally, assume the additional regularity in the third item. We apply
the dyadic energy estimates at the index \(d/2+1\), use
\eqref{e3} and estimate the nonlinear terms by the product
and composition laws in Appendix A. Consequently, a Gronwall
inequality leads to the expected \(d/2+1\)-order bounds on the same lifespan. The same finite-frequency and tail argument as above at the
index \(d/2+1\) proves the asserted continuity of the higher
Besov norm of \(f\). Furthermore,
the maximal-regularity estimate for the Lam\'e system also gives
\(u\in L^1(0,T;\dot B^{d/2+2}_{2,1})\). This completes the proof.
\end{proof}

\vspace{2mm}

\noindent \textbf{Acknowledgments} 
The problem studied in this paper originated during the second
author's short visit to R.~Danchin at LAMA in early 2020.
The second author warmly thanks R.~Danchin for his hospitality. Part of this work was carried out
between 2020 and 2022, during the second author's doctoral studies. H.-L. Li is partially supported by the National Natural Science Foundation of China Grant Nos. 12331007, 12326613 and 11931010, the Beijing Scholar Foundation and the key research project of Academy for Multidisciplinary Studies, Capital Normal University.
L.-Y. Shou is supported by the National Natural Science Foundation of China Grant Nos. 12671258 and 12301275.

\vspace{2mm}

\noindent \textbf{Conflict of interest.} The authors declare that they have no conflict of interest.

\vspace{2mm}

\noindent \textbf{Data availability statement.}
Data sharing is not applicable to this article, as no data sets were generated or analyzed during the current study.

\vspace{3mm}

\bigbreak\bigbreak
\noindent Hai-Liang Li\\
{\scshape School of Mathematical Sciences and Academy for Multidisciplinary Studies,\\
Capital Normal University, Beijing 100048, China\\
{\itshape Email address}: {\tt hailiang.li.math@gmail.com}}

\bigbreak

\noindent Ling-Yun Shou\\
{\scshape School of Mathematical Sciences, Ministry of Education Key Laboratory of NSLSCS,\\
and Key Laboratory of Jiangsu Provincial Universities of FDMTA,\\
Nanjing Normal University, Nanjing 210023, China\\
{\itshape Email address}: {\tt shoulingyun11@gmail.com}}


\begin{thebibliography}{99}

\parskip=0pt
\small

\bibitem{bae1}
H.-O. Bae, Y.-P. Choi, S.-Y. Ha and M.-J. Kang,
Asymptotic flocking dynamics of Cucker--Smale particles immersed in
compressible fluids.
\emph{Discrete Contin. Dyn. Syst.} 34(11), 4419--4458 (2014).

\bibitem{bahouri1}
H. Bahouri, J.-Y. Chemin and R. Danchin,
\emph{Fourier Analysis and Nonlinear Partial Differential Equations}.
Grundlehren der Mathematischen Wissenschaften, vol. 343,
Springer, Heidelberg (2011).

\bibitem{baranger2}
C. Baranger and L. Desvillettes,
Coupling Euler and Vlasov equations in the context of sprays:
the local-in-time, classical solutions.
\emph{J. Hyperbolic Differ. Equ.} 3(1), 1--26 (2006).

\bibitem{boudinNSV1}
L. Boudin, L. Desvillettes, C. Grandmont and A. Moussa,
Global existence of solutions for the coupled Vlasov and
Navier--Stokes equations.
\emph{Differ. Integral Equ.} 22(11--12), 1247--1271 (2009).

\bibitem{boudin2}
L. Boudin, L. Desvillettes and R. Motte,
A modeling of compressible droplets in a fluid.
\emph{Commun. Math. Sci.} 1(4), 657--669 (2003).

\bibitem{brandolese1}
L. Brandolese, L.-Y. Shou, J. Xu and P. Zhang,
Sharp decay characterization for the compressible Navier--Stokes equations.
\emph{Adv. Math.} 456, 109905 (2024).

\bibitem{CaoJiang2021}
W. Cao and P. Jiang,
Global bounded weak entropy solutions to the Euler--Vlasov equations
in fluid-particle system.
\emph{SIAM J. Math. Anal.} 53(4), 3958--3984 (2021).

\bibitem{carrillo2}
J. A. Carrillo, Y.-P. Choi and T. Karper,
On the analysis of a coupled kinetic-fluid model with local alignment forces.
\emph{Ann. Inst. H. Poincar\'e Anal. Non Lin\'eaire}
33(2), 273--307 (2016).

\bibitem{carrillo4}
J. A. Carrillo, R.-J. Duan and A. Moussa,
Global classical solution close to equilibrium to the
Vlasov--Euler--Fokker--Planck system.
\emph{Kinet. Relat. Models} 4(1), 227--258 (2011).

\bibitem{chae1}
M. Chae, K. Kang and J. Lee,
Global existence of weak and classical solutions for the
Navier--Stokes--Vlasov--Fokker--Planck equations.
\emph{J. Differential Equations} 251(9), 2431--2465 (2011).

\bibitem{chae2}
M. Chae, K. Kang and J. Lee,
Global classical solutions for a compressible fluid-particle interaction model.
\emph{J. Hyperbolic Differ. Equ.} 10, 537--562 (2013).

\bibitem{choiNSV1}
Y.-P. Choi,
Large-time behavior for the Vlasov/compressible Navier--Stokes
equations.
\emph{J. Math. Phys.} 57(7), 071501 (2016).

\bibitem{choiblowup}
Y.-P. Choi,
Finite-time blow-up phenomena of Vlasov/Navier--Stokes equations and
related systems.
\emph{J. Math. Pures Appl. (9)} 108(6), 991--1021 (2017).

\bibitem{choi4}
Y.-P. Choi and J. Jung,
Asymptotic analysis for a Vlasov--Fokker--Planck/Navier--Stokes
system in a bounded domain.
\emph{Math. Models Methods Appl. Sci.}
31(11), 2213--2295 (2021).

\bibitem{crin1}
T. Crin-Barat and R. Danchin,
Partially dissipative hyperbolic systems in the critical regularity
setting: the multi-dimensional case.
\emph{J. Math. Pures Appl. (9)} 165, 1--41 (2022).

\bibitem{danchin1}
R. Danchin,
Global existence in critical spaces for compressible Navier--Stokes equations.
\emph{Invent. Math.} 141(3), 579--614 (2000).

\bibitem{danchin4}
R. Danchin,
Well-posedness in critical spaces for barotropic viscous fluids
with truly not constant density.
\emph{Comm. Partial Differential Equations}
32(7--9), 1373--1397 (2007).

\bibitem{DanchinShou2026}
R. Danchin and L.-Y. Shou,
Large-time asymptotics of periodic two-dimensional
Vlasov--Navier--Stokes flows.
\emph{J. Lond. Math. Soc. (2)} 113(1), e70435 (2026).

\bibitem{danchin3}
R. Danchin and J. Xu,
Optimal time-decay estimates for the compressible Navier--Stokes equations
in the critical \(L^p\) framework.
\emph{Arch. Ration. Mech. Anal.} 224, 53--90 (2017).



\bibitem{degond1}
P. Degond,
Global existence of smooth solutions for the Vlasov--Fokker--Planck
equation in \(1\) and \(2\) space dimensions.
\emph{Ann. Sci. \'Ec. Norm. Sup\'er.} 19(4), 519--542 (1986).

\bibitem{duan3}
R.-J. Duan and S. Liu,
Cauchy problem on the Vlasov--Fokker--Planck equation coupled
with the compressible Euler equations through the friction force.
\emph{Kinet. Relat. Models} 6(4), 687--700 (2013).

\bibitem{goudon1}
T. Goudon, L. He, A. Moussa and P. Zhang,
The Navier--Stokes--Vlasov--Fokker--Planck system near equilibrium.
\emph{SIAM J. Math. Anal.} 42(5), 2177--2202 (2010).

\bibitem{goudon2}
T. Goudon, P.-E. Jabin and A. Vasseur,
Hydrodynamic limit for the Vlasov--Navier--Stokes equations. I:
Light particles regime.
\emph{Indiana Univ. Math. J.} 53(6), 1495--1515 (2004).

\bibitem{goudon3}
T. Goudon, P.-E. Jabin and A. Vasseur,
Hydrodynamic limit for the Vlasov--Navier--Stokes equations. II:
Fine particles regime.
\emph{Indiana Univ. Math. J.} 53(6), 1517--1536 (2004).

\bibitem{guo2}
Y. Guo,
The Boltzmann equation in the whole space.
\emph{Indiana Univ. Math. J.} 53, 1081--1094 (2004).

\bibitem{guo1}
Y. Guo and Y. Wang,
Decay of dissipative equations and negative Sobolev spaces.
\emph{Comm. Partial Differential Equations}
37, 2165--2208 (2012).

\bibitem{hamdache1}
K. Hamdache,
Global existence and large time behaviour of solutions for
the Vlasov--Stokes equations.
\emph{Jpn. J. Ind. Appl. Math.} 15(1), 51--74 (1998).

\bibitem{han1}
D. Han-Kwan, A. Moussa and I. Moyano,
Large time behavior of the Vlasov--Navier--Stokes system on the torus.
\emph{Arch. Ration. Mech. Anal.} 236, 1273--1323 (2020).

\bibitem{he1}
L. He,
On the global smooth solution to the two-dimensional fluid-particle system.
\emph{Discrete Contin. Dyn. Syst.} 27(1), 237--263 (2010).

\bibitem{jabin1}
P.-E. Jabin and B. Perthame,
Notes on mathematical problems on the dynamics of dispersed particles
interacting through a fluid.
In: N. Bellomo and M. Pulvirenti (eds.),
\emph{Modeling in Applied Sciences},
pp. 111--147. Birkh\"auser, Boston (2000).

\bibitem{lf1}
F.-C. Li, Y. Mu and D. Wang,
Strong solutions to the compressible
Navier--Stokes--Vlasov--Fokker--Planck equations:
global existence near the equilibrium and large time behavior.
\emph{SIAM J. Math. Anal.} 49(2), 984--1026 (2017).


\bibitem{LiNiShouWang2025}
F.-C. Li, J. Ni, L.-Y. Shou and D. Wang,
The incompressible inhomogeneous
Navier--Stokes--Vlasov--Fokker--Planck equations:
global well-posedness and inviscid limit.
Preprint, arXiv:2512.11220 (2025).

\bibitem{LiNiWang2026}
F.-C. Li, J. Ni and D. Wang,
Global well-posedness and inviscid limit of the compressible
Navier--Stokes--Vlasov--Fokker--Planck system with
density-dependent friction force.
Preprint, arXiv:2603.07411 (2026).





\bibitem{lishouNSV1}
H.-L. Li and L.-Y. Shou,
Global well-posedness of one-dimensional compressible
Navier--Stokes--Vlasov system.
\emph{J. Differential Equations} 280, 841--890 (2021).

\bibitem{lishouNSV2}
H.-L. Li and L.-Y. Shou,
Asymptotical behavior of one-dimensional compressible
Navier--Stokes--Vlasov system.
\emph{Sci. Sin. Math.} 51(6), 985--1002 (2021).

\bibitem{lishou1}
H.-L. Li and L.-Y. Shou,
Global existence and optimal time-decay rates of the compressible
Navier--Stokes--Euler system.
\emph{SIAM J. Math. Anal.} 55(3), 1810--1846 (2023).

\bibitem{LiShouZhang2025}
H.-L. Li, L.-Y. Shou and Y. Zhang,
Exponential stability of the inhomogeneous
Navier--Stokes--Vlasov system in vacuum.
\emph{Kinet. Relat. Models} 18(2), 252--285 (2025).

\bibitem{lhl1}
H.-L. Li, J. Sun, G. Zhang and M. Zhong,
The behaviors of the solutions to the compressible
Navier--Stokes (Euler)/Fokker--Planck equations.
Preprint (2018).


\bibitem{LWW}
H.-L. Li, T. Wang and Y. Wang,
Wave phenomena to the three-dimensional fluid-particle model.
\emph{Arch. Ration. Mech. Anal.} 243, 1019--1089 (2022).

\bibitem{LiuShouXu2025}
J. Liu, L.-Y. Shou and J. Xu,
The Boltzmann equation in the homogeneous critical regularity framework.
\emph{SIAM J. Math. Anal.} 57(4), 4314--4357 (2025).

\bibitem{liu1}
T.-P. Liu and S.-H. Yu,
Boltzmann equation: Micro--macro decompositions and positivity of
shock profiles.
\emph{Comm. Math. Phys.} 246(1), 133--179 (2004).

\bibitem{mellet2}
A. Mellet and A. Vasseur,
Global weak solutions for a
Vlasov--Fokker--Planck/Navier--Stokes system of equations.
\emph{Math. Models Methods Appl. Sci.} 17(7), 1039--1063 (2007).

\bibitem{mellet1}
A. Mellet and A. Vasseur,
Asymptotic analysis for a
Vlasov--Fokker--Planck/compressible Navier--Stokes system of equations.
\emph{Comm. Math. Phys.} 281, 573--596 (2008).

\bibitem{so1}
V. Sohinger and R. M. Strain,
The Boltzmann equation, Besov spaces, and optimal time decay rates
in \(\mathbb R^n_x\).
\emph{Adv. Math.} 261, 274--332 (2014).

\bibitem{villani1}
C. Villani,
\emph{Hypocoercivity}.
Mem. Amer. Math. Soc. 202, no. 950 (2009).

\bibitem{wangd1}
D. Wang and C. Yu,
Global weak solution to the inhomogeneous
Navier--Stokes--Vlasov equations.
\emph{J. Differential Equations} 259, 3976--4008 (2015).

\bibitem{williams1}
F. A. Williams,
Spray combustion and atomization.
\emph{Phys. Fluids} 1(6), 541--545 (1958).

\bibitem{xin1}
Z. Xin and J. Xu,
Optimal decay for the compressible Navier--Stokes equations
without additional smallness assumptions.
\emph{J. Differential Equations} 274, 543--575 (2021).

\bibitem{xu1}
J. Xu and S. Kawashima,
Global classical solutions for partially dissipative hyperbolic
systems of balance laws.
\emph{Arch. Ration. Mech. Anal.} 211, 513--553 (2014).

\bibitem{xu2}
J. Xu and S. Kawashima,
The optimal decay estimates on the framework of Besov spaces
for generally dissipative systems.
\emph{Arch. Ration. Mech. Anal.} 218, 275--315 (2015).

\bibitem{yuNSV1}
C. Yu,
Global weak solutions to the incompressible
Navier--Stokes--Vlasov equations.
\emph{J. Math. Pures Appl. (9)} 100(2), 275--293 (2013).

\end{thebibliography}
\end{document}